\newtheorem{theorem}{Theorem}[section]
\newtheorem{lemma}[theorem]{Lemma}
\newtheorem{proposition}[theorem]{Proposition}
\newtheorem{corollary}[theorem]{Corollary}
\newtheorem{definition}[theorem]{Definition}
\newtheorem{example}[theorem]{Examples}
\newtheorem{remark}[theorem]{Remark}
\newtheorem{assumptions}[theorem]{Assumptions}
\DeclareMathOperator*{\argmin}{arg\,min}
\newcommand{\x}[1]{{\color{red} #1}}
\DeclareRobustCommand\longtwoheadrightarrow
\newtcolorbox{soutbox}{breakable,
 enhanced jigsaw,
 opacityback=0,
 parbox=false,
 boxrule=0mm,
 top=0mm,bottom=0pt,left=0pt,right=0pt,
 boxsep=0pt,
 frame hidden,
 finish={\fill[pattern=mystrikeout, pattern color=blue] (frame.north west) rectangle (frame.south east);}
}
\title{An extension to Banach stackings of the Brezis--Pazy semigroup-convergence theorem, with applications to $\lambda$-convex gradient flows}
\author{Samuel Mercer \and Yves van Gennip}
\date{}
\begin{document}

\maketitle

\begin{abstract}
A 1972 theorem by Brezis and Pazy establishes the uniform convergence of nonlinear semigroups generated by $\omega$-accretive operators on a Banach space. Our goal is to expand the setting of this theorem to include nonlinear semigroups that are acting on different Banach spaces. This is useful, for example, to prove discrete-to-continuum convergence for graph-based gradient flows. We name the general setting in which our theorem holds a Banach stacking.

We give three main applications of the extended theorem that are of independent interest. The first establishes uniform convergence of semigroups in a Banach stacking if the generators of the semigroups converge pointwise. The second is a proof of uniform convergence for gradient flows of $\Gamma$-converging $\lambda$-convex functions on a Banach stacking of Hilbert spaces; the third a proof of uniform convergence for gradient flows of $\Gamma$-converging functions that satisfy a convexity condition that was formulated by B\'enilan and Crandall in a 1991 publication (and which we term `$P_0$-convexity') on a Banach stacking of $L^p$ spaces, corresponding to the $TL^p$ space introduced by Garc\'ia Trillos and Slep\v{c}ev in a 2016 paper.
\end{abstract}

\section{Introduction}\label{sec:introduction}

Let $X_\infty$ be a Banach space and, for all $n\in \mathbb{N}$, let $\{S_{A_n}(t)\}_{t\geq 0}$ be a semigroup acting on $X_\infty$ generated by an $\omega$-accretive (Definition~\ref{def:omegamaccretive}) operator $A_n$ on the space $X_\infty$. In \cite[Theorem 3.1]{brezis1972convergence} Brezis and Pazy prove that if the resolvents of the operators $A_n$ converge pointwise, then the corresponding semigroups converge uniformly over any finite interval $[0,T]$. 

We are interested in extending this theorem to a setting in which each operator $A_n$, and thus each generated semigroup $\{S_{A_n}(t)\}_{t\geq 0}$, is defined on a (potentially) different Banach space $X_n$. In order to still have a meaningful concept of convergence, we require each of the spaces $X_n$ to be embedded into a unifying metric space $\mathcal{M}$; naturally we will need a few assumptions on the properties of these embeddings.

We present a new axiomatic structure in Definition~\ref{BanStack} imposed on the collection $(X_1,X_2,...,X_\infty)$, which we call a Banach stacking. One can think of this structure as being similar to a vector bundle; in fact, the second example of a Banach stacking in Examples~\ref{BSexmpl} is built off a vector bundle with a Riemannian metric structure on the base space. Figure~\ref{fig:banach} shows a schematic representation of a Banach stacking, with the functions $\xi_n$ being embeddings of $X_n$ into $\mathcal{M}$, satisfying the requirements in Definition~\ref{BanStack}. In particular we emphasise that the spaces $X_n$, with $n\in \mathbb{N}$, do not need to embed into the space $X_\infty$.

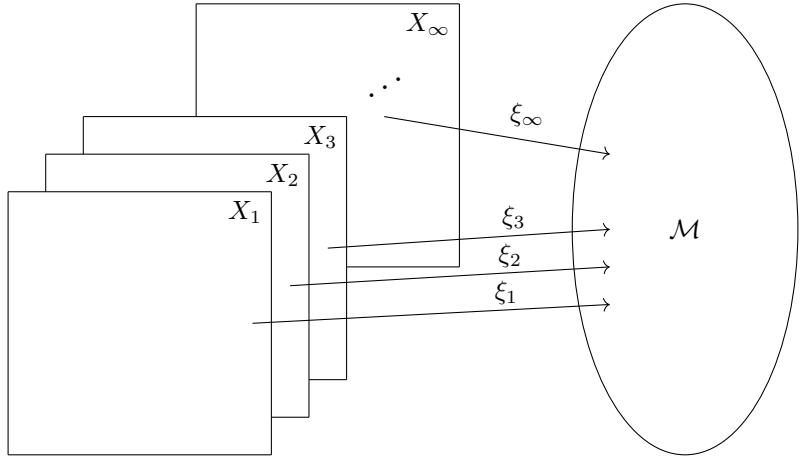
\begin{figure}[ht]
    \begin{center}
    \begin{tikzpicture}[scale=0.5]
\draw (0,0) -- (7,0) ;
\draw (0,0) -- (0,7) ;
\draw (0,7) -- (7,7) ;
\draw (7,7) -- (7,0) ;

\draw (1,7) -- (1,8) ;
\draw (1,8) -- (8,8) ;
\draw (8,8) -- (8,1) ;
\draw (8,1) -- (7,1) ;

\draw (2,8) -- (2,9) ;
\draw (2,9) -- (9,9) ;
\draw (9,9) -- (9,2) ;
\draw (9,2) -- (8,2) ;

\draw (5,9) -- (5,12) ;
\draw (5,12) -- (12,12) ;
\draw (12,12) -- (12,5) ;
\draw (12,5) -- (9,5) ;

\node at (10,10) {$\Large{\iddots}$} ;

\node at (7,7) [anchor=north east] {$X_1$} ;
\node at (8,8) [anchor=north east] {$X_2$} ;
\node at (9,9) [anchor=north east] {$X_3$} ;
\node at (12,12) [anchor=north east] {$X_\infty$} ;

\draw (18,6) ellipse (3 and 6) ;
\node at (18,6) {$\mathcal{M}$} ;

\draw (6.5,3.5) edge[->] node[above,xshift=1cm]{$\xi_1$} (16,4)  ;
\draw (7.5,4.5)  edge[->] node[above,xshift=0.8cm]{$\xi_2$} (16,5) ;
\draw (8.5,5.5)  edge[->] node[above,xshift=0.6cm]{$\xi_3$} (16,6) ;
\draw (10,9) edge[->] node[above,xshift=0.4cm]{$\xi_\infty$} (16,8) ;
\end{tikzpicture}
\end{center}
    \caption{Schematic illustration of a Banach stacking}
    \label{fig:banach}
\end{figure}

With this structure in hand we prove Theorem~\ref{theorem1}, which extends the Brezis--Pazy theorem to Banach stackings.

Our definition for a Banach stacking is primarily motivated by discrete-to-continuum limits. In \cite{garcia2016continuum} the metric space $TL^p(\Omega)$ is introduced to allow for $\Gamma$-convergence results of discrete graph-based functionals to continuum functionals (we defer giving details of $\Omega\subset\mathbb{R}^d$). In Proposition~\ref{tlpBanach} we show that $TL^p(\Omega)$ fits into our Banach stacking structure, which opens the door for the application of our extended Brezis--Pazy theorem to gradient flows in a $TL^p(\Omega)$ setting.

In Proposition~\ref{OpConvergence} we prove uniform convergence of semigroups on intervals $[0,T]$ in a Banach stacking under the assumption that their generating operators converge pointwise. Our other two main applications for this extended Brezis--Pazy theorem avoid this assumption on the operators in favour of other assumptions. They are the convergence results Theorems~\ref{Hilthm} and~\ref{thm:P0theorem1} for gradient flows of functions over a Banach stacking that satisfy certain $\Gamma$-convergence, equicoercivity, and convexity conditions. These results also build upon classical results regarding convergence of gradient flows such as those in \cite{EvoEq} and \cite{SerfatyGam}. 

One notable difference of our results compared to those in \cite{EvoEq} and \cite{SerfatyGam} is that we require no well-preparedness \footnote{Well-preparedness of the initial conditions tends to refer to any additional assumptions on the sequence $(x_n)_{n\in\mathbb{N}}$ of initial conditions beyond its convergence. There is no single specific definition that is used in the literature. For example, if we have a sequence of energies $(\Phi_n)_{n\in\mathbb{N}}$ that $\Gamma$-converges to an energy $\Phi_\infty$, well-preparedness may mean $\Phi_n(x_n)\rightarrow\Phi_\infty(x_\infty)$ as $n\rightarrow \infty$.} of the initial conditions, instead restricting ourselves to $\lambda$-convex functions (see Definition~\ref{def:lambdaconvex}) in Theorem~\ref{Hilthm}, including the non-convex $\lambda<0$ case, and $P_0$-convex functions (see Definition~\ref{def:Pconvex}) in Theorem~\ref{thm:P0theorem1}, and exploiting the energy bound for gradient flows proved in Theorem~\ref{GradFlowTheorem}. In Section~\ref{sec:literature} we will see that both \cite{EvoEq} and \cite{SerfatyGam} require conditions on the functions that are strictly weaker than $\lambda$-convexity. Surprisingly, without well-preparedness assumptions, we are still able to deduce convergence of the `energy' of the gradient flows for every positive time $t>0$.

In Theorem~\ref{Hilthm} the spaces $X_n$ and $X_\infty$ are required to be Hilbert spaces and the functions $\lambda$-convex. We obtain uniform convergence of the gradient flows on closed and bounded time intervals and pointwise convergence of the values of the functions along those flows.

In Theorem~\ref{thm:P0theorem1} we treat the $TL^p(\Omega)$ setting, in which $X_n:=L^p(\Omega;\mu_n)$ and $X_\infty:=L^p(\Omega;\mu_\infty)$ for probability measures $\mu_n$ and $\mu_\infty$ on an open subset $\Omega\subset \mathbb{R}^d$. We require a stronger convexity assumption than in Theorem~\ref{Hilthm} ($P_0$-convexity, which implies convexity for lower semicontinuous functionals, but is itself not implied by $\lambda$-convexity for any $\lambda\geq 0$; see Propositions~\ref{Pprop} and~\ref{prop:counterexample}), but are able to weaken the coercivity conditions. We compare Theorems~\ref{Hilthm} and \ref{thm:P0theorem1} with other results in the literature in further detail in Section~\ref{sec:literature}. 
Additionally, Proposition~\ref{prop:assumpreduce} provides an alternative, stronger set of assumptions to that in Theorem~\ref{thm:P0theorem1} which we expect to be easier to verify in practice.

\subsection{Setting and main results}

\subsubsection{Convex analysis}

In various places in this paper we consider functions on a Hilbert space that are $\lambda$-convex. In other sources $\lambda$-convexity is also called $\omega$-convexity; it is a particular case of $\phi$-convexity defined in \cite{EvoEq}. For some intuition we mention that a function $\Phi$ on a Hilbert space with norm $\|\cdot\|$ is $\lambda$-convex if and only if $\Phi(\cdot)-\frac{\lambda}{2}\Vert \cdot \Vert^2$ is convex. Moreover, if $\lambda \geq 0$ ($\lambda>0$), a $\lambda$-convex function is (strictly) convex. For proofs of these statements we refer to Lemmas~\ref{lem:lambdaconvexequivalent} and~\ref{lem:lambdaconvexanstrictlyconvex}. 

In order to study convergence of gradient flows, a convexity assumption of this type is necessary because of the stability it introduces with respect to initial conditions. This is especially important for discrete-to-continuum limits where the initial condition must be approximated. We illustrate this in Figure~\ref{fig:lambdaconvexity} with the graphs of two functions, one being $\lambda$-convex and the other not. 

\begin{figure}[ht]
\centering
\begin{subcaptionblock}{.5\textwidth}
\centering
\begin{tikzpicture}[scale=0.5]

\draw (0,0)--(5,5) ;
\draw (5,5)--(10,0) ;

\draw[dashed,->] (4,5)--(0,1) ;
\node at (2,3) [anchor=south east] {\scriptsize fast moving} ;
\draw[dashed,->] (6,5)--(10,1) ;
\node at (8,3) [anchor=south west] {\scriptsize fast moving} ;

\end{tikzpicture}
\caption{Non-$\lambda$-convex}
\end{subcaptionblock}%
\begin{subcaptionblock}{.5\textwidth}
\centering
\begin{tikzpicture}[scale=0.5]

\draw (5,5) parabola (0,0) ;
\draw (5,5) parabola (10,0) ;

\draw[dashed,->] (4,5) to[out=190, in =55] (0.5,1.8) ;
\node at (2,5) [anchor=south] { \scriptsize slow moving} ;
\draw[dashed,->] (6,5) to[out=-10, in =125] (9.5,1.8) ;
\node at (8,5) [anchor=south] { \scriptsize slow moving} ;

\end{tikzpicture}
\caption{$\lambda$-convex}
\end{subcaptionblock}%
\caption{Graphs of (a) a function that is not $\lambda$-convex and (b) a function that is $\lambda$-convex for some $\lambda<0$. The dashed lines with arrows show the trajectories of gradient flows of the functions.}\label{fig:lambdaconvexity}
\end{figure} 

We observe it is not possible to deduce uniform convergence of gradient flows without some assumption such as $\lambda$-convexity. For the non-$\lambda$-convex example in Figure~\ref{fig:lambdaconvexity}, two gradient flows that start from arbitrarily close initial conditions located on opposite sides of the maximimizer of the function will, after a time $T$, have moved away from each other in opposite directions over a distance proportional to $T$. In the $\lambda$-convex case the gradient flows still move in opposite directions, but the gradient at the peak is zero. Hence the closer the two initial conditions are together the slower the respective gradient flows move apart from one another. The intuition we gain from this is that $\lambda$-convexity introduces some stability to the gradient flow.

In Theorem~\ref{GradFlowTheorem} we prove that the value $\Phi(u(t))$ along the gradient flow $u$ of a $\lambda$-convex function $\Phi$ (with initial condition $x_0$) is bounded above by a Moreau envelope: $\Phi(u(t))\leq [\Phi]^{\kappa(t,\lambda)}(x_0)$. As is not uncommon, we will call $\Phi(u(t))$ the energy of the gradient flow, even if there is no direct physical justification to view $\Phi(u(t))$ as an energy. For the definition of Moreau envelope we refer to Definition~\ref{MoreauProx} and $\kappa(t,\lambda)$ is defined in Theorem~\ref{GradFlowTheorem}.The importance of this bound is that the Moreau envelope satisfies a number of nice analytical properties, which we detail in Section~\ref{sec:convex}. These will be particularly useful for us when proving our convergence results in Theorems~\ref{Hilthm} and~\ref{thm:P0theorem1}.

In \cite[Theorem 4.3.2]{greenbook}, the authors prove a very similar result to Theorem~\ref{GradFlowTheorem}, the only difference being the parameter in the Moreau envelope. In fact, for the applications of Theorem~\ref{GradFlowTheorem} in the current paper, \cite[Theorem 4.3.2]{greenbook} would suffice, yet we include Theorem~\ref{GradFlowTheorem} as its bound is strictly stronger in the case when $\lambda>0$ and the proof strategy we use is novel and could be useful for future research.

We illustrate the bound from Theorem~\ref{GradFlowTheorem} in Figure~\ref{fig:enter-label} for the convex function ${\Phi: \mathbb{R} \to \mathbb{R}, x \mapsto |x|}$ and a time $T>0$. The figure shows the graphs of $x\mapsto \Phi(x)$, $x\mapsto [\Phi]^T(x)$, and $x\mapsto \Phi(u_x(T))$, where $u_x$ denotes the gradient flow of $\Phi$ starting from $x$.

\begin{figure}[ht]
    \centering
    \begin{tikzpicture}[scale=1]

    \draw (3,0) -- (7,0) ;
    \draw [stealth-stealth] (0,-0.7) -- (10,-0.7) ;
    \node at (5,-0.7) [anchor=north]{$x\in \mathbb{R}$} ;
    \draw (3,0) -- (0,3) ;
    \draw (7,0) -- (10,3) ;

    \draw (0,5) -- (5,0) ;
    \draw (5,0) -- (10,5) ;

    \draw[dashed] (5,0) parabola (2,1.5) ; 
    \draw[dashed] (2,1.5) -- (0,3.5) ;
    \draw[dashed] (5,0) parabola (8,1.5) ; 
    \draw[dashed] (8,1.5) -- (10,3.5) ;

    \node at (2.5,2.5) [anchor=south west]{$\Phi(x)$} ;
    \node at (1.5,1.7) [anchor=south west]{$[\Phi]^T(x)$} ;
    \node at (2,1) [anchor=north east]{$\Phi(u_x(T))$} ;
    \node at (3,0) [anchor=north]{$-T$} ;
    \node at (7,0) [anchor=north]{$T$} ;
    \end{tikzpicture}
    \caption{Moreau-envelope bound for $\Phi(x):=|x|$}
    \label{fig:enter-label}
\end{figure}
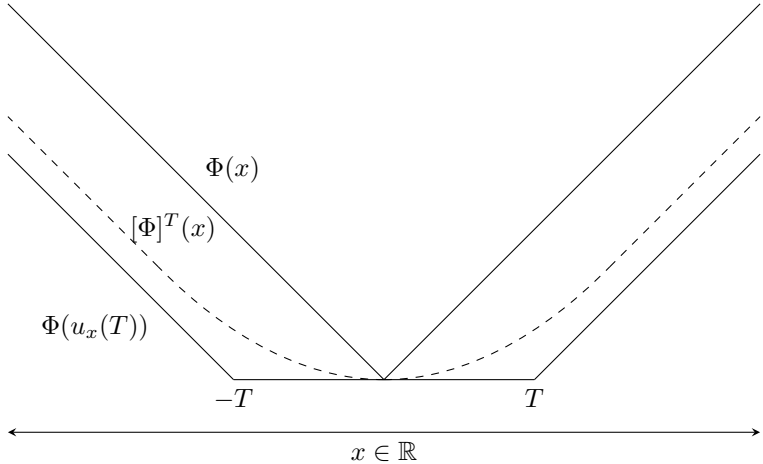

\subsubsection{Convergence of semigroups and gradient flows}\label{sec:convgradflowintro}

In order to use our generalised Brezis--Pazy theorem to prove convergence results for gradient flows, we first have to express the gradient flows as semigroups generated by suitably accretive operators, which we do in Section~\ref{sec:convex}, in particular Lemma~\ref{lem:gradflowsemigroup}. Different notions of accretivity are explained in Section~\ref{sec:accretivity}. The operators will be the subdifferentials (adjusted for $\lambda$-convexity) of the functions $\Phi$ of which the gradient flows are taken and the required accretivity will be guaranteed by convexity properties of the functions. For applications such as total-variation flow, which we will address shortly, we would like to formulate our flow as a semigroup generated by an $L^2$ subdifferential, yet find that the domain of that operator is not fitting for our purposes. In Section~\ref{sec:subdifferentials} we define an operator $\partial_{L^P}\Phi$ whose domain is a suitable $L^p$ space for $p\neq 2$. To ensure the required accretivity properties of this new operator, we consider $P_0$-convexity of $\Phi$; we return to this topic briefly near the end of Section~\ref{sec:convgradflowintro}.

Second, we need to establish convergence of the resolvents. One way to do this is via Proposition~\ref{OpConvergence}, which deduces pointwise convergence of the resolvents from pointwise convergence of the underlying operators. The latter, however, is not always easy to prove and thus for our two main applications of the generalised Brezis--Pazy theorem in Theorems~\ref{Hilthm} and~\ref{thm:P0theorem1}, we do not use Proposition~\ref{OpConvergence}, but we make use of $\Gamma$-convergence of the functions of which the gradient flows are taken to establish the required convergence of the resolvents.

Our result in Theorem~\ref{Hilthm} holds for a Banach stacking of Hilbert spaces. We prove uniform convergence on intervals $[0,T]$ of the gradient flows of a sequence of $\lambda$-convex, lower-semicontinuous functions on Hilbert spaces under the assumption that the functions $\Gamma$-converge and satisfy an equicoercivity property in the Banach stacking. We also establish pointwise convergence of the values of the functions along the flow. 

Finally, Theorem~\ref{thm:P0theorem1}, is specifically about gradient flows in the Banach stacking $TL^p(\Omega)$. As in the previous result, we are still interested in a gradient flow with respect to the inner product of a Hilbert space, namely an $L^2$ inner product, as such gradient flows are very common in the literature (e.g., the total-variation flow below), yet we require the $\Gamma$-convergence and compactness assumptions to hold over the Banach stacking $TL^p(\Omega)$. If $p<2$, that means the compactness assumption (which is formulated in terms of the equicoercivity property of Definitions~\ref{def:equicoercivity} and~\ref{BSdefs}) is strictly weaker than if we were to assume a similar compactness property in the $L^2$ topology.

To motivate why these alternative assumptions in Theorem~\ref{thm:P0theorem1} are necessary, we give the example of the total-variation functional (see \cite[Definition 3.4]{Ambrosio2000})
\[
TV(u) := \sup\left\{\int_\Omega u \operatorname{div} \varphi \,|\, \varphi\in C^1_c(\Omega;\mathbb{R}^d) \text{ with } \|\varphi\|_{L^\infty(\Omega;\mathbb{R}^d)}\leq1 \right\},
\]
defined on $L^1(\Omega)$, where $\Omega\subset \mathbb{R}^d$ is the open unit ball. (In our results later in this paper we consider function spaces on open subsets $\Omega$ equipped with probability measures, so this example fits into our framework up to a normalization factor.) The total-variation flow is the gradient flow of $TV$ with respect to the $L^2(\Omega)$ inner product. However, the standard compactness assumption in $L^2(\Omega)$ that one would typically require for studying the convergence of gradient flows---namely that every bounded sequence $(\psi_n)_{n\in\mathbb{N}}$ in $L^2(\Omega)$ for which also $(TV(\psi_n))_{n\in\mathbb{N}}$ is bounded, has a converging subsequence in $L^2(\Omega)$---fails, as the following counterexample shows.

If $u\in W^{1,1}(\Omega)$, then $TV(u)=\int_\Omega |\nabla u(x)| \, dx$. Let $\Omega$ be the unit ball in $\mathbb{R}^d$ and let $\psi\neq 0$ be a smooth function defined on $\mathbb{R}^d$ with compact support in $\Omega$. After rescaling if necessary, we may assume that $\Vert \psi\Vert^2_{L^2(\Omega)}=1$. Define the sequence $(\psi_n)_{n\in \mathbb{N}}$ by, for all $n\in\mathbb{N}$, $\psi_n(x):=n^{\frac{d}{2}}\psi(nx)$. Then $\Vert \psi_n\Vert^2_{L^2(\Omega)}=\Vert \psi\Vert^2_{L^2(\Omega)}=1$, so the sequence $(\psi_n)_{n\in\mathbb{N}}$ is bounded in $L^2(\Omega)$. Moreover, $TV(\psi_n)=n^{1-\frac{d}{2}}\int|\nabla\psi(x)| \, dx$, thus, if $d\geq 2$, the sequence $(TV(\psi_n))_{n\in \mathbb{N}}$ is also bounded. However, the sequence $(\psi_n)_{n\in\mathbb{N}}$ has no subsequence that converges in $L^2(\Omega)$; indeed, if such a subsequence were to exist, then the limit function would have unit $L^2(\Omega)$ norm which contradicts that, for all $x\in \mathbb{R}^d\setminus\{0\}$, $(\psi_n(x))_{n\in\mathbb{N}}$ converges to $0$. Hence the standard compactness assumption, as explained above, is not satisfied.

On the other hand, by a known result (see \cite[Theorem 3.47]{Ambrosio2000}), if $A\subset L^1(\Omega)$ is a subset such that $\{\Vert u \Vert_{L^1(\Omega)}\}_{u\in A}$ and $\{TV(u)\}_{u\in A}$ are bounded, then $A$ is relatively compact in $L^p(\Omega)$ for $p<\frac{d}{d-1}$. Thus we can recover some equicoercivity, just for $p<2$, i.e. an exponent smaller than that of the Lebesgue-space topology with respect to which we take the gradient flow. In order to overcome this difficulty we will assume a stronger notion of convexity which we call $P_0$-convexity (see Section~\ref{sec:subdifferentials}). This notion is taken from the theory of completely accretive operators \cite[Lemma 7.1]{benilan1991comp}. This assumption introduces a number of other useful properties for the semigroup that describes the gradient flow. In particular, Theorem~\ref{BenCran1991} connects the $L^2$ subdifferential, which is the operator that generates the gradient flow with respect to the $L^2$ topology (see Lemma~\ref{lem:gradflowsemigroup}), to an m-accretive subdifferential operator on $L^p$. This allows us to consider $L^2$ gradient flows, even of functionals whose equicoercivity and $\Gamma$-convergence properties hold with respect to the topology of a different $L^p$ space. Moreover, Lemma~\ref{lemAcrete} shows that the subdifferential of a $P_0$-convex functional contracts the $L^q$ norm for any $q\geq 1$, which will be a useful property for our proofs.

This leads to Theorem~\ref{thm:P0theorem1}, which establishes uniform convergence on intervals $[0,T]$ of gradient flows of $P_0$-convex, lower-semicontinuous, non-negative functionals on $TL^{\max(2,p)}(\Omega)$ (for some $p\geq 1$) and pointwise convergence of the values of the functionals along the flow, under conditions that (when paired with Proposition~\ref{prop:assumpreduce}) take the form of 
\begin{itemize}
    \item a limit-inferior inequality for the functionals over $TL^p$;
    \item a limit-superior inequality for the functionals over $TL^{\max(2,p)}$;
    \item an equicoercivity property giving compactness in $TL^p$; and
    \item a uniform bound controlling $L^p$ norms in terms of $L^2$ norms plus the functionals, possibly via some interpolation result.
\end{itemize}

\subsection{Relations to the literature}\label{sec:literature}

An early result regarding convergence of gradient flows was introduced by Degiovanni \textit{et al.} \cite{EvoEq}. In this paper the authors prove that for a sequence of functions, which is either $\phi$-convex (\cite[Definition 1.16]{EvoEq}) or has a $\phi$-monotone subdifferential (\cite[Definition 1.4]{EvoEq}), the gradient flows of the functions converge, assuming the functions themselves $\Gamma$-converge. In particular \cite[Theorem 4.11]{EvoEq} overlaps with the problems we investigate, since $\lambda$-convexity implies $\phi$-convexity (for details we refer to the discussion following Definition 1.16 in \cite{EvoEq}). 

The convergence results in \cite{EvoEq}, however, are stated in a single Hilbert space $H$, which is one of the key differences with our results that concern convergence in Banach stackings. Moreover, the convergence of gradient flows in \cite{EvoEq} is given in $W^{1,p}(0,T;H)$, whereas we prove uniform convergence in $C([0,T];\mathcal{M})$ for a particular metric space $\mathcal{M}$. 
Another key difference between our work and \cite[Theorem 4.11]{EvoEq} is that we are able to do away with all well-preparedness assumptions on the initial conditions, whereas \cite[Theorem 4.11]{EvoEq} requires that the function values and the gradients of the initial conditions are uniformly bounded. Similarly to \cite[Theorem 4.11]{EvoEq}, our result also gives the convergence of the function values along the gradient flows (of those functions).

Other well-known contributions to the study of convergence of gradient flows are Serfaty's work in \cite{SerfatyGam}, originally applied to the study of gradient flows for the Ginzburg--Landau functional by Sandier and Serfaty in \cite{SerfatyGinz}. 

In \cite[Theorem 2]{SerfatyGam} the author proves that, if the gradient flows of a $\Gamma$-converging sequence of functions on a metric space converge pointwise, then their limit is the gradient flow of the $\Gamma$-limit. Some key assumptions required for this result are a lower bound on the slopes, a lower bound on the metric derivatives and well-prepared initial conditions. Per Proposition~\ref{prop:derivativelowbound}, in the setting of a Banach stacking the lower bound on the metric derivatives holds; the lower bounds on the slopes is also known to hold for $\lambda$-convex functions \cite[Proposition 8]{Ortner}. In order to apply \cite[Theorem 2]{SerfatyGam}, pointwise convergence of the gradient flows has to be established first. To do so, typically one has to assume some kind of compactness, analogous to the equicoercivity assumptions we include in our results.

As mentioned previously, we do not require a well-preparedness assumption for the initial conditions in Theorems~\ref{Hilthm} and \ref{thm:P0theorem1}. Uniform-in-time convergence of the gradient flows is not included in \cite[Theorem 2]{SerfatyGam}; we are able to establish it through our generalised Brezis--Pazy result of Theorem~\ref{theorem1}. Moreover, Banach stackings do not not fit into the framework of \cite[Theorem 2]{SerfatyGam}. In particular, the limiting gradient flow in \cite[Theorem 2]{SerfatyGam} is taken with respect to the limiting metric space ($\mathcal{S}$ in the notation of \cite{SerfatyGam}), whereas the limiting gradient flow in our results is taken with respect to the norm in the Banach space $X_\infty$, rather than with respect to the metric on the metric space $\mathcal{M}$ into which the Banach spaces $X_n$ embed.

There are further developments that build on the theory introduced in \cite{SerfatyGam}. The key observation for these is that the maximal-slope formulation of a gradient flow is equivalent to an energy-dissipation inequality (sometimes referred to as an energy-dissipation estimate, or energy-dissipation balance when equality holds). The works \cite{Liero2017,EDPtilting,VarConvergence} take this perspective and provide an analysis similar to that in \cite{SerfatyGam}. In general, they conclude that if gradient systems\footnote{Some authors refer to gradient flows as gradient systems or classical gradient systems and to non-gradient-flow gradient systems as generalised gradient systems or non-classical gradient systems. We will not do so here.} (which are generalisations of a gradient flow; see \cite[Section 1]{Liero2017}) converge to a limit (similar to a $\Gamma$-limit) and the solutions of the induced evolution equations converge to a limit pointwise, then the limiting evolution equation is induced by the limiting gradient system. The type of convergence of the gradient system typically required in the literature involves (a kind of) $\Gamma$-convergence of the dissipation potential, which appears in the energy-dissipation inequality. The theorems in these works require assumptions similar to those in \cite[Theorem 2]{SerfatyGam}.

It can be interesting future research to study if the ideas we introduce here can be applied to generalised gradient systems, especially since it is well known that gradient flows can converge to a non-gradient-flow gradient system \cite[Section 3.3.3]{Liero2017}; this falls outside the scope of the current paper.

We also like to point the reader towards Example 3.4 in \cite{Liero2017}, which illustrates that the particulars of the well-preparedness condition can influence the resulting limiting gradient system. Since in this example the energy function in the gradient system is not $\lambda$-convex, this suggests that our restriction to gradient flows of $\lambda$-convex functions is especially important for being able to drop well-preparedness assumptions. 

The work \cite{florentine} takes an approach like the one in the current paper as it derives a theorem similar to \cite[Theorem 2]{SerfatyGam}, except that it is based on the minimizing movement scheme, which allows for some error in each step of the scheme. The connection with our work follows from \eqref{eq:minimizinglimit} and Lemma~\ref{lem:gradflowsemigroup}, which together equate a semigroup generated by a suitable operator, which is our main analytical tool, with minimizing movements.    

There are a number of papers which which focus on particular discrete-to-continuum gradient-flow convergence results. In \cite{NonlocalPLaplace,Fadili2020,Fadili2023,Fadili2023Sparse,Fadili2024} discrete-to-continuum limits for solutions of $p$-Laplacian evolution equations are established. Notably, \cite{NonlocalPLaplace} imports the language of graphons as a limiting object for a sequence of graphs to discuss the discrete-to-continuum limit for the flows. We also refer to \cite{vanGennipGigaOkamoto26} which proves discrete-to-continuum convergence for total-variation flow and the solutions of one-dimensional Allen--Cahn equations. In work currently in preparation \cite{mefuture} we will apply the results set out in this paper to strengthen the known convergence results in \cite{vanGennipGigaOkamoto26} as well as broaden their setting.

A key tool of analysis that we use in our work is semigroup theory, a broad overview of which can be found in \cite{ito2002} with a particular focus on approximation results. As mentioned before, our primary strategy builds on and extends a result by Brezis and Pazy from \cite{brezis1972convergence}. We note that the linear analogue of this result is given in \cite{Ito1998} for discrete to continuum limits under the name `Trotter--Kato theorem'. Furthermore, we use the theory of completely accretive operators that was introduced in \cite{benilan1991comp}. This theory has been used in the analysis of total-variation flow in \cite{Mazon2001,MinTVFlow,Mazon2005}. Its use in those papers has guided us to its usefulness in our analysis; in particular Lemma~\ref{lemAcrete} and Theorem~\ref{BenCran1991} are key ingredients in our proofs . 

Our setting of a Banach stacking is inspired by the metric space $TL^p(\Omega)$ introduced in \cite{garcia2016continuum}. This setting has been used in a number of discrete-to-continuum results about convergence of minimizers, for example in \cite{von2016,Garcia2017,Osting2017,Thorpe2017,Alonso2018,Davis2018,Garcia2018,Dejan2019,Theil2019,Garcia2020,slepcev2020,Thorpe2020,Dunlop2020,Kaplan2020,ThorpeVanGennip23}.

\subsection{Section breakdown}

The paper is organised as follows.
\begin{itemize}
    \item In Section~\ref{sec:nonlinsemi} we set up the notation and basic concepts regarding operators and nonlinear semigroup that we will use.
    \item Section~\ref{sec:convex} gives definitions and results regarding $\lambda$-convex functions and gradient flows. Of particular interest is Theorem~\ref{GradFlowTheorem}, which provides a bound on the energy along a gradient flow in terms of a Moreau envelope. This is an improvement to the bound given in \cite[Theorem 4.3.2]{greenbook} for the case $\lambda>0$.
    \item Section~\ref{sec:subdifferentials} is dedicated to a stronger notion of convexity that was introduced (without name) in \cite[Lemma 7.1]{benilan1991comp}, which we give the name $P_0$-convexity. We also introduce nonstandard subdifferentials which are effectively extensions of the standard $L^2$ subdifferential to $L^p$. Together, these notions will be necessary for extending an $L^2$ gradient flow to the space $L^p$ whilst guaranteeing the semigroup contracts the $L^p$ norm.
    \item Section~\ref{sec:Banachstackings} introduces our definition of a Banach stacking; the setting we require for our gradient flow convergence results. Additionally, in Proposition~\ref{tlpBanach} we prove that $TL^p(\Omega)$ satisfies the axioms of a Banach stacking.
    \item In Section~\ref{sec:BrazisPazyextension} we extend the result by Brezis and Pazy from \cite[Theorem 3.1]{brezis1972convergence} to the setting of a Banach stacking. This result allows us to deduce uniform convergence of semigroups from pointwise convergence of resolvents.
    \item Section~\ref{sec:convresolvents} explores under what conditions we can deduce pointwise convergence of resolvents in a Banach stacking and thus, by our result in Section~\ref{sec:BrazisPazyextension}, also uniform convergence of semigroups. In particular, we prove the three main applications of Theorem~\ref{theorem1}: Proposition~\ref{OpConvergence} and Theorems~\ref{Hilthm} and~\ref{thm:P0theorem1}. Additionally, we prove Proposition~\ref{prop:assumpreduce}, which investigates in further detail the assumptions required for Theorem~\ref{thm:P0theorem1}.
    \item In the appendices we prove some basic properties of operators that we need (Appendix~\ref{app:basicproperties});  provide an overview of the required properties of $\lambda$-convex functions and subdifferentials (Appendix~\ref{sec:C}); give a proof of Proposition~\ref{prop:derivativelowbound} which establishes a lower bound on the metric derivatives in the setting of a Banach stacking (Appendix~\ref{sec:B}); give examples of Banach stackings (Appendix~\ref{sec:BanStackEx}); and give the main definitions and results regarding classical $\Gamma$-convergence and its formulation on Banach stackings (Appendix~\ref{app:Gammaconvergence}).
\end{itemize}

\section{Semigroups and notation}\label{sec:nonlinsemi}

In this paper we do not include $0$ in $\mathbb{N}$ and we use $\mathbb{N}_{\infty}:=\mathbb{N}\cup \{\infty\}$. Moreover the subset notation $\subset$ allows for equality. Given a function $f:X \rightarrow (-\infty,+\infty] := \mathbb{R} \cup \{+\infty\}$ defined on a set $X$, we define $\operatorname{dom}(f) \subset X$ to be the effective domain of $f$, i.e., 
$\operatorname{dom}(f):=\{x\in X \,|\, f(x)\neq +\infty \}$. We say $f$ is proper if $\operatorname{dom}(f)\neq \emptyset$. 

Here we shall introduce the notation/conventions used throughout the paper for operators and semigroups. The definitions we use in Section~\ref{sec:nonlinsemi} are taken from \cite[Chapter 2 and 3]{barbu2010nonlinear}. 

\subsection{Operators}\label{sec:operatorsbasic}

\begin{definition}\label{def:operator}
An operator on a set $X$ is a subset $A\subset X \times X$. We write ${A(x):= \{ y\in X \,| \, (x,y)\in A \}}$ and define the domain of $A$ to be the set ${D(A) := \{x\in X \,|\, A(x) \neq \emptyset \} \subset X}$ and the range of $A$ as ${\operatorname{Range}(A) := \cup_{x\in X}A(x) \subset X}$.
\end{definition}

To lighten the use of parentheses in the operator notation, when there is no risk of ambiguity we will also write $Ax$ instead of $A(x)$.

\begin{definition}\label{def:operatorsubset}
Given two operators $A$ and $B$ on a set $X$, $A\subset B$ is interpreted in the standard way for sets. Thus, $A\subset B$ if and only if for all $x\in X$, $A(x) \subset B(x)$.
\end{definition}

\begin{definition}\label{def:restriction}
Let $A$ be an operator on a set $X$ and let $Y$ be a subset of $X$. We define the restriction of $A$ to $Y$ by $A|_{Y}:= A \cap (Y \times Y)$. 
\end{definition}

\begin{definition}\label{def:inverse}
The inverse of an operator $A$ on a set $X$ is another operator $A^{-1}$ on $X$ defined by
\begin{equation*}
    A^{-1}:=\{ (x,y)\in X\times X \,|\, (y,x) \in A \} .
\end{equation*}
Moreover, for all $y\in X$ we define
\begin{equation*}
    A^{-1}(y):=\{ x\in X \,|\, (x,y) \in A\} .
\end{equation*}
\end{definition}

\begin{definition}\label{def:identity}
We shall denote the identity operator $I_X$ on a set $X$ by $I_X:=\{ (x,x) \in X \times X \,|\, x\in X \}$.
\end{definition}

Next we will assume that $X$ is a topological vector space and introduce some further concepts.

\begin{definition}\label{def:pointwiseclosure}
The pointwise closure (in the topology of $X$) of an operator $A$ on $X$, denoted by $\overline{A}$, is the closure of $A$ in the product topology of $X\times X$. We say an operator $A$ is pointwise closed if $\overline{A}=A$.
\end{definition}

\begin{definition}\label{def:operatorsoperations}
For operators $A,B$ on $X$ and $\lambda \in \mathbb{R}$ we define:
\begin{itemize}
    \item the sum of $A$ and $B$ as $A+B:= \{ (x,y+z) \in X\times X\, | \, (x,y)\in A \; \text{and} \; (x,z) \in B \}$;
    \item the scalar multiple of $A$ as $\lambda A:= \{ (x,\lambda y) \in X\times X\, | \, (x,y)\in A \}$;
    \item and the resolvent operator of $A$ at $\lambda$ as $R_{\lambda}(A) := (I_X+\lambda A)^{-1}$.
\end{itemize}
All these are again operators on $X$.
\end{definition}

\begin{definition}\label{def:rangecondition}
We say an operator $A$ on $X$ satisfies the range condition if there exists a $\delta>0$ such that, for all $\lambda\in (0,\delta)$,
\begin{equation}
\label{rangeCon}
\operatorname{Range}(I_X+\lambda A) \supset \overline{D(A)}.    
\end{equation}
\end{definition}

In Appendix~\ref{app:basicproperties} we prove some basic properties involving domains, ranges, restrictions inverses, pointwise closures, sums, and scalar multiples of operators.

\subsection{Notions of accretivity}\label{sec:accretivity}
In the remainder of Section~\ref{sec:nonlinsemi},, unless explicitly stated differently, $X$ is a Banach space with norm $\Vert \cdot \Vert$.

\begin{definition}\label{def:accretive}
An operator $A$ is accretive (with respect to the norm $\|\cdot\|$) if, for all $(x,y),(\hat{x},\hat{y})\in A$ and all $\lambda>0$,
\begin{equation*}
\Vert x - \hat{x} + \lambda(y - \hat{y}) \Vert \geq \Vert x - \hat{x} \Vert. 
\end{equation*}
\end{definition}
It is worth noting that if $A=\emptyset \subset X\times X$, then it is accretive.

\begin{remark}
    When we say that an operator is accretive `on $X$', we mean with respect to the norm that has been defined on, or is considered standard for, $X$. In those cases where accretivity is intended with respect to a different norm on the same set, we will be careful to indicate this using the language of Definition~\ref{def:accretiveN}. Analogous disclaimers hold for the concepts of maximal, $\omega$-, m-, and $\omega$-m-accretivity that will be introduced later.
\end{remark}

\begin{definition}\label{def:accretiveN}
Let $A$ be an operator on $X$ and $N:X \rightarrow [0,+\infty]:=[0,+\infty)\cup\{+\infty\}$ a non-negative function. We say that $A$ is accretive with respect to $N$ if, for all $(u,v),(u',v')\in A$ and for all $\lambda>0$,
$$ N(u-u'+\lambda(v-v')) \geq N(u-u').$$
\end{definition}

\begin{remark}\label{rem:intersectionclosure}
It follows directly from Definition~\ref{def:accretive} that if an operator $A$ on $X$ is accretive and $Y\subset X$, then $A|_Y$ is also an accretive operator with respect to the norm on $X$ (and thus also with respect to the subspace norm on $Y$ induced by the norm on $X$). Moreover, since the norm $\|\cdot\|$ on $X$ is a continuous function on $X$, also $\overline{A}$ is an accretive operator on $X$.
\end{remark}

If $X$ is a Hilbert space, then accretivity has a simple characterisation which in some sources (e.g. \cite{barbu2010nonlinear}) is referred to as monotonicity and which is given in Lemma~\ref{Hilbertaccretive}.

\begin{lemma}\label{Hilbertaccretive}
Suppose that $X$ is a Hilbert space with an inner product $\langle \cdot,\cdot \rangle$. Then $A$ is accretive on $X$ if and only if, for all $(x,y),(\hat{x},\hat{y})\in A$, 
$$ \langle y- \hat{y}, x-\hat{x} \rangle \geq 0.$$
\end{lemma}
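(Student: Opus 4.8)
The statement to prove is the Hilbert-space characterisation of accretivity: in a Hilbert space $X$ with inner product $\langle\cdot,\cdot\rangle$, an operator $A$ is accretive (Definition~\ref{def:accretive}) if and only if $\langle y-\hat y, x-\hat x\rangle\geq 0$ for all $(x,y),(\hat x,\hat y)\in A$. Writing $u:=x-\hat x$ and $v:=y-\hat y$, accretivity says $\|u+\lambda v\|\geq\|u\|$ for all $\lambda>0$, and the monotonicity condition says $\langle v,u\rangle\geq 0$; so the whole lemma reduces to the elementary Hilbert-space fact that $\|u+\lambda v\|\geq\|u\|$ for all $\lambda>0$ if and only if $\langle v,u\rangle\geq 0$. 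I would prove this equivalence directly using the expansion $\|u+\lambda v\|^2=\|u\|^2+2\lambda\langle u,v\rangle+\lambda^2\|v\|^2$, which is valid in any (real) Hilbert space.

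For the implication from monotonicity to accretivity: if $\langle u,v\rangle\geq 0$, then for every $\lambda>0$ the expansion gives $\|u+\lambda v\|^2=\|u\|^2+2\lambda\langle u,v\rangle+\lambda^2\|v\|^2\geq\|u\|^2$, hence $\|u+\lambda v\|\geq\|u\|$; applying this with $u=x-\hat x$ and $v=y-\hat y$ for each pair $(x,y),(\hat x,\hat y)\in A$ yields accretivity. For the converse: suppose $\|u+\lambda v\|\geq\|u\|$ for all $\lambda>0$. Squaring and using the same expansion, $0\leq\|u+\lambda v\|^2-\|u\|^2=2\lambda\langle u,v\rangle+\lambda^2\|v\|^2$, and dividing by $\lambda>0$ gives $2\langle u,v\rangle+\lambda\|v\|^2\geq 0$. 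Letting $\lambda\downarrow 0$ forces $\langle u,v\rangle\geq 0$, i.e. $\langle y-\hat y,x-\hat x\rangle\geq 0$. Since the inner product is symmetric this is the claimed condition.

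There is essentially no main obstacle here; the only point worth a word of care is whether the Hilbert space is taken to be real or complex. The definition of accretivity via the norm makes sense in either case, but the clean identity $\|u+\lambda v\|^2=\|u\|^2+2\lambda\langle u,v\rangle+\lambda^2\|v\|^2$ (with $\langle u,v\rangle$ real) holds as stated only for real Hilbert spaces; in the complex case one replaces $\langle u,v\rangle$ by $\operatorname{Re}\langle u,v\rangle$ and the lemma should be read with the real part. Since the paper works with real Banach and Hilbert spaces throughout (as is standard in this gradient-flow context, and consistent with the use of $\langle\cdot,\cdot\rangle$ without a real part in the statement), I would simply carry out the real-Hilbert-space computation above and not belabour this point, perhaps adding a parenthetical remark that in the complex case one reads $\langle\cdot,\cdot\rangle$ as its real part.
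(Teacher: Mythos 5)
Your proof is correct and follows essentially the same route as the paper's: expand $\|u+\lambda v\|^2$, observe that monotonicity makes the expansion nonnegative, and for the converse divide by $\lambda$ and let $\lambda\downarrow 0$. The extra remark on real versus complex Hilbert spaces is reasonable but not needed given the paper's real-space conventions.
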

\begin{proof}
Let $(x,y),(\hat{x},\hat{y})\in A$. The inequality in Definition~\ref{def:accretive} is equivalent to the inequality 
$\Vert x - \hat{x} + \lambda(y - \hat{y}) \Vert^2 \geq \Vert x - \hat{x} \Vert^2.$ 
Expanding this into inner products and rearranging terms, this is equivalent to
\begin{equation}\label{eq:accretiveequivalent}
\lambda^2 \langle y-\hat y, y-\hat y\rangle + 2 \lambda \langle y-\hat y, x-\hat x\rangle \geq 0.
\end{equation}
If $A$ is accretive, the inequality in \eqref{eq:accretiveequivalent} holds for all $\lambda>0$. Dividing both sides by $\lambda$ and taking the limit $\lambda \downarrow 0$, we obtain $\langle y- \hat{y},x-\hat{x} \rangle \geq 0$.

On the other hand, if $\langle y- \hat{y},x-\hat{x} \rangle \geq 0$, then \eqref{eq:accretiveequivalent} holds for all $\lambda>0$ and thus $A$ is accretive.
\end{proof}

In the remainder of this section, $X$ again denotes a Banach space.

\begin{definition}\label{def:m-accretive}
We say an operator $A$ on $X$ is m-accretive if it is accretive and, for all $\lambda>0$, $\operatorname{Range}(I_X+\lambda A) = X$.
\end{definition}

Next we give a simpler and equivalent characterisation of m-accretivity.

\begin{proposition}\label{prop:maccretive}
An operator $A$ on $X$ is m-accretive if and only if it is accretive and $\operatorname{Range}(I_X+ A) = X$.
\end{proposition}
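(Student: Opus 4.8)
The plan is to prove the two directions separately, with the forward direction being trivial and the reverse direction being the substance. If $A$ is m-accretive, then by Definition~\ref{def:m-accretive} it is accretive and $\operatorname{Range}(I_X+\lambda A) = X$ for \emph{all} $\lambda > 0$; in particular this holds for $\lambda = 1$, so $\operatorname{Range}(I_X + A) = X$. This requires no work.

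For the reverse direction, suppose $A$ is accretive and $\operatorname{Range}(I_X + A) = X$; we must show $\operatorname{Range}(I_X + \lambda A) = X$ for every $\lambda > 0$. I would first record the consequence of accretivity that for each $\lambda > 0$ the resolvent $R_\lambda(A) = (I_X + \lambda A)^{-1}$ is a single-valued, $1$-Lipschitz (nonexpansive) map from $\operatorname{Range}(I_X + \lambda A)$ into $X$: indeed if $(x,y), (\hat x, \hat y) \in A$ then $\|x - \hat x + \lambda(y-\hat y)\| \geq \|x - \hat x\|$ says exactly that $u = x + \lambda y$ and $\hat u = \hat x + \lambda \hat y$ satisfy $\|u - \hat u\| \geq \|x - \hat x\|$, i.e.\ the resolvent contracts. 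The standard strategy is then a continuity/fixed-point argument in $\lambda$: fix $\lambda > 0$ and $f \in X$, and seek to solve $x + \lambda A x \ni f$. Rewriting, for $\mu > 0$ one has the identity $x + \mu A x \ni g$ is equivalent to $x = R_\mu(A)\big(\tfrac{\mu}{\lambda} f + (1 - \tfrac{\mu}{\lambda}) x\big)$ when $g = \tfrac{\mu}{\lambda} f + (1 - \tfrac{\mu}{\lambda})x$ — the usual resolvent identity manipulation $R_\mu(A) z = R_\lambda(A)\big(\tfrac{\lambda}{\mu} z + (1 - \tfrac{\lambda}{\mu}) R_\mu(A) z\big)$. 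This shows that the set $\Lambda := \{\mu > 0 : \operatorname{Range}(I_X + \mu A) = X\}$ is both open and relatively closed in $(0,\infty)$: given $\mu_0 \in \Lambda$, for $\mu$ close enough to $\mu_0$ the map $x \mapsto R_{\mu_0}(A)\big(\tfrac{\mu_0}{\mu} f + (1 - \tfrac{\mu_0}{\mu}) x\big)$ is a strict contraction on $X$ (since $R_{\mu_0}(A)$ is nonexpansive and the affine prefactor has modulus $|1 - \mu_0/\mu| < 1$), hence has a unique fixed point by Banach's fixed point theorem, and one checks the fixed point solves $x + \mu A x \ni f$; this gives a neighbourhood of $\mu_0$ contained in $\Lambda$, so in particular one can march from $\mu = 1$ to any target $\lambda > 0$ in finitely many such steps. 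Combined with $1 \in \Lambda$ (the hypothesis) and connectedness of $(0,\infty)$, this yields $\Lambda = (0,\infty)$.

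The main obstacle — and the step that needs care rather than cleverness — is verifying that the fixed point $x$ of the contraction actually satisfies $x + \mu A x \ni f$, i.e.\ that the algebraic equivalence between the fixed-point equation and the inclusion is airtight. Concretely, if $x = R_{\mu_0}(A)\big(\tfrac{\mu_0}{\mu} f + (1 - \tfrac{\mu_0}{\mu}) x\big)$, then by definition of the resolvent there is $y \in Ax$ with $x + \mu_0 y = \tfrac{\mu_0}{\mu} f + (1 - \tfrac{\mu_0}{\mu})x$, and rearranging gives $\tfrac{\mu_0}{\mu} x + \mu_0 y = \tfrac{\mu_0}{\mu} f$, hence $x + \mu y = f$, i.e.\ $f \in (I_X + \mu A)x$. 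One also needs the elementary facts (provable directly from accretivity, or citable from Appendix~\ref{app:basicproperties}) that resolvents of accretive operators are single-valued so that $R_{\mu_0}(A)$ is genuinely a function on $X$ once $\operatorname{Range}(I_X + \mu_0 A) = X$. I would present the open-and-closed / step-marching argument, since it avoids needing any a priori bound on how far $\Lambda$ extends; an alternative would be to prove directly via the contraction that $\Lambda$ is an interval containing arbitrarily large and small $\mu$, but the incremental argument from $\mu_0 \in \Lambda$ to nearby $\mu$ is the cleanest.
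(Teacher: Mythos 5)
Your proof is correct and is the standard continuity/fixed-point argument. The paper does not supply its own proof of this proposition but instead cites \cite[Definition 3.1 and Proposition 3.3]{barbu2010nonlinear}, where precisely this argument (nonexpansiveness of the resolvent plus Banach's fixed-point theorem applied to $x\mapsto R_{\mu_0}(A)\big(\tfrac{\mu_0}{\mu}f+(1-\tfrac{\mu_0}{\mu})x\big)$) is used, so you have reconstructed the cited proof.

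One small presentation point. You describe $\Lambda:=\{\mu>0:\operatorname{Range}(I_X+\mu A)=X\}$ as ``open and relatively closed,'' but you only argue openness; and in fact closedness is not needed and would require a separate argument. What your contraction step actually shows is much more quantitative: if $\mu_0\in\Lambda$, then $(\mu_0/2,\infty)\subset\Lambda$, since $|1-\mu_0/\mu|<1$ is equivalent to $\mu>\mu_0/2$. From $1\in\Lambda$ you therefore get $(2^{-n},\infty)\subset\Lambda$ for every $n$ by iterating, hence $\Lambda=(0,\infty)$ without invoking connectedness or a closedness claim. Stating the radius explicitly makes the ``marching'' step rigorous and dispenses with the topological gloss.

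The rest is sound: you correctly note that $R_{\mu_0}(A)$ is single-valued on $X$ once $\mu_0\in\Lambda$ (Proposition~\ref{prop:accretivecontraction} together with Remark~\ref{rem:contraction}), that its range lies in $D(A)$ so the fixed point is in $D(A)$, and the algebraic verification that the fixed point of $T$ solves $x+\mu Ax\ni f$ is exactly right.
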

\begin{proof}
See \cite[Definition 3.1 and Proposition 3.3]{barbu2010nonlinear}.
\end{proof}

\begin{proposition}
\label{accprop}
Let $A$ be an m-accretive operator on $X$, then, for all $\lambda>0$, $I_X+\lambda A$ is m-accretive on $X$.
\end{proposition}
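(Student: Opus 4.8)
The plan is to prove the statement directly from the definition of m-accretivity (Definition~\ref{def:m-accretive}) together with Proposition~\ref{prop:maccretive}, checking the two requirements separately: that $I_X+\lambda A$ is accretive, and that its range over the relevant resolvent operator is all of $X$.

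First I would verify accretivity of $B:=I_X+\lambda A$. Take $(x,\bar x+\lambda \bar y),(\hat x,\hat{\bar x}+\lambda\hat{\bar y})\in B$, where $(\bar x,\bar y)\in A$ with $\bar x = x$, $\bar y\in Ax$, and similarly for the hatted pair; more carefully, a pair in $B$ has the form $(x, x+\lambda y)$ with $(x,y)\in A$. So for $(x,x+\lambda y),(\hat x,\hat x+\lambda\hat y)\in B$ and any $\mu>0$, I need $\|x-\hat x+\mu((x+\lambda y)-(\hat x+\lambda\hat y))\|\geq \|x-\hat x\|$, i.e. $\|(1+\mu)(x-\hat x)+\mu\lambda(y-\hat y)\|\geq\|x-\hat x\|$. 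Dividing through by $1+\mu$, this is $\|(x-\hat x)+\frac{\mu\lambda}{1+\mu}(y-\hat y)\|\geq\frac{1}{1+\mu}\|x-\hat x\|$, and since $\frac{\mu\lambda}{1+\mu}>0$, accretivity of $A$ gives $\|(x-\hat x)+\frac{\mu\lambda}{1+\mu}(y-\hat y)\|\geq\|x-\hat x\|\geq\frac{1}{1+\mu}\|x-\hat x\|$, which is exactly what is needed. So $B$ is accretive.

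Next I would establish the range condition in the form given by Proposition~\ref{prop:maccretive}, namely $\operatorname{Range}(I_X+B)=X$, i.e. $\operatorname{Range}(I_X+(I_X+\lambda A))=X$. Now $I_X+(I_X+\lambda A)=2I_X+\lambda A$, and for $z\in X$ we want to solve $2x+\lambda y=z$ with $y\in Ax$. Rewrite this as $x+\frac{\lambda}{2}y=\frac{z}{2}$, i.e. $\frac z2\in\operatorname{Range}(I_X+\tfrac\lambda2 A)$. Since $\lambda>0$ implies $\tfrac\lambda2>0$ and $A$ is m-accretive, Definition~\ref{def:m-accretive} gives $\operatorname{Range}(I_X+\tfrac\lambda2 A)=X$, so the equation is solvable for every $z\in X$. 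Hence $\operatorname{Range}(2I_X+\lambda A)=X$, and by Proposition~\ref{prop:maccretive}, $B=I_X+\lambda A$ is m-accretive.

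I do not anticipate a genuine obstacle here; the proof is a routine manipulation. The only point that requires slight care is the accretivity step, where one must correctly identify the form of pairs in $I_X+\lambda A$ and rescale the accretivity inequality of $A$ so that the coefficient in front of $y-\hat y$ remains positive; the factor $\frac{\mu\lambda}{1+\mu}$ does the job. An alternative, perhaps cleaner, route for the range part is to note that $I_X+\lambda A$ is itself accretive with $\operatorname{Range}(I_X+\mu(I_X+\lambda A))=\operatorname{Range}((1+\mu)I_X+\mu\lambda A)=X$ for every $\mu>0$ (by the same rescaling trick applied to the m-accretivity of $A$), which verifies Definition~\ref{def:m-accretive} directly without invoking Proposition~\ref{prop:maccretive}; I would likely present whichever is shorter given the lemmas already available.
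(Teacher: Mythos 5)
Your proof is correct and follows essentially the same route as the paper's: the accretivity step rewrites $\|x-\hat x+\mu\big((x+\lambda y)-(\hat x+\lambda\hat y)\big)\|$ by pulling out the factor $1+\mu$ so that $\frac{\mu\lambda}{1+\mu}>0$ lets you invoke the accretivity of $A$, exactly as in the paper. The one minor variation is that for the range you invoke Proposition~\ref{prop:maccretive} to check only $\operatorname{Range}\big(I_X+(I_X+\lambda A)\big)=X$, whereas the paper verifies $\operatorname{Range}\big(I_X+\gamma(I_X+\lambda A)\big)=X$ for every $\gamma>0$ via the same factorisation; you note this alternative yourself, and both are equally routine.
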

\begin{proof}
Let $\lambda>0$ and $\gamma>0$. We note that $$I_X+\gamma(I_X+\lambda A) = (1+\gamma)\left(I_X+ \frac{\lambda \gamma}{1+\gamma}A \right)$$
and thus, by m-accretivity of $A$, $\operatorname{Range}(I_X+\gamma(I_X + \lambda A) ) = X$.

Furthermore if $(x,y),(\hat{x},\hat{y}) \in A$, then $(x,x+ \lambda y),(\hat{x},\hat{x}+\lambda \hat{y} A) \in I_X + \lambda A $. We see that,
\begin{align*}
\Vert x-\hat{x} + \gamma( x+ \lambda y - \hat{x} -\lambda \hat{y} ) \Vert &= (1+\gamma)\left\Vert x-\hat{x} + \frac{\gamma \lambda}{1+\gamma}(  y - \hat{y} ) \right\Vert\\
&\geq (1+\gamma) \Vert x -\hat{x} \Vert 
\geq \Vert x- \hat{x} \Vert,
\end{align*}
where the first inequality follows from accretivity of $A$.
\end{proof}

\begin{definition}\label{def:maximalmonotone}
    An accretive operator $A$ on $X$ is called maximal accretive if, for all accretive operators $B$ on $X$, $A\subset B$ implies $A=B$.
\end{definition}

We state an important result regarding m-accretive operators. 

\begin{proposition}\label{propMaccret}
Every m-accretive operator on a Banach space $X$ is maximal accretive. If $X$ is a Hilbert space, an accretive operator on $X$ is maximal accretive if and only if it is m-accretive.
\end{proposition}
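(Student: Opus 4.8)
\textbf{Proof plan for Proposition~\ref{propMaccret}.}

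The plan is to prove the two statements separately. For the first statement, suppose $A$ is m-accretive on a Banach space $X$ and let $B$ be an accretive operator with $A\subset B$. I want to show $B\subset A$, so take an arbitrary $(x,y)\in B$. Since $A$ is m-accretive, $\operatorname{Range}(I_X+A)=X$, so there is a pair $(\hat x,\hat y)\in A$ with $\hat x+\hat y=x+y$. Because $A\subset B$, we also have $(\hat x,\hat y)\in B$, and now both $(x,y)$ and $(\hat x,\hat y)$ lie in the accretive operator $B$. Applying accretivity of $B$ with $\lambda=1$ gives $\|x-\hat x+(y-\hat y)\|\geq\|x-\hat x\|$; but $x-\hat x+(y-\hat y)=(x+y)-(\hat x+\hat y)=0$, so $\|x-\hat x\|\leq 0$, whence $x=\hat x$ and therefore $y=(x+y)-x=(\hat x+\hat y)-\hat x=\hat y$. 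Thus $(x,y)=(\hat x,\hat y)\in A$, so $B\subset A$ and hence $A=B$, i.e.\ $A$ is maximal accretive.

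For the second statement, assume $X$ is a Hilbert space. The ``if'' direction is immediate from the first statement. For the ``only if'' direction, suppose $A$ is maximal accretive on the Hilbert space $X$; I must show $\operatorname{Range}(I_X+\lambda A)=X$ for all $\lambda>0$, which by Proposition~\ref{prop:maccretive} reduces to showing $\operatorname{Range}(I_X+A)=X$ (note that maximal accretivity of $A$ is equivalent to maximal accretivity of $\lambda A$ for any fixed $\lambda>0$, since accretivity is preserved under positive scaling, so it would in fact suffice to treat $\lambda=1$ directly). The strategy here is the classical Minty-type argument: use Lemma~\ref{Hilbertaccretive} to rephrase accretivity as monotonicity, $\langle y-\hat y,x-\hat x\rangle\geq 0$, and then argue that if some $z\in X$ were not in $\operatorname{Range}(I_X+A)$, one could enlarge $A$ by adjoining a suitable pair while preserving monotonicity, contradicting maximality. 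Concretely, given $z\notin\operatorname{Range}(I_X+A)$, one shows that adding the point $(x_0,z-x_0)$ for an appropriately chosen $x_0$ keeps the operator monotone; the existence of such an $x_0$ typically comes from a fixed-point or projection/variational argument on the resolvent, or alternatively one first establishes that $\operatorname{Range}(I_X+A)$ is closed and convex and then rules out a separating hyperplane using monotonicity.

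I expect the main obstacle to be the ``only if'' direction in the Hilbert-space case: unlike the first statement, it is not a one-line set-theoretic manipulation but requires genuine Hilbert-space structure (an inner product, hence projections and the parallelogram law) together with either a fixed-point theorem or a closedness-and-convexity argument for the range of $I_X+A$. Since this is a standard result (Minty's theorem), the cleanest route for the paper is likely to cite it — e.g.\ from \cite{barbu2010nonlinear} — rather than reproduce the full argument; the first statement, by contrast, deserves the short direct proof above.
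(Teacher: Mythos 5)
Your proof of the first statement (m-accretive $\Rightarrow$ maximal accretive) is correct and complete: it is exactly the right set-theoretic argument, using only $\operatorname{Range}(I_X+A)=X$ together with accretivity of the larger operator at $\lambda=1$ to force $(x,y)=(\hat x,\hat y)\in A$. Your treatment of the converse in the Hilbert case is also right in spirit — the "if" direction follows from part one, and the "only if" direction is Minty's theorem, which you correctly identify as the genuinely nontrivial step and for which a citation is the appropriate move. The paper itself simply cites \cite[Remark 3.1]{barbu2010nonlinear} for the entire proposition with no argument given. So the difference is one of presentation rather than substance: you replace the citation for the easy Banach-space half with a short self-contained proof (which arguably is a nicer choice, since it costs only a few lines and makes the asymmetry between the two halves visible — the Banach-space implication is elementary, whereas the Hilbert-space converse truly needs the inner-product structure), while agreeing with the paper that Minty's theorem is best cited rather than reproved.
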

\begin{proof}
    See \cite[Remark 3.1]{barbu2010nonlinear}.
\end{proof}

\begin{definition}\label{def:contraction}
An operator $A$ is a contraction on $X$ if, for all $(x,y),(\hat{x},\hat{y})\in A$,
$$ \Vert y - \hat{y} \Vert \leq \Vert x - \hat{x} \Vert.$$
\end{definition}

We note that, contrary to other common definitions of contraction, in this context it is usual not to require the inequality in the definition to be strict.

\begin{remark}\label{rem:contraction}
If an operator $A$ on $X$ has the property that there exists an $L>0$ such that, for all $(x,y),(\hat{x},\hat{y})\in A$,
$\Vert y - \hat{y} \Vert \leq L \Vert x - \hat{x} \Vert,$ then, for all $x\in X$, if $y, \hat y \in A(x)$, then $y=\hat y$. Thus $A(x)$ is either empty or a singleton. So one can reasonably think of $A$ as an $L$-Lipschitz-continuous function (i.e., a Lipschitz-continuous function for which $L$ is a Lipschitz constant) from $D(A)$ to $X$.

In particular, if $A$ is a contraction on $X$, then per Definition~\ref{def:contraction} $A$ satisfies an inequality as above with $L=1$ and thus we can view $A$ as operator or as $1$-Lipschitz-continuous function from $D(A)$ to $X$. In such cases, in a slight abuse of notation, we may use $A$ to denote both the operator as well as the function that maps $x\in D(A)$ to the only element in $A(x)$.
\end{remark}

\begin{proposition}\label{prop:accretivecontraction}
An operator $A$ is accretive if and only if for all $\lambda>0$, $R_{\lambda}(A)$ is a contraction.
\end{proposition}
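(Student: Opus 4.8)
The plan is to unwind both sides directly from the definitions, using the fact that for a fixed $\lambda>0$, elements of the resolvent $R_\lambda(A) = (I_X+\lambda A)^{-1}$ are in bijection with elements of $A$ via the substitution $(u,v)\in A \iff (u+\lambda v, u)\in R_\lambda(A)$. So I would begin by fixing $\lambda>0$ and taking two arbitrary pairs $(x,p),(\hat x,\hat p)\in R_\lambda(A)$. By Definition~\ref{def:inverse} this means $(p,x)\in I_X+\lambda A$, i.e. $x = p + \lambda q$ for some $q$ with $(p,q)\in A$, and similarly $\hat x = \hat p + \lambda\hat q$ with $(\hat p,\hat q)\in A$. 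The contraction inequality $\|p-\hat p\|\leq \|x-\hat x\|$ then reads $\|p-\hat p\| \leq \|(p-\hat p) + \lambda(q-\hat q)\|$, which is exactly the accretivity inequality of Definition~\ref{def:accretive} for the pairs $(p,q),(\hat p,\hat q)\in A$ at parameter $\lambda$.

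For the forward direction ($A$ accretive $\implies$ $R_\lambda(A)$ a contraction for all $\lambda>0$): given $(x,p),(\hat x,\hat p)\in R_\lambda(A)$, recover $(p,q),(\hat p,\hat q)\in A$ as above; then accretivity of $A$ at parameter $\lambda$ gives $\|p-\hat p + \lambda(q-\hat q)\|\geq \|p-\hat p\|$, and since $p+\lambda q = x$ and $\hat p + \lambda\hat q = \hat x$, the left-hand side is $\|x-\hat x\|$, yielding $\|p-\hat p\|\leq\|x-\hat x\|$, i.e. $R_\lambda(A)$ is a contraction. For the converse ($R_\lambda(A)$ a contraction for all $\lambda>0$ $\implies$ $A$ accretive): take arbitrary $(u,v),(u',v')\in A$ and arbitrary $\mu>0$; set $x:=u+\mu v$, $x':=u'+\mu v'$, so that $(x,u),(x',u')\in R_\mu(A)$; the contraction property at $\lambda=\mu$ gives $\|u-u'\|\leq\|x-x'\| = \|u-u'+\mu(v-v')\|$, which is the accretivity inequality at parameter $\mu$. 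Since $\mu>0$ was arbitrary, $A$ is accretive.

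Two bookkeeping points deserve a sentence each in the write-up. First, one should note that $R_\lambda(A)$ may be multivalued a priori (the definition only makes $(I_X+\lambda A)$ invertible as a relation), but the equivalence goes through unchanged because everything is phrased in terms of pairs belonging to the relation; no single-valuedness is needed. Second, the degenerate case $A=\emptyset$ (or $R_\lambda(A)=\emptyset$) is vacuously fine on both sides, as already observed after Definition~\ref{def:accretive}. I do not anticipate a genuine obstacle here: the statement is essentially a reformulation, and the only mild care required is to make the change of variables $(u,v)\leftrightarrow(u+\lambda v,u)$ cleanly and to be explicit that the quantifier over $\lambda>0$ on the accretivity side matches the quantifier over $\lambda>0$ on the resolvent side.
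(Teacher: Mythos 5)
Your proposal is correct and uses essentially the same argument as the paper: both directions proceed by passing between pairs in $A$ and pairs in $R_\lambda(A)$ via $(u,v)\mapsto(u+\lambda v,u)$ and recognising that the contraction inequality for the latter is exactly the accretivity inequality for the former. The remarks about multivaluedness and the empty case are correct but not needed; the paper states the same thing slightly more tersely.
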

\begin{proof}
To prove the `only if' statement, suppose $A$, is accretive. Let $\lambda>0$ and suppose $(z,x),(\hat{z},\hat{x}) \in R_{\lambda}(A)$. Then, by definition of the resolvent, there exist $y\in A(x)$ and $\hat{y}\in A(\hat{x})$ such that $z=x+\lambda y$ and $\hat{z}=\hat{x}+\lambda \hat{y}$. Hence, by accretivity, we have
    $
        \Vert x- \hat{x} \Vert \leq \Vert x- \hat{x} + \lambda(y-\hat{y}) \Vert 
        = \Vert z -\hat{z} \Vert
    $ 
and thus $R_\lambda(A)$ is a contraction.

To prove the `if' statement, assume that, for all $\lambda>0$, $R_\lambda(A)$ is a contraction. Let $(x,y),(\hat{x},\hat{y}) \in A$. Then $z:=x+\lambda y$ and $\hat z:=\hat x+\lambda \hat y$ are both in $D(R_\lambda(A))$; moreover $x \in R_\lambda(A)(z)$ and $\hat x \in R_\lambda(A)(\hat z)$. Thus, by the contraction property of $R_\lambda(A)$, we have
$
\|x-\hat x\| \leq \|z-\hat z\| = \|x-\hat x + \lambda (y-\hat y)\|.
$ 
It follows that $A$ is accretive.    
\end{proof}

Lastly we will weaken the notion of accretivity which in some cases is called $\omega$-accretivity. This will be relevant to our discussion of $\lambda$-convex functions.

\begin{definition}\label{def:omegamaccretive}
Let $\omega \in \mathbb{R}$. We say that an operator $A$ is $\omega$-accretive if $A+\omega I_X$ is accretive. We say that $A$ is $\omega$-m-accretive if $A+\omega I_X$ is m-accretive.
\end{definition}

For all $\omega \in \mathbb{R}$ we define the interval $\mathfrak{I}_{\omega}$ by
\begin{equation}
\label{OmegaInterval}
 \mathfrak{I}_{\omega} :=
\begin{cases}
 (0,1/\omega) \; &\text{if} \; \omega>0,  \\
 (0,+\infty) \;  &\text{if} \; \omega\leq 0.
\end{cases}
\end{equation}

\begin{proposition}
\label{Omega1}
Let $\omega \in \mathbb{R}$ and let $A$ be an $\omega$-accretive operator on $X$. For all $\lambda \in \mathfrak{I}_{\omega}$ and all $(x,y),(\hat{x},\hat{y})\in A$ we have
$$ \Vert x-\hat{x} +\lambda(y-\hat{y}) \Vert \geq (1-\lambda \omega) \Vert x- \hat{x} \Vert.$$
Equivalently, for all $\lambda \in \mathfrak{I}_{\omega}$ and all $(x,y),(\hat{x},\hat{y}) \in R_{\lambda}(A)$ we have
\[
\Vert y- \hat{y} \Vert \leq \frac{1}{1-\lambda \omega}\Vert x- \hat{x} \Vert.
\]
\end{proposition}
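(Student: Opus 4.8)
The plan is to derive the first inequality directly from the definition of $\omega$-accretivity, and then observe that the second (resolvent) inequality is a routine reformulation of the first, in the same spirit as the proof of Proposition~\ref{prop:accretivecontraction}. So fix $\omega\in\mathbb{R}$, an $\omega$-accretive operator $A$ on $X$, a parameter $\lambda\in\mathfrak{I}_\omega$, and two pairs $(x,y),(\hat x,\hat y)\in A$.

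First I would unwind the definition: $A$ being $\omega$-accretive means $A+\omega I_X$ is accretive, and $(x,y+\omega x),(\hat x,\hat y+\omega\hat x)\in A+\omega I_X$. Applying Definition~\ref{def:accretive} to these two pairs with the positive parameter $\mu>0$ gives $\Vert x-\hat x + \mu\big((y-\hat y)+\omega(x-\hat x)\big)\Vert \geq \Vert x-\hat x\Vert$, i.e. $\Vert (1+\mu\omega)(x-\hat x) + \mu(y-\hat y)\Vert \geq \Vert x-\hat x\Vert$. The next step is to choose $\mu$ so that $1+\mu\omega$ combines correctly with the target coefficient $\lambda$; the right choice is $\mu := \lambda/(1-\lambda\omega)$, which is legitimate and positive precisely because $\lambda\in\mathfrak{I}_\omega$ forces $1-\lambda\omega>0$ (when $\omega>0$ this is $\lambda<1/\omega$; when $\omega\le 0$ it is automatic). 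With this $\mu$ one computes $1+\mu\omega = 1/(1-\lambda\omega)$ and $\mu = \lambda/(1-\lambda\omega)$, so the inequality becomes $\tfrac{1}{1-\lambda\omega}\Vert (x-\hat x)+\lambda(y-\hat y)\Vert \geq \Vert x-\hat x\Vert$; multiplying through by $1-\lambda\omega>0$ yields exactly $\Vert x-\hat x+\lambda(y-\hat y)\Vert \geq (1-\lambda\omega)\Vert x-\hat x\Vert$.

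For the equivalence with the resolvent formulation, I would argue as in Proposition~\ref{prop:accretivecontraction}: if $(u,v),(\hat u,\hat v)\in R_\lambda(A)$, then by definition of the resolvent there are $y\in A(v)$, $\hat y\in A(\hat v)$ with $u=v+\lambda y$ and $\hat u=\hat v+\lambda\hat y$; applying the just-proved inequality to $(v,y),(\hat v,\hat y)\in A$ gives $\Vert u-\hat u\Vert = \Vert v-\hat v+\lambda(y-\hat y)\Vert \geq (1-\lambda\omega)\Vert v-\hat v\Vert$, which rearranges to $\Vert v-\hat v\Vert \le \tfrac{1}{1-\lambda\omega}\Vert u-\hat u\Vert$. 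Conversely, running this backwards (every $(x,y)\in A$ produces a pair in $R_\lambda(A)$ with first coordinate $x+\lambda y$ and image containing $x$) recovers the first inequality from the second, so the two statements are genuinely equivalent.

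The only point requiring a moment's care — and the closest thing to an obstacle — is verifying that $\mu=\lambda/(1-\lambda\omega)$ is strictly positive, which is exactly what the case split in the definition of $\mathfrak{I}_\omega$ in \eqref{OmegaInterval} guarantees; everything else is elementary norm manipulation. I would also note in passing that for $\lambda\notin\mathfrak{I}_\omega$ no such clean bound is available, which is why the proposition is stated with this restriction.
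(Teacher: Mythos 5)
Your proof is correct and follows essentially the same route as the paper's: both exploit accretivity of $A+\omega I_X$ at the parameter $\mu=\lambda/(1-\lambda\omega)$, with the paper starting from the target quantity and factoring out $(1-\lambda\omega)$ while you start from the accretivity inequality and solve for the right $\mu$; these are the same computation read in opposite directions. The one place you go further than the paper is spelling out the resolvent equivalence, which the paper leaves implicit — that addition is accurate and harmless.
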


\begin{proof}
Firstly we note that $(x,\omega x +y),(\hat{x},\omega \hat{x}+\hat{y})\in A+\omega I_X$ and,
\begin{align*}
x-\hat{x} +\lambda(y-\hat{y}) &= (1-\lambda\omega)(x-\hat{x}) +\lambda(\omega x+y-\omega \hat{x}-\hat{y}) \\
&= (1-\lambda \omega)\left[ x-\hat{x} + \frac{\lambda}{1-\lambda \omega} (\omega x+y-\omega \hat{x}-\hat{y}).     \right] 
\end{align*}
By our assumptions $\lambda \omega<1$ and $A+\omega I_X$ is accretive. So we conclude,
\begin{align*}
\Vert x-\hat{x} +\lambda(y-\hat{y}) \Vert &= (1-\lambda \omega) \left\Vert  x-\hat{x} + \frac{\lambda}{1-\lambda \omega} (\omega x+y-\omega \hat{x}-\hat{y}) \right\Vert \\
&\geq (1-\lambda\omega)\Vert x-\hat{x} \Vert .
\end{align*}
\end{proof}

\begin{proposition}
\label{Omega2}
Let $\omega \in \mathbb{R}$ and $A$ be an $\omega$-m-accretive operator on $X$. Then, for all $\lambda \in \mathfrak{I}_{\omega}$ (as defined in \eqref{OmegaInterval}), we have
$$ \operatorname{Range}(I_X + \lambda A) = X.$$
\end{proposition}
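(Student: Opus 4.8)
The plan is to reduce the $\omega$-m-accretive case to the (plain) m-accretive case by absorbing $\omega I_X$ into a rescaled resolvent parameter. By definition of $\omega$-m-accretivity, the operator $B := A + \omega I_X$ is m-accretive on $X$. The key algebraic identity I would use is that, for $\lambda > 0$ with $\lambda\omega \neq 1$,
\[
I_X + \lambda A = I_X + \lambda(B - \omega I_X) = (1-\lambda\omega)\left(I_X + \frac{\lambda}{1-\lambda\omega}B\right),
\]
which is the same manipulation already carried out in the proof of Proposition~\ref{Omega1}. Hence $\operatorname{Range}(I_X+\lambda A) = (1-\lambda\omega)\operatorname{Range}\!\left(I_X + \frac{\lambda}{1-\lambda\omega}B\right)$.

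First I would fix $\lambda \in \mathfrak{I}_\omega$ and check that the scalar $\mu := \frac{\lambda}{1-\lambda\omega}$ is a positive real number: by the definition of $\mathfrak{I}_\omega$ in \eqref{OmegaInterval}, if $\omega > 0$ then $\lambda \in (0,1/\omega)$ gives $0 < 1 - \lambda\omega < 1$, so $\mu > 0$; if $\omega \leq 0$ then $\lambda > 0$ and $1 - \lambda\omega \geq 1 > 0$, so again $\mu > 0$. Next, since $B$ is m-accretive, Definition~\ref{def:m-accretive} gives $\operatorname{Range}(I_X + \mu B) = X$ for this $\mu > 0$. Finally, multiplying a set equal to all of $X$ by the nonzero scalar $1-\lambda\omega$ again yields all of $X$, so the identity above gives $\operatorname{Range}(I_X + \lambda A) = (1-\lambda\omega)\cdot X = X$, as desired.

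There is essentially no obstacle here: the only thing requiring care is the bookkeeping on the sign and nonvanishing of $1-\lambda\omega$ and the positivity of $\mu$, which is exactly what the restriction $\lambda \in \mathfrak{I}_\omega$ is designed to guarantee; note that when $\omega > 0$ and $\lambda \geq 1/\omega$ the identity breaks down (the factor $1-\lambda\omega$ is zero or negative), which is why the statement is restricted to $\mathfrak{I}_\omega$. One small point to state explicitly is that $\operatorname{Range}$ respects scalar multiplication, i.e. $\operatorname{Range}(cA) = c\operatorname{Range}(A)$ and more generally $\operatorname{Range}(I_X + \lambda A)$ transforms as claimed under the factorization; this is immediate from Definition~\ref{def:operatorsoperations} and can be cited from the basic operator properties in Appendix~\ref{app:basicproperties} if a reference is preferred over an inline remark.
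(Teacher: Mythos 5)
Your proof is correct and takes essentially the same approach as the paper: both reduce to the m-accretive case via the algebraic factorization $I_X + \lambda A = (1-\lambda\omega)\bigl(I_X + \tfrac{\lambda}{1-\lambda\omega}(A+\omega I_X)\bigr)$, with the restriction $\lambda\in\mathfrak{I}_\omega$ ensuring $\tfrac{\lambda}{1-\lambda\omega}>0$. The paper phrases the final step by picking an arbitrary $y\in X$ and exhibiting a preimage rather than invoking $\operatorname{Range}(cA)=c\operatorname{Range}(A)$ abstractly, but the two formulations are interchangeable.
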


\begin{proof}
Firstly we note $A+\omega I_X$ is m-accretive. So we have,
\begin{align*}
 I_X + \lambda A &= (1-\lambda\omega)I_X + \lambda(\omega I_X + A) \\
 &= (1-\lambda\omega)\left[ I_X + \frac{\lambda}{1-\lambda \omega} (\omega I_X + A)     \right] .
\end{align*}
Since $\frac{\lambda}{1-\lambda \omega}>0$ we observe $\operatorname{Range}\left( I_X + \frac{\lambda}{1-\lambda \omega} (\omega I_X + A)     \right)=X.$ Hence for any $y\in X$ we can choose $x\in X$ such that,
$$ \frac{1}{1-\lambda\omega}y \in \left( I_X + \frac{\lambda}{1-\lambda \omega} (\omega I_X + A)     \right)(x).$$
From this we conclude $y\in  (I_X + \lambda A)(x)$.
\end{proof}

\begin{remark}\label{rem:rangecondition}
By Proposition~\ref{Omega2}, the range condition in Definition~\ref{def:rangecondition} is satisfied whenever $A$ is $\omega$-m-accretive. In this case we can take the interval $(0,\delta)$ to be $\mathfrak{I}_{\omega}$ as defined in \eqref{OmegaInterval}. In particular, if $A$ is m-accretive, the interval can be taken to be $(0,+\infty)$.
\end{remark}

\begin{remark}
\label{accremark}
    Let $\lambda>0$. By Remark~\ref{rem:contraction} and Proposition~\ref{prop:accretivecontraction}, if $A$ is an accretive operator on $X$, then we can view $R_\lambda(A)$ as a $1$-Lipschitz-continuous function from $D(R_\lambda(A))$ to $X$. If $A$ is an m-accretive operator, then by Definition~\ref{def:m-accretive} $\operatorname{Range}(I_X+\lambda A) = X$ and thus, for all $x\in X$, $R_\lambda(A)(x) \neq \emptyset$, i.e. $D(R_\lambda(A))=X$, and thus we can identify $R_{\lambda}(A)$ with a $1$-Lipschitz-continuous function from $X$ to $X$.

    Now let $\omega\in \mathbb{R}$ and $\lambda\in \mathfrak{I}_{\omega}$ instead. In a similar fashion as above, if $A$ is an $\omega$-accretive operator on $X$, then by Remark~\ref{rem:contraction} and Proposition~\ref{Omega1}, we can view $R_\lambda(A)$ as a $\frac{1}{1-\lambda \omega}$-Lipschitz-continuous function from $D(R_\lambda(A))$ to $X$. If $A$ is $\omega$-m-accretive, then, by Proposition~\ref{Omega2}, $D(R_\lambda(A))=X$ and thus we can identify $ R_{\lambda}(A)$ with a $\frac{1}{1-\lambda \omega}$-Lipschitz-continuous function from $X$ to $X$.

    For convenience, when $A$ is an accretive, m-accretive, $\omega$-accretive, or $\omega$-m-accretive operator on $X$ we shall use the notation $R_{\lambda}(A)$ to refer both to the resolvent operator and to the function described above interchangeably. To lighten the notation, we may write $R_\lambda(A)x$ instead of $R_\lambda(A)(x)$.
\end{remark}

In the following remark we explain why the range condition~\eqref{rangeCon} is important. It allows us to iterate the resolvent map when considered as a function, which in particular means that the backward-Euler approximation of the Cauchy problem that follows in \eqref{AbCauchy} is solvable.

\begin{remark}\label{rem:whyrangecon}
    By Remark~\ref{rem:domainrange} and the definition of the resolvent in Definition~\ref{def:operatorsoperations}, we know that $D(R_\lambda(A)) = \operatorname{Range}(I_X+\lambda A)$. Thus, if $A$ satisfies the range condition in \eqref{rangeCon} for a given $\delta>0$, then, for all $\lambda \in (0,\delta)$, $\overline{D(A)} \subset D(R_\lambda(A))$.
    Additionally, $\operatorname{Range}(R_{\lambda}(A)) = D(I_X+\lambda A) \subset D(A)$, where the inclusion holds since, if $x\in D(I_X+\lambda A)$, then there exist $y \in X$ and $z\in D(A)$ such that $y=x+\lambda z$. Thus, for all $\lambda \in (0,\delta)$,
    $$\operatorname{Range}(R_{\lambda}(A)) \subset \overline{D(A)} \subset D(R_\lambda(A)).$$
    Hence, for all $x\in \overline{D(A)}$, all $\lambda \in (0,\delta)$, and all $k\in\mathbb{N}$, $R^k_\lambda(A)(x) \neq \emptyset$, where $R_\lambda^k(A):= (R_\lambda(A))^k$. In this paper, we use $R_\lambda^k(A)$ exclusively in the setting in which we can view the resolvent $R_\lambda(A)$ as a Lipschitz-continuous function, as per Remark~\ref{accremark}, and interpret $R_\lambda^k(A)$ as the usual $k$th power of that function.
        
    To avoid confusion stemming from our choice of notation, we emphasise that in general $R^k_\lambda(A)$ does not equal $R_\lambda(R^{k-1}_\lambda(A))$, but rather $R^k_\lambda(A)(x) = R_\lambda(A)(R_\lambda^{k-1}(x))$. 
\end{remark}

\subsection{Semigroups}\label{sec:semigroups}

We recall that, also in Section~\ref{sec:semigroups}, $X$ denotes a Banach space with norm $\|\cdot\|$.

We want to study the following Cauchy problem for an operator $A$ and initial condition $x\in X$:
\begin{equation} \label{AbCauchy}
\begin{cases}
u'+Au \ni 0, \\
u(0)=x. \\
\end{cases}   
\end{equation}

\begin{definition}
Given $T\in [0,+\infty)$, we define a strong solution to \eqref{AbCauchy} on $[0,T]$ to be a function $u\in C([0,T];X)\cap W^{1,1}((0,T);X)$ for which the inclusion in \eqref{AbCauchy} holds for a.e. $t\in (0,T)$ and $u(0)=x$.
\end{definition}

We also require a concept of mild solution for \eqref{AbCauchy}.

\begin{definition}\label{def:epsdiscr}
Let $\varepsilon>0$. We define an $\varepsilon$-discretization $\mathcal{P}_{\varepsilon}$ of the interval $[0,T]$ of size $N\in \mathbb{N}$ to be a tuple of times $(t_0, \ldots, t_N)$ with $0 = t_0<t_1<...<t_N\leq T$ and, for all $i\in \{1, \ldots, N\}$, $t_i-t_{i-1}\leq \varepsilon$ and $T-\varepsilon < t_N \leq T$ .
\end{definition}

\begin{definition}
\label{def:discsolution}

Given $T>0$ and $\varepsilon>0$, consider a triple $(\mathcal{P}_{\varepsilon},v,\theta)$ such that $\mathcal{P}_{\varepsilon} = (t_0, \ldots, t_N)$ is an $\varepsilon$-discretization of the interval $[0,T]$ of size $N$ and $v,\theta$ are functions $v,\theta: \mathcal{P}_{\varepsilon} \rightarrow X $. Conventionally we shall assume $\theta(0)=0$. For each $0\leq i \leq N$ we denote $v_i = v(t_i)$ and $\theta_i =\theta(t_i)$.

We say that the triple $(\mathcal{P}_{\varepsilon},v,\theta)$ is an $\varepsilon$-approximate solution to (\ref{AbCauchy}) if the following are satisfied:
\begin{itemize}
    \item $\Vert v_0 - x\Vert \leq \varepsilon$;
    \item for all $1\leq i \leq N$,
    $$ \frac{v_i-v_{i-1}}{t_i-t_{i-1}} +Av_i \ni \theta_i \; ;$$
    \item $\sum_{i=1}^N (t_i-t_{i-1})\Vert \theta_i \Vert \leq \varepsilon.$
\end{itemize}

We shall denote the piecewise constant interpolation of $v$ by a map $\bar{v}:[0,T]\rightarrow X$, such that for $1\leq i\leq N$,
$ \bar{v}(t) = v(t_i) $ for $t\in (t_{i-1},t_i]$, $\bar{v}(0)=v(0)$ and $\bar{v}(t)=v(t_N)$ for $t\in(t_N,T]$. 

\end{definition}

\begin{definition}\label{def:mildsolution}
Given $T>0$, we say $u$ is a mild solution to \eqref{AbCauchy} on $[0,T]$ if $u\in C([0,T];X)$ and for any $\varepsilon>0$ there exists an $\varepsilon$-approximate solution $(\mathcal{P}_{\varepsilon},v,\theta)$  to \eqref{AbCauchy} such that for any $t\in [0,T]$, $\Vert u(t) -\bar{v}(t) \Vert \leq \varepsilon$ and $u(0)=x$.

Moreover, we say that $u\in C([0,+\infty);X)$ is a mild solution to \eqref{AbCauchy} on $[0,+\infty)$ if for any $T<+\infty$ we have that $u|_{[0,T]}$ is a mild solution to \eqref{AbCauchy} on $[0,T]$.  
\end{definition}

\begin{proposition}
\label{strongimpliesmild}
Let $u$ be a strong solution to \eqref{AbCauchy} on $[0,T]$, then $u$ is also a mild solution to \eqref{AbCauchy} on $[0,T]$.
\end{proposition}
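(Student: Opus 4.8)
The plan is to show directly that a strong solution $u$ on $[0,T]$ satisfies the definition of mild solution (Definition~\ref{def:mildsolution}) by constructing, for every $\varepsilon>0$, an $\varepsilon$-approximate solution whose piecewise-constant interpolation is uniformly $\varepsilon$-close to $u$. First I would fix $\varepsilon>0$. Since $u\in C([0,T];X)$ it is uniformly continuous, so there is a modulus-of-continuity bound giving $\|u(t)-u(s)\|\le \varepsilon$ whenever $|t-s|$ is small enough. Since $u\in W^{1,1}((0,T);X)$, the derivative $u'\in L^1((0,T);X)$ exists a.e., the inclusion $u'(t)+Au(t)\ni 0$ holds for a.e.\ $t$, and by absolute continuity of the integral the quantity $\int_E \|u'(r)\|\,dr$ is small when $|E|$ is small.

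The key steps, in order: (1) choose a partition $0=t_0<t_1<\dots<t_N\le T$ with mesh size at most some $\delta\le\varepsilon$, with $T-\delta<t_N\le T$, and with $\delta$ small enough that both $\|u(t)-u(s)\|\le\varepsilon$ for $|t-s|\le\delta$ and $\int_E\|u'(r)\|\,dr\le\varepsilon$ for every measurable $E$ with $|E|\le\delta$ (one also wants the partition points chosen among Lebesgue points / points where the ODE inclusion holds, which is possible since a.e.\ point qualifies); (2) set $v_i:=u(t_i)$ and define $\theta_i:=\dfrac{v_i-v_{i-1}}{t_i-t_{i-1}}+w_i$ where $w_i\in Au(t_i)$ is chosen so that $w_i=-u'(t_i)$ (legitimate at a point where the inclusion holds), so that $\theta_i=\dfrac{u(t_i)-u(t_{i-1})}{t_i-t_{i-1}}-u'(t_i)$ and the inclusion $\frac{v_i-v_{i-1}}{t_i-t_{i-1}}+Av_i\ni\theta_i$ holds by construction; set $\theta_0=0$; (3) verify the three bullet points of Definition~\ref{def:discsolution}: $\|v_0-x\|=\|u(0)-x\|=0\le\varepsilon$; the inclusion holds by step (2); and for the defect sum, write $u(t_i)-u(t_{i-1})=\int_{t_{i-1}}^{t_i}u'(r)\,dr$ so that
\[
(t_i-t_{i-1})\|\theta_i\|=\left\|\int_{t_{i-1}}^{t_i}\bigl(u'(r)-u'(t_i)\bigr)\,dr\right\|\le \int_{t_{i-1}}^{t_i}\|u'(r)\|\,dr + (t_i-t_{i-1})\|u'(t_i)\|,
\]
and summing gives $\sum_i (t_i-t_{i-1})\|\theta_i\|\le \int_0^{t_N}\|u'(r)\|\,dr + \sum_i(t_i-t_{i-1})\|u'(t_i)\|$, which is an issue — the second term is a Riemann sum that need not be small.

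The main obstacle is exactly controlling this defect sum $\sum_i (t_i-t_{i-1})\|\theta_i\|\le\varepsilon$: the naive triangle-inequality split above fails because $(t_i-t_{i-1})\|u'(t_i)\|$ summed over $i$ is of order $\int_0^T\|u'\|$, not small. The fix is to not split, but instead to keep $\theta_i=\frac{1}{t_i-t_{i-1}}\int_{t_{i-1}}^{t_i}(u'(r)-u'(t_i))\,dr$ and bound $(t_i-t_{i-1})\|\theta_i\|\le\int_{t_{i-1}}^{t_i}\|u'(r)-u'(t_i)\|\,dr$, then sum to get $\sum_i\int_{t_{i-1}}^{t_i}\|u'(r)-u'(t_i)\|\,dr$, and argue this is small for a suitable choice of partition. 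This is the standard fact that step functions are dense in $L^1$: choose a step function $g=\sum_i c_i\mathbf{1}_{(t_{i-1},t_i]}$ with $\int_0^T\|u'(r)-g(r)\|\,dr\le\varepsilon/3$; then refine so that each mesh point $t_i$ is a point where the inclusion holds and additionally $\|u'(t_i)-c_i\|$ contributes controllably — more cleanly, pick on each subinterval a value $\tau_i$ (a Lebesgue point of $u'$ at which the inclusion holds) with $\|u'(\tau_i)-c_i\|$ small, relabel $v_i:=u(\tau_i)$ sampled at $\tau_i$ rather than $t_i$ as the right endpoint, and use $w_i=-u'(\tau_i)$. Then $\int_{t_{i-1}}^{t_i}\|u'(r)-u'(\tau_i)\|\,dr\le\int_{t_{i-1}}^{t_i}\|u'(r)-c_i\|\,dr+(t_i-t_{i-1})\|c_i-u'(\tau_i)\|$, and both pieces sum to something $\le\varepsilon$ after also ensuring $|\tau_i-t_i|\le\varepsilon$ so the interpolation estimate still holds. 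Finally, for the $C([0,T];X)$-closeness: since $\bar v(t)=v_i=u(\tau_i)$ for $t\in(t_{i-1},t_i]$ and $|\tau_i - t|\le 2\delta$, uniform continuity of $u$ gives $\|u(t)-\bar v(t)\|\le\varepsilon$ provided $\delta$ was chosen small against the modulus of continuity of $u$; the endpoint cases $t=0$ and $t\in(t_N,T]$ are handled the same way. Collecting the constants (replacing $\varepsilon$ by $\varepsilon/c$ for an appropriate absolute $c$ at the start) completes the verification that $u$ is a mild solution.
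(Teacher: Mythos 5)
The paper itself supplies no proof (it only cites \cite[Definition 4.3]{barbu2010nonlinear}), so your direct verification is a different route, and your overall strategy is right---in particular you correctly reduce the defect sum to $\sum_i\int_{t_{i-1}}^{t_i}\|u'(r)-u'(\tau_i)\|\,dr$ and see that the naive triangle-inequality split fails. However, the concrete fix you give for this sum has a gap: after fixing a step function $g=\sum_i c_i\mathbf{1}_{(t_{i-1},t_i]}$ with $\|u'-g\|_{L^1}\le\varepsilon/3$, you assert that one can ``pick on each subinterval a value $\tau_i$\dots\ with $\|u'(\tau_i)-c_i\|$ small,'' but nothing guarantees such a $\tau_i$ exists. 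For instance, if $u'$ equals $0$ on the left half of $(t_{i-1},t_i]$ and $2c_i$ on the right half, then $\|u'(\tau)-c_i\|=\|c_i\|$ for every $\tau$ in the cell, so $\sum_i(t_i-t_{i-1})\|u'(\tau_i)-c_i\|$ is of order $\int_0^T\|u'\|$---exactly the quantity you were trying to avoid---and a Chebyshev bound cannot rescue it because $u'$ is only $L^1$, not bounded.

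The standard repair reverses the order of choices: pick the tags first and let them define the step function. Fix a base partition of mesh $\delta$ and average $\sum_i\int_{\tau_{i-1}}^{\tau_i}\|u'(r)-u'(\tau_i)\|\,dr$ over all tag tuples $(\tau_1,\dots,\tau_N)$ with $\tau_i$ in the $i$-th cell (normalized product Lebesgue measure). Fubini together with continuity of translation in $L^1(0,T;X)$ bounds this average by a multiple of the $L^1$-modulus of continuity $\omega_{u'}(2\delta)$, which tends to $0$ as $\delta\to 0$. A Markov-inequality argument then yields a positive-measure set of tag tuples for which the sum is at most, say, twice the average; since the set $E$ where $u'$ exists and the inclusion holds has full measure, one may choose such a tuple with every $\tau_i\in E$. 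With that amendment, the rest of your bookkeeping---endpoints of the discretization, the estimate $\|u(t)-\bar v(t)\|\le\varepsilon$ from uniform continuity of $u$, and absorbing constants---goes through as you describe.
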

\begin{proof}
See \cite[Definition 4.3]{barbu2010nonlinear}.
\end{proof}

\begin{proposition}
\label{mildsolutionsProp}
\begin{enumerate}
    \item If $u$ is a mild solution to \eqref{AbCauchy} on $[0,T]$, then for all $t\in [0,T]$, $u(t)\in \overline{D(A)}$. 
    \item If $u$ is a mild solution to \eqref{AbCauchy} on $[0,T]$ it is also a mild solution for the Cauchy problem in \eqref{AbCauchy} on $[0,T]$ with $A$ replaced by $\overline{A}$. 
    \item Let $0<T_1<T_2$, then if $u$ is a mild solution to \eqref{AbCauchy} on $[0,T_2]$, then $u|_{[0,T_1]}$ is a mild solution to \eqref{AbCauchy} on $[0,T_1]$.
\end{enumerate}
\end{proposition}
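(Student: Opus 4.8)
The plan is to verify all three parts directly against Definitions~\ref{def:discsolution} and~\ref{def:mildsolution}: each is a soft manipulation of $\varepsilon$-approximate solutions, and only the third requires real care.

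\textbf{Part 1.} Fix $t\in(0,T]$ and $\varepsilon>0$, and pick an $\varepsilon$-approximate solution $(\mathcal{P}_\varepsilon,v,\theta)$, say $\mathcal{P}_\varepsilon=(t_0,\dots,t_N)$, with $\|u(t)-\bar v(t)\|\le\varepsilon$. Since $t>t_0=0$, the piecewise-constant value $\bar v(t)$ equals $v(t_j)$ for some $j\in\{1,\dots,N\}$ (take $j$ with $t\in(t_{j-1},t_j]$, or $j=N$ if $t\in(t_N,T]$). The difference-quotient inclusion $\tfrac{v_j-v_{j-1}}{t_j-t_{j-1}}+Av_j\ni\theta_j$ forces $Av_j\neq\emptyset$, i.e.\ $v_j\in D(A)$, so $v_j\in D(A)$ and $\|u(t)-v_j\|\le\varepsilon$; since $\varepsilon>0$ was arbitrary, $u(t)\in\overline{D(A)}$. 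For $t=0$ I would use continuity of $u$ together with closedness of $\overline{D(A)}$: since $T>0$ (standing hypothesis of Definitions~\ref{def:discsolution} and~\ref{def:mildsolution}), $u(0)=\lim_{t\downarrow0}u(t)$ is a limit of points of $\overline{D(A)}$ and hence lies in it.

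\textbf{Part 2.} Of the three conditions defining an $\varepsilon$-approximate solution, the first and third do not mention $A$, and the second, $\tfrac{v_i-v_{i-1}}{t_i-t_{i-1}}+Av_i\ni\theta_i$, is preserved on replacing $A$ by $\overline A$ because $A\subset\overline A$ (Definition~\ref{def:pointwiseclosure}). Hence every $\varepsilon$-approximate solution to~\eqref{AbCauchy} is, verbatim, an $\varepsilon$-approximate solution to the same problem with $A$ replaced by $\overline A$, and the sup-estimate on $\|u-\bar v\|$ is unaffected; so the same $u$ witnesses mild solvability for $\overline A$.

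\textbf{Part 3.} The difficulty is that truncating a discretization of $[0,T_2]$ at $T_1$ leaves a final subinterval on which the truncated interpolant disagrees with the original. I would first use uniform continuity of $u$ on the compact interval $[0,T_2]$: given $\varepsilon>0$, choose $\eta>0$ with $\|u(s)-u(t)\|\le\varepsilon/2$ whenever $|s-t|\le\eta$, and set $\delta:=\min\{\varepsilon/2,\eta,T_2-T_1,T_1/2\}>0$. Take a $\delta$-approximate solution $(\mathcal{P}_\delta,v,\theta)$ on $[0,T_2]$ with $\|u(t)-\bar v(t)\|\le\delta$ for all $t\in[0,T_2]$, write $\mathcal{P}_\delta=(t_0,\dots,t_N)$, and let $k$ be the largest index with $t_k\le T_1$. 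Since $t_1\le\delta<T_1$ we get $k\ge1$, and since $t_N>T_2-\delta\ge T_1$ we get $k<N$, so $t_{k+1}>T_1$ exists; then $T_1-\varepsilon\le T_1-\delta<t_{k+1}-\delta\le t_k\le T_1$, so $(t_0,\dots,t_k)$ is an $\varepsilon$-discretization of $[0,T_1]$ (Definition~\ref{def:epsdiscr}), and the restrictions of $v,\theta$ to it inherit all three $\varepsilon$-approximate-solution conditions (each relevant bound was $\le\delta\le\varepsilon$). Finally, writing $\bar v'$ for the interpolant of the truncated triple: on $[0,t_k]$ one has $\bar v'=\bar v$, so $\|u(t)-\bar v'(t)\|\le\delta\le\varepsilon$; and for $t\in(t_k,T_1]$ one has $\bar v'(t)=v(t_k)=\bar v(t_k)$ and $|t-t_k|<t_{k+1}-t_k\le\delta\le\eta$, so $\|u(t)-\bar v'(t)\|\le\|u(t)-u(t_k)\|+\|u(t_k)-\bar v(t_k)\|\le\varepsilon/2+\delta\le\varepsilon$; thus $u|_{[0,T_1]}$ is a mild solution on $[0,T_1]$. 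The main obstacle is exactly this last subinterval estimate (together with the minor point of keeping the truncated discretization non-degenerate, which is why $\delta$ is chosen below both $T_1$ and $T_2-T_1$); everything else is bookkeeping.
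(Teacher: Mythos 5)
Your proof is correct and self-contained. The paper itself does not argue this proposition in-text — it simply cites \cite[Theorem A.9]{andreu2010nonlocal} — so there is no paper approach to compare against; your argument supplies the elementary verification from Definitions~\ref{def:discsolution} and~\ref{def:mildsolution}. The only real delicacy is Part~3, and you handle it exactly right: capping $\delta$ below $T_1/2$ and $T_2-T_1$ is what guarantees $1\le k<N$ so that the truncated tuple $(t_0,\dots,t_k)$ is a genuine $\varepsilon$-discretization of $[0,T_1]$, and the dangling interval $(t_k,T_1]$ is absorbed by uniform continuity of $u$ on $[0,T_2]$ via $\delta\le\eta$. Part~2 is, in effect, the same monotonicity-in-the-operator observation the paper already records as Lemma~\ref{subsetOp}, specialised to $B=\overline{A}$ using $A\subset\overline{A}$ (cf.\ Remark~\ref{rem:closureinclusions}). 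Part~1 correctly reads $v_j\in D(A)$ off the difference-quotient inclusion for $j\ge1$, and disposes of the endpoint $t=0$ by continuity of $u$ and closedness of $\overline{D(A)}$.
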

\begin{proof}
See \cite[ Theorem A.9]{andreu2010nonlocal}.
\end{proof}

\begin{definition}\label{def:semigroup}
Let $A$ be an operator on $X$. We define
\begin{equation*}
    D(S_A) :=  \{ x\in X \, | \, \eqref{AbCauchy} \text{ has a unique mild solution } u(t) \text{ on } [0,+\infty)\}.
\end{equation*}
We note that \eqref{AbCauchy} depends on $x$ via the initial condition $u(0)=x$. 
Furthermore, for each $t\in [0,+\infty)$ we define the map $S_A(t):D(S_A) \rightarrow X  $ by $S_A(t)x := u(t)$, where $u(t)$ is the unique mild solution to \eqref{AbCauchy} on $[0,+\infty)$ with $u(0)=x$. We refer to the collection of maps $\{S_A(t)\}_{t\in [0,+\infty)}$ as the semigroup of $A$ (or generated by $A$) and to $D(S_A)$ as the domain of the semigroup.
\end{definition}

\begin{remark}\label{rem:propertiessemigroups}
From Definition~\ref{def:semigroup} we obtain some properties of the semigroup of an operator $A$.
\begin{itemize}
    \item For any $t\in [0,+\infty)$ we have $\operatorname{Range}(S_A(t)) \subset D(S_A).$
    \item Let $t,s\in [0,+\infty)$, then we have $S_A(t+s)=S_A(t) \circ S_A(s)$. In particular we can identify the collection $\{S_A(t)\}_{t\in [0,+\infty)}$ as a left action of the semigroup $([0,+\infty),+)$ on the Banach space $X$. This is often referred to as the semigroup property. 
    \item For all $x\in D(S_A)$, $S_A(t)x$ is continuous in $t$.
    \item For all $x\in D(S_A)$, $S_A(0)x=x$.
\end{itemize}
\end{remark}

\begin{lemma}
\label{subsetOp}
 Given two operators $A,B$ such that $A \subset B$. Then $D(S_A) \subset D(S_B)$ and for any $x\in D(S_A)$ we have $S_A(t)x = S_B(t)x$.   
\end{lemma}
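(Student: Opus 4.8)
The plan is to run everything through the definition of a mild solution and exploit that $A\subset B$ turns every $\varepsilon$-approximate solution of the $A$-Cauchy problem into one for the $B$-Cauchy problem. Fix $x\in D(S_A)$ and let $u:=S_A(\cdot)x$ be the unique mild solution of \eqref{AbCauchy} on $[0,+\infty)$. Fix $T>0$ and $\varepsilon>0$ and choose an $\varepsilon$-approximate solution $(\mathcal{P}_\varepsilon,v,\theta)$ to \eqref{AbCauchy} on $[0,T]$ with $\|u(t)-\bar v(t)\|\leq\varepsilon$ for all $t\in[0,T]$. For each $i\in\{1,\dots,N\}$ the inclusion $\tfrac{v_i-v_{i-1}}{t_i-t_{i-1}}+Av_i\ni\theta_i$ holds, and since $Av_i\subset Bv_i$ by Definition~\ref{def:operatorsubset}, also $\tfrac{v_i-v_{i-1}}{t_i-t_{i-1}}+Bv_i\ni\theta_i$. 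The two remaining conditions $\|v_0-x\|\leq\varepsilon$ and $\sum_{i=1}^N(t_i-t_{i-1})\|\theta_i\|\leq\varepsilon$ involve only $v,\theta,x$, so they are untouched; hence $(\mathcal{P}_\varepsilon,v,\theta)$ is an $\varepsilon$-approximate solution to \eqref{AbCauchy} with $A$ replaced by $B$, and the same $\bar v$ still approximates $u$ to within $\varepsilon$. As $\varepsilon>0$ and $T>0$ were arbitrary, $u$ is a mild solution of \eqref{AbCauchy} with $B$ in place of $A$ on $[0,+\infty)$.

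Granting that this $B$-mild solution is the only one with initial value $x$, we obtain $x\in D(S_B)$ directly from Definition~\ref{def:semigroup}, and then $S_B(t)x=u(t)=S_A(t)x$ for every $t\geq0$, because both sides are by definition the value at time $t$ of the unique mild solution of the respective Cauchy problem starting at $x$, and we have identified both with the single function $u$. So the whole lemma reduces to the claim that the $B$-Cauchy problem with datum $x\in D(S_A)$ has a unique mild solution.

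I expect this uniqueness to be the only genuine obstacle. It does not follow from $A\subset B$ alone: the enlargement can produce $\varepsilon$-approximate solutions for $B$ that use values in $Bv_i\setminus Av_i$ (note $R_\lambda(B)\supset R_\lambda(A)$ may be a strict, multivalued enlargement), and a priori these could accumulate onto a function different from $u$. The way I would close it is to invoke the uniqueness of mild solutions that is available in the settings where this lemma is applied: when $A$ and $B$ are accretive (or $\omega$-accretive) and satisfy the range condition of Definition~\ref{def:rangecondition}, \eqref{AbCauchy} admits at most one mild solution, which then forces any $B$-mild solution through $x$ to coincide with $u$. With that in hand the inclusion $D(S_A)\subset D(S_B)$ and the identity $S_A(t)x=S_B(t)x$ are immediate; in the accretive regime the extra $B$-approximate solutions described above simply cannot escape the contraction estimates, so no alternative limit arises and the argument is complete.
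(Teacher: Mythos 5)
Your core step---transferring every $\varepsilon$-approximate solution for the $A$-Cauchy problem into one for the $B$-Cauchy problem, and concluding that the unique $A$-mild solution is also a $B$-mild solution---is exactly the paper's argument; in fact the paper's proof consists of precisely that one observation and nothing more. Where you go further is in flagging the uniqueness issue, and you are right to do so: membership in $D(S_B)$ requires the $B$-Cauchy problem to have a \emph{unique} mild solution, and this does not follow from $A\subset B$ for arbitrary operators. A concrete obstruction: take $X=\mathbb{R}$, $A=\{(1,0)\}$, $B=A\cup\{(y,1):y\neq 1\}$. Then $u\equiv 1$ is the unique $A$-mild solution from $x=1$ (every discretization is forced to sit at $1$ since $D(A)=\{1\}$), so $1\in D(S_A)$; but $u(t)=1-t$ is also a $B$-mild solution from $x=1$ (take $v_i=1-t_i$, $\theta_i=0$, so that $\tfrac{v_i-v_{i-1}}{t_i-t_{i-1}}+Bv_i=-1+\{1\}=\{0\}$), so $1\notin D(S_B)$ and the asserted inclusion fails.

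Your proposed repair---restricting to the $\omega$-accretive regime with the range condition so Crandall--Liggett supplies uniqueness on $\overline{D(B)}$, and noting $x\in\overline{D(A)}\subset\overline{D(B)}$ via part~1 of Proposition~\ref{mildsolutionsProp}---is correct and closes the gap in the setting where the paper actually uses this lemma. So your proposal is not weaker than the paper's proof; it exposes that both the one-line proof and the lemma as stated for arbitrary operators are missing a uniqueness (or accretivity) hypothesis.
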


\begin{proof}
This follows from the definitions of semigroups and mild solutions, because if $(\mathcal{P}_{\varepsilon},v,\theta)$ is an $\varepsilon$-approximate solution to \eqref{AbCauchy} then it is also an $\varepsilon$-approximate solution to \eqref{AbCauchy} with the operator $A$ replaced with $B$. 
\end{proof}

Next we state the celebrated Crandall--Liggett theorem. We recall that the range condition is given in Definition~\ref{def:rangecondition}. For the meaning of the notation $R^n_{t/n}(A)$, we refer to Remark~\ref{rem:whyrangecon}.

\begin{theorem}[Crandall--Liggett]
\label{CranLig}
Let $A$ be an $\omega$-accretive operator on $X$ satisfying the range condition~\eqref{rangeCon} for a given $\delta>0$ and let $x\in \overline{D(A)}$. Then \eqref{AbCauchy} has a unique mild solution $u\in C([0,+\infty),X)$, which, for all $t>0$, is given by
$$ u(t) = \lim_{n\rightarrow \infty} R_{t/n}^n(A) x .$$
This limit is taken with respect to the topology on $X$.
\end{theorem}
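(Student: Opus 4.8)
The plan is to prove the Crandall--Liggett theorem by the classical route: first establish that the resolvent iterates $R_{t/n}^n(A)x$ form a Cauchy sequence uniformly on compact time intervals, then identify the limit as a mild solution, and finally argue uniqueness. The setting is an $\omega$-accretive operator $A$ satisfying the range condition for some $\delta>0$, so that by Remark~\ref{rem:whyrangecon} the iterates $R_\lambda^k(A)x$ are well-defined for $x\in\overline{D(A)}$ and $\lambda\in(0,\delta)$, and by Remark~\ref{accremark} each $R_\lambda(A)$ is a $\frac{1}{1-\lambda\omega}$-Lipschitz function on its domain.

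First I would reduce to the case $x\in D(A)$ by a density argument: if the convergence and the mild-solution property are established for initial data in $D(A)$, then the uniform Lipschitz bounds on $R_{t/n}^n(A)$ (a product of $n$ resolvents, each $\frac{1}{1-(t/n)\omega}$-Lipschitz, giving an equi-Lipschitz-in-$n$ bound $\bigl(1-\tfrac{t}{n}\omega\bigr)^{-n}\to e^{\omega t}$ on compact $t$-intervals) let us pass to general $x\in\overline{D(A)}$ by approximation, and the limiting function is still a mild solution because mild solutions depend continuously on initial data (the semigroup is a contraction-type map, cf.\ Proposition~\ref{Omega1}). Next, for $x\in D(A)$, I would pick $y\in Ax$ and derive the fundamental a priori estimate $\|R_\lambda^k(A)x - R_\mu^m(A)x\|$ is small when $k\lambda\approx m\mu$ and $\lambda,\mu$ are small. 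The standard way: set $a_{k,m}:=\|R_\lambda^k(A)x - R_\mu^m(A)x\|$, use accretivity of $A$ together with the resolvent identity $R_\lambda(A)z = R_\mu(A)\!\bigl(\tfrac{\mu}{\lambda}z + (1-\tfrac{\mu}{\lambda})R_\lambda(A)z\bigr)$ to get a recursive inequality for $a_{k,m}$ in terms of $a_{k-1,m}$ and $a_{k,m-1}$, seed it with $\|R_\lambda(A)x-x\|\leq \lambda\|y\|\cdot C(\omega,\lambda)$ (the resolvent moves $x\in D(A)$ by at most order $\lambda$), and solve the recursion by comparison with a binomial/heat-kernel expression, yielding a bound of the form $a_{k,m}\lesssim \|y\|\bigl(\sqrt{(k\lambda-m\mu)^2 + k\lambda^2 + m\mu^2}\bigr)$ up to the $\omega$-dependent exponential factor. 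Taking $\lambda=\mu=t/n$, $k=m=n$ this shows $\{R_{t/n}^n(A)x\}_n$ is Cauchy, and the same estimate with $\lambda = t/n$, $\mu = s/m$ gives uniformity over $t$ in compact intervals and continuity of the limit $u(t)$ in $t$.

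Having obtained $u(t)=\lim_n R_{t/n}^n(A)x\in\overline{D(A)}$, continuous in $t$, I would verify $u$ is a mild solution in the sense of Definition~\ref{def:mildsolution}: given $\varepsilon>0$, the partition $t_i = i\lambda$ with $\lambda$ small and $v_i := R_\lambda^i(A)x$, $\theta_i := 0$, is an $\varepsilon$-approximate solution (the defining difference quotient relation $\frac{v_i - v_{i-1}}{\lambda} + Av_i \ni 0$ is exactly the statement that $v_i = R_\lambda(A)v_{i-1}$, i.e.\ $v_{i-1}\in (I+\lambda A)v_i$, which holds by definition of the resolvent when $v_i\in D(R_\lambda(A))$ — guaranteed by the range condition), and the a priori estimate shows $\|\bar v(t) - u(t)\|\leq\varepsilon$ once $\lambda$ is small enough. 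For uniqueness I would invoke the standard fact (a consequence of accretivity of $A$) that any two mild solutions $u,\tilde u$ of \eqref{AbCauchy} with the same initial data satisfy $\|u(t)-\tilde u(t)\| \le e^{\omega t}\|u(0)-\tilde u(0)\| = 0$; this follows by comparing their $\varepsilon$-approximate solutions and letting $\varepsilon\to 0$, or one can cite \cite{barbu2010nonlinear} directly.

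The main obstacle is the a priori Cauchy estimate — making the recursion on $a_{k,m}$ precise and solving it with the correct $\omega$-exponential corrections, since the $\omega$-accretive (rather than accretive) case forces the factors $(1-\lambda\omega)^{-1}$ through every step of the recursion and one must track that they accumulate to at most $e^{\omega t}$ on compact intervals while the partition parameter $\lambda\in\mathfrak I_\omega$ stays bounded away from $1/\omega$. Everything else (the density/approximation step, the identification of the limit as a mild solution, uniqueness) is comparatively routine once that estimate is in hand; indeed, since this is the classical Crandall--Liggett theorem, I would expect the paper to cite \cite{barbu2010nonlinear} or \cite{brezis1972convergence} for the full proof rather than reproduce it, using only the structural facts (Remarks~\ref{accremark} and~\ref{rem:whyrangecon}, Propositions~\ref{Omega1} and~\ref{Omega2}) already assembled above.
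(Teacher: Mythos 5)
The paper does not reproduce a proof of this theorem but simply cites the original source, \cite[Theorem I]{CrandallLiggett71}. Your outline correctly reconstructs the classical Crandall--Liggett argument (the $a_{k,m}$ recursion from the resolvent identity, density reduction from $\overline{D(A)}$ to $D(A)$, identification of the limit as a mild solution, and uniqueness from $\omega$-accretivity), and you correctly anticipated that the paper would cite rather than reproduce it.
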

\begin{proof}
See \cite[Theorem I]{CrandallLiggett71}.
\end{proof}

\begin{remark}
    In Theorem~\ref{CranLig}, if $t/n \geq \delta$, then $A$ is not guaranteed to satisfy the range condition and thus $R_{t/n}^n(A)$ is not guaranteed to be a non-empty operator (see Remark~\ref{rem:whyrangecon}). Since we are only interested in the limit $n\to\infty$, however, this does not impact the result of the theorem.
\end{remark}

\begin{corollary}
\label{SemigroupCon}
Suppose $A$ is $\omega$-accretive on $X$ and satisfies the range condition~\eqref{rangeCon}. Then $\overline{D(A)} \subset D(S_A)$ and, for all $x,y\in \overline{D(A)}$ and all $t\in [0,+\infty)$,
$$ \Vert S_A(t)x - S_A(t)y \Vert \leq e^{\omega t}\Vert x-y \Vert. $$
\end{corollary}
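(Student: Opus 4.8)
The plan is to derive Corollary~\ref{SemigroupCon} directly from the Crandall--Liggett theorem (Theorem~\ref{CranLig}) together with Proposition~\ref{Omega1}. First I would observe that Theorem~\ref{CranLig} already gives us, for every $x\in\overline{D(A)}$, that \eqref{AbCauchy} has a unique mild solution $u\in C([0,+\infty),X)$ on $[0,+\infty)$; by the definition of the domain of a semigroup (Definition~\ref{def:semigroup}) this is exactly the statement that $x\in D(S_A)$, and hence $\overline{D(A)}\subset D(S_A)$. Moreover Theorem~\ref{CranLig} tells us that for all $t>0$ we have $S_A(t)x=\lim_{n\to\infty}R_{t/n}^n(A)x$, the limit being taken in the norm topology of $X$.

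Next I would establish the contraction-type estimate for the iterated resolvents and then pass to the limit. Fix $x,y\in\overline{D(A)}$ and $t>0$. For $n$ large enough that $t/n\in\mathfrak{I}_\omega$, Proposition~\ref{Omega1} (via Remark~\ref{accremark}, viewing $R_{t/n}(A)$ as a $\frac{1}{1-(t/n)\omega}$-Lipschitz function) gives, for all $z,w\in D(R_{t/n}(A))$,
\[
\Vert R_{t/n}(A)z - R_{t/n}(A)w \Vert \leq \frac{1}{1-(t/n)\omega}\Vert z-w\Vert.
\]
Iterating this bound $n$ times along the orbit (which is legitimate by the range-condition discussion in Remark~\ref{rem:whyrangecon}, so that all the iterates $R_{t/n}^k(A)x$ and $R_{t/n}^k(A)y$ exist and lie in $\overline{D(A)}\subset D(R_{t/n}(A))$) yields
\[
\Vert R_{t/n}^n(A)x - R_{t/n}^n(A)y \Vert \leq \left(\frac{1}{1-(t/n)\omega}\right)^{n}\Vert x-y\Vert.
\]
Then I would take $n\to\infty$: the left-hand side converges to $\Vert S_A(t)x - S_A(t)y\Vert$ by continuity of the norm together with Theorem~\ref{CranLig}, while the right-hand side satisfies $\left(1-(t/n)\omega\right)^{-n}\to e^{\omega t}$. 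This gives the claimed inequality $\Vert S_A(t)x-S_A(t)y\Vert\leq e^{\omega t}\Vert x-y\Vert$ for all $t>0$, and the $t=0$ case is immediate since $S_A(0)$ is the identity on $D(S_A)$ (Remark~\ref{rem:propertiessemigroups}).

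I do not anticipate a serious obstacle here; the only points requiring a little care are (i) ensuring we only invoke the Lipschitz bound for $t/n$ in the admissible interval $\mathfrak{I}_\omega$, which is harmless since we send $n\to\infty$ and, as the remark after Theorem~\ref{CranLig} notes, the finitely many bad terms do not affect the limit; (ii) justifying the iteration of the resolvent as a genuine function composition along the orbit, which is precisely what Remark~\ref{rem:whyrangecon} sets up; and (iii) the elementary limit $\lim_{n\to\infty}(1-(t/n)\omega)^{-n}=e^{\omega t}$, valid for both signs of $\omega$ (when $\omega\le 0$ the base is $\le 1$ for all $n$, when $\omega>0$ it is eventually positive). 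If one prefers to avoid Proposition~\ref{Omega1} entirely, an alternative is to quote the standard Crandall--Liggett estimate directly, but deriving it from the tools already in the excerpt keeps the argument self-contained.
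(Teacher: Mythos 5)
Your proof is correct and follows essentially the same route as the paper's: invoke Crandall--Liggett for $\overline{D(A)}\subset D(S_A)$ and the resolvent-limit formula, apply the Lipschitz bound from Proposition~\ref{Omega1}/Remark~\ref{accremark} iteratively, and pass to the limit $n\to\infty$. Your version is slightly more careful on one point the paper glosses over---under the range condition alone (without $\omega$-m-accretivity) the resolvent need not have domain all of $X$, so one must track, as you do via Remark~\ref{rem:whyrangecon}, that the iterates stay inside $\overline{D(A)}\subset D(R_{t/n}(A))$---but this is a refinement of the same argument rather than a different one.
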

\begin{proof}
We can apply the Crandall--Liggett theorem (Theorem~\ref{CranLig}) together with the definition of $S_A$ in Definition~\ref{def:semigroup} to deduce that, for all $z\in \overline{D(A)}$,
$$ S_A(t)z = \lim_{n\rightarrow \infty} R_{t/n}^n(A) z .$$
By iteratively applying Remark~\ref{accremark}, we obtain that, for all $n\in \mathbb{N}$, $R_{t/n}^n(A)$ can be identified with a Lipschitz-continuous function with Lipschitz constant $(1-t\omega /n)^{-n}$ and domain $X$. Then we deduce, for all $t\in [0,\infty)$, 
\begin{align*}
    \Vert S_A(t) x - S_A(t) y \Vert &= \lim_{n\rightarrow \infty} \Vert R_{t/n}^n(A) x - R_{t/n}^n(A) y \Vert
    \leq \lim_{n\rightarrow \infty} (1-t\omega/n)^{-n}  \Vert x -y \Vert\\
    &= e^{\omega t} \Vert x -y \Vert.
\end{align*}
\end{proof}

\section{Properties of \texorpdfstring{$\lambda$}{lambda}-convex functions and gradient flows}\label{sec:convex}

Throughout Section~\ref{sec:convex} we assume $H$ to be a Hilbert space with inner product $\langle \cdot , \cdot \rangle$ and corresponding norm $\| \cdot \|$. 

\subsection{Definition and properties of \texorpdfstring{$\lambda$}{lambda}-convex functions}

\begin{definition}\label{def:lambdaconvex}
Let $\Phi:H\rightarrow (-\infty,+\infty]$ and $\lambda \in \mathbb{R}$. We say that $\Phi$ is $\lambda$-convex if, for all $x,y\in H$ and for all $t\in [0,1]$,
\[
\Phi( tx + (1-t)y) \leq t\Phi(x) +(1-t) \Phi(y) - \frac{1}{2} \lambda t(1-t) \Vert x-y \Vert^2.
\]
\end{definition}
It is clear from the definition above that $0$-convexity corresponds to the standard notion of convexity.

A generalisation of Definition~\ref{def:lambdaconvex} to functions $\Phi$ defined on metric spaces is possible; see for example \cite[Section 2.2]{vanGennipGigaOkamoto26}. In this paper we will only be interested in the case where $H$ is a Hilbert space.

\begin{definition}\label{def:subdifferential}
Let $\Phi: H \rightarrow (-\infty,+\infty]$. We define the subdifferential $\partial \Phi$ to be the operator on $H$ determined by $(x,y)\in \partial\Phi$ if and only if, for all $h\in H$,
$$ \Phi(x+h) \geq \Phi(x) + \langle y,h \rangle .$$
\end{definition}

Appendix~\ref{sec:C} contains a review of the basic properties of $\lambda$-convex functions and subdifferentials. If $\Phi$ is not convex, the subdifferential $\partial \Phi$ is not guaranteed to have the desirable properties of Proposition~\ref{subdiffProp}. In the case that $\Phi$ is $\lambda$-convex on $H$, however, the function $\Phi - \frac\lambda2 \|\cdot\|^2$ is convex on $H$, which motivates Definition~\ref{def:subdiflambda}.

\begin{definition}\label{def:subdiflambda}
Let $\lambda \in \mathbb{R}$ and $\Phi: H \rightarrow (-\infty,+\infty]$. We define the operator $\partial^{\lambda}\Phi$ on $H$ by
$$ \partial^{\lambda}\Phi := \partial\big( \Phi-\frac{\lambda}{2}\Vert \cdot \Vert^2 \big) + \lambda I_H .$$
We note that $\partial^0 \Phi = \partial \Phi$.
\end{definition}

Since we have introduced $\partial^\lambda \Phi$ to have a useful notion of subdifferential for a broader class of functions $\Phi$ than only the convex functions and since, by Lemma~\ref{lem:lambdaconvexanstrictlyconvex}, $\lambda$-convex functions are convex if $\lambda \geq 0$, it is natural to ask whether in this case $\partial^\lambda \Phi$ equals the standard subdifferential $\partial \Phi$. Part 2 of Lemma~\ref{lem:subdiffinclusion} in Appendix~\ref{sec:C} (with $\lambda_1=0 < \lambda_2$) gives an affirmative answer for the case when $\Phi$ is $\lambda$-convex.

\begin{proposition}\label{prop:partiallambdaaccretive}
Let $\lambda \in \mathbb{R}$ and $\Phi:H \rightarrow (-\infty,+\infty]$. Then $\partial^{\lambda} \Phi$ is $(-\lambda)$-accretive.
\end{proposition}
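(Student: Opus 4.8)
The plan is to reduce the statement to the known fact (Lemma~\ref{Hilbertaccretive}) that, on a Hilbert space, an operator is accretive if and only if it is monotone, and then to use the convexity of $\Phi - \frac{\lambda}{2}\|\cdot\|^2$ together with the monotonicity of the standard subdifferential of a convex function. By Definition~\ref{def:omegamaccretive}, to show $\partial^\lambda \Phi$ is $(-\lambda)$-accretive we must show that $\partial^\lambda \Phi + (-\lambda) I_H = \partial^\lambda \Phi - \lambda I_H$ is accretive. By Definition~\ref{def:subdiflambda}, $\partial^\lambda \Phi - \lambda I_H = \partial\big(\Phi - \frac{\lambda}{2}\|\cdot\|^2\big)$. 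So the statement is equivalent to: $\partial\big(\Phi - \frac{\lambda}{2}\|\cdot\|^2\big)$ is accretive.

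First I would set $\Psi := \Phi - \frac{\lambda}{2}\|\cdot\|^2$ and observe that, directly from the definition of the subdifferential (Definition~\ref{def:subdifferential}), the operator $\partial \Psi$ is monotone in the Hilbert-space sense: if $(x,y),(\hat x,\hat y)\in \partial\Psi$, then applying the subgradient inequality $\Psi(x+h)\geq \Psi(x)+\langle y,h\rangle$ with $h = \hat x - x$, and the inequality $\Psi(\hat x + h')\geq \Psi(\hat x)+\langle \hat y, h'\rangle$ with $h' = x - \hat x$, and adding the two, yields $0 \geq \langle y,\hat x - x\rangle + \langle \hat y, x - \hat x\rangle = -\langle y - \hat y, x - \hat x\rangle$, i.e. $\langle y - \hat y, x - \hat x\rangle \geq 0$. (Note this argument does not even require $\Psi$ to be convex — monotonicity of the subdifferential as defined here is automatic.) Then, by Lemma~\ref{Hilbertaccretive}, $\partial\Psi$ is accretive on $H$.

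Finally I would unwind the equivalences established in the first paragraph: $\partial^\lambda \Phi - \lambda I_H = \partial\Psi$ is accretive, which by Definition~\ref{def:omegamaccretive} (with $\omega = -\lambda$) is precisely the statement that $\partial^\lambda\Phi$ is $(-\lambda)$-accretive. I expect there to be no serious obstacle here; the only point requiring minor care is to correctly chase the sign conventions — that $(-\lambda)$-accretivity of $\partial^\lambda\Phi$ means accretivity of $\partial^\lambda\Phi + (-\lambda)I_H$, and that this operator is exactly $\partial\big(\Phi - \frac{\lambda}{2}\|\cdot\|^2\big)$ by Definition~\ref{def:subdiflambda}, so that the $\lambda I_H$ terms cancel. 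One could alternatively bypass Lemma~\ref{Hilbertaccretive} and verify Definition~\ref{def:accretive} directly using \eqref{eq:accretiveequivalent}, but routing through the monotonicity characterisation is cleaner.
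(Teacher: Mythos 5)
Your proposal is correct and takes essentially the same approach as the paper's proof: reduce $(-\lambda)$-accretivity of $\partial^\lambda\Phi$ to accretivity of $\partial^\lambda\Phi - \lambda I_H = \partial\big(\Phi - \tfrac{\lambda}{2}\|\cdot\|^2\big)$ via Definitions~\ref{def:omegamaccretive} and~\ref{def:subdiflambda}, then establish monotonicity by evaluating the subgradient inequality at $h = \hat x - x$ and $h = x - \hat x$ and combining, and finally invoke Lemma~\ref{Hilbertaccretive}. Your aside that the argument does not actually use any convexity of $\Phi - \tfrac{\lambda}{2}\|\cdot\|^2$ is a nice explicit observation, consistent with the fact that the proposition places no convexity hypothesis on $\Phi$.
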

\begin{proof}
By Definition~\ref{def:omegamaccretive} and Lemma~\ref{Hilbertaccretive}, $\partial^\lambda \Phi$ is $(-\lambda)$-accretive if, for all $(x,y), (\hat x, \hat y) \in \partial^\lambda \Phi - \lambda I_H = \partial(\Phi-\frac\lambda2 \|\cdot\|^2)$, $\langle y-\hat y, x-\hat x\rangle \geq 0$. 
Therefore, assume that $(x,y),(\hat{x},\hat{y})\in \partial(\Phi-\frac\lambda2 \|\cdot\|^2)$. Then, if for $(x,y)$ we choose $h=\hat x -x$ in Definition~\ref{def:subdifferential}, we obtain
\[
\Phi(\hat x) - \frac\lambda2 \|\hat x\|^2 \geq \Phi(x) - \frac\lambda2 \|x\|^2 + \langle y, \hat x-x\rangle.
\]
Similarly, if for $(\hat x, \hat y)$ we choose $h=x-\hat x$ in Definition~\ref{def:subdifferential}, we get
\[
\Phi(x) - \frac\lambda2 \|x\|^2 \geq \Phi(\hat x) - \frac\lambda2 \|\hat x\|^2 + \langle \hat y, x-\hat x\rangle.
\]
Subtracting the second inequality from the first, we obtain the required inequality ${\langle y-\hat y, x-\hat x\rangle \geq 0}$.
\end{proof}

\begin{theorem}\label{thm:partiallambdamaccretive}
Let $\lambda \in \mathbb{R}$, $\Phi:H \rightarrow (-\infty,+\infty]$ be proper, $\lambda$-convex, and lower semicontinuous. Then $\partial^{\lambda} \Phi$ is $(-\lambda)$-m-accretive.
\end{theorem}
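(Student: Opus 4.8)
The plan is to reduce the statement to the classical fact that the subdifferential of a proper, lower-semicontinuous, convex function on a Hilbert space is maximal monotone (equivalently, m-accretive), which is recorded in the appendix as part of Proposition~\ref{subdiffProp}. By Definition~\ref{def:subdiflambda} we have $\partial^{\lambda}\Phi - \lambda I_H = \partial\big(\Phi - \frac{\lambda}{2}\|\cdot\|^2\big)$, so by Definition~\ref{def:omegamaccretive} it suffices to show that $\partial\big(\Phi - \frac{\lambda}{2}\|\cdot\|^2\big)$ is m-accretive. Set $\Psi := \Phi - \frac{\lambda}{2}\|\cdot\|^2$. Since $\Phi$ is $\lambda$-convex, Lemma~\ref{lem:lambdaconvexequivalent} gives that $\Psi$ is convex; it is clearly proper (it has the same effective domain as $\Phi$), and it is lower semicontinuous because $\Phi$ is lower semicontinuous and $\frac{\lambda}{2}\|\cdot\|^2$ is continuous. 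Accretivity (monotonicity) of $\partial\Psi$ is already Proposition~\ref{prop:partiallambdaaccretive}, so the only remaining point is the range condition.

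For m-accretivity, by Proposition~\ref{prop:maccretive} it is enough to check that $\operatorname{Range}(I_H + \partial\Psi) = H$; that is, for every $z \in H$ the inclusion $z \in x + \partial\Psi(x)$ has a solution $x$. The standard argument is variational: fix $z\in H$ and consider the functional
\[
F(x) := \Psi(x) + \tfrac12\|x - z\|^2 .
\]
Because $\Psi$ is proper, convex, lower semicontinuous, it is bounded below by an affine function, so $F$ is proper, coercive (the quadratic term dominates), strictly convex, and lower semicontinuous; hence $F$ attains its infimum at a unique point $x^*$. Writing out the first-order optimality condition $0 \in \partial F(x^*)$ and using that $\partial$ of the sum contains the sum of the subdifferentials (here the quadratic term is everywhere differentiable, so in fact $\partial F(x^*) = \partial\Psi(x^*) + (x^* - z)$), we get $z - x^* \in \partial\Psi(x^*)$, i.e. $z \in (I_H + \partial\Psi)(x^*)$. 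Thus $\operatorname{Range}(I_H + \partial\Psi) = H$, and combined with Proposition~\ref{prop:partiallambdaaccretive} this shows $\partial\Psi$ is m-accretive, hence $\partial^{\lambda}\Phi$ is $(-\lambda)$-m-accretive.

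The only genuinely non-routine ingredient is the existence of the minimizer $x^*$ and the justification that $0 \in \partial F(x^*)$ yields $z - x^* \in \partial\Psi(x^*)$ rather than merely $z - x^* \in \partial F(x^*)\ominus\{x^*-z\}$; this is where the differentiability of the quadratic perturbation is used to split the subdifferential cleanly. Both pieces are classical convex-analysis facts, and indeed the cleanest route is to invoke them directly from Proposition~\ref{subdiffProp} (or the cited references in Appendix~\ref{sec:C}) applied to $\Psi$: Proposition~\ref{subdiffProp} should already state that $\partial\Psi$ is m-accretive for proper, l.s.c., convex $\Psi$, in which case the proof collapses to the two observations that $\Psi$ has these three properties and that $\partial^{\lambda}\Phi - \lambda I_H = \partial\Psi$. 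I would present it in that short form, flagging the variational argument above only as the reason the cited result holds.
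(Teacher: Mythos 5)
Your reduction step is exactly the paper's: write $\partial^{\lambda}\Phi - \lambda I_H = \partial\bigl(\Phi - \tfrac{\lambda}{2}\|\cdot\|^2\bigr)$, observe that $\Psi := \Phi - \tfrac{\lambda}{2}\|\cdot\|^2$ is proper, convex, and lower semicontinuous, and invoke Definition~\ref{def:omegamaccretive} to reduce to showing that $\partial\Psi$ is m-accretive. Where you and the paper diverge is on the last step. The paper simply cites \cite[Theorem 2.8]{barbu2010nonlinear} for the maximal monotonicity of the subdifferential of a proper lsc convex function on a Hilbert space and then spends a sentence translating monotone/accretive terminology via Lemma~\ref{Hilbertaccretive} and Proposition~\ref{propMaccret}. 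You instead prove the range condition directly via the Moreau proximal minimization of $F(x)=\Psi(x)+\tfrac12\|x-z\|^2$: existence and uniqueness of $x^*$ from coercivity, strict convexity and weak lsc; then $z-x^*\in\partial\Psi(x^*)$, either by the subdifferential sum rule (the quadratic being everywhere continuous and differentiable) or by a one-sided Taylor argument from minimality. This is a correct and self-contained proof of exactly the fact the paper outsources, so it buys self-containedness at the cost of a few more lines; the paper's citation-plus-translation buys brevity.

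One small correction: you suggest that Proposition~\ref{subdiffProp} ``should already state that $\partial\Psi$ is m-accretive for proper, lsc, convex $\Psi$,'' but it does not. That proposition only records continuity on the interior of the domain, weak lower semicontinuity, nonemptiness of $\partial\Phi$ on the interior of the domain, and density of $D(\partial\Phi)$ in $\operatorname{dom}(\Phi)$. So the short form you envisage cannot cite Proposition~\ref{subdiffProp}; the paper's choice to cite \cite[Theorem 2.8]{barbu2010nonlinear} (with the translation between maximal monotone and m-accretive via Proposition~\ref{propMaccret}) is the correct way to compress, and otherwise your variational argument needs to be kept in the proof rather than demoted to a remark.
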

\begin{proof}
By Definition~\ref{def:omegamaccretive}, $(-\lambda)$-m-accretivity of $\partial^\lambda \Phi$ is equivalent to m-accretivity of $\partial^{\lambda}\Phi-\lambda I$. By the definition of $\partial^\lambda \Phi$ in Definition~\ref{def:subdiflambda} $\partial^{\lambda}\Phi-\lambda I = \partial\big( \Phi-\frac{\lambda}{2}\Vert \cdot \Vert^2 \big)$. Since $\Phi-\frac{\lambda}{2}\Vert \cdot \Vert^2$ is proper, convex (i.e., $0$-convex) and lower semicontinuous, it suffices to prove this theorem in the case that $\lambda=0$. A proof of this case can be found in \cite[Theorem 2.8]{barbu2010nonlinear}, where we note that on a Hilbert space maximal monotonicity of an operator (which is what the conclusion of \cite[Theorem 2.8]{barbu2010nonlinear} gives us) is equivalent to m-accretivity. Indeed, comparing \cite[Definition 2.1]{barbu2010nonlinear} with Lemma~\ref{Hilbertaccretive} and Definition~\ref{def:maximalmonotone}, we see that on Hilbert spaces the concept of (maximal) monotonicity in \cite{barbu2010nonlinear} is equivalent to (maximal) accretivity; Proposition~\ref{propMaccret} establishes equivalence between maximal accretivity and m-accretivity in the Hilbert space setting.
\end{proof}

\begin{definition}
\label{MoreauProx}
Let $\Phi:H \rightarrow (-\infty,+\infty]$ be proper. We define the Moreau envelope with parameter $\gamma>0$ to be the function $[\Phi]^{\gamma}:H \rightarrow [-\infty,+\infty)$ defined by
$$ [\Phi]^{\gamma}(x) := \underset{y\in H}{\inf} \left(\Phi(y) + \frac{1}{2\gamma}\Vert y-x \Vert^2\right).$$
Moreover, we define the operator $J_{\gamma}(\Phi)$ on $H$ to be such that $(x,y)\in J_{\gamma}(\Phi)$ if and only if
$$ [\Phi]^{\gamma}(x) = \Phi(y) + \frac{1}{2\gamma}\Vert y-x \Vert^2.$$
We call $J_{\gamma}(\Phi)$ the proximal operator.
\end{definition}

Next we establish some properties of Moreau envelopes and afterwards prove a lemma that connects the proximal operator to the operator $\partial^{\lambda}\Phi$.

\begin{remark}
\label{rem:envelopesupport} Let $\Phi:H \rightarrow (-\infty,+\infty]$ be proper. The graph of a Moreau envelope of a function lies below the graph of that function. Indeed, for all $\gamma>0$ and all $x\in H$,
$$ [\Phi]^{\gamma}(x)\leq \Phi(x) + \frac{1}{2\gamma}\Vert x-x \Vert^2 = \Phi(x).$$
Moreover the envelope coincides with the function at minimizers. Given a minimizer $x^*$ of $\Phi$ we note that, for all $\gamma>0$ and all $x\in H$,
$$ \Phi(x^*) \leq \Phi(x) + \frac{1}{2\gamma}\Vert x-x^* \Vert^2$$
and so $\Phi(x^*) \leq [\Phi]^{\gamma}(x^*)$; hence $\Phi(x^*) = [\Phi]^{\gamma}(x^*)$.
\end{remark}

We recall that the definition of the interval $\mathfrak{I}_{-\lambda}$ is given in \eqref{OmegaInterval}. In what follows, in a slight abuse of notation, if $J_\gamma(\Phi)(x)$ is a singleton, we use the same notation $J_{\gamma}(\Phi)(x)$ (or $J_{\gamma}(\Phi)x$) for the set and for the single element in the set.

\begin{proposition}\label{envelopeProp}
Let $\lambda \in \mathbb{R}$, $\Phi:H \rightarrow (-\infty,+\infty]$ be proper, $\lambda$-convex, and lower semicontinuous. Then the following hold.
\begin{enumerate}
    \item For all $\gamma \in \mathfrak{I}_{-\lambda}$ and for all $x\in H$, $J_{\gamma}(\Phi)(x)$ is a singleton.
    \item For all $x\in H$, $\gamma \mapsto [\Phi]^{\gamma}(x)$ is non-increasing. Moreover, if $\lambda \geq 0$, then $\gamma \mapsto \Phi(J_{\gamma}(\Phi)x)$ is non-increasing and $[\Phi]^{\gamma}(x) \rightarrow \inf_{y\in H} \Phi(y)$ as $\gamma \rightarrow \infty$.
    \item For all $x\in H$, $[\Phi]^{\gamma}(x) \rightarrow \Phi(x)$ as $\gamma\rightarrow 0$. 
    \item For all $\gamma>0$ and $\delta>0$ such that $\gamma +\delta \in \mathfrak{I}_{-\lambda}$ we have
    $$ [[\Phi]^{\gamma}]^{\delta} = [\Phi]^{\gamma+\delta}.$$
\end{enumerate}
\end{proposition}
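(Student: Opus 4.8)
\emph{Part 1.} The plan is to treat the four parts in turn; Parts~1 and~4 are essentially self-contained, while Parts~2 and~3 build on Part~1 together with the elementary monotonicity of $\gamma\mapsto[\Phi]^\gamma(x)$. For Part~1, the key observation is that $\gamma\in\mathfrak I_{-\lambda}$ is equivalent to $1+\lambda\gamma>0$, i.e.\ to $\lambda+1/\gamma>0$. I would rewrite the objective of the minimisation defining $J_\gamma(\Phi)(x)$ as
\[
\Phi(y)+\frac{1}{2\gamma}\|y-x\|^2=\Bigl(\Phi(y)-\frac{\lambda}{2}\|y\|^2\Bigr)+\frac{\lambda}{2}\|y\|^2+\frac{1}{2\gamma}\|y-x\|^2 ,
\]
where the first bracket is convex by Lemma~\ref{lem:lambdaconvexequivalent}, and the remaining two terms form a quadratic with Hessian $(\lambda+1/\gamma)I_H$, which is positive definite exactly for $\gamma\in\mathfrak I_{-\lambda}$. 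Hence the objective is proper, lower semicontinuous and strongly convex, so it attains its infimum at a unique point, which is then the sole element of $J_\gamma(\Phi)(x)$. I expect this step to be routine.

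\emph{Part 2.} Monotonicity of $\gamma\mapsto[\Phi]^\gamma(x)$ is immediate: for $0<\gamma_1\le\gamma_2$ we have $1/(2\gamma_1)\ge 1/(2\gamma_2)$, so the family of functions of $y$ whose infimum is $[\Phi]^{\gamma_1}(x)$ dominates pointwise the corresponding family for $[\Phi]^{\gamma_2}(x)$. For the claim on $\gamma\mapsto\Phi(J_\gamma(\Phi)x)$ when $\lambda\ge0$ (so that every $\gamma>0$ lies in $\mathfrak I_{-\lambda}$ and Part~1 applies), I would fix $0<\gamma_1<\gamma_2$, put $p_i:=J_{\gamma_i}(\Phi)x$, test the minimisation for $p_1$ against $p_2$ and that for $p_2$ against $p_1$, and add the two resulting inequalities; this gives $(1/(2\gamma_1)-1/(2\gamma_2))(\|p_1-x\|^2-\|p_2-x\|^2)\le0$, hence $\|p_1-x\|\le\|p_2-x\|$, and inserting this back into the optimality inequality for $p_2$ yields $\Phi(p_2)\le\Phi(p_1)$. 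Finally, $[\Phi]^\gamma(x)\ge\inf_{y\in H}\Phi(y)$ for every $\gamma$, while $[\Phi]^\gamma(x)\le\Phi(y)+\frac{1}{2\gamma}\|y-x\|^2$ for each fixed $y$; letting $\gamma\to\infty$ in the latter (the limit exists by monotonicity) gives $\lim_{\gamma\to\infty}[\Phi]^\gamma(x)\le\Phi(y)$ for all $y$, hence $\le\inf_{y\in H}\Phi(y)$, which combined with the former forces $\lim_{\gamma\to\infty}[\Phi]^\gamma(x)=\inf_{y\in H}\Phi(y)$ (the argument also covers $\inf_{y\in H}\Phi(y)=-\infty$).

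\emph{Part 3.} By Remark~\ref{rem:envelopesupport}, $[\Phi]^\gamma(x)\le\Phi(x)$ for every $\gamma>0$, so it remains to prove $\liminf_{\gamma\to0}[\Phi]^\gamma(x)\ge\Phi(x)$. Since $\Phi-\frac{\lambda}{2}\|\cdot\|^2$ is proper, convex and lower semicontinuous it has an affine minorant, so $\Phi(y)\ge-c_1\|y\|^2-c_2\|y\|-c_3$ for some $c_1,c_2,c_3\ge0$. A direct estimate then shows that for each fixed $\varepsilon>0$,
\[
\inf_{\|y-x\|\ge\varepsilon}\Bigl(\Phi(y)+\frac{1}{2\gamma}\|y-x\|^2\Bigr)\longrightarrow+\infty\qquad\text{as }\gamma\to0 ,
\]
because for small $\gamma$ the term $\frac{1}{2\gamma}\|y-x\|^2$ overwhelms $c_1\|y\|^2$, leaving a coercive quadratic whose minimum over $\{\,y:\|y-x\|\ge\varepsilon\,\}$ diverges. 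Hence, for $\gamma$ small enough, $[\Phi]^\gamma(x)=\inf_{\|y-x\|<\varepsilon}(\Phi(y)+\frac{1}{2\gamma}\|y-x\|^2)\ge\inf_{\|y-x\|<\varepsilon}\Phi(y)$, and lower semicontinuity of $\Phi$ at $x$ makes the right-hand side arbitrarily close to $\Phi(x)$ as $\varepsilon\downarrow0$, both when $\Phi(x)$ is finite and when $\Phi(x)=+\infty$. This tail estimate is the main, though fairly mild, technical obstacle of the proposition.

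\emph{Part 4.} This is the semigroup identity for infimal convolutions. First, since $\gamma<\gamma+\delta$ and $\gamma+\delta\in\mathfrak I_{-\lambda}$, also $\gamma\in\mathfrak I_{-\lambda}$, so the argument of Part~1 shows $[\Phi]^\gamma$ is real-valued, hence proper, so that Definition~\ref{MoreauProx} applies to it. Unfolding the definitions,
\[
[[\Phi]^\gamma]^\delta(x)=\inf_{z\in H}\Bigl([\Phi]^\gamma(z)+\frac{1}{2\delta}\|z-x\|^2\Bigr)=\inf_{y\in H}\,\inf_{z\in H}\Bigl(\Phi(y)+\frac{1}{2\gamma}\|y-z\|^2+\frac{1}{2\delta}\|z-x\|^2\Bigr),
\]
the interchange of the two infima being always permissible. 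For fixed $y$ the inner minimisation over $z$ is an unconstrained strictly convex quadratic, attained at $z^\ast=(\gamma x+\delta y)/(\gamma+\delta)$ with value $\frac{1}{2(\gamma+\delta)}\|y-x\|^2$; substituting this back leaves $\inf_{y\in H}(\Phi(y)+\frac{1}{2(\gamma+\delta)}\|y-x\|^2)=[\Phi]^{\gamma+\delta}(x)$, as claimed.
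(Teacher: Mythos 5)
Your proof is correct and, in Parts~2, 3 and 4, deliberately more self-contained than the paper's, which leans on citations to \cite{Bauschke}. The key differences are worth noting.

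\textbf{Part 1} follows the paper essentially verbatim: the objective is $\left(\tfrac1\gamma+\lambda\right)$-convex and lower semicontinuous, and existence/uniqueness of the minimiser follows. The paper invokes its Proposition~\ref{ConvexMin}; your strong-convexity phrasing is equivalent.

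\textbf{Part 2.} The first monotonicity is the same elementary observation. For the monotonicity of $\gamma\mapsto\Phi(J_\gamma(\Phi)x)$ the paper simply cites \cite[Proposition 12.27]{Bauschke}, whereas you prove it directly by testing the two optimality conditions against each other, extracting $\Vert p_1-x\Vert\le\Vert p_2-x\Vert$, and substituting back. This is a clean, correct, self-contained alternative. Your argument for $[\Phi]^\gamma(x)\to\inf\Phi$ is also sound, including the degenerate $\inf\Phi=-\infty$ case.

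\textbf{Part 3.} This is where you and the paper diverge most. The paper handles $\lambda\ge0$ by citing \cite[Proposition 12.33(ii)]{Bauschke} and reduces $\lambda<0$ to that case via the change of variables $\Psi(y):=\Phi(y)-\tfrac{\lambda}{2}\Vert y-x\Vert^2$, which is convex and satisfies $[\Phi]^\gamma(x)=[\Psi]^{\gamma/(1+\gamma\lambda)}(x)$. Your route is a from-scratch argument: use an affine minorant of the convex part to get a quadratic lower bound on $\Phi$, show that the infimum over $\{\Vert y-x\Vert\ge\varepsilon\}$ blows up as $\gamma\downarrow0$, and then let $\varepsilon\downarrow0$ using lower semicontinuity of $\Phi$ at $x$. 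The idea is right and the tail estimate does go through, but watch one slight imprecision: for small $\gamma$ you cannot quite assert an \emph{equality} $[\Phi]^\gamma(x)=\inf_{\Vert y-x\Vert<\varepsilon}(\cdots)$ when $\Phi(x)=+\infty$ (both the near and far infima may be large); what you actually need, and what your estimate delivers, is the inequality $[\Phi]^\gamma(x)\ge\min\bigl(\inf_{\Vert y-x\Vert<\varepsilon}\Phi,\,M\bigr)$ for any prescribed $M$ once $\gamma$ is small enough. The paper's change-of-variables reduction is shorter and more elegant; yours has the merit of being completely elementary.

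\textbf{Part 4.} You give the standard direct computation via interchange of infima and the explicit minimisation over $z$. The paper simply cites \cite[Proposition 12.22(ii)]{Bauschke}. Both are correct; your version makes explicit where the Hilbert-space inner product is actually used (the inner quadratic minimisation), which the paper itself remarks on elsewhere as the reason the result does not trivially extend to metric spaces.

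In short: no gaps, and in Parts~2--4 your argument replaces the paper's reliance on the Bauschke monograph by direct computations, at the cost of a slightly longer and less slick Part~3.
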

\begin{proof}
\begin{enumerate}
    \item Let $\gamma\in \mathfrak{I}_{-\lambda}$ and $x\in H$. The function $y\mapsto \Phi(y)+\frac{1}{2\gamma}\Vert y-x \Vert^2$ is $\big(\frac{1}{\gamma} + \lambda \big)$-convex. By the definition of $\mathfrak{I}_{-\lambda}$ in \eqref{OmegaInterval}, $\frac{1}{\gamma} + \lambda>0$ and thus Proposition \ref{ConvexMin} tells us that the infimum in the definition of the Moreau envelope $[\Phi]^\gamma(x)$ is achieved at a unique $y\in H$. Hence, for every $x\in H$ there is exactly one $y\in H$ such that $(x,y)\in J_\gamma(\Phi)$. This proves the claim.
    
    \item Let $x\in H$. If $\gamma \leq \delta$, then, for all $y\in H$,
    $$\Phi(y)+\frac{1}{2\gamma} \Vert y-x \Vert^2 \geq \Phi(y)+\frac{1}{2\delta} \Vert y-x \Vert^2$$
    and so $[\phi]^{\gamma}(x) \geq [\phi]^{\delta}(x)$.

    Next assume that $\lambda\geq 0$. A proof that $\gamma \mapsto \Phi(J_{\gamma}(\Phi)x)$ is non-increasing can be found in \cite[Proposition 12.27]{Bauschke}.

    Given $\varepsilon>0$, choose $z\in H$ such that $\Phi(z) \leq \inf_{y\in H} \Phi(y) + \varepsilon$. For $\gamma \geq \frac{1}{2\varepsilon} \Vert z-x \Vert^2$ we have
    $$ \inf_{y\in H} \Phi(y) \leq [\Phi]^{\gamma}(x) \leq \Phi(z) + \frac{1}{2\gamma} \Vert x-z \Vert^2 \leq \inf_{y\in H} \Phi(y) +2\varepsilon.$$
    From this we conclude that $[\Phi]^{\gamma}(x) \rightarrow \inf_{y\in H} \Phi(y)$ as $\gamma \rightarrow +\infty$. 
    
    \item The case when $\lambda \geq 0$ is proved in \cite[Proposition 12.33 (ii)]{Bauschke}. Here we address the case when $\lambda<0$. Let $x\in H$ and define the function $\Psi:H \rightarrow (-\infty,+\infty]$ by
    \begin{equation}\label{eq:Psi}
    \Psi(y):= \Phi(y) - \frac{\lambda}{2} \Vert y-x \Vert^2.
    \end{equation}
    We note that $\Psi$ is convex and lower semicontinuous. Moreover,
    \begin{equation}\label{eq:Psiequality}
    \Phi(y) +\frac{1}{2\gamma}\Vert y - x \Vert^2 = \Psi(y) +\frac{1+\gamma \lambda}{2\gamma} \Vert y-x \Vert^2.
    \end{equation}
    Since, for all $\gamma \in \mathfrak{I}_{-\lambda}$, $1+\gamma \lambda >0$, we deduce that 
    $ [\Phi]^{\gamma}(x) = [\Psi]^{\frac{\gamma}{1+\gamma \lambda}}(x)$.
    Given that the result for $\lambda =0$ is already proven and $\frac{\gamma}{1+\gamma \lambda} \rightarrow 0$ as $\gamma \rightarrow 0$ we conclude that
    $$ [\Phi]^{\gamma}(x) = [\Psi]^{\frac{\gamma}{1+\gamma \lambda}}(x) \rightarrow \Psi(x) = \Phi(x) \; \; \text{as} \; \; \gamma \rightarrow 0.$$
    
    \item See \cite[Proposition 12.22 (ii)]{Bauschke}.
\end{enumerate}
\end{proof}

Next we extend part 3 of Proposition~\ref{envelopeProp} to sequences in $\mathfrak{I}_{-\lambda}$ with limits other than zero.

\begin{proposition}
\label{envelopeProp2}
Let $\lambda \in \mathbb{R}$, $\Phi:H \rightarrow (-\infty,+\infty]$ be proper, $\lambda$-convex, and lower semicontinuous. Assume the sequence $(t_n)_{n\in \mathbb{N}} \subset \mathfrak{I}_{-\lambda}$ and $t_* \in \mathfrak{I}_{-\lambda}$ are such that $t_n \rightarrow t_*$ as $n\rightarrow \infty$. Let $x\in H$. Then $[\Phi]^{t_n}(x) \rightarrow [\Phi]^{t_*}(x)$ as $n\rightarrow \infty$.
\end{proposition}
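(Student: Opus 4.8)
The plan is to prove the stronger statement that the real-valued function $m:\mathfrak{I}_{-\lambda}\to\mathbb{R}$, $m(\gamma):=[\Phi]^{\gamma}(x)$, is continuous at $t_*$; this immediately yields $m(t_n)\to m(t_*)$. First I would record that $m$ is genuinely finite-valued on $\mathfrak{I}_{-\lambda}$: for $\gamma\in\mathfrak{I}_{-\lambda}$ the map $y\mapsto\Phi(y)+\frac{1}{2\gamma}\|y-x\|^2$ is $\big(\lambda+\frac1\gamma\big)$-convex with $\lambda+\frac1\gamma>0$, proper and lower semicontinuous, so by Proposition~\ref{ConvexMin} its infimum is attained at $J_\gamma(\Phi)(x)$ (this is part~1 of Proposition~\ref{envelopeProp}) and is therefore finite. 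Since $\mathfrak{I}_{-\lambda}$ is an open interval containing $t_*$, I would fix $a,b$ with $0<a<t_*<b$ and $[a,b]\subset\mathfrak{I}_{-\lambda}$, and discard the finitely many $n$ with $t_n\notin[a,b]$.

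The upper estimate $\limsup_{n\to\infty}m(t_n)\le m(t_*)$ is elementary: given $\varepsilon>0$, choose $y_\varepsilon\in\operatorname{dom}(\Phi)$ (nonempty since $\Phi$ is proper) with $\Phi(y_\varepsilon)+\frac{1}{2t_*}\|y_\varepsilon-x\|^2\le m(t_*)+\varepsilon$. Then $m(t_n)\le\Phi(y_\varepsilon)+\frac{1}{2t_n}\|y_\varepsilon-x\|^2$, whose right-hand side converges to $\Phi(y_\varepsilon)+\frac{1}{2t_*}\|y_\varepsilon-x\|^2\le m(t_*)+\varepsilon$ because $t_n\to t_*$; letting $\varepsilon\downarrow0$ gives the claim. (For the one-sided case $t_n\uparrow t_*$ this also follows directly from monotonicity of $m$, part~2 of Proposition~\ref{envelopeProp}.)

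The bulk of the work is the lower estimate $\liminf_{n\to\infty}m(t_n)\ge m(t_*)$, and the crucial ingredient is a bound on the proximal points $y_n:=J_{t_n}(\Phi)(x)$ uniform in $n$. Since $\Phi$ is $\lambda$-convex, the function $\Phi-\frac\lambda2\|\cdot\|^2$ is proper, convex and lower semicontinuous (Lemma~\ref{lem:lambdaconvexequivalent}), hence admits a continuous affine minorant — a standard fact, which one can also extract from Theorem~\ref{thm:partiallambdamaccretive} applied with modulus $0$: m-accretivity of $\partial\big(\Phi-\frac\lambda2\|\cdot\|^2\big)$ forces $\operatorname{Range}\big(I_H+\partial(\Phi-\frac\lambda2\|\cdot\|^2)\big)=H$, so this subdifferential is nonempty at some point, and any element of it gives the minorant. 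Thus $\Phi(y)\ge\frac\lambda2\|y\|^2+\langle v,y\rangle+c$ for some $v\in H$, $c\in\mathbb{R}$. Feeding this into $\Phi(y_n)+\frac{1}{2b}\|y_n-x\|^2\le\Phi(y_n)+\frac{1}{2t_n}\|y_n-x\|^2=m(t_n)\le m(a)$ (the last inequality by monotonicity, part~2 of Proposition~\ref{envelopeProp}, since $t_n\ge a$), and using that the coefficient $\frac\lambda2+\frac{1}{2b}=\frac{\lambda b+1}{2b}$ is strictly positive because $b\in\mathfrak{I}_{-\lambda}$, one obtains an inequality of the form $\kappa\|y_n\|^2\le A\|y_n\|+B$ with $\kappa>0$ and $A,B$ independent of $n$; this bounds $\sup_n\|y_n\|$, hence $M:=\sup_n\|y_n-x\|<\infty$. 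Now writing $\frac{1}{2t_n}\|y_n-x\|^2=\frac{1}{2t_*}\|y_n-x\|^2+\big(\frac{1}{2t_n}-\frac{1}{2t_*}\big)\|y_n-x\|^2$ and using $\Phi(y_n)+\frac{1}{2t_*}\|y_n-x\|^2\ge\inf_{y}\big(\Phi(y)+\frac{1}{2t_*}\|y-x\|^2\big)=m(t_*)$, one gets $m(t_n)\ge m(t_*)-\big|\frac{1}{2t_n}-\frac{1}{2t_*}\big|M^2$, whose right-hand side tends to $m(t_*)$ as $n\to\infty$ since $t_n\to t_*>0$.

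Combining the two estimates gives $\lim_{n\to\infty}m(t_n)=m(t_*)$, as desired. I expect the uniform bound on the proximal points $y_n$ to be the only delicate point; the rest is routine juggling of infima. An alternative strategy would exploit the semigroup property part~4 of Proposition~\ref{envelopeProp} to write $[\Phi]^{\gamma}=\big[[\Phi]^{t_*}\big]^{\gamma-t_*}$ for $\gamma>t_*$ and then invoke part~3 of Proposition~\ref{envelopeProp} for the function $[\Phi]^{t_*}$ (combined with monotonicity for the left side), but that would require first verifying that $[\Phi]^{t_*}$ is again proper, lower semicontinuous and $\mu$-convex for a suitable $\mu$, which amounts to comparable effort; I would favour the direct argument above.
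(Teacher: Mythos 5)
Your proof is correct, but it follows a genuinely different route from the paper's, and in one respect a cleaner one. The paper's proof of the lower estimate for $\lambda\geq 0$ bounds $\|J_{t_n}(\Phi)x\|$ using the monotonicity of $\gamma\mapsto\Phi(J_\gamma(\Phi)x)$ (part~2 of Proposition~\ref{envelopeProp}, which is stated only for $\lambda\geq 0$), then applies Banach--Alaoglu to extract a weak subsequential limit $z$ and invokes weak lower semicontinuity of $\Phi$ and of the norm to conclude $[\Phi]^{t_*}(x)\leq\liminf[\Phi]^{t_n}(x)$; for $\lambda<0$ the paper then has to run a separate reduction via $\Psi(y)=\Phi(y)-\frac{\lambda}{2}\|y-x\|^2$ and the reparametrisation $\gamma\mapsto\gamma/(1+\gamma\lambda)$. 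You instead derive a uniform a~priori bound on the proximal points $y_n=J_{t_n}(\Phi)(x)$ from the affine minorant of the convex function $\Phi-\frac{\lambda}{2}\|\cdot\|^2$ (legitimately obtained via m-accretivity of its subdifferential from Theorem~\ref{thm:partiallambdamaccretive}, whose range condition guarantees the subdifferential is nonempty somewhere), and then close the lower estimate by the purely algebraic decomposition $\frac{1}{2t_n}\|y_n-x\|^2=\frac{1}{2t_*}\|y_n-x\|^2+\bigl(\frac{1}{2t_n}-\frac{1}{2t_*}\bigr)\|y_n-x\|^2$, avoiding weak compactness and weak lower semicontinuity entirely. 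Because your bound on the proximal points uses only the affine minorant and the sign of $\frac{\lambda}{2}+\frac{1}{2b}$ (which is positive exactly because $b\in\mathfrak{I}_{-\lambda}$), your argument handles all $\lambda\in\mathbb{R}$ uniformly and dispenses with the paper's two-case structure. What the paper's approach buys is economy of citation within its own framework — it leans on facts already recorded (parts~2 and~3 of Proposition~\ref{envelopeProp} and the $\Psi$-trick it reuses elsewhere) and does not need to produce an affine minorant; what yours buys is a shorter, case-free, and more elementary lower bound. Both are correct.
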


\begin{proof}
Let $(t_n)$, $t_*$, and $x$ be as in the statement of the proposition. First we prove the result for $\lambda\geq0$. Since the sequence $(t_n)$ converges to $t_* \in \mathfrak{I}_{-\lambda}$, and thus in particular has a positive limit, there exist $\alpha, \beta \in (0,+\infty)$ such that, for all $n\in \mathbb{N}$, $\alpha \leq t_n \leq \beta$. Then, by part 2 of Proposition~\ref{envelopeProp}, $\Phi(J_{t_n}(\Phi)x) + \frac{1}{2t_n} \Vert J_{t_n}(\Phi)x - x \Vert^2 = [\Phi]^{t_n}(x) \leq  [\Phi]^{\alpha}(x)$. Moreover, again by part 2 of the same proposition, $\Phi(J_{t_n}(\Phi)x) \geq \Phi(J_{\beta}(\Phi)x)$ and hence
$ \Vert J_{t_n}(\Phi)x - x \Vert^2 \leq 2\beta \big( [\Phi]^{\alpha}(x) - \Phi(J_{\beta}(\Phi)x) \big).$ From this we deduce that
$$ \Vert J_{t_n}(\Phi) x\Vert^2 \leq 2\Vert x \Vert^2+ 2 \Vert J_{t_n}(\Phi)x - x \Vert^2 \leq 2\Vert x \Vert^2+4\beta\big( [\Phi]^{\alpha}(x) - \Phi(J_{\beta}(\Phi)x) \big). $$
Importantly, the sequence $(J_{t_n}(\Phi) x)_{n\in \mathbb{N}}$ is bounded and so by the Banach--Alaoglu theorem we deduce that there exists a $z\in H$ such that $(J_{t_n}(\Phi) x)$ converges weakly to $z$. Next we compute that
\begin{align}\label{eq:liminfinequalityMoreau}
    [\Phi]^{t_*}(x) &\leq \Phi(z) + \frac{1}{2t_*} \Vert x-z \Vert^2
    \leq \underset{n\rightarrow \infty}{\liminf} \Phi(J_{t_n}(\Phi)x) + \frac{1}{2t_n} \Vert J_{t_n}(\Phi)x - x\Vert^2\notag\\
    &= \underset{n\rightarrow \infty}{\liminf} [\Phi]^{t_n}(x).
\end{align}
The first inequality follows from the definition of the Moreau envelope. The second inequality follows since both $\Phi$ and the norm $\Vert \cdot \Vert$, being convex, are also weakly lower semicontinuous (see \cite[Theorem 9.1]{Bauschke}). Furthermore, 
\begin{align*}
    [\Phi]^{t_n}(x) &\leq \Phi(J_{t_*}(\Phi)x)+\frac{1}{2t_n} \Vert J_{t_*}(\Phi) x -x \Vert^2
    \rightarrow  \Phi(J_{t_*}(\Phi)x)+\frac{1}{2t_*} \Vert J_{t_*}(\Phi) x -x \Vert^2\\
    &= [\Phi]^{t_*}(x),
\end{align*}
where the convergence is as $n\to\infty$. It follows that $\underset{n\rightarrow \infty}{\limsup} [\Phi]^{t_n}(x) \leq [\Phi]^{t_*}(x)$, which, together with the inequality in \eqref{eq:liminfinequalityMoreau}, establishes that $[\Phi]^{t_n}(x) \rightarrow [\Phi]^{t_*}(x)$ as $n\rightarrow \infty$. 

Next we prove the result for the case when $\lambda<0$, using a similar approach as in the proof of part 3 of Proposition~\ref{envelopeProp}. Define the function $\Psi:H \rightarrow (-\infty,+\infty]$ as in \eqref{eq:Psi}. Then $\Psi$ is convex and lower semicontinuous. Moreover, for all $y\in H$ and all $n\in \mathbb{N}$, the equality in \eqref{eq:Psiequality} holds with $\gamma=t_n$ and thus, since $1+t_n \lambda >0$, we obtain $[\Phi]^{t_n}(x) = [\Psi]^{\frac{t_n}{1+t_n \lambda}}(x)$.
    Because the result is already proven for the case when $\lambda =0$ and $\frac{t_n}{1+t_n \lambda} \rightarrow \frac{t_*}{1+t_* \lambda}$ as $n \rightarrow \infty$, we conclude that
    $$ [\Phi]^{t_n}(x) = [\Psi]^{\frac{t_n}{1+t_n \lambda}}(x) \rightarrow [\Psi]^{\frac{t_*}{1+t_* \lambda}}(x) = [\Phi]^{t_*}(x) \; \; \text{as} \; \; n \rightarrow \infty.$$
\end{proof}

\begin{lemma}
\label{ProxResolve}
Let $\lambda \in \mathbb{R}$ and let $\Phi:H \rightarrow (-\infty,+\infty]$ be proper, $\lambda$-convex, and lower semicontinuous. Then, for all $\gamma \in \mathfrak{I}_{-\lambda}$, 
$$J_{\gamma}(\Phi) = R_{\gamma}(\partial^{\lambda}\Phi).$$
In this case we can identify $J_{\gamma}(\Phi)$ with a Lipschitz-continuous function on $H$ with Lipschitz constant $\frac{1}{1+\gamma \lambda}$.
\end{lemma}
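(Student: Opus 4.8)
The plan is to prove the operator identity $J_\gamma(\Phi)=R_\gamma(\partial^\lambda\Phi)$ as subsets of $H\times H$, and then obtain the closing Lipschitz statement from results already established. Throughout write $\Phi_\lambda:=\Phi-\frac\lambda2\|\cdot\|^2$, which by Lemma~\ref{lem:lambdaconvexequivalent} is proper, convex, and lower semicontinuous, and recall from Definition~\ref{def:subdiflambda} that $\partial^\lambda\Phi=\partial\Phi_\lambda+\lambda I_H$. A short computation from \eqref{OmegaInterval} shows that $\gamma\in\mathfrak I_{-\lambda}$ forces $1+\gamma\lambda>0$; this positivity is used repeatedly. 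The first step is to rewrite both memberships as elementary inequalities. By Definition~\ref{MoreauProx} and the fact that $[\Phi]^\gamma(x)$ is the infimum defining it, $(x,y)\in J_\gamma(\Phi)$ if and only if $y$ minimises $z\mapsto\Phi(z)+\frac{1}{2\gamma}\|z-x\|^2$ over $H$; and by Definition~\ref{def:operatorsoperations} together with $\partial^\lambda\Phi=\partial\Phi_\lambda+\lambda I_H$, $(x,y)\in R_\gamma(\partial^\lambda\Phi)$ if and only if $\frac{x-y}{\gamma}-\lambda y\in\partial\Phi_\lambda(y)$, which by Definition~\ref{def:subdifferential} and the definition of $\Phi_\lambda$ is equivalent to
\[
\Phi(y+h)\;\geq\;\Phi(y)+\Big\langle\tfrac{x-y}{\gamma},h\Big\rangle+\tfrac\lambda2\|h\|^2\qquad\text{for all }h\in H.
\]

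For the inclusion $R_\gamma(\partial^\lambda\Phi)\subset J_\gamma(\Phi)$ I would start from the displayed inequality, add $\frac{1}{2\gamma}\|y+h-x\|^2$ to both sides, and expand $\frac{1}{2\gamma}\|y+h-x\|^2=\frac{1}{2\gamma}\|y-x\|^2+\frac1\gamma\langle y-x,h\rangle+\frac{1}{2\gamma}\|h\|^2$; the cross terms cancel and, using $1+\gamma\lambda>0$, one is left with
\[
\Phi(y+h)+\tfrac{1}{2\gamma}\|y+h-x\|^2\;\geq\;\Phi(y)+\tfrac{1}{2\gamma}\|y-x\|^2+\tfrac{1+\gamma\lambda}{2\gamma}\|h\|^2\;\geq\;\Phi(y)+\tfrac{1}{2\gamma}\|y-x\|^2,
\]
so $y$ minimises the relevant functional, i.e. $(x,y)\in J_\gamma(\Phi)$.

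The reverse inclusion $J_\gamma(\Phi)\subset R_\gamma(\partial^\lambda\Phi)$ is the only real obstacle. Suppose $y$ minimises $z\mapsto\Phi(z)+\frac{1}{2\gamma}\|z-x\|^2$; comparing the values at $y$ and at $y+h$ and expanding the square gives the a priori weak first-order inequality $\Phi(y+h)\geq\Phi(y)+\langle\frac{x-y}{\gamma},h\rangle-\frac{1}{2\gamma}\|h\|^2$ for all $h$. To upgrade the error term from $-\frac{1}{2\gamma}\|h\|^2$ to the sharp $+\frac\lambda2\|h\|^2$ — the two have the right sign relation precisely because $1+\gamma\lambda>0$ — I would apply this inequality with $h$ replaced by $th$ for $t\in(0,1]$ and combine it with $\lambda$-convexity of $\Phi$ (Definition~\ref{def:lambdaconvex} applied to the points $y+h$ and $y$ with weight $t$, which bounds $\Phi(y+th)$ above by $t\Phi(y+h)+(1-t)\Phi(y)-\frac12\lambda t(1-t)\|h\|^2$); subtracting $\Phi(y)$, dividing by $t>0$, and letting $t\downarrow0$ yields exactly the displayed subdifferential inequality, hence $\frac{x-y}{\gamma}-\lambda y\in\partial\Phi_\lambda(y)$ and $(x,y)\in R_\gamma(\partial^\lambda\Phi)$. (The case $\Phi(y+h)=+\infty$ is trivial, so all terms above may be taken finite.) An alternative to the limiting argument is to write $\Phi(z)+\frac{1}{2\gamma}\|z-x\|^2=\Phi_\lambda(z)+q(z)$ with $q(z):=\frac\lambda2\|z\|^2+\frac{1}{2\gamma}\|z-x\|^2$ a finite, continuous, convex quadratic (convex since $\lambda+\frac1\gamma>0$), and to invoke the Moreau--Rockafellar sum rule $\partial(\Phi_\lambda+q)=\partial\Phi_\lambda+\nabla q$ together with the optimality condition $0\in\partial(\Phi_\lambda+q)(y)$.

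Finally, once the identity $J_\gamma(\Phi)=R_\gamma(\partial^\lambda\Phi)$ is in hand, the Lipschitz statement follows directly: by Theorem~\ref{thm:partiallambdamaccretive} the operator $\partial^\lambda\Phi$ is $(-\lambda)$-m-accretive, so by Proposition~\ref{Omega2} its resolvent $R_\gamma(\partial^\lambda\Phi)$ has domain all of $H$ for $\gamma\in\mathfrak I_{-\lambda}$, and by Proposition~\ref{Omega1} and Remark~\ref{accremark} it is a single-valued $\frac{1}{1-\gamma(-\lambda)}=\frac{1}{1+\gamma\lambda}$-Lipschitz map on $H$ (consistent with part~1 of Proposition~\ref{envelopeProp}, which already gives single-valuedness of $J_\gamma(\Phi)$). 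Transporting this across the identity gives the claimed identification of $J_\gamma(\Phi)$.
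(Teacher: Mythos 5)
Your proof is correct. The forward inclusion $R_\gamma(\partial^\lambda\Phi)\subset J_\gamma(\Phi)$ is essentially the same algebraic expansion as the paper's proof: you add $\tfrac{1}{2\gamma}\|y+h-x\|^2$ to the subdifferential inequality, observe the cross terms cancel, and use $1+\gamma\lambda>0$ to drop the residual $\tfrac{1+\gamma\lambda}{2\gamma}\|h\|^2\geq 0$; the paper does exactly this (writing $z$ for $y+h$).

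Where you diverge is that you also prove the reverse inclusion $J_\gamma(\Phi)\subset R_\gamma(\partial^\lambda\Phi)$ directly, by upgrading the weak first-order condition $\Phi(y+h)\geq\Phi(y)+\langle\tfrac{x-y}{\gamma},h\rangle-\tfrac{1}{2\gamma}\|h\|^2$ to the sharp subdifferential inequality via $\lambda$-convexity and a limit $t\downarrow 0$. The paper sidesteps this: having established one inclusion, it notes that $R_\gamma(\partial^\lambda\Phi)(x)$ is a singleton for all $x\in H$ (by Theorem~\ref{thm:partiallambdamaccretive}, Proposition~\ref{Omega2} and Remark~\ref{accremark}) and that $J_\gamma(\Phi)(x)$ is a singleton (part~1 of Proposition~\ref{envelopeProp}), so the one inclusion already gives equality. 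Both routes are sound. Your direct argument is more self-contained — it would still work even if one did not know a priori that $J_\gamma(\Phi)$ is single-valued — while the paper's is more economical given the machinery already set up. Your alternative via the Moreau--Rockafellar sum rule applied to $\Phi_\lambda+q$ is also valid, though it imports an external result the paper does not state, whereas the $t\downarrow 0$ limiting argument stays within the tools the paper has already developed. The concluding Lipschitz identification is handled identically in both proofs, via Remark~\ref{accremark} and the $(-\lambda)$-m-accretivity of $\partial^\lambda\Phi$.
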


\begin{proof}
Let $\lambda\in \mathbb{R}$, $\gamma \in \mathfrak{I}_{-\lambda}$, $x\in H$. By Theorem~\ref{thm:partiallambdamaccretive} $\partial^\lambda \Phi$ is m-accretive and thus, since $\gamma>0$, by Remark~\ref{accremark}, $R_\gamma(\partial^\lambda \Phi)$ is a singleton. Define $y$ to be the unique element in $R_{\gamma}(\partial^{\lambda}\Phi)(x)$. By the definition of the resolvent in Definition~\ref{def:operatorsoperations} we have $\frac{x-y}{\gamma} \in \partial^{\lambda} \Phi(y)$. It follows that
$$ \left(y,\frac{x-y}{\gamma} - \lambda y \right) \in \partial\big( \Phi - \frac{\lambda}{2} \Vert \cdot\Vert^2 \big) .   $$
Then, for all $z \in H$,
\begin{align*}
\Phi(z) + \frac{1}{2\gamma} \Vert z -x \Vert^2 &= \Phi(z) -\frac{\lambda}{2} \Vert z \Vert^2 + \frac{\lambda}{2} \Vert z \Vert^2 +  \frac{1}{2\gamma} \Vert z -x \Vert^2\\
&\geq \Phi(y) -\frac{\lambda}{2} \Vert y \Vert^2 + \left\langle \frac{x-y}{\gamma} - \lambda y, z-y \right\rangle  + \frac{\lambda}{2} \Vert y +z-y \Vert^2  + \frac{1}{2\gamma} \Vert z -y + y - x \Vert^2\\
&= \Phi(y) -\frac{\lambda}{2} \Vert y \Vert^2 +\frac{1}{\gamma} \langle x-y, z-y \rangle -\lambda \langle y,z-y \rangle \\
&+\frac{\lambda}{2} \Vert y \Vert^2 +\lambda \langle y,z-y \rangle + \frac{\lambda}{2}\Vert z-y \Vert^2 + \frac{1}{2\gamma} \Vert z-y \Vert^2 + \frac{1}{\gamma} \langle y-x, z-y \rangle + \frac{1}{2\gamma} \Vert y -x \Vert^2\\
&= \Phi(y) + \frac{1}{2\gamma} \Vert y-x \Vert^2 
+ \left( \frac{\lambda}{2} + \frac{1}{2\gamma}\right)\Vert z-y \Vert^2\\
&\geq \Phi(y) + \frac{1}{2\gamma} \Vert y-x \Vert^2.
\end{align*}
The first inequality follows from the definition of subdifferential in Definition~\ref{def:subdifferential} and the second follows since $\frac{1}{\gamma} + \lambda >0$, because $\gamma\in \mathfrak{I}_{-\lambda}$. 

From the inequality above we conclude that $y$ minimizes $\Phi(\cdot) + \frac{1}{2\gamma}\Vert\cdot -x \Vert^2$ over $H$; moreover by part 1 of Proposition~\ref{envelopeProp} it is the unique minimizer. Thus our result holds.

The final claim of the lemma holds by Remark~\ref{accremark}, because, by Theorem~\ref{thm:partiallambdamaccretive}, $\partial^\lambda \Phi$ is $(-\lambda)$-m-accretive.
\end{proof}

Now that we have established the prerequisites for $\lambda$-convex functions, we will define gradient flows and establish some of their properties.

\subsection{Definition and properties of gradient flows}

\begin{definition}
\label{gradflowdef}
Let $\lambda \in \mathbb{R}$, $\Phi:H \rightarrow (-\infty,+\infty]$ be proper, $\lambda$-convex, and lower semicontinuous. We say a curve $u:(0,+\infty)\rightarrow H$ is the gradient flow of $\Phi$ (over $H$) starting at $x_0\in H$ if $u$ is locally absolutely continuous, $u(t)\rightarrow x_0$ as $t\downarrow 0$, and $u$ satisfies the following inequality called the evolution variational inequality: for a.e. $t\in [0,+\infty)$ and for all $v\in H$,
$$ \frac{1}{2}\frac{d}{dt}\Vert u(t) -v \Vert^2 + \frac{1}{2}\lambda \Vert u(t) -v \Vert^2 + \Phi(u(t)) \leq \Phi(v) .$$
\end{definition}

\begin{remark}\label{rem:gradientflow}
Let $\Phi$ be as in Definition~\ref{gradflowdef}. Then by \cite[Theorem 4.0.4, i]{greenbook}, for all $x_0 \in \overline{\operatorname{dom}(\Phi)}$ the gradient flow $u$ of $\Phi$ starting at $x_0$ exists and is unique. Moreover, by \cite[Theorem 4.0.4, ii]{greenbook}, for all $t>0$, $u(t) \in \operatorname{dom}(\Phi)$, and thus by \cite[Lemma 2.2]{vanGennipGigaOkamoto26} $t\mapsto \Phi(u(t))$ is non-increasing on $[0,\infty)$ where, as a matter of convention, we define $u(0):=x_0$. 

Moreover, gradient flows satisfy the semigroup property (see Remark~\ref{rem:propertiessemigroups} in the following sense. If we define a family $\{S_t\}_{t\geq 0}$ of maps $S_t:\overline{\operatorname{dom}(\Phi)}\rightarrow H$ by $S_t(x_0):=u(t)$, where $u$ is the gradient flow starting from $x_0$, then, for all $T_1,T_2>0$,
$$ S_{T_1+T_2}(x_0)=S_{T_2}(S_{T_1}(x_0)).$$
\end{remark}

There are other possible ways to define a gradient flow. We have chosen the formulation in Definition~\ref{gradflowdef} since it suits our current purposes, but note that this definition coincides with other common formulations. Notably, a gradient flow as in Definition~\ref{gradflowdef} is also a curve of maximal slope with respect to the upper gradient $|\partial \Phi|$ (as in \cite[Definition 1.2.4]{greenbook}). Moreover, it also coincides with a minimizing movement for $\Phi$ (as in \cite[Definition 2.0.6]{greenbook}). For full details on these equivalences we refer the reader to \cite[Theorem 4.0.4]{greenbook}. In the following remark, we collect the relevant results needed to connect Definition~\ref{gradflowdef} with the notion of curves of maximal slope. We will not go into full detail here and refer to the appropriate places in \cite{greenbook} for further information.

\begin{remark}
\label{rem:maxslope}
Let $\Phi: H \to (-\infty, +\infty]$ satisfy all the conditions in Definition~\ref{gradflowdef}, let $u:[0,+\infty)\rightarrow H$ be the gradient flow starting at $x_0 \in \overline{\operatorname{dom}(\Phi)}$ as in Definition~\ref{gradflowdef}, and let $\delta>0$. Then, by the semigroup property in \cite[Theorem 4.0.4, iv]{greenbook}, the map $t\mapsto u(t+\delta)$ is the gradient flow of $\Phi$ starting at $u(\delta)$. Moreover, as noted in \cite[Theorem 4.0.4, ii]{greenbook}, $u(\delta)\in \operatorname{dom}(\Phi)$. Now combining  \cite[Theorem 2.3.3]{greenbook} and \cite[Corollary 2.4.10]{greenbook} we note $t\mapsto u(t+\delta)$ is the unique curve of maximal slope for $\Phi$ with respect to the strong upper gradient $|\partial\Phi|$ starting from $u(\delta)$. The definition of maximal slopes with respect to upper gradients is given in \cite[Definition 1.3.2]{greenbook}. We emphasise here that \cite[Theorem 2.3.3]{greenbook} requires the initial condition of $u$ to be in $\operatorname{dom}(\Phi)$, which we cannot guarantee for $x_0$, hence the need to start from $u(\delta)$. 

Since $t\mapsto u(t+\delta)$ is a curve of maximal slope, \cite[Remark 1.3.3]{greenbook} establishes that the map $t\mapsto \Phi(u(t+\delta))$ is locally absolutely continuous on $(0,\infty)$. Given that $\delta>0$ was chosen arbitrarily, we deduce that $t\mapsto \Phi(u(t))$ is locally absolutely continuous on $(0,\infty)$.

We also note that \cite[Theorem 2.3.3]{greenbook} states that the following energy identity holds for all $T>\delta$:
$$ \frac{1}{2} \int^T_\delta |u'|^2(t)dt +  \frac{1}{2} \int^T_\delta |\partial\Phi|^2(u(t))dt + \Phi(u(T)) = \Phi(u(\delta)) . $$
From this we read that $t\mapsto \Phi(u(t))$ is non-increasing on $[\delta, +\infty)$. Since $\delta>0$ is arbitrary, the function is non-increasing on $(0,\infty)$. In the case that $x_0\notin \operatorname{dom}(\Phi)$, we have $\Phi(x_0)=+\infty$, and thus $t\mapsto \Phi(u(t))$ is non-increasing on $[0,+\infty)$. If instead $x_0\in \operatorname{dom}(\Phi)$, we could simply take $\delta=0$ from the start of the argument and again conclude that $t\mapsto \Phi(u(t))$ is non-increasing on $[0,+\infty)$. This gives an alternative justification of \cite[Lemma 2.2]{vanGennipGigaOkamoto26}. 
\end{remark}

\begin{remark}\label{rem:minimizingmovement}
    Again, let $\Phi: H \to (-\infty, +\infty]$ satisfy all the conditions in Definition~\ref{gradflowdef} and let $u:[0,+\infty)\rightarrow H$ be the gradient flow starting at $x_0 \in \overline{\operatorname{dom}(\Phi)}$ as in Definition~\ref{gradflowdef}. As a consequence of \cite[Theorem 4.0.4, i]{greenbook} $u$ is a minimizing movement for $\Phi$ starting at $x_0$. The definition of minimizing movements is given in \cite[Definition 2.0.6]{greenbook}. 

In particular, recalling the operator $J_\gamma(\Phi)$ from Definition~\ref{MoreauProx}, by \cite[Theorem 4.0.4]{greenbook} we have, for all $t\in (0,+\infty)$
\begin{equation}\label{eq:minimizinglimit}
u(t) = \lim_{n \rightarrow \infty}J_{t/n}^n(\Phi)(x_0),
\end{equation}
where $J^n_{t/n}(\Phi) := \left(J_{t/n}(\Phi)\right)^n$. Similarly as in Remark~\ref{rem:whyrangecon}, this should be understood as an $n$-fold composition of the function associated with $J_{t/n}(\Phi)$. Indeed, for $n$ large enough we have $t/n\in \mathfrak{I}_{-\lambda}$ and thus, by part~1 of Proposition~\ref{envelopeProp}, $J_{t/n}(\Phi)(x_0)$ is a singleton and $J_{t/n}(\Phi)$ can be understood as a function from $H$ to $H$.

In this paper, our use of the minimizing-movement formulation of gradient flows is restricted to the use of the formula in \eqref{eq:minimizinglimit}. 
\end{remark}

We will make use of the minimizing-movement formulation mentioned in Remark~\ref{rem:minimizingmovement}, and equation \eqref{eq:minimizinglimit} specifically, for two results. First, in Theorem~\ref{GradFlowTheorem} we improve the a-priori estimate for the regularizing effect given in \cite[Theorem 4.3.2]{greenbook} via a proof strategy different from the one in \cite{greenbook}. Our result will be limited to the setting of a Hilbert space, whereas \cite[Theorem 4.3.2]{greenbook} holds in a metric space. We expect that our result can be generalised to metric spaces as well (see also the short discussion at the end of this section), but since this is not required for our purposes here, this falls outside of the scope of this paper. Second, in Lemma~\ref{lem:gradflowsemigroup} we use the minimizing-movement formulation to characterise our gradient flows in terms of a semigroup generated by a suitable operator.

\begin{remark}\label{rem:kappa}
In preparation of Theorem~\ref{GradFlowTheorem} we note that the Moreau envelope $[\Phi]^{\kappa(t,\lambda)}(x_0)$ that appears in the statement of that theorem is well defined. To be precise: if $\lambda \in \mathbb{R}$, $t> 0$, $\mathfrak{I}_{-\lambda}$ is the interval from \eqref{OmegaInterval}, and $\kappa$ is as in Theorem~\ref{GradFlowTheorem}, then $\kappa(t,\lambda)\in \mathfrak{I}_{-\lambda}$. 

Indeed, if $\lambda\geq 0$, then $\mathfrak{I}_{-\lambda}=(0,+\infty)$ and thus $\kappa(t,\lambda)\in \mathfrak{I}_{-\lambda}$. If $\lambda<0$, then $\mathfrak{I}_{-\lambda}=(0,1/|\lambda|)$ and
\[
0<\kappa(t,\lambda) = \frac{e^{2\lambda t}-1}{2\lambda} = \frac{1-e^{-2|\lambda| t}}{2|\lambda|} < \frac{1}{|\lambda|}.
\]
Thus $\kappa(t,\lambda)\in \mathfrak{I}_{-\lambda}$ as claimed.
\end{remark}

\begin{theorem}
\label{GradFlowTheorem}
Let $\lambda \in \mathbb{R}$, $\Phi:H \rightarrow (-\infty,+\infty]$ be proper, $\lambda$-convex, and lower semicontinuous. Let $u$ be the unique gradient flow of $\Phi$ starting at $x_0 \in \overline{\operatorname{dom}(\Phi)}$. Define a function $\kappa: (0,+\infty) \times \mathbb{R} \to \mathbb{R}$ by $\kappa(t,s):= \frac{e^{2s t}-1}{2s}$ for $s\neq 0$ and $\kappa(t,0):=t$. Then, for all $t> 0$,
$$ \Phi(u(t))\leq [\Phi]^{\kappa(t,\lambda)}(x_0).$$
\end{theorem}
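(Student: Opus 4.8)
The plan is to prove the bound via the minimizing-movement formula \eqref{eq:minimizinglimit}, i.e.\ $u(t)=\lim_{n\to\infty} J_{t/n}^n(\Phi)(x_0)$, together with the semigroup composition property of Moreau envelopes (part~4 of Proposition~\ref{envelopeProp}) and lower semicontinuity of the envelope in its parameter (Proposition~\ref{envelopeProp2}). The central idea is to track how the "energy" $\Phi$ and the envelope parameter evolve under a single application of the proximal operator $J_\gamma(\Phi)$, then iterate. More precisely, I would first establish a one-step estimate: for $\gamma \in \mathfrak{I}_{-\lambda}$ and $y = J_\gamma(\Phi)(x)$, we have, by definition of the Moreau envelope and Remark~\ref{rem:envelopesupport}, that $\Phi(y) \le [\Phi]^\gamma(x)$, and moreover (using that $\frac{x-y}{\gamma} - \lambda y \in \partial(\Phi - \frac\lambda2\|\cdot\|^2)(y)$, as extracted in the proof of Lemma~\ref{ProxResolve}, together with the contraction estimate for $J_\gamma(\Phi)$ from Lemma~\ref{ProxResolve}) a quantitative decay of the form $\Phi(y) \le \Phi(x) - c\|x-y\|^2$ for a suitable constant. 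Combining this one-step control across the $n$ steps of the scheme $x_0 \mapsto J_{t/n}(\Phi)(x_0) \mapsto \cdots \mapsto J_{t/n}^n(\Phi)(x_0)$ should yield, after passing to the limit $n\to\infty$ and using part~4 of Proposition~\ref{envelopeProp} to telescope the composed envelopes, that $\Phi(u(t)) \le [\Phi]^{\gamma_n}(x_0)$ for a sequence of parameters $\gamma_n$ converging to $\kappa(t,\lambda)$; Proposition~\ref{envelopeProp2} then upgrades this to the claimed bound in the limit.

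Concretely, the order of steps is: (1)~reduce to $x_0 \in \operatorname{dom}(\Phi)$ if needed (the case $x_0 \notin \operatorname{dom}(\Phi)$ should follow by the monotonicity of $t\mapsto\Phi(u(t))$ from Remark~\ref{rem:gradientflow} applied after an arbitrarily small time shift, or be treated directly since $[\Phi]^{\kappa}(x_0)$ may be $+\infty$ but the shifted flow lands in $\operatorname{dom}(\Phi)$); (2)~fix $n$ large enough that $t/n \in \mathfrak{I}_{-\lambda}$, write $x_k := J_{t/n}^k(\Phi)(x_0)$, and prove by induction on $k$ an inequality of the shape $\Phi(x_k) \le [\Phi]^{\sigma_k}(x_0)$ where $\sigma_k$ is built by the recursion coming from part~4 of Proposition~\ref{envelopeProp} combined with the one-step decay — here the $\lambda$-dependence enters because $J_{t/n}(\Phi)$ is only $\frac{1}{1+(t/n)\lambda}$-Lipschitz, which inflates (or deflates, when $\lambda>0$) the effective parameter increment at each step; (3)~identify $\lim_{n\to\infty}\sigma_n$ with $\kappa(t,\lambda) = \frac{e^{2\lambda t}-1}{2\lambda}$ — this is where the exponential appears, as the limit of $\big(1 + \tfrac{2\lambda t}{n} + o(1/n)\big)^{\!n}$-type products; (4)~take $n\to\infty$ in step (2), using $\Phi(u(t)) \le \liminf_n \Phi(x_n)$ (lower semicontinuity of $\Phi$ and \eqref{eq:minimizinglimit}) on the left and Proposition~\ref{envelopeProp2} on the right.

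The main obstacle I anticipate is getting the bookkeeping in step~(2)–(3) exactly right so that the parameter sequence converges to the sharp value $\kappa(t,\lambda)$ rather than to the weaker parameter of \cite[Theorem 4.3.2]{greenbook}. The subtlety is that a naive telescoping of the identity $[[\Phi]^{\gamma}]^{\delta} = [\Phi]^{\gamma+\delta}$ over $n$ equal steps of size $t/n$ gives exactly $[\Phi]^{t}(x_0)$, which corresponds to $\lambda=0$; to recover the $\lambda$-corrected parameter one must exploit the quantitative strong-convexity gain at each proximal step (the $\big(\frac\lambda2 + \frac{1}{2\gamma}\big)\|z-y\|^2$ slack term appearing in the displayed chain of inequalities inside the proof of Lemma~\ref{ProxResolve}), which effectively rescales the envelope parameter by a factor like $\frac{1}{1+(t/n)\lambda}$ per step. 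Tracking this rescaling through the composition and showing the product telescopes to $\frac{e^{2\lambda t}-1}{2\lambda}$ is the delicate computational heart of the argument; everything else (the limiting arguments, the reduction for $x_0\notin\operatorname{dom}(\Phi)$) is comparatively routine given the machinery already developed in Propositions~\ref{envelopeProp} and~\ref{envelopeProp2} and Lemma~\ref{ProxResolve}.
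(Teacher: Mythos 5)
Your plan is the paper's proof: the same minimizing-movement starting point, the same one-step envelope bound $\Phi(J_\gamma(\Phi)x)\leq[\Phi]^\gamma(x)$, the same reliance on the Lipschitz constant $\frac{1}{1+\gamma\lambda}$ of $J_\gamma(\Phi)$ together with the envelope composition identity (Proposition~\ref{envelopeProp}, part~4) to track the effective parameter $\sum_{k=0}^{n-1}\frac{t}{n}(1+\lambda t/n)^{2k}\to\kappa(t,\lambda)$, and the same closing step via lower semicontinuity of $\Phi$ and Proposition~\ref{envelopeProp2}. Two of the detours you anticipate in step~(1) are unnecessary: no reduction to $x_0\in\operatorname{dom}(\Phi)$ is needed because $J_\gamma(\Phi)$ is single-valued on all of $H$ (Proposition~\ref{envelopeProp}, part~1) and the induction applies directly to any $x_0\in\overline{\operatorname{dom}(\Phi)}$, and the dissipation estimate $\Phi(y)\leq\Phi(x)-c\|x-y\|^2$ plays no role --- the $\lambda$-correction arises entirely from the Lipschitz rescaling, which you also correctly identify as the crux.
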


We will find the previous estimate useful as it gives a quantitative estimate on how the energy of the gradient flow stays below a Moreau envelope. That Moreau envelope will be easier to work with analytically because it is more regular, for example in the case that $\Phi$ is lower semicontinuous and convex it has been proved that $[\Phi]^{\gamma}$ is Fr\'echet differentiable with Lipschitz-continuous gradient (see \cite[Proposition 12.30]{Bauschke}.) For convenience in reading the calculations below, we write $[\Phi]^{\wedge}\{\gamma\}:=[\Phi]^\gamma$.

\begin{proof}[Proof of Theorem~\ref{GradFlowTheorem}]
By Remark~\ref{rem:minimizingmovement} we know that $u$ is a minimizing movement, in particular \eqref{eq:minimizinglimit} holds. Moreover, we emphasise that, if $t> 0$, then, for $n\in \mathbb{N}$ sufficiently large, $\frac{t}n \in \mathfrak{I}_{-\lambda}$ and thus, by part 1 of Proposition~\ref{envelopeProp}, the set $J_{t/n}^n(\Phi)x_0$ contains a single element which we again denote by $J_{t/n}^n(\Phi)x_0$. 

For later use in this proof we compute that, for all $\lambda\neq 0$ and all $t> 0$, 
\begin{equation}\label{eq:somesum1}
\sum_{k=0}^{n-1} \frac{t}{n} \cdot \left(1+ \frac{\lambda t}{n} \right)^{2k} = \frac{t}{n} \left[ \frac{(1+ \frac{\lambda t}{n} )^{2n} -1}{(1+ \frac{\lambda t}{n} )^2-1} \right] = \left[ \frac{(1+ \frac{\lambda t}{n} )^{2n} -1}{2\lambda + \frac{\lambda^2 t}{n}} \right]
\end{equation}
and, for $\lambda=0$ and all $t> 0$, 
\begin{equation}\label{eq:somesum2}
\sum_{k=0}^{n-1} \frac{t}{n} \cdot \left(1+\frac{\lambda t}{n}\right)^{2k} = t.
\end{equation}

Next we prove by induction on $n$ that, for all $\lambda\in \mathbb{R}$, all $n\in \mathbb{N}$, and all $t> 0$ with $t/n \in \mathfrak{I}_{-\lambda}$,
\begin{equation}
\label{induction1}
 \Phi(J_{t/n}^n(\Phi)x_0) \leq [\Phi]^{\wedge}\left\{ \sum_{k=0}^{n-1} \frac{t}{n} \cdot \left(1+\frac{\lambda t}{n}\right)^{2k} \right\} (x_0).
\end{equation}
Before we start the induction argument, we shall establish that both sides of the inequality in \eqref{induction1} are well defined. By our earlier observation, $J_{t/n}^n(\Phi)x_0$ is single valued and thus $\Phi(J_{t/n}^n(\Phi)x_0)$ is unambiguously defined. Moreover, if $\lambda\geq 0$, then $\mathfrak{I}_{-\lambda}=(0,\infty)$ and thus the quantity in curly brackets in the right-hand side of \eqref{induction1} is in $\mathfrak{I}_{-\lambda}$, which implies that the Moreau envelope in \eqref{induction1} is well defined. If, on the other hand, $\lambda<0$, then
$$ 0< \frac{(1+ \frac{\lambda t}{n} )^{2n} -1}{2\lambda + \frac{\lambda^2 t}{n}}  = \frac{1-(1- \frac{|\lambda| t}{n} )^{2n} }{2|\lambda|- \frac{\lambda^2 t}{n}}<\frac{1 }{2|\lambda|- \frac{\lambda^2 }{|\lambda|}}=\frac{1}{|\lambda|}.$$
The first inequality holds, because by assumption in \eqref{induction1}, $t/n\in \mathfrak{I}_{-\lambda} = (0, 1/|\lambda|)$ and thus $1-(1+ \frac{\lambda t}{n} )^{2n} > 0$ and $|\lambda| \left(2-\frac{|\lambda| t}{n}\right) > 0$. So, by \eqref{eq:somesum1}, again the quantity in curly brackets in the right-hand side of \eqref{induction1} is in $\mathfrak{I}_{-\lambda}$ and hence the Moreau envelope in \eqref{induction1} is well defined.

For $n=1$, \eqref{induction1} is equivalent to
\begin{equation}\label{eq:envelopeinequality}
\Phi(J_t(\Phi)x_0)\leq [\Phi]^t(x_0),
\end{equation}
which holds by the definition of the proximal operator in Definition~\ref{MoreauProx}. For the induction step, assume that \eqref{induction1} holds for a specific $n\in \mathbb{N}$. Then, for all $t> 0$ with $t/n\in \mathfrak{J}_{-\lambda}$,
\begin{align*}
\Phi(J_{t/(n+1)}^{n+1}(\Phi)x_0) &\leq [\Phi]^{t/(n+1)} \big( J_{t/(n+1)}^n(\Phi)x_0 \big)\\
&= \underset{y\in H}{\argmin} \; \Phi(y) + \frac{n+1}{2t} \Vert y - J^n_{t/(n+1)}(\Phi)x_0 \Vert^2\\
&\leq \underset{z\in H}{\argmin} \; \Phi(J^n_{t/(n+1)}(\Phi)z) + \frac{n+1}{2t} \Vert J^n_{t/(n+1)}(\Phi)z - J^n_{t/(n+1)}(\Phi)x_0 \Vert^2\\
&\leq \underset{z\in H}{\argmin} \; \Phi(J^n_{t/(n+1)}(\Phi)z) + \frac{n+1}{2t} \left(\frac{1}{1+t\lambda/(n+1)} \right)^{2n}\Vert z-x_0 \Vert^2\\
&\leq \underset{z\in H}{\argmin} \; [\Phi]^{\wedge}\left\{\sum_{k=0}^{n-1} \frac{t}{n+1} \cdot \left(1+ \frac{\lambda t}{n+1}\right)^{2k} \right\}(z) + \frac{n+1}{2t} \left(\frac{1}{1+t\lambda/(n+1)} \right)^{2n}\Vert z-x_0 \Vert^2\\
&= \left[[\Phi]^{\wedge}\left\{\sum_{k=0}^{n-1} \frac{t}{n+1} \cdot \left(1+ \frac{\lambda t}{n+1}\right)^{2k} \right\}\right]^{\wedge}\left\{\frac{t}{n+1} \left(1+ \frac{\lambda t}{n+1}\right)^{2n}\right\}(x_0)\\
&= [\Phi]^{\wedge}\left\{\sum_{k=0}^{n} \frac{t}{n+1} \cdot \left(1+ \frac{\lambda t}{n+1}\right)^{2k} \right\} (x_0).
\end{align*}
The first inequality holds by \eqref{eq:envelopeinequality} with $\frac{t}{(n+1)}$ instead of $t$ and $J_{t/(n+1)}^n(\Phi)x_0$ instead of $x_0$. The first and second equalities follow from the definition of the Moreau envelope. The second inequality holds since $\{y=J^n_{t/(n+1)}(\Phi)z \,|\, z\in H\} \subset H$. The third inequality follows from Lemma~\ref{ProxResolve}, specifically the statement in that lemma about the Lipschitz constant for the proximal operator. The fourth inequality is a consequence of the induction assumption. Part 4 of Proposition \ref{envelopeProp} establishes the final equality. This establishes that \eqref{induction1} holds for all $n\in \mathbb{N}$.

By taking the limit $n\to\infty$ in \eqref{eq:somesum1} and \eqref{eq:somesum2}, we find that, for all $\lambda \in \mathbb{R}$,
$$\sum_{k=0}^{n-1} \frac{t}{n} \cdot \left(1+ \frac{\lambda t}{n} \right)^{2k} \rightarrow 
\left.\begin{cases}
    \frac{e^{2\lambda t}-1}{2\lambda}, &\text{if } \lambda \neq 0,\\
    t, &\text{if } \lambda = 0
\end{cases}\right\}
= \kappa(t,\lambda).
$$
Given that $\Phi$ is lower semicontinuous and using \eqref{eq:minimizinglimit}, we conclude that
$$ \Phi(u(t)) \leq \underset{n\rightarrow \infty}{\liminf} \Phi(J_{t/n}^n(\Phi)x_0) \leq \underset{n\rightarrow \infty}{\liminf} [\Phi]^{\wedge}\left\{\sum_{k=0}^{n-1} \frac{t}{n} \cdot \left(1+\frac{\lambda t}{n}\right)^{2k} \right\} (x_0) = [\Phi]^{\kappa(t,\lambda)}(x_0) .$$
The second inequality holds by \eqref{induction1} and the third equality follows from Proposition~\ref{envelopeProp2}. 
\end{proof}

\begin{remark}
\label{remark:quadratic}
We illustrate the bound in Theorem~\ref{GradFlowTheorem} for a quadratic function $\Phi: \mathbb{R} \rightarrow (-\infty,+\infty]$ such that $\Phi(x) := \frac{\lambda}{2} x^2$ for some $\lambda>0$. For a starting point $x_0 \in \mathbb{R}$ the gradient flow of $\Phi$ is given by $u(t) = e^{-\lambda t}x_0$ and so $\Phi(u(t))= \frac{\lambda}{2}e^{-2\lambda t} x_0^2$.
For $\gamma>0$ we calculate
\begin{align*}
[\Phi]^{\gamma}(x_0) &= \inf_{y\in \mathbb{R}} \left\{ \frac{\lambda}{2} y^2 + \frac{1}{2\gamma} (y-x_0)^2 \right\}
= \inf_{y\in \mathbb{R}} \left\{ \left(\frac{\lambda}{2} + \frac{1}{2\gamma}\right)y^2 - \frac{1}{\gamma} x_0y + \frac{1}{2\gamma} x_0^2 \right\}\\
&= \frac{1+ \gamma \lambda}{2\gamma}\inf_{y\in \mathbb{R}} \left\{ y^2 - \frac{2}{1+ \lambda\gamma} x_0y + \frac{1}{1+\gamma \lambda} x_0^2 \right\} \\
&= \frac{1+ \gamma \lambda}{2\gamma}\inf_{y\in \mathbb{R}} \left\{ \left(y-\frac{1}{1+\gamma \lambda} x_0 \right)^2 + \frac{1}{1+\gamma \lambda} x_0^2 - \frac{1}{(1+\gamma \lambda)^2} x_0^2 \right\}\\
&= \left(\frac{1}{2\gamma} - \frac{1}{2\gamma(1+\gamma \lambda) } \right) x_0^2 
= \frac{\lambda}{2(1+ \gamma \lambda)} x_0^2 .
\end{align*}
We note that $\Phi$ is proper, $\lambda$-convex, and (lower semi)continuous and indeed
\begin{align*}
[\Phi]^{\kappa(t,\lambda)}(x_0) - \Phi(u(t)) &= \frac{\lambda}{2(1+\frac{e^{2\lambda t}-1}{2})} x_0^2 - \frac{\lambda}{2e^{2\lambda t}}x_0^2
= \frac{\lambda}{1+e^{2\lambda t}}x_0^2 - \frac{\lambda}{2e^{2\lambda t}}x_0^2\\
&= \frac\lambda{2e^{2\lambda t}} \frac{e^{2\lambda t}-1}{1+e^{2\lambda t}} 
= \frac\lambda{2e^{2\lambda t}} \frac{1-e^{-2\lambda t}}{1+e^{-2\lambda t}} x_0^2 >0.
\end{align*}
Moreover, this shows us that $[\Phi]^{\kappa(t,\lambda)}(x_0) - \Phi(u(t)) \downarrow 0$ as $\lambda\to\infty$ and thus the bound in Theorem~\ref{GradFlowTheorem} is tight, if, as formulated in the theorem, we wish it to hold for all $\lambda>0$. Whether the bound is sharp, in the sense that equality can be achieved, is unknown to the authors at the moment. 

Furthermore, since $\Phi$ achieves its minimum value $0$ at $x_*:=0$, we have that
\[
\Phi(u(t))-\Phi(x_*) = \frac\lambda2 e^{-2\lambda t} x_0^2 = \frac{1-e^{-2\lambda t}}2 \frac1{2\kappa(t,\lambda)} x_0^2.
\]
Since $0<\frac{1-e^{-2\lambda t}}2 <\frac12$, this also illustrates the result of Corollary~\ref{cor:decayrate} below, for $\lambda>0$ and $x=x_*$. 
\end{remark}

\begin{corollary}\label{cor:decayrate}
Let $\lambda \geq 0$ and let $\Phi:H \rightarrow (-\infty,+\infty]$ be proper, $\lambda$-convex, and lower semicontinuous. Assume that $u$ is the unique gradient flow of $\Phi$ starting at $x_0 \in \overline{\operatorname{dom}(\Phi)}$. Then we have, for all $x\in H$ and all $t>0$,
\begin{equation}\label{eq:Phiinequality}
\Phi(u(t)) -\Phi(x)\leq \frac{1}{2\kappa(t,\lambda)} \Vert x_0 - x \Vert^2,
\end{equation}
where $\kappa(t,\lambda) \to +\infty$ as $t\to+\infty$.

In particular, if $\Phi$ has a minimum at $x_* \in H$, then $\Phi(u(t)) \to \Phi(x_*)$ as $t\to \infty$.
\end{corollary}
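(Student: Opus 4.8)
The plan is to read off \eqref{eq:Phiinequality} directly from the energy estimate in Theorem~\ref{GradFlowTheorem}, so essentially no new work is needed. First I would apply Theorem~\ref{GradFlowTheorem} — legitimate since $\Phi$ is proper, $\lambda$-convex, and lower semicontinuous — to obtain, for every $t>0$,
\[
\Phi(u(t)) \leq [\Phi]^{\kappa(t,\lambda)}(x_0).
\]
Because $\lambda \geq 0$ we have $\mathfrak{I}_{-\lambda} = (0,+\infty)$, so $\kappa(t,\lambda)>0$ is an admissible Moreau parameter and the right-hand side is well defined (cf.\ Remark~\ref{rem:kappa}). Then, for any fixed $x\in H$, the infimum in Definition~\ref{MoreauProx} is bounded above by the value of the minimised expression at the particular choice $y=x$, that is,
\[
[\Phi]^{\kappa(t,\lambda)}(x_0) \leq \Phi(x) + \frac{1}{2\kappa(t,\lambda)}\Vert x - x_0 \Vert^2.
\]
Concatenating the two displays and rearranging gives \eqref{eq:Phiinequality}. (When $\Phi(x)=+\infty$ the inequality is trivial, so only $x\in\operatorname{dom}(\Phi)$ matters.)

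Next I would record that $\kappa(t,\lambda)\to+\infty$ as $t\to+\infty$: if $\lambda=0$ this is just $\kappa(t,0)=t$, and if $\lambda>0$ it follows from $\kappa(t,\lambda)=\frac{e^{2\lambda t}-1}{2\lambda}$ and the growth of the exponential. For the last assertion I would specialise \eqref{eq:Phiinequality} to $x=x_*$, a minimiser of $\Phi$. Since $\Phi$ is proper, $\Phi(x_*)$ is finite, and by minimality $\Phi(x_*)\leq\Phi(u(t))$ for every $t>0$; combining this with
\[
0 \leq \Phi(u(t)) - \Phi(x_*) \leq \frac{1}{2\kappa(t,\lambda)}\Vert x_0 - x_*\Vert^2,
\]
whose right-hand side tends to $0$ as $t\to\infty$, the squeeze theorem yields $\Phi(u(t))\to\Phi(x_*)$.

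I do not expect any real obstacle here: the corollary is a near-immediate consequence of Theorem~\ref{GradFlowTheorem} together with the trivial upper bound for a Moreau envelope. The only two points that deserve a sentence are that $\kappa(t,\lambda)$ lies in the admissible range $\mathfrak{I}_{-\lambda}$ (automatic since $\lambda\geq 0$) and that the minimum value $\Phi(x_*)$ is a finite real number so that the limiting statement makes sense (automatic from properness together with $\Phi$ taking values in $(-\infty,+\infty]$).
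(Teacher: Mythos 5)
Your proof is correct and follows essentially the same route as the paper's: apply Theorem~\ref{GradFlowTheorem}, bound the Moreau envelope from above by testing the infimum in Definition~\ref{MoreauProx} at $y=x$, then specialise $x=x_*$ and use the sign of $\Phi(u(t))-\Phi(x_*)$ together with $\kappa(t,\lambda)\to+\infty$. The minor extra remarks you add (admissibility of $\kappa(t,\lambda)$ when $\lambda\geq0$, finiteness of $\Phi(x_*)$) are sensible but not needed beyond what the paper already records in Remark~\ref{rem:kappa}.
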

\begin{proof}
Since $\lambda\geq 0$, we have $I_{-\lambda} = (0,+\infty)$ and thus, for all $t>0$, $\kappa(t,\lambda)$ is well defined and positive and the inequality in Theorem~\ref{GradFlowTheorem} holds. By the definition of the Moreau envelope in Definition~\ref{MoreauProx} we obtain that, for all $x\in H$ and all $t>0$,
$$ [\Phi]^{\kappa(t,\lambda)}(x_0) \leq \Phi(x)+\frac{1}{2\kappa(t,\lambda)} \Vert x_0 - x \Vert^2.$$
Thus, by Theorem~\ref{GradFlowTheorem}, for all $x\in H$ and all $t>0$,
\[
\Phi(u(t)) \leq [\Phi]^{\kappa(t,\lambda)}(x_0) \leq \Phi(x)+\frac{1}{2\kappa(t,\lambda)} \Vert x_0 - x \Vert^2,
\]
from which the inequality \eqref{eq:Phiinequality} follows. 
The limit for $\kappa(t,\lambda)$ is immediate from its definition in Theorem~\ref{GradFlowTheorem}.

Now assume that $\Phi$ has a minimum at $x_*\in H$, then, for all $t>0$, $\Phi(u(t))-\Phi(x_*) \geq 0$. Thus, with $x=x_*$, the inequality \eqref{eq:Phiinequality} gives
\[
|\Phi(u(t)) -\Phi(x_*)|\leq \frac{1}{2\kappa(t,\lambda)} \Vert x_0 - x_* \Vert^2 \to 0 \qquad \text{as } t\to\infty.
\]
\end{proof}

\begin{remark}
In the case that $\lambda>0$, Corollary~\ref{cor:decayrate} gives us the same rate of decay as the one in \cite[Theorem 2.4.14]{greenbook}.
\end{remark}

For the following lemma, we recall the semigroup notation from Definition~\ref{def:semigroup}.

\begin{lemma}\label{lem:gradflowsemigroup}
Let $\lambda \in \mathbb{R}$, $\Phi:H \rightarrow (-\infty,+\infty]$ be proper, $\lambda$-convex and lower semicontinuous. Let $u$ be the unique gradient flow of $\Phi$ starting at $x_0 \in \overline{\operatorname{dom}(\Phi)}$. Then, for all $t\in[0,+\infty)$,
$$ u(t) = S_{\partial^{\lambda}\Phi}(t)x_0 .$$
\end{lemma}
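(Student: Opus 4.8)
The plan is to realise both sides of the claimed identity as the same limit of iterated proximal maps, using the Crandall--Liggett theorem to give meaning to the semigroup $S_{\partial^\lambda\Phi}$ and Lemma~\ref{ProxResolve} together with \eqref{eq:minimizinglimit} to identify the two limits. So the proof is essentially a splicing of Theorem~\ref{CranLig}, Lemma~\ref{ProxResolve}, and Remark~\ref{rem:minimizingmovement}.

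First I would verify that $S_{\partial^\lambda\Phi}(t)x_0$ is well defined. By Theorem~\ref{thm:partiallambdamaccretive} the operator $\partial^\lambda\Phi$ is $(-\lambda)$-m-accretive; in particular it is $(-\lambda)$-accretive (Proposition~\ref{prop:partiallambdaaccretive}) and, by Remark~\ref{rem:rangecondition}, satisfies the range condition~\eqref{rangeCon} with $(0,\delta)$ taken to be $\mathfrak{I}_{-\lambda}$. To apply Corollary~\ref{SemigroupCon} and Theorem~\ref{CranLig} with initial datum $x_0$, I need $x_0\in\overline{D(\partial^\lambda\Phi)}$. Here I would invoke the classical density fact that $\overline{D(\partial\psi)}=\overline{\operatorname{dom}(\psi)}$ for a proper, convex, lower-semicontinuous $\psi$ on a Hilbert space (from the subdifferential theory collected in Appendix~\ref{sec:C}, cf.\ Proposition~\ref{subdiffProp}), applied to $\psi:=\Phi-\frac{\lambda}{2}\|\cdot\|^2$: since $\operatorname{dom}(\psi)=\operatorname{dom}(\Phi)$ and $D(\partial^\lambda\Phi)=D(\partial\psi)$ by Definition~\ref{def:subdiflambda}, we get $\overline{D(\partial^\lambda\Phi)}=\overline{\operatorname{dom}(\Phi)}\ni x_0$. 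Hence $x_0\in D(S_{\partial^\lambda\Phi})$ and, by Theorem~\ref{CranLig}, for every $t>0$ one has $S_{\partial^\lambda\Phi}(t)x_0=\lim_{n\to\infty}R_{t/n}^n(\partial^\lambda\Phi)x_0$.

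Next I would match the iterates. Fix $t>0$; for all $n$ large enough that $t/n\in\mathfrak{I}_{-\lambda}$, Lemma~\ref{ProxResolve} gives $R_{t/n}(\partial^\lambda\Phi)=J_{t/n}(\Phi)$ as single-valued $\frac{1}{1+(t/n)\lambda}$-Lipschitz maps on $H$, and therefore $R_{t/n}^n(\partial^\lambda\Phi)x_0=J_{t/n}^n(\Phi)x_0$. Letting $n\to\infty$ and using \eqref{eq:minimizinglimit} from Remark~\ref{rem:minimizingmovement}, which expresses the gradient flow as $u(t)=\lim_{n\to\infty}J_{t/n}^n(\Phi)x_0$, we conclude $S_{\partial^\lambda\Phi}(t)x_0=u(t)$ for all $t>0$. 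The case $t=0$ is immediate: $S_{\partial^\lambda\Phi}(0)x_0=x_0$ by Remark~\ref{rem:propertiessemigroups}, and $u(0)=x_0$ by the convention adopted in Remark~\ref{rem:gradientflow}.

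I expect the only delicate point to be the domain bookkeeping in the second paragraph: one must be sure that $\overline{\operatorname{dom}(\Phi)}\subset\overline{D(\partial^\lambda\Phi)}$, so that Crandall--Liggett produces a mild solution starting precisely from $x_0$ and not merely from points of $D(\partial^\lambda\Phi)$. Everything else is a direct application of results already in place. Note that no appeal to the evolution-variational-inequality characterisation of Definition~\ref{gradflowdef} is needed, since \eqref{eq:minimizinglimit} already identifies the gradient flow with the minimizing-movement limit, and that is the object we compare with the resolvent limit.
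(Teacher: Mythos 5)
Your proof is correct and follows essentially the same route as the paper's: equate the gradient flow's minimizing-movement limit $\lim_{n\to\infty}J_{t/n}^n(\Phi)x_0$ with the resolvent limit $\lim_{n\to\infty}R_{t/n}^n(\partial^\lambda\Phi)x_0$ via Lemma~\ref{ProxResolve}, then identify the latter with the semigroup by Crandall--Liggett, handling $t=0$ separately. You are slightly more careful than the paper in explicitly verifying $\overline{D(\partial^\lambda\Phi)}=\overline{\operatorname{dom}(\Phi)}$ (via part~4 of Proposition~\ref{subdiffProp} applied to $\Phi-\frac{\lambda}{2}\|\cdot\|^2$) so that Theorem~\ref{CranLig} applies to the given initial datum $x_0$, a point the paper leaves implicit.
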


\begin{proof} For the notation $J^n_{t/n}(\Phi)$ we refer to Remark~\ref{rem:minimizingmovement} and for $R^n_{t/n}(\partial^\lambda \Phi)$ to Remark~\ref{rem:whyrangecon}.

If $t=0$, then, by the last property of semigroups listed in Remark~\ref{rem:propertiessemigroups}, ${S_{\partial^\lambda \Phi}(t)x_0=x_0=u(t)}$. 

Now assume that $t>0$. By Remark~\ref{rem:minimizingmovement} we know that \eqref{eq:minimizinglimit} holds. By \eqref{OmegaInterval}, if $\lambda \geq 0$, then, for all $n\in \mathbb{N}$, $\frac{t}n \in \mathfrak{I}_{-\lambda}$. If $\lambda < 0$, then $\frac{t}n \in \mathfrak{I}_{-\lambda}$ if $n > t|\lambda|$. Hence in both cases we can use \eqref{eq:minimizinglimit} and Lemma~\ref{ProxResolve} to obtain
\begin{equation}\label{eq:gradflowandresolvents}
u(t) = \lim_{n \rightarrow \infty} J_{t/n}^n(\Phi)x_0 = \lim_{n \rightarrow \infty} R_{t/n}^n(\partial^{\lambda}\Phi)x_0.
\end{equation}
Finally, by the Crandall--Liggett theorem in Theorem~\ref{CranLig} and the definition of the semigroup generated by $\partial^\lambda \Phi$ in Definition~\ref{def:semigroup}, we find that
\[
\lim_{n \rightarrow \infty} R_{t/n}^n(\partial^{\lambda}\Phi)x_0 = S_{\partial^{\lambda}\Phi}(t)x_0.
\]
The proof is completed by combining these three identities.
\end{proof}

\begin{remark}
In the setting of Lemma~\ref{lem:gradflowsemigroup}, if $\gamma \leq \lambda$, then, by Lemma~\ref{lem:lambdaconvexanstrictlyconvex}, $\Phi$ is also $\gamma$-convex and thus $u(t) = S_{\partial^{\gamma}\Phi}(t)x_0$. In fact, by part~2 of Lemma~\ref{lem:subdiffinclusion}, $\partial^{\gamma}\Phi=\partial^{\lambda}\Phi$.
\end{remark}

\begin{remark}
By Theorem~\ref{thm:partiallambdamaccretive} $\partial^{\lambda}\Phi$ is $(-\lambda)$-m-accretive if the assumptions on $\Phi$ in Lemma~\ref{lem:gradflowsemigroup} are satisfied and so $x_0 \in D(S_{\partial^{\lambda}\Phi})$ by Corollary~\ref{SemigroupCon}.
\end{remark}

Throughout Section~\ref{sec:convex} we have restricted ourselves to the setting of a Hilbert space $H$. The key concepts of $\lambda$-convexity, gradient flows, and the Moreau envelope can readily be generalized to a metric space, typically by replacing the norm by a metric. However, there are two places where the inner-product structure is important for the proofs of our main results of this section. The first is in the proof of Lemma~\ref{ProxResolve}, where we expand the norm in the definition of the proximal operator via the inner product. The second is the semigroup property of the Moreau envelope given by part~4 of Proposition~\ref{envelopeProp} which we use to prove Theorem~\ref{GradFlowTheorem}. In particular, the inner-product structure is used in the proof of \cite[Proposition 12.22 (ii)]{Bauschke} (which in turn is the source of our proof of part~4 of Proposition~\ref{envelopeProp}) to rewrite certain norms.

To what extent the results of this section can be generalised beyond the Hilbert-space setting is a worthwhile question for future research. For example, we know from \cite[Theorem 4.3.2]{greenbook} that the result from Theorem~\ref{GradFlowTheorem} holds more generally on metric spaces, just with a smaller parameter in the Moreau envelope in the case $\lambda>0$.

Another interesting question is whether the bound we give in Theorem~\ref{GradFlowTheorem} can be reproduced for other types of infimal convolution (see \cite[Definition 12.1]{Bauschke}) besides the Moreau envelope. The Moreau envelope is a natural choice in the current context because of its connection to gradient flows via the proximal operator. However, when studying $p$-curves of maximal slope (see \cite[Definition 1.3.2]{greenbook}) for $p\neq 2$ we conjecture the suitable replacement to be an infimal convolution with $\Vert \cdot \Vert^p$ (where $\|\cdot\|$ denotes the norm on the Banach space under consideration) instead of $\|\cdot\|^2$.

\section{Subdifferentials over \texorpdfstring{$L^p$}{Lp} and \texorpdfstring{$P_0$}{P0}-convexity}\label{sec:subdifferentials}

For this section we fix $\Omega \subset \mathbb{R}^d$ to be an open subset and $\mu$ to be a Radon measure defined on $\Omega$. By $\mathfrak{F}(\Omega;\mu)$ we denote the set of all equivalence classes of Borel-measurable functions on $\Omega$, where two functions are considered equivalent if and only if they differ on a null set with respect to $\mu$. As is the usual convention for such spaces, in a slight abuse (and simplification) of notation we will treat $\mathfrak{F}(\Omega;\mu)$ as a set of functions modulo equality up to $\mu$-null sets. We note that $\mathfrak{F}(\Omega;\mu)$ is real vector space and thus the notion of convexity is meaningful for subsets of $\mathfrak{F}(\Omega;\mu)$. 

Given a function $\Phi:\mathfrak{F}(\Omega;\mu) \rightarrow (-\infty,+\infty]$ we are interested in studying the gradient flow of $\Phi|_{L^2(\Omega;\mu)}$ over the Hilbert space $L^2(\Omega;\mu)$. In particular, we will consider the subdifferential $\partial \Phi|_{L^2(\Omega;\mu)}$ as an operator on $L^2(\Omega;\mu)$ and study the semigroup it generates in the sense of Definition~\ref{def:semigroup}. By Lemma~\ref{lem:gradflowsemigroup} this indeed gives us a representation of the desired gradient flow. 

For the reasons explained in the introduction (at the start of Section~\ref{sec:convgradflowintro}), for all $p\in [1, +\infty)$, we want to adjust $\partial \Phi|_{L^2(\Omega;\mu)}$ to an operator $\partial_{L^p} \Phi$ on $L^p(\Omega;\mu)$ that satisfies the equality in \eqref{eq:subgradidentity} and then consider the semigroup it generates as a flow in $L^p(\Omega;\mu)$. We mention that this adjusted operator will satisfy $\partial_{L^2}\Phi = \partial \Phi|_{L^2(\Omega;\mu)}$; see Remark~\ref{rem:adjustedsubdiff}.

In this section we give the definitions relevant to this adjustment and establish conditions on $\Phi$ such that \eqref{eq:subgradidentity} holds. Our notation and theory will predominantly be taken from the work on completely accretive operators by Benilan and Crandall \cite{benilan1991comp}.

\begin{definition}
\label{Subdiffdef}
Let $1\leq p < +\infty$ and $\Phi: \mathfrak{F}(\Omega;\mu) \rightarrow (-\infty,+\infty]$. The subdifferential of $\Phi$ over $L^p(\Omega;\mu)$, denoted by $\partial_{L^p} \Phi$, is an operator on $L^p(\Omega;\mu)$ defined as follows. For all $u, v\in L^p(\Omega;\mu)$, $(u,v)\in \partial_{L^p} \Phi$ if and only if, for all $h\in L^p(\Omega;\mu)$ with $hv \in L^1(\Omega;\mu)$, 
$$ \Phi(u+h) \geq \Phi(u) + \int_\Omega hv \, d\mu.$$
\end{definition}

\begin{remark}\label{rem:adjustedsubdiff}
In Definition~\ref{Subdiffdef} above the (adjusted) subdifferential is always defined with respect to the $L^2$ inner product (and thus in particular $\partial_{L^2}\Phi = \partial \Phi|_{L^2(\Omega;\mu)}$) but has domain and range in $L^p(\Omega;\mu)$. Our later results will justify this adjusted subdifferential as useful and help us to extend the semigroup it generates to act on an $L^p$ space in a natural way. 

If $\mu$ is finite and $p\geq 2$ we have $L^p(\Omega;\mu) \subset L^2(\Omega;\mu)$. Directly from Definition \ref{Subdiffdef} we then have 
$ \partial_{L^2} \Phi|_{L^p(\Omega;\mu)} \subset  \partial_{L^p} \Phi.$
\end{remark}

The next proposition gives some important properties of the subdifferential.

\begin{proposition} \label{SubdiffProp}
Let $1\leq p < +\infty$. Suppose $\Phi,\Psi:L^p(\Omega;\mu) \rightarrow (-\infty,+\infty]$ and $\lambda>0$, then
\begin{enumerate}
    \item $\partial_{L^p} \Psi + \partial_{L^p} \Phi \subset \partial_{L^p}(\Phi + \Psi) $ and
    \item $\lambda \partial_{L^p} \Phi = \partial_{L^p}(\lambda \Phi).$
\end{enumerate}
Additionally, if $L^p(\Omega;\mu) \subset L^2(\Omega,\mu)$, then
\begin{enumerate}
\setcounter{enumi}{2}
    \item for all $u,v\in L^p(\Omega;\mu)$, $v\in \partial_{L^p} \Phi(u)$ if and only if
$\displaystyle
u \in \argmin_{w\in L^p(\Omega;\mu)} \left( \Phi(w) - \int_\Omega vw \, d\mu \right)
$,
\item and
\begin{equation*}
\partial_{L^p} \left(w \mapsto \frac{1}{2}\int_\Omega |w|^2 \, d\mu \right) = I_{L^p(\Omega; \mu)}.    
\end{equation*}
\end{enumerate}
\end{proposition}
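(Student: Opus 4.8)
The plan is to unwind the defining inequality in Definition~\ref{Subdiffdef} in each of the four parts; the only point requiring attention is the integrability side condition ``$hv\in L^1(\Omega;\mu)$'', and in Parts 3 and 4 the hypothesis $L^p(\Omega;\mu)\subset L^2(\Omega;\mu)$ will be used precisely to make that condition vacuous, since then $vw\in L^1(\Omega;\mu)$ for all $v,w\in L^p(\Omega;\mu)$ by the Cauchy--Schwarz inequality. I would treat the parts in order.

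For Part 1, I would take $(u,v)\in\partial_{L^p}\Psi+\partial_{L^p}\Phi$, so that $v=v_1+v_2$ with $(u,v_1)\in\partial_{L^p}\Psi$ and $(u,v_2)\in\partial_{L^p}\Phi$, and let $h\in L^p(\Omega;\mu)$ be a test function for which $hv_1$ and $hv_2$ (hence $hv$) lie in $L^1(\Omega;\mu)$; adding the two defining inequalities gives $(\Phi+\Psi)(u+h)\ge(\Phi+\Psi)(u)+\int_\Omega hv\,d\mu$, which is the inequality defining membership in $\partial_{L^p}(\Phi+\Psi)$ (when $L^p\subset L^2$ the restriction on $h$ is empty, and in general one works with test functions on which all relevant products are integrable). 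Part 2 is immediate from the fact that the condition ``$hv\in L^1$'' is unchanged under the rescaling $v\mapsto v/\lambda$, $\lambda>0$: membership $(u,v)\in\lambda\partial_{L^p}\Phi$ says exactly that $\Phi(u+h)\ge\Phi(u)+\tfrac1\lambda\int_\Omega hv\,d\mu$ for all admissible $h$, and multiplying through by $\lambda>0$ gives the inequality defining $\partial_{L^p}(\lambda\Phi)$.

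For Part 3, the embedding $L^p\subset L^2$ makes $w\mapsto\Phi(w)-\int_\Omega vw\,d\mu$ well defined on all of $L^p(\Omega;\mu)$ and removes the restriction on test functions in Definition~\ref{Subdiffdef}. If $v\in\partial_{L^p}\Phi(u)$, taking $h=w-u$ for arbitrary $w\in L^p(\Omega;\mu)$ in the defining inequality gives $\Phi(w)-\int_\Omega vw\,d\mu\ge\Phi(u)-\int_\Omega vu\,d\mu$, so $u$ minimizes; conversely, if $u$ is a minimizer, then for arbitrary $h\in L^p(\Omega;\mu)$ the choice $w=u+h$ rearranges the minimality inequality to $\Phi(u+h)\ge\Phi(u)+\int_\Omega vh\,d\mu$, i.e.\ $v\in\partial_{L^p}\Phi(u)$. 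Part 4 then follows from Part 3 by completing the square: with $\Phi(w)=\tfrac12\int_\Omega|w|^2\,d\mu=\tfrac12\|w\|_{L^2}^2$ (finite on $L^p$ by the embedding) and $v\in L^p\subset L^2$,
\[
\Phi(w)-\int_\Omega vw\,d\mu=\tfrac12\bigl\|w-v\bigr\|_{L^2(\Omega;\mu)}^2-\tfrac12\bigl\|v\bigr\|_{L^2(\Omega;\mu)}^2,
\]
which as a function of $w\in L^p(\Omega;\mu)$ attains its minimum only at $w=v$ (and $v\in L^p(\Omega;\mu)$, so it is an admissible competitor); hence the $\argmin$ set in Part 3 is $\{v\}$, so $v\in\partial_{L^p}\Phi(u)$ iff $u=v$, i.e.\ $\partial_{L^p}\Phi=\{(u,u)\,:\,u\in L^p(\Omega;\mu)\}=I_{L^p(\Omega;\mu)}$. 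There is no deep obstacle here; the only step calling for genuine care is the integrability condition in Part 1, since absent the $L^2$ embedding $hv\in L^1$ need not force $hv_1$ and $hv_2$ to be individually integrable, whereas Parts 3 and 4 reduce to standard Fenchel-type manipulations once that condition is seen to be automatic.
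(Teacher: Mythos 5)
Your proposal follows essentially the same route as the paper's proof: unwind Definition~\ref{Subdiffdef} in each part, use the embedding $L^p\subset L^2$ in Parts~3 and~4 to render the integrability side condition automatic, and derive Part~4 by a quadratic manipulation. The one cosmetic difference is in Part~4, where you invoke Part~3 and complete the square to identify the $\argmin$ as $\{v\}$, whereas the paper argues directly: it first shows $(u,u)\in\partial_{L^p}(\cdot)$ from the expansion of $|u+h|^2$, then takes $h=v-u$ in the subgradient inequality to force $\|v-u\|_{L^2}^2\leq\tfrac12\|v-u\|_{L^2}^2$, hence $u=v$; both are two-line calculations and neither buys anything over the other. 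You are also more attentive than the paper's text on one point: in Part~1 you explicitly note that, without the $L^2$ embedding, $hv\in L^1$ does not force $hv_1,hv_2\in L^1$ individually, so the argument as stated only yields the subgradient inequality for test functions $h$ on which all three products are integrable rather than for all $h$ with $h(v_1+v_2)\in L^1$; the paper's proof silently quantifies over ``all $h\in L^p$'' and has the same (unacknowledged) gap, so your flag improves on the source rather than falling short of it.
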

\begin{proof}
\begin{enumerate}
    \item Let $(u,v)\in \partial_{L^p} \Psi$ and $(u,w)\in \partial_{L^p} \Phi$. For all $h\in L^p(\Omega;\mu)$ we have
    \begin{align*}
    \Phi(u+h)+ \Psi(u+h) &\geq \Phi(u) + \int_\Omega hv d\mu + \Psi(u) + \int_\Omega hw d\mu\\
    &= \Phi(u) + \Psi(u) + \int_\Omega h(v+w) d\mu.
    \end{align*}
Hence $(u,v+w) \in \partial_{L^p}( \Phi + \Psi)$.
\item This follows from an expansion analogous to the one in the proof of part 1.
\item Let $u,v\in L^p(\Omega; \mu)$. By assumption $v \in L^2(\Omega;\mu)$ and, if $w\in L^p(\Omega;\mu)$, then also $w-u \in L^2(\Omega;\mu)$; thus $(w-u)v\in L^1(\Omega;\mu)$. Therefore $(u,v)\in \partial_{L^p} \Phi$ if and only if, for all $w\in L^p(\Omega;\mu)$,
$$ \Phi(w) \geq \Phi(u) + \int_\Omega (w-u)v d\mu;$$
equivalently,
$$ \Phi(w) - \int_\Omega vw d\mu \geq \Phi(u) - \int_\Omega vu d\mu.$$
Moreover, this is equivalent to $\displaystyle
u \in \argmin_{w\in L^p(\Omega;\mu)} \left( \Phi(w) - \int_\Omega vw \, d\mu \right)
$.
\item Let $u,h \in L^p(\Omega;\mu)$. By assumption $u,h \in L^2(\Omega;\mu)$, hence $uh\in L^1(\Omega;\mu)$. Then
\begin{align*}
 \frac{1}{2}\int_\Omega |u+h|^2  d\mu   &=  \frac{1}{2}\int_\Omega |u|^2 d\mu +  \int_\Omega hu d\mu 
 + \frac{1}{2}\int_\Omega |h|^2 d\mu\\
 &\geq  \frac{1}{2}\int_\Omega |u|^2 d\mu + \int_\Omega hu d\mu .
\end{align*}
Thus $(u,u) \in \partial_{L^p} \left(w \mapsto \frac{1}{2}\int_\Omega |w|^2 \, d\mu \right)$, which establishes that 
$I_{L^p(\Omega; \mu)} \subset \partial_{L^p} \left(w \mapsto \frac{1}{2}\int_\Omega |w|^2 \, d\mu \right)$. 
On the other hand, if $(u,v) \in \partial_{L^p} \left(w \mapsto \frac{1}{2}\int_\Omega |w|^2 \, d\mu \right)$, then, by the same expansion of $|u+h|^2$ as above, we have, for all $h\in L^p(\Omega;\mu)$,
\[
\frac{1}{2}\int_\Omega |u|^2 d\mu + \int_\Omega hv d\mu \leq \frac{1}{2}\int_\Omega |u|^2 d\mu + \int_\Omega hu d\mu 
 + \frac{1}{2}\int_\Omega |h|^2 d\mu.
\]
Choosing $h=v-u$, we obtain
\[
\int_\Omega |v-u|^2 \, d\mu \leq \frac12 \int_\Omega |v-u|^2 \, d\mu,
\]
hence $u=v$ in $L^p(\Omega;\mu)$. Thus also $\partial_{L^p} \left(w \mapsto \frac{1}{2}\int_\Omega |w|^2 \, d\mu \right) \subset I_{L^p(\Omega; \mu)}$.
\end{enumerate}
\end{proof}

\begin{definition}\label{def:Pconvex}
Define the class of functions
\[
P_0 := \{ g \in C^{\infty}(\mathbb{R}) \,|\, 0 \notin \operatorname{spt}(g), \text{ } \operatorname{spt}(g') \text{ is compact, and } 0 \leq g' \leq 1 \},
\]
where $\operatorname{spt}(g')$ denotes the support of $g'$. Let $X$ be a convex subset of $\mathfrak{F}(\Omega;\mu)$. We then say a functional $\Phi: \mathfrak{F}(\Omega;\mu) \rightarrow (-\infty, +\infty]$ is $P_0$-convex on $X$ if, for all $u,v \in X$ and all $g \in P_0$,
\begin{equation} \label{Pconvex}
\Phi(u+g \circ (v-u)) + \Phi(v-g\circ(v-u)) \leq \Phi(u) + \Phi(v) .
\end{equation}
\end{definition}

The concept of $P_0$-convexity in Definition~\ref{def:Pconvex} is taken from \cite[Lemma 7.1]{benilan1991comp}, where it is introduced without name as condition in the lemma. This notion is very useful for our paper and we have decided to name it. 

\begin{remark}\label{rem:P0subset}
    It follows directly from Definition~\ref{def:Pconvex} that any functional that is $P_0$-convex on a convex subset of $\mathfrak{F}(\Omega;\mu)$, is also $P_0$-convex on any convex subset of $X$.
\end{remark}

\begin{remark}\label{rem:gcomposedwithu}
Let $g\in P_0$, then $\operatorname{sign}(g(x))= \operatorname{sign}(x)$. Indeed, since $g$ is smooth and $g'\geq 0$, $g$ is monotonically increasing, and because $0\not\in \operatorname{spt}$, $g(0)=0$. Thus if $x<0$ then $g(x)<0$, and if $x>0$, then $g(x)>0$. Moreover, since $|g'|\leq 1$, we find, for all $x \in \mathbb{R}$,
$
|g(x)| \leq \int_{[0,x]} |g'(y)| \, dy \leq |x|.
$
From this inequality it also follows that, if $1\leq p <+\infty$, then, for all $u\in L^p(\Omega;\mu)$ and all $g\in P_0$, we have $g \circ u \in L^p(\Omega;\mu)$, since $\int_\Omega |g\circ u|^p \, d\mu \leq \int_\Omega |u|^p \, d\mu < +\infty$.
\end{remark}

\begin{proposition} \label{Pprop}
Let $1\leq p<+\infty$. If the functional $\Phi: \mathfrak{F}(\Omega;\mu) \rightarrow (-\infty, +\infty]$ is $P_0$-convex on $L^p(\Omega;\mu)$ and lower semicontinuous with respect to the topology of $L^p(\Omega;\mu)$, then its restriction to $L^p(\Omega;\mu)$ is convex. 
\end{proposition}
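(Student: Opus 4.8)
The plan is to first establish midpoint convexity of $\Phi|_{L^p(\Omega;\mu)}$, that is,
\[
\Phi\Big(\tfrac{u+v}{2}\Big) \le \tfrac12\big(\Phi(u)+\Phi(v)\big) \qquad \text{for all } u,v \in L^p(\Omega;\mu),
\]
and then to upgrade this to full convexity using lower semicontinuity. For the upgrade I would fix $u,v\in L^p(\Omega;\mu)$ with (without loss of generality) $\Phi(u),\Phi(v)<+\infty$, prove by induction on $n$ that $\Phi(tu+(1-t)v)\le t\Phi(u)+(1-t)\Phi(v)$ for every dyadic rational $t=k/2^n\in[0,1]$ (the inductive step writes such a $t$ as the midpoint of two dyadic rationals of lower level and applies midpoint convexity followed by the inductive hypothesis, using that $tu+(1-t)v$ is then the midpoint of the corresponding two convex combinations), and then pass to arbitrary $t\in[0,1]$ by picking dyadic $t_j\to t$, noting $t_ju+(1-t_j)v\to tu+(1-t)v$ in $L^p(\Omega;\mu)$, and invoking lower semicontinuity: $\Phi(tu+(1-t)v)\le\liminf_j\Phi(t_ju+(1-t_j)v)\le\liminf_j\big(t_j\Phi(u)+(1-t_j)\Phi(v)\big)=t\Phi(u)+(1-t)\Phi(v)$.

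The heart of the matter is therefore the midpoint inequality, which I would derive from $P_0$-convexity by choosing $g\in P_0$ close to the map $t\mapsto\tfrac12 t$. Concretely, for $0<\varepsilon<M$ I would build $g_{\varepsilon,M}\in P_0$ with $g_{\varepsilon,M}(t)=\tfrac12 t$ for all $t$ with $\varepsilon\le|t|\le M$; this is possible within the constraints defining $P_0$ precisely because the slope $\tfrac12$ is strictly less than $1$, which leaves room both to ramp $g_{\varepsilon,M}$ up smoothly from the value $0$ it is forced to take on a neighbourhood of the origin and to taper $g_{\varepsilon,M}'$ back down to $0$ for $|t|$ beyond $M$ so that $\operatorname{spt}(g_{\varepsilon,M}')$ stays compact, all while keeping $0\le g_{\varepsilon,M}'\le1$. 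Using $g_{\varepsilon,M}(t)=\tfrac12 t$ on $\{\varepsilon\le|t|\le M\}$ together with the pointwise bound $|g_{\varepsilon,M}(t)-\tfrac12 t|\le|g_{\varepsilon,M}(t)|+\tfrac12|t|\le\tfrac32|t|$ (here $|g(t)|\le|t|$ for $g\in P_0$ is part of Remark~\ref{rem:gcomposedwithu}), we get $|g_{\varepsilon,M}(t)-\tfrac12 t|\le\tfrac32|t|\,\mathbf{1}_{\{|t|<\varepsilon\}\cup\{|t|>M\}}$, hence
\[
\big\|g_{\varepsilon,M}\circ(v-u)-\tfrac12(v-u)\big\|_{L^p(\Omega;\mu)}^p \le \Big(\tfrac32\Big)^p\int_{\{|v-u|<\varepsilon\}\cup\{|v-u|>M\}}|v-u|^p\,d\mu,
\]
which tends to $0$ as $\varepsilon\downarrow0$ and $M\uparrow+\infty$ by dominated convergence, since $v-u\in L^p(\Omega;\mu)$. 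In particular, along a sequence $g_j:=g_{\varepsilon_j,M_j}$ with $\varepsilon_j\downarrow0$, $M_j\uparrow+\infty$, both $u+g_j\circ(v-u)$ and $v-g_j\circ(v-u)$ lie in $L^p(\Omega;\mu)$ (by Remark~\ref{rem:gcomposedwithu}) and converge there to $\tfrac12(u+v)$.

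Now I would invoke $P_0$-convexity (Definition~\ref{def:Pconvex}) with $g=g_j$, giving $\Phi(u+g_j\circ(v-u))+\Phi(v-g_j\circ(v-u))\le\Phi(u)+\Phi(v)$ for every $j$; the case $\Phi(u)=+\infty$ or $\Phi(v)=+\infty$ makes the target midpoint inequality trivial, so assume both are finite, whence both terms on the left are finite for every $j$. Applying lower semicontinuity of $\Phi|_{L^p(\Omega;\mu)}$ to each of the two convergent sequences gives $\Phi(\tfrac12(u+v))\le\liminf_j\Phi(u+g_j\circ(v-u))$ and $\Phi(\tfrac12(u+v))\le\liminf_j\Phi(v-g_j\circ(v-u))$; adding these, using superadditivity of $\liminf$, and then the $P_0$-convexity bound yields $2\Phi(\tfrac12(u+v))\le\Phi(u)+\Phi(v)$, which is the required midpoint convexity. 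The only nonroutine step is the construction of the family $g_{\varepsilon,M}\in P_0$ together with the verification that the truncations it is forced to perform near $0$ and near $\infty$ are negligible in $L^p(\Omega;\mu)$; once that is in hand, the remainder is bookkeeping with $\liminf$s and the classical midpoint-to-convexity argument.
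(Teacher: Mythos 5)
The paper proves Proposition~\ref{Pprop} by a one-line citation to \cite[Proposition~7.2]{benilan1991comp}, checking only that $L^p(\Omega;\mu)$ is normal in the sense of that reference and contains $L^1\cap L^\infty$ as a dense subset; the argument itself is delegated to the reference. You give a self-contained direct proof, and it is correct: test the $P_0$-convexity inequality against $g_{\varepsilon,M}\in P_0$ chosen to agree with $t\mapsto\tfrac12 t$ on $\{\varepsilon\le|t|\le M\}$, use Remark~\ref{rem:gcomposedwithu} and dominated convergence to deduce $g_{\varepsilon,M}\circ(v-u)\to\tfrac12(v-u)$ in $L^p(\Omega;\mu)$, combine lower semicontinuity with superadditivity of $\liminf$ to obtain midpoint convexity, then upgrade to convexity at dyadic $t$ by induction and at all $t\in[0,1]$ by lower semicontinuity again. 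The one step that deserves to be written out carefully is the existence of $g_{\varepsilon,M}$: since $g$ must vanish on some neighbourhood $(-\delta,\delta)$ of $0$ and you prescribe $g(\varepsilon)=\varepsilon/2$, the constraint $0\le g'\le 1$ forces $\varepsilon/2=\int_\delta^\varepsilon g'\le\varepsilon-\delta$, i.e.\ $\delta\le\varepsilon/2$; choosing $\delta<\varepsilon/2$, the ramp over $[\delta,\varepsilon]$ has average slope strictly below $1$ and endpoint slope $\tfrac12<1$, so a smooth $g'\in[0,1]$ with the required matching exists (symmetrically for negative $t$), and tapering $g'$ to $0$ past $|t|=M$ keeps $\operatorname{spt}(g')$ compact---exactly the ``room'' you appeal to. Your direct proof has the virtue of exhibiting which features of $L^p$ actually carry the result (the pointwise bound $|g|\le|\cdot|$, dominated convergence, and lower semicontinuity), whereas the citation is shorter and inherits Benilan--Crandall's generality over normal Banach function spaces, neither of which is needed for this particular proposition.
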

\begin{proof}
    See \cite[Proposition 7.2]{benilan1991comp}. This result can be used since  $L^p(\Omega;\mu)$ is normal (in the sense of \cite[Definition 2.8]{benilan1991comp}) and contains $L^{1\cap\infty}(\Omega)$ as a dense subset.
\end{proof}

Proposition~\ref{Pprop} shows that $P_0$-convexity is stronger than convexity for lower-semicontinuous functions. Proposition~\ref{prop:counterexample} below tells us that it even is strictly stronger and in fact cannot be implied by $\lambda$-convexity for any $\lambda\geq 0$. 

\begin{proposition}
\label{prop:counterexample}
Let $x,y\in \Omega$ be distinct and $\mu:= \frac{1}{2} \delta_{x} + \frac{1}{2} \delta_{y}$. Then, for all $\lambda\geq 0$, there exists a function $\Phi:\mathfrak{F}(\Omega;\mu) \rightarrow \mathbb{R}$ which is continuous with respect to the topology of $L^2(\Omega;\mu)$ and $\lambda$-convex, yet which is not $P_0$-convex on $L^2(\Omega;\mu)$.
\end{proposition}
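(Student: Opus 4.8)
The plan is to work directly on the two-point space. Since $\mu=\tfrac12\delta_x+\tfrac12\delta_y$ with $x\neq y$, every element of $\mathfrak{F}(\Omega;\mu)$ is determined $\mu$-a.e.\ by its values at $x$ and $y$, so I would identify $\mathfrak{F}(\Omega;\mu)$ with $\mathbb{R}^2$ via $u\mapsto(u(x),u(y))$; under this identification $\|u\|_{L^2(\Omega;\mu)}^2=\tfrac12\bigl(u(x)^2+u(y)^2\bigr)$, the $L^2(\Omega;\mu)$-topology is the usual topology of $\mathbb{R}^2$, and (here) every $L^p(\Omega;\mu)$ coincides with $\mathfrak{F}(\Omega;\mu)=\mathbb{R}^2$ as a set. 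On this space I propose the quadratic functional
\[
\Phi(u):=\tfrac12\bigl(u(x)+u(y)\bigr)^2+\tfrac{\lambda}{2}\,\|u\|_{L^2(\Omega;\mu)}^2 .
\]
This is a polynomial in $(u(x),u(y))$, hence real-valued and continuous for the $L^2(\Omega;\mu)$-topology, and $\Phi-\tfrac{\lambda}{2}\|\cdot\|_{L^2(\Omega;\mu)}^2=\tfrac12(u(x)+u(y))^2$ is the composition of a linear map with a convex function, hence convex; by Lemma~\ref{lem:lambdaconvexequivalent} this shows $\Phi$ is $\lambda$-convex.

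It remains to falsify \eqref{Pconvex} for $\Phi$ on $L^2(\Omega;\mu)$. I would test it at $u:=0$ and $v$ with $v(x)=1$, $v(y)=-1$, against a function $g\in P_0$ whose derivative $g'$ is a single smooth nonnegative bump supported in $[\eta/4,1-\eta/4]$ with $0\le g'\le1$ and $\int_{\mathbb{R}}g'=1-\eta$, for a small parameter $\eta\in(0,1)$. Such a $g$ exists because $1-\eta$ is strictly smaller than the length $1-\eta/2$ of $[\eta/4,1-\eta/4]$, so the required average height is below $1$; then $g$ is smooth, non-decreasing, $\operatorname{spt}(g')\subset[\eta/4,1-\eta/4]$ is compact, $g\equiv0$ on $(-\infty,\eta/4]$ so $\operatorname{spt}(g)\subset[\eta/4,+\infty)\not\ni 0$, whence $g\in P_0$, while $g(1)=1-\eta$ and $g(t)=0$ for all $t\le 0$, in particular $g(-1)=0$. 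Hence $g\circ(v-u)$ corresponds to $(1-\eta,0)$, so $u+g\circ(v-u)\leftrightarrow(1-\eta,0)$ and $v-g\circ(v-u)\leftrightarrow(\eta,-1)$. A short one-variable polynomial computation then gives
\[
\Phi\bigl(u+g\circ(v-u)\bigr)+\Phi\bigl(v-g\circ(v-u)\bigr)-\Phi(u)-\Phi(v)\;=\;1-\Bigl(2+\tfrac{\lambda}{2}\Bigr)\eta+\Bigl(1+\tfrac{\lambda}{2}\Bigr)\eta^2 ,
\]
which is strictly positive as soon as $\eta\le\tfrac{1}{\lambda+4}$ (then the middle term is at most $\tfrac12$ and the last term is nonnegative). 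This violates \eqref{Pconvex}, so $\Phi$ is not $P_0$-convex on $L^2(\Omega;\mu)$, which finishes the proof for the given $\lambda$.

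The step I expect to need the most thought — rather than the bump-function construction, which is a routine mollification — is the structural reason the argument survives for arbitrarily large $\lambda$. The genuinely non-$P_0$-convex ingredient is $\tfrac12(u(x)+u(y))^2$, which is only $0$-convex (and degenerate) yet already fails \eqref{Pconvex}: replacing $v$ by the two ``split'' vectors $u+g\circ(v-u)$ and $v-g\circ(v-u)$ forces both coordinates of each piece to have the same sign, destroying the cancellation present in the cross term $u(x)u(y)$ of $\Phi(v)$ when $v(x)$ and $v(y)$ have opposite signs. Adding $\tfrac{\lambda}{2}\|\cdot\|_{L^2(\Omega;\mu)}^2$ to make $\Phi$ as convex as we wish only perturbs the resulting gap by quantities of order $\eta$ (the $(2+\tfrac\lambda2)\eta$ and $(1+\tfrac\lambda2)\eta^2$ terms above), which are harmless once $\eta$ is chosen small depending on $\lambda$. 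So the decoupling of ``arbitrary $\lambda$-convexity'' from ``$P_0$-convexity'' is exactly what makes the statement true, and verifying it quantitatively is the heart of the argument.
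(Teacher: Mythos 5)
Your proposal is correct and follows the same essential strategy as the paper: identify $\mathfrak{F}(\Omega;\mu)$ with $\mathbb{R}^2$, take a quadratic $\Phi$ whose $\lambda$-convexity is manifest (since $\Phi - \tfrac{\lambda}{2}\|\cdot\|^2$ is a convex quadratic), and falsify \eqref{Pconvex} with an explicit $g\in P_0$ that maps the two coordinates of $v-u$ to distinct, non-symmetric values, thereby destroying the cancellation in the cross term $(u(x)+u(y))^2$. The only differences are cosmetic: the paper puts a $\lambda$-dependent coefficient $(\lambda+1)$ on the cross term so that a single fixed $g$ (with $g(-1)=0$, $g(1)=\tfrac12$) works for every $\lambda\geq 0$, whereas you keep the cross-term coefficient fixed at $\tfrac12$ and compensate by letting $g$ depend on $\lambda$ through $\eta\leq\tfrac{1}{\lambda+4}$; both computations close the argument cleanly.
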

\begin{proof}
First we observe that, for all $u\in \mathfrak{F}(\Omega;\mu)$,
\[
\|u\|_{L^2(\Omega;\mu)}^2 = \frac12 \left(u^2(x) + u^2(y)\right) < +\infty
\]
and thus if we equip $\mathfrak{F}(\Omega;\mu)$ with the $L^2(\Omega;\mu)$ norm, then $\mathfrak{F}(\Omega;\mu)=L^2(\Omega;\mu)$ as Hilbert spaces.

Let $\lambda\geq 0$ and define
$$ \Phi(u):= (\lambda+1)\big(u(x)+u(y)\big)^2+\frac{\lambda}{2}\left(u^2(x)+u^2(y)\right).$$
Then $\Phi$ is continuous with respect to the topology of $L^2(\Omega;\mu)$. Using Lemma~\ref{lem:lambdaconvexequivalent} it is also not difficult to see that $\Phi$ is $\lambda$-convex. If we take $u,v\in \mathfrak{F}(\Omega;\mu)$ such that $u(x)=1$, $u(y)=0$, $v(x)=0$, and $v(y)=1$, and choose $g\in P_0$ such that $g(-1)=0$ and $g(1)=\frac{1}{2}$, then $(g\circ(v-u))(x) = g(-1)=0$, $(g\circ(v-u))(y)=g(1)=\frac12$,
and
$\Phi(u)=\Phi(v)=(\lambda+1) + \frac\lambda2=1+\frac32\lambda$. Thus
\begin{align*}
    \Phi(u+g\circ(v-u)) + \Phi(v-g\circ(v-u)) &= (\lambda+1)\big(u(x)+(g\circ(v-u))(x)+u(y)+(g\circ(v-u))(y)\big)^2\\
    &+ (\lambda+1)\big(v(x)-(g\circ(v-u))(x)+v(y)-(g\circ(v-u))(y)\big)^2\\
&\hspace{0.3cm}+\frac{\lambda}{2}\left([u(x)+(g\circ(v-u))(x)]^2+[u(y)+(g\circ(v-u)(y))]^2\right)\\    &\hspace{0.3cm}+\frac{\lambda}{2}\left([v(x)-(g\circ(v-u))(x)]^2+[v(y)-(g\circ(v-u)(y))]^2\right)\\   
&= \frac52(\lambda+1) + \frac32 \frac\lambda2 = \frac52+\frac{13}4\lambda > 2 + 3\lambda = \Phi(u)+\Phi(v).
\end{align*}
\end{proof}

\begin{lemma}
\label{basicP0}
Let $1\leq p <+\infty$ and $\lambda>0$. Suppose $\Phi_1,\Phi_2: \mathfrak{F}(\Omega;\mu) \rightarrow (-\infty,+\infty]$ are $P_0$-convex. Then $\lambda \Phi_1$ and $\Phi_1 +\Phi_2$ are both $P_0$-convex.
\end{lemma}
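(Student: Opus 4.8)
The plan is simply to note that $P_0$-convexity is, unpacked, a family of inequalities indexed by $u,v$ in the (common) convex domain and by $g\in P_0$, and that such inequalities are stable under multiplication by a positive scalar and under addition, once one is slightly careful with $(-\infty,+\infty]$-valued arithmetic. Throughout, for fixed $u,v$ and $g\in P_0$ the elements $u+g\circ(v-u)$ and $v-g\circ(v-u)$ again lie in $\mathfrak{F}(\Omega;\mu)$ (the composition of the smooth $g$ with a Borel representative of $v-u$ is Borel), so all the function evaluations below are well defined in $(-\infty,+\infty]$.

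First I would treat $\lambda\Phi_1$. Fix $u,v$ and $g\in P_0$. If $\Phi_1(u)=+\infty$ or $\Phi_1(v)=+\infty$, then, since $\lambda>0$, the right-hand side $\lambda\Phi_1(u)+\lambda\Phi_1(v)$ equals $+\infty$ and the $P_0$-convexity inequality \eqref{Pconvex} for $\lambda\Phi_1$ holds trivially. Otherwise $\Phi_1(u)$ and $\Phi_1(v)$ are finite, and I multiply the inequality \eqref{Pconvex} for $\Phi_1$ through by $\lambda>0$, which preserves it and gives
\[
\lambda\Phi_1\big(u+g\circ(v-u)\big)+\lambda\Phi_1\big(v-g\circ(v-u)\big)\leq \lambda\Phi_1(u)+\lambda\Phi_1(v),
\]
i.e.\ exactly \eqref{Pconvex} for $\lambda\Phi_1$. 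Since $u,v,g$ were arbitrary, $\lambda\Phi_1$ is $P_0$-convex.

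Next I would treat $\Phi_1+\Phi_2$. Again fix $u,v$ and $g\in P_0$, and write down \eqref{Pconvex} for $\Phi_1$ and for $\Phi_2$ separately. Because each $\Phi_i$ takes values in $(-\infty,+\infty]$, every sum occurring in these two inequalities — in particular $(\Phi_1+\Phi_2)(u)$, $(\Phi_1+\Phi_2)(v)$, and the values at the two $g$-shifted points — lies in $(-\infty,+\infty]$, with no $\infty-\infty$ indeterminacy. Hence the two inequalities may be added, yielding
\[
(\Phi_1+\Phi_2)\big(u+g\circ(v-u)\big)+(\Phi_1+\Phi_2)\big(v-g\circ(v-u)\big)\leq (\Phi_1+\Phi_2)(u)+(\Phi_1+\Phi_2)(v),
\]
which is \eqref{Pconvex} for $\Phi_1+\Phi_2$. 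This proves the lemma.

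\textbf{Main obstacle.} There is no substantive difficulty here; the only points requiring any attention are the bookkeeping for the value $+\infty$ (handled by the trivial case split above) and the observation that the $g$-shifted arguments remain in $\mathfrak{F}(\Omega;\mu)$, so that the lemma is a direct consequence of the definition of $P_0$-convexity. In this respect it is the $P_0$-analogue of parts~1 and~2 of Proposition~\ref{SubdiffProp} for subdifferentials.
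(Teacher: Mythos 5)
Your proof is correct and follows exactly the route the paper intends: the paper's own proof is the one-liner ``This follows immediately from the defining condition in~\eqref{Pconvex},'' and your argument simply spells out that scaling by $\lambda>0$ and adding the two instances of \eqref{Pconvex} is legitimate in $(-\infty,+\infty]$-valued arithmetic. The extra bookkeeping about $+\infty$ and measurability is fine but not required beyond what the paper states.
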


\begin{proof}
    This follows immediately from the defining condition in ($\ref{Pconvex}$).
\end{proof}

Next we state a result from \cite{benilan1991comp} that illustrates why $P_0$-convexity is a useful notion.

\begin{lemma} \label{lemAcrete}
Let $1\leq p<+\infty$ and $\Phi:\mathfrak{F}(\Omega;\mu) \rightarrow (-\infty,+\infty]$ be $P_0$-convex on $L^p(\Omega;\mu)$ and lower semicontinuous on $L^p(\Omega;\mu)$. Then, for all $1\leq r<+\infty$, $\partial_{L^p} \Phi$ is accretive with respect to the $L^r(\Omega;\mu)$ norm.
\end{lemma}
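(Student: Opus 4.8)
The plan is to derive the statement from the theory of completely accretive operators of B\'enilan and Crandall \cite{benilan1991comp}, of which the $P_0$-convexity condition of Definition~\ref{def:Pconvex} is exactly the convexity counterpart. First I would work within the B\'enilan--Crandall framework, just as in the proof of Proposition~\ref{Pprop}: since $\Omega\subset\mathbb{R}^d$ is open and $\mu$ is Radon, hence $\sigma$-finite, the space $L^p(\Omega;\mu)$ is normal in the sense of \cite[Definition 2.8]{benilan1991comp} and contains $L^{1\cap\infty}(\Omega)$ as a dense subspace. If $\Phi\equiv+\infty$ on $L^p(\Omega;\mu)$ then $\partial_{L^p}\Phi=\emptyset$, which is trivially accretive with respect to any $N$ in the sense of Definition~\ref{def:accretiveN}, so we may assume $\Phi|_{L^p(\Omega;\mu)}$ is proper; being also $P_0$-convex and lower semicontinuous, it is covered by \cite[Lemma 7.1]{benilan1991comp}, which gives that $\partial_{L^p}\Phi$ --- which per Remark~\ref{rem:adjustedsubdiff} is the subdifferential relative to the $L^2$ inner product but with domain and range in $L^p(\Omega;\mu)$ --- is completely accretive.

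The second step is to turn complete accretivity into $L^r$-accretivity. By one of the equivalent descriptions of completely accretive operators in \cite[Section 2]{benilan1991comp}, for all $(u,v),(\hat u,\hat v)\in\partial_{L^p}\Phi$, all $\lambda>0$, and all convex lower-semicontinuous $j:\mathbb{R}\to[0,+\infty]$ with $j(0)=0$ one has
$$ \int_\Omega j\bigl(u-\hat u\bigr)\,d\mu \;\leq\; \int_\Omega j\bigl(u-\hat u+\lambda(v-\hat v)\bigr)\,d\mu . $$
Fixing $r\in[1,+\infty)$ and applying this with $j(s):=|s|^{r}$, which belongs to that class, yields
$$ \|u-\hat u\|_{L^r(\Omega;\mu)}^{r}\;\leq\;\|u-\hat u+\lambda(v-\hat v)\|_{L^r(\Omega;\mu)}^{r} \qquad\text{in }[0,+\infty]; $$
taking $r$-th roots and comparing with Definition~\ref{def:accretiveN} for $N=\|\cdot\|_{L^r(\Omega;\mu)}$ is exactly the claimed accretivity. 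When $r>p$ and $\mu$ is not finite, $\|\cdot\|_{L^r(\Omega;\mu)}$ may equal $+\infty$ on an element of $L^p(\Omega;\mu)$, but this causes no trouble, since Definition~\ref{def:accretiveN} allows $N$ to take values in $[0,+\infty]$ and the displayed inequality holds in $[0,+\infty]$.

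I expect the only genuine difficulty to be organisational: checking that the operator $\partial_{L^p}\Phi$ of Definition~\ref{Subdiffdef}, with its restriction to test functions $h$ obeying $hv\in L^1(\Omega;\mu)$ and with domain inside $L^p(\Omega;\mu)$, is precisely the object treated in \cite[Lemma 7.1]{benilan1991comp}, and that all of that lemma's structural hypotheses on the underlying function space are in force. For readers who prefer not to invoke the structural results of \cite{benilan1991comp}, it is worth recording the mechanism at work: for $(u,v),(\hat u,\hat v)\in\partial_{L^p}\Phi$ and $g\in P_0$, apply Definition~\ref{Subdiffdef} with the test function $h=g\circ(\hat u-u)$ at $(u,v)$ and with $h=-g\circ(\hat u-u)$ at $(\hat u,\hat v)$, using $|g\circ(\hat u-u)|\leq|\hat u-u|$ from Remark~\ref{rem:gcomposedwithu}; adding the two inequalities and comparing with the $P_0$-convexity inequality \eqref{Pconvex} for the pair $(u,\hat u)$, the $\Phi$-terms cancel and one is left with $\int_\Omega g(u-\hat u)(v-\hat v)\,d\mu\geq 0$ for every $g\in P_0$ (replacing $g(s)$ by $-g(-s)\in P_0$ to fix the sign if necessary). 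From this differential inequality $L^r$-accretivity follows by approximating $s\mapsto|s|^{r-1}\operatorname{sign}(s)$ by functions in $P_0$, after truncation, together with convexity of $\lambda\mapsto\|u-\hat u+\lambda(v-\hat v)\|_{L^r(\Omega;\mu)}^{r}$; on this route the delicate point is precisely that truncation-and-limit step, which must accommodate the superlinear growth of $|s|^{r-1}\operatorname{sign}(s)$ for $r>2$, its lack of smoothness at $0$ for $r<2$, and the integrability of the products $hv$ when $p>2$ and $\mu$ is infinite.
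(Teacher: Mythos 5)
Your proposal is correct and follows essentially the same route as the paper: the paper's proof is a citation to \cite[Definition 2.1 and Lemma 7.1]{benilan1991comp}, i.e.\ it invokes Lemma~7.1 there to obtain complete accretivity of $\partial_{L^p}\Phi$ and then unwinds Definition~2.1 (the $\ll$--relation over all $j\in J_0$) with $j(s)=|s|^r$ to get $L^r$-accretivity, exactly as you spell out in your first two paragraphs. The only cosmetic difference is which normality check you record (you verify $L^p(\Omega;\mu)$ is normal, which is what Lemma~7.1 needs; the paper mentions $L^r(\Omega;\mu)$ being normal, which serves the passage from complete accretivity to $L^r$-accretivity), and your third paragraph adds a self-contained sketch that the paper does not attempt.
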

\begin{proof}
    See \cite[Definition 2.1 and Lemma 7.1]{benilan1991comp}. This lemma can be used because $L^r(\Omega;\mu)$ is normal, in the sense of \cite[Definition 2.8]{benilan1991comp}.
\end{proof}

By Proposition~\ref{prop:accretivecontraction} accretivity of $\partial_{L^p} \Phi$ with respect to the topology of $L^r(\Omega;\mu)$, as in the conclusion of Lemma~\ref{lemAcrete}, means that the resolvent $R_\lambda(\partial_{L^p}\Phi)$ contracts the $L^r(\Omega;\mu)$ norm. We can use this to establish that the semigroup generated by $\partial_{L^p}\Phi$ also contracts the $L^r(\Omega;\mu)$ norm. We will do so in Corollary~\ref{cor:semigroupAccretive}, but first require the following lemma.

\begin{lemma}
\label{lem:lscab}
For all $p,q\geq 1$, the norm functional $\Vert \cdot \Vert_{L^q(\Omega;\mu)}$ is lower semicontinuous on the space $L^p(\Omega;\mu)$. 
\end{lemma}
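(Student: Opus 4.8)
The plan is to reduce the claim to a statement about sequences and then apply Fatou's lemma. Since $L^p(\Omega;\mu)$ is a metric space, the extended-real-valued functional $\|\cdot\|_{L^q(\Omega;\mu)}\colon L^p(\Omega;\mu)\to[0,+\infty]$ is lower semicontinuous if and only if it is sequentially lower semicontinuous, so it suffices to show: whenever $(u_n)_{n\in\mathbb{N}}\subset L^p(\Omega;\mu)$ converges in the norm of $L^p(\Omega;\mu)$ to some $u\in L^p(\Omega;\mu)$, one has $\|u\|_{L^q(\Omega;\mu)}\leq\liminf_{n\to\infty}\|u_n\|_{L^q(\Omega;\mu)}$. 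If this $\liminf$ equals $+\infty$ there is nothing to prove, so we may assume it is a finite number $\ell\geq 0$.

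First I would pass to subsequences twice. Choose a subsequence $(u_{n_k})_{k\in\mathbb{N}}$ along which $\|u_{n_k}\|_{L^q(\Omega;\mu)}\to\ell$. Since $u_{n_k}\to u$ in $L^p(\Omega;\mu)$ with $p<+\infty$, the standard fact that $L^p$-convergence forces a $\mu$-a.e.\ convergent subsequence yields a further subsequence $(u_{n_{k_j}})_{j\in\mathbb{N}}$ with $u_{n_{k_j}}\to u$ $\mu$-almost everywhere on $\Omega$. In the case $q<+\infty$, I would then apply Fatou's lemma to the nonnegative measurable functions $|u_{n_{k_j}}|^q$, which converge $\mu$-a.e.\ to $|u|^q$:
\[
\int_\Omega |u|^q\,d\mu\leq\liminf_{j\to\infty}\int_\Omega|u_{n_{k_j}}|^q\,d\mu=\liminf_{j\to\infty}\|u_{n_{k_j}}\|_{L^q(\Omega;\mu)}^q=\ell^q,
\]
where the last equality holds because $(u_{n_{k_j}})_j$ is a subsequence of $(u_{n_k})_k$. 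Taking $q$-th roots gives $\|u\|_{L^q(\Omega;\mu)}\leq\ell=\liminf_{n\to\infty}\|u_n\|_{L^q(\Omega;\mu)}$, the desired inequality. If one also wishes to admit $q=+\infty$, the same two subsequences work: from $|u_{n_{k_j}}|\leq\|u_{n_{k_j}}\|_{L^\infty(\Omega;\mu)}$ $\mu$-a.e.\ for each $j$ together with $u_{n_{k_j}}\to u$ $\mu$-a.e., one gets $|u|\leq\ell$ $\mu$-a.e.\ (the countable union of the exceptional null sets is still null), hence $\|u\|_{L^\infty(\Omega;\mu)}\leq\ell$.

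There is no serious obstacle here; the argument is routine. The only points that need a little care are that the functional takes the value $+\infty$ on those elements of $L^p(\Omega;\mu)$ that fail to lie in $L^q(\Omega;\mu)$ (harmless, as the case $\liminf=+\infty$ is trivial) and the invocation of the classical theorem that norm convergence in $L^p(\Omega;\mu)$ admits a $\mu$-a.e.\ convergent subsequence, which is what lets Fatou's lemma be applied pointwise.
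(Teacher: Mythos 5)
Your argument is correct and takes a genuinely different route from the paper's. You reduce to sequences, extract first a subsequence realising the limit inferior, then a further subsequence converging $\mu$-a.e.\ (using the standard fact that $L^p$-convergence, $p<\infty$, admits an a.e.-convergent subsequence — valid on any measure space, so the Radon assumption on $\mu$ is not an obstacle), and apply Fatou's lemma to $|u_{n_{k_j}}|^q$. The paper instead truncates both the domain (by intersecting with $(-K,K)^d$) and the range (by the thresholding operator $\tau_K$), uses an interpolation/H\"older argument on the bounded truncated functions to upgrade $L^p$-convergence to $L^q$-convergence, and then lets $K\to\infty$ via the monotone convergence theorem. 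The two proofs are of comparable length but yours is more elementary: Fatou plus one standard subsequence extraction versus truncation, interpolation, and monotone convergence. Your version also handles $q=+\infty$ as a bonus, whereas the paper's truncation argument is written for finite $q$ (which is all the paper ever uses). What the paper's approach buys is that it avoids passing to subsequences and works directly with the full sequence, which some readers find cleaner; but this is a matter of taste rather than necessity here, since the two-subsequence trick you use is the textbook way to deduce lower semicontinuity of a $\liminf$.
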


\begin{proof}
Let $K\in\mathbb{N}$ and define $\Omega_K:=\Omega\cap(-K,K)^d$. Also define the thresholding operator $\tau_K:L^p(\Omega;\mu)\rightarrow L^p(\Omega;\mu)$ by
$$\tau_Ku(x):=\max\Big(-K,\min\big(K,u(x)\big)\Big).$$ 
For later use we note that, for all $u\in L^p(\Omega;\mu)$ and all $x\in \Omega$, $|\tau_Ku(x)|\leq |u(x)|$.
By $\chi_{\Omega_K}$ we denote the indicator function of the subset $\Omega_K$, i.e. the function whose value equals $1$ on $\Omega_K$ and $0$ on $\Omega\setminus\Omega_K$.

Let $u\in L^p(\Omega;\mu)$ and suppose that $(u_n)_{n\in\mathbb{N}}\subset L^p(\Omega;\mu)$ is a sequence such that $u_n\rightarrow u$ in $L^p(\Omega;\mu)$ as $n\to\infty$. By direct computation in each of the different cases of the values $u(x)$ and $u_n(x)$ being in $(-\infty, -K)$, $[-K,K]$, or $(K,+\infty)$, it follows that, for all $x\in \Omega$, $|\tau_K u(x) - \tau_K u_n(x)|\leq |u(x)-u_n(x)|$ and thus also $$|\chi_{\Omega_K}(x)\tau_K u(x) - \chi_{\Omega_K}(x)\tau_K u_n(x)|\leq |u(x)-u_n(x)|.$$
Hence $\chi_{\Omega_K}\tau_Ku_n\rightarrow \chi_{\Omega_K}\tau_Ku$ in $L^p(\Omega;\mu)$ as $n\rightarrow \infty$. Moreover, for all $n\in\mathbb{N}$, $\tau_Ku_n$ has bounded support in $\Omega_K$ and $|\tau_Ku_n|\leq K$. Thus, by a standard interpolation argument using H\"older's inequality (see \cite[Lemma 244E(b)]{Fremlin2}), $\chi_{\Omega_K}\tau_Ku_n\rightarrow \chi_{\Omega_K}\tau_Ku$ in $L^q(\Omega;\mu)$ as $n\rightarrow \infty$. So we deduce that
$$ \int_{\Omega_K} |\tau_K u(x)|^q \, d\mu(x)= \liminf_{n\rightarrow\infty} \int_{\Omega_K} |\tau_K u_n(x)|^q \, d\mu(x) \leq \liminf_{n\rightarrow\infty} \int_{\Omega} | u_n(x)|^q \, d\mu(x).$$
For all $x\in\Omega$, $|\chi_{\Omega_K}(x)\tau_Ku(x)|\rightarrow |u(x)|$ as $K\rightarrow \infty$ and this convergence is monotone. So, by the monotone convergence theorem for integrals (\cite[Theorem 2.14]{Folland99}),
\[
\int_\Omega |u(x)|^q \, d\mu(x)=\lim_{K\rightarrow\infty} \int_{\Omega_K} |\tau_K u(x)|^q \, d\mu(x).
\]
Combining this with the previous inequality, we get
$$ \int_{\Omega}|u(x)|^q \, d\mu(x) \leq \liminf_{n\rightarrow\infty} \int_{\Omega} |u_n(x)|^q \, d\mu(x).$$
\end{proof}

\begin{corollary}
\label{cor:semigroupAccretive}
Let $\Phi:\mathfrak{F}(\Omega;\mu) \to (-\infty,+\infty]$ be proper, lower semicontinuous, and $P_0$-convex on $L^2(\Omega;\mu)$ and let $S_{\partial_{L^2} \Phi}$ be the semigroup that gives the gradient flow of $\Phi|_{L^2(\Omega;\mu)}$ over $L^2(\Omega;\mu)$ in the sense of Lemma~\ref{lem:gradflowsemigroup}. Then, for all $r\in [1, +\infty)$ and all $t\geq 0$,
\[
\Vert S_{\partial_{L^2} \Phi}(t)x -  S_{\partial_{L^2} \Phi}(t)y\Vert_{L^r(\Omega;\mu)} \leq \Vert x-y \Vert_{L^r(\Omega;\mu)}.
\]
\end{corollary}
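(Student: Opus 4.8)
The plan is to transfer the $L^r$-contractivity of the resolvents of $\partial_{L^2}\Phi$ — which comes from $P_0$-convexity via Lemma~\ref{lemAcrete} — to the semigroup, using the Crandall--Liggett representation of $S_{\partial_{L^2}\Phi}$ and the lower semicontinuity of the $L^r(\Omega;\mu)$ norm on $L^2(\Omega;\mu)$ to pass to the limit. First I would set up the semigroup: by Proposition~\ref{Pprop} (with $p=2$) the restriction $\Phi|_{L^2(\Omega;\mu)}$ is convex, and it is also proper and lower semicontinuous on $L^2(\Omega;\mu)$; by Remark~\ref{rem:adjustedsubdiff}, $\partial_{L^2}\Phi = \partial\big(\Phi|_{L^2(\Omega;\mu)}\big)$, which by Theorem~\ref{thm:partiallambdamaccretive} (with $\lambda=0$) is m-accretive on $L^2(\Omega;\mu)$ and hence satisfies the range condition. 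Consequently Corollary~\ref{SemigroupCon} together with the Crandall--Liggett theorem (Theorem~\ref{CranLig}) gives, for every $z$ in the domain $D(S_{\partial_{L^2}\Phi})$ of the semigroup (which contains $\overline{D(\partial_{L^2}\Phi)}$, and in fact, by Proposition~\ref{mildsolutionsProp}, equals it) and every $t>0$,
\[
S_{\partial_{L^2}\Phi}(t)z = \lim_{n\to\infty} R^n_{t/n}(\partial_{L^2}\Phi)z,
\]
the limit taken in $L^2(\Omega;\mu)$; the case $t=0$ is trivial since $S_{\partial_{L^2}\Phi}(0)$ is the identity, so I would fix $t>0$ and $r\in[1,+\infty)$ henceforth.

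Next I would prove the inequality at the level of the backward-Euler approximants $R^n_{t/n}(\partial_{L^2}\Phi)$. By Lemma~\ref{lemAcrete} (with $p=2$), $\partial_{L^2}\Phi$ is accretive with respect to the $\|\cdot\|_{L^r(\Omega;\mu)}$ norm. Running the ``only if'' argument in the proof of Proposition~\ref{prop:accretivecontraction} with $\|\cdot\|_{L^r(\Omega;\mu)}$ in place of the ambient norm shows that, for every $\gamma>0$, the resolvent $R_\gamma(\partial_{L^2}\Phi)$ — which by m-accretivity of $\partial_{L^2}\Phi$ is a single-valued function defined on all of $L^2(\Omega;\mu)$ — is nonexpansive for $\|\cdot\|_{L^r(\Omega;\mu)}$. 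An $n$-fold composition of maps that are nonexpansive for a given norm is again nonexpansive for that norm, so for all $x,y\in D(S_{\partial_{L^2}\Phi})$ and all $n\in\mathbb{N}$,
\[
\bigl\| R^n_{t/n}(\partial_{L^2}\Phi)x - R^n_{t/n}(\partial_{L^2}\Phi)y \bigr\|_{L^r(\Omega;\mu)} \leq \|x-y\|_{L^r(\Omega;\mu)}.
\]

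Finally I would pass to the limit $n\to\infty$. By Lemma~\ref{lem:lscab} (with $p=2$, $q=r$) the functional $\|\cdot\|_{L^r(\Omega;\mu)}$ is lower semicontinuous on $L^2(\Omega;\mu)$; since $R^n_{t/n}(\partial_{L^2}\Phi)x\to S_{\partial_{L^2}\Phi}(t)x$ and $R^n_{t/n}(\partial_{L^2}\Phi)y\to S_{\partial_{L^2}\Phi}(t)y$ in $L^2(\Omega;\mu)$, their difference converges in $L^2(\Omega;\mu)$, and taking $\liminf_{n\to\infty}$ in the previous display gives
\[
\bigl\| S_{\partial_{L^2}\Phi}(t)x - S_{\partial_{L^2}\Phi}(t)y \bigr\|_{L^r(\Omega;\mu)} \leq \|x-y\|_{L^r(\Omega;\mu)},
\]
which is the claim. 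I expect the only non-routine point to be this last step: $L^2$-convergence does not by itself control the $L^r$ norm (for instance when $r>2$ and $\mu$ is finite), so Lemma~\ref{lem:lscab} is what bridges the two topologies; everything else is bookkeeping with the Crandall--Liggett formula and the stability of nonexpansiveness under composition.
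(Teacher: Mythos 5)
Your proof is correct and follows essentially the same route as the paper's: reduce to $\lambda=0$ via $P_0$-convexity (Proposition~\ref{Pprop}), represent $S_{\partial_{L^2}\Phi}(t)$ via the Crandall--Liggett formula, obtain $L^r$-nonexpansiveness of the iterated resolvents from Lemma~\ref{lemAcrete} and Proposition~\ref{prop:accretivecontraction}, and pass to the limit using the lower semicontinuity of $\|\cdot\|_{L^r(\Omega;\mu)}$ on $L^2(\Omega;\mu)$ from Lemma~\ref{lem:lscab}. The only cosmetic difference is that you correctly write $\liminf$ where the paper writes $\lim$ in the key estimate, and you spell out the single-valuedness and full-domain properties of the resolvent a bit more explicitly; substantively the arguments coincide.
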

\begin{proof}
    First we observe that, by Proposition~\ref{Pprop} and Lemma~\ref{lem:lambdaconvexanstrictlyconvex}, we can indeed take $\lambda=0$ in the generator $\partial^\lambda \Phi|_{L^2(\Omega;\mu)}$ of the semigroup in Lemma~\ref{lem:gradflowsemigroup}. Moreover, by Remark~\ref{rem:adjustedsubdiff}, $\partial \Phi|_{L^2(\Omega;\mu)}=\partial_{L^2}\Phi$.

    If $t=0$, then the result is trivially true according to the fourth property of semigroups in Remark~\ref{rem:propertiessemigroups}.

    Now assume that $t>0$. By Lemma~\ref{lemAcrete}, $\partial \Phi_{L^2}$ is accretive with respect to $\Vert \cdot \Vert_{L^r(\Omega;\mu)}$ which itself is a lower-semicontinuous functional on $L^2(\Omega;\mu)$ according to Lemma~\ref{lem:lscab}. So we conclude that
\begin{align*}
    \Vert S_{\partial_{L^2} \Phi}(t)x -  S_{\partial_{L^2} \Phi}(t)y\Vert_{L^r(\Omega;\mu)}&= \left\Vert \lim_{n\rightarrow \infty} \left(  R_{t/n}^n(\partial_{L^2}\Phi)x -  R_{t/n}^n(\partial_{L^2}\Phi)y      \right) \right\Vert_{L^r(\Omega;\mu)} \\
    &\leq \lim_{n\rightarrow\infty}  \left\Vert \left(  R_{t/n}^n(\partial_{L^2}\Phi)x -  R_{t/n}^n(\partial_{L^2}\Phi)y      \right) \right\Vert_{L^r(\Omega;\mu)}\\ 
    &\leq \Vert x-y \Vert_{L^r(\Omega;\mu)},
\end{align*}   
where the equality holds by \eqref{eq:gradflowandresolvents} and the last inequality follows from Proposition~\ref{prop:accretivecontraction}.
\end{proof}

The following three lemmas give some examples of classes of $P_0$-convex functionals.

\begin{lemma}
\label{lem:P0Energy}
Let $\mu$ be absolutely continuous with respect to the Lebesgue measure on $\Omega$. Consider a measurable function $J:\Omega \times \mathbb{R}^d \rightarrow [0,+\infty]$ that is lower semicontinuous and convex in the $\mathbb{R}^d$ component. Define the functional $\Phi: \mathfrak{F}(\Omega;\mu) \rightarrow (-\infty,+\infty]$ by
$$\Phi(u) := 
\begin{cases}
 \int_{\Omega} J(x, \nabla u(x)) \, d\mu(x),& \text{if } u\in W^{1,1}_{\text{loc}}(\Omega), \\
 +\infty,& \text{otherwise}.
\end{cases}
$$
Then $\Phi$ is $P_0$-convex on $L^1(\Omega;\mu)$.
\end{lemma}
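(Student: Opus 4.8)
The plan is to verify the defining inequality \eqref{Pconvex} directly. Fix $u,v\in L^1(\Omega;\mu)$ and $g\in P_0$, and write $w:=v-u$. First I would dispose of the trivial case: if $u\notin W^{1,1}_{\text{loc}}(\Omega)$ or $v\notin W^{1,1}_{\text{loc}}(\Omega)$, then the right-hand side of \eqref{Pconvex} is $+\infty$ and there is nothing to prove. So assume $u,v\in W^{1,1}_{\text{loc}}(\Omega)$. Since $g$ is smooth with $0\leq g'\leq 1$ (hence $g'$ bounded) and $g(0)=0$ (from $0\notin\operatorname{spt}(g)$, as noted in Remark~\ref{rem:gcomposedwithu}), the chain rule for Sobolev functions applies: $g\circ w\in W^{1,1}_{\text{loc}}(\Omega)$ with $\nabla(g\circ w)=g'(w)\nabla w$ a.e., and consequently $u+g\circ w$ and $v-g\circ w$ both lie in $W^{1,1}_{\text{loc}}(\Omega)$, with
\[
\nabla(u+g\circ w) = \nabla u + g'(w)\nabla w, \qquad \nabla(v-g\circ w) = \nabla v - g'(w)\nabla w.
\]
Hence all four terms in \eqref{Pconvex} are given by the integral formula for $\Phi$.

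The heart of the argument is then a pointwise inequality for the integrand. Abbreviate $a:=\nabla u(x)$, $b:=\nabla v(x)$, and $t:=g'(w(x))\in[0,1]$, so that $\nabla v(x)-\nabla u(x)=b-a$ and the four gradients above are $a+t(b-a)=(1-t)a+tb$ and $b-t(b-a)=(1-t)b+ta$. Since $J(x,\cdot)$ is convex on $\mathbb{R}^d$,
\[
J\big(x,(1-t)a+tb\big) \leq (1-t)J(x,a)+tJ(x,b), \qquad J\big(x,(1-t)b+ta\big) \leq (1-t)J(x,b)+tJ(x,a).
\]
Adding these two inequalities, the right-hand sides sum to $J(x,a)+J(x,b)$, giving
\[
J\big(x,\nabla(u+g\circ w)(x)\big) + J\big(x,\nabla(v-g\circ w)(x)\big) \leq J(x,\nabla u(x)) + J(x,\nabla v(x))
\]
for a.e.\ $x\in\Omega$. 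Integrating against $\mu$ (all integrands are non-negative and measurable, so the integrals are well defined in $[0,+\infty]$, and additivity of the integral applies) yields exactly \eqref{Pconvex}.

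I expect the only genuine subtlety to be the measurability and the chain-rule step: one needs $x\mapsto J(x,\nabla u(x))$ to be measurable (which follows from $J$ being a Carathéodory-type function — measurable in $x$, lower semicontinuous and convex, hence continuous, in the second variable — composed with the measurable map $x\mapsto(x,\nabla u(x))$), and one needs the Sobolev chain rule $\nabla(g\circ w)=g'(w)\nabla w$, which is standard for $g\in C^1$ with bounded derivative applied to $w\in W^{1,1}_{\text{loc}}$. The role of $\mu$ being absolutely continuous with respect to Lebesgue measure is precisely to make the ``$\nabla u$'' in the integrand well defined $\mu$-a.e.\ (the weak gradient is a Lebesgue-a.e.\ defined object) and to ensure that a.e.-in-Lebesgue pointwise inequalities transfer to $\mu$-a.e.\ inequalities. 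Everything else is the one-line convexity computation above; no further properties of $P_0$ beyond $0\leq g'\leq 1$ and $g(0)=0$ are needed.
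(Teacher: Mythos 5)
Your proof is correct, and it takes a genuinely different route from the paper's. The paper dispatches the lemma by citation to \cite[Example~7.9 and Lemma~7.10]{benilan1991comp} (choosing $D=W^{1,1}_{\mathrm{loc}}(\Omega)$ and $X=L^1(\Omega;\mu)$), whereas you give a self-contained verification of the defining inequality \eqref{Pconvex}: the Sobolev chain rule identifies $\nabla(u+g\circ w)$ and $\nabla(v-g\circ w)$ as the convex combinations $(1-t)\nabla u+t\nabla v$ and $(1-t)\nabla v+t\nabla u$ with $t=g'(w(x))\in[0,1]$, after which two applications of convexity of $J(x,\cdot)$ and an integration give exactly \eqref{Pconvex}. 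This is in spirit what one obtains by unwinding the cited Benilan--Crandall lemma, and it has the virtue of making the role of the $P_0$ constraints ($0\le g'\le 1$ and $g$ vanishing near $0$) completely transparent, as well as the role of absolute continuity of $\mu$ (to make the Lebesgue-a.e.\ pointwise inequality a $\mu$-a.e.\ one). The paper's citation is shorter; your computation is more informative.

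One small correction to your aside on measurability: a proper, lower semicontinuous, convex function on $\mathbb{R}^d$ taking values in $[0,+\infty]$ need \emph{not} be continuous (for instance $\xi\mapsto 0$ for $|\xi|\le 1$ and $+\infty$ otherwise is lower semicontinuous and convex but discontinuous on the unit sphere), so the parenthetical reasoning ``lower semicontinuous and convex, hence continuous'' is false. Measurability of $x\mapsto J(x,\nabla u(x))$ should instead be justified from the joint measurability hypothesis on $J$ (e.g.\ $\mathcal{L}(\Omega)\otimes\mathcal{B}(\mathbb{R}^d)$-measurability, so that composing with the measurable map $x\mapsto(x,\nabla u(x))$ yields a measurable integrand). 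This does not affect the substance of your argument, which goes through unchanged.
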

\begin{proof}
See \cite[Example 7.9 and Lemma 7.10]{benilan1991comp}. In particular, take $D=W^{1,1}_{loc}(\Omega)$ and $X=L^1(\Omega;\mu)$ in the lemma and recall that $L^1(\Omega;\mu)$ is normal by \cite[Definition 2.8]{benilan1991comp}.
\end{proof}

We prove a lemma similar to the one above that applies in the discrete setting.

\begin{lemma}
\label{discreteP0}
Let $n\in \mathbb{N}$ and let $\Omega_n := \{ x_1,x_2,...,x_n \}$ be a finite subset of $\Omega$. Let $\mu_n := \frac{1}{n}\sum_{i=1}^n \delta_{x_i}$, where $\delta_{x_i}$ is the Dirac measure at $x_i$. Given a convex function $L: \mathbb{R} \rightarrow [0,+\infty)$ and an $n\times n$ matrix $A$ with non-negative real entries $A_{ij}\geq 0$, define a functional $\Phi: \mathfrak{F}(\Omega;\mu_n) \to \mathbb{R}$ by
$$ \Phi(u) := \sum_{i,j=1}^n A_{ij} L\big(u(x_i) - u(x_j)\big).$$
Then $\Phi$ is $P_0$-convex on $\mathfrak{F}(\Omega;\mu_n)$.

\end{lemma}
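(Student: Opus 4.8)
The plan is to reduce the defining inequality of $P_0$-convexity to a single summand and there exploit convexity of $L$ together with the fact that functions in $P_0$ are nondecreasing and $1$-Lipschitz. Fix $u,v\in\mathfrak{F}(\Omega;\mu_n)$ and $g\in P_0$, and write $w:=v-u$ and $a:=g\circ w$; note $u+a,\,v-a\in\mathfrak{F}(\Omega;\mu_n)$ since measurability on the finite set $\Omega_n$ is automatic. Because $\Phi$ is the finite sum $\Phi=\sum_{i,j=1}^n A_{ij}\Phi_{ij}$ with $\Phi_{ij}(u):=L(u(x_i)-u(x_j))$ and all coefficients satisfy $A_{ij}\geq0$, it suffices to show that for every fixed pair $(i,j)$,
\[
\Phi_{ij}(u+a)+\Phi_{ij}(v-a)\leq\Phi_{ij}(u)+\Phi_{ij}(v);
\]
multiplying by $A_{ij}$ and summing then yields $\Phi(u+a)+\Phi(v-a)\leq\Phi(u)+\Phi(v)$, which is \eqref{Pconvex}. (Alternatively, one could invoke Lemma~\ref{basicP0} once $P_0$-convexity of each $\Phi_{ij}$ is established.)

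For the reduced claim, fix $(i,j)$ and abbreviate $\alpha:=u(x_i)-u(x_j)$, $\beta:=w(x_i)-w(x_j)$, and $\gamma:=g(w(x_i))-g(w(x_j))$, so that $v(x_i)-v(x_j)=\alpha+\beta$. A direct computation gives $(u+a)(x_i)-(u+a)(x_j)=\alpha+\gamma$ and $(v-a)(x_i)-(v-a)(x_j)=\alpha+\beta-\gamma$, so the inequality to prove becomes
\[
L(\alpha+\gamma)+L(\alpha+\beta-\gamma)\leq L(\alpha)+L(\alpha+\beta).
\]

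The key observation is what the constraints on $g$ buy us. Since $g'\geq0$ on $\mathbb{R}$, $g$ is nondecreasing, so $\gamma$ and $\beta$ have the same sign; since $g'\leq1$, $g$ is $1$-Lipschitz, so $|\gamma|\leq|\beta|$. Hence there exists $\theta\in[0,1]$ with $\gamma=\theta\beta$ (take $\theta=0$ when $\beta=0$, noting that then $\gamma=0$ as well). Consequently $\alpha+\gamma=(1-\theta)\alpha+\theta(\alpha+\beta)$ and $\alpha+\beta-\gamma=\theta\alpha+(1-\theta)(\alpha+\beta)$, and convexity of $L$ gives
\[
L(\alpha+\gamma)\leq(1-\theta)L(\alpha)+\theta L(\alpha+\beta),\qquad L(\alpha+\beta-\gamma)\leq\theta L(\alpha)+(1-\theta)L(\alpha+\beta).
\]
Adding these two inequalities yields the displayed two-point inequality, which completes the proof.

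There is no deep obstacle here; the one place that requires care is Step~3, namely recognizing that membership in $P_0$ forces the increment $g(w(x_i))-g(w(x_j))$ to be a convex-combination coefficient $\theta\in[0,1]$ times the full increment $w(x_i)-w(x_j)$. This is exactly where both bounds $0\leq g'\leq1$ enter, and it is what makes the convexity of $L$ applicable; the remainder is bookkeeping. One small point worth stating explicitly is that the reduction in Step~1 is legitimate because nonnegativity of the $A_{ij}$ preserves the inequality under the finite summation, and terms with $A_{ij}=0$ contribute trivially.
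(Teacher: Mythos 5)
Your proof is correct and follows essentially the same route as the paper's. The only cosmetic difference is how the convex-combination coefficient $\theta$ is produced: the paper applies the mean value theorem to get $\theta = g'(\xi_{ij})\in[0,1]$ directly, whereas you deduce $\gamma=\theta\beta$ with $\theta\in[0,1]$ from the facts that $g$ is nondecreasing (giving matching signs) and $1$-Lipschitz (giving $|\gamma|\le|\beta|$); both are straightforward consequences of $0\le g'\le1$ and lead to the identical convexity estimate on $L$.
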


\begin{proof}
For notational convenience we write $u_i:=u(x_i)$ for functions $u\in L^p(\Omega;\mu_n)$. Let $g \in P_0$, and $u,v \in L^p(\Omega;\mu_n)$. By Remark~\ref{rem:gcomposedwithu} $g\circ(u-v), g\circ(v-u) \in L^p(\Omega;\mu_n)$ and by the mean value theorem, for all $i,j \in \{1, \ldots, n\}$, there exists a $\xi_{ij} \in [v_i - u_i, v_j - u_j]\cup[v_j - u_j, v_i - u_i]$ such that
$$ g(v_i -u_i) - g(v_j -u_j) = g'(\xi_{i,j})\big((v_i-u_i) - (v_j -u_j)\big).$$

It follows that 
\begin{align*}&\Phi( u + g\circ(v-u) ) + \Phi(v -g\circ(v-u)) \\
&\hspace{0.05cm}= \sum_{i,j=1}^n A_{ij} \big[L\big(
u_i + g(v_i-u_i) - u_j - g(v_j-u_j)\big) 
+ L \big(
v_i - g(v_i-u_i) - v_j + g(v_j-u_j)\big)\big]\\
&\hspace{0.05cm}= \sum_{i,j=1}^n A_{ij} \big[  L\big( u_i -u_j + g'(\xi_{ij})\left((v_i-u_i) - (v_j -u_j)\right)\big) 
+ L\big( v_i -v_j - g'(\xi_{ij})\left((v_i-u_i) - (v_j -u_j)\right) \big)\big] \\
&\hspace{0.05cm}= \sum_{i,j=1}^n A_{ij}\big[ L\big( (1-g'(\xi_{ij}))(u_i -u_j) + g'(\xi_{ij})(v_i- v_j ) \big) 
+ L\big( (1-g'(\xi_{ij}))(v_i -v_j) + g'(\xi_{ij})(u_i- u_j ) \big) \big] \\
&\hspace{0.05cm}\leq \sum_{i,j=1}^n A_{ij}\big[ (1-g'(\xi_{ij}))L(u_i -u_j) + g'(\xi_{ij})L(v_i- v_j ) 
+  (1-g'(\xi_{ij}))L(v_i -v_j) + g'(\xi_{ij})L(u_i- u_j ) \big] \\
&\hspace{0.05cm}= \sum_{i,j=1}^n A_{ij}[ L(u_i -u_j) + L(v_i- v_j ) ] 
= \Phi(u) + \Phi(v) .
\end{align*}
For the inequality above, we used that $L$ is convex and $0\leq g' \leq 1$.
\end{proof}

\begin{lemma}
\label{quadP0}
Define the quadratic map $Q:\mathfrak{F}(\Omega;\mu) \rightarrow [0,+\infty]$ by
$$ Q(u) := \frac{1}{2}\int_{\Omega} |u|^2 \, d\mu. $$
Then $Q$ is $P_0$-convex on $\mathfrak{F}(\Omega;\mu)$.
\end{lemma}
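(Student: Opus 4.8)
The plan is to reduce the defining inequality \eqref{Pconvex} for $\Phi = Q$ to a pointwise inequality on $\Omega$ and then verify that pointwise inequality by an elementary case analysis using the sign and magnitude properties of functions $g \in P_0$ recorded in Remark~\ref{rem:gcomposedwithu}. First I would fix $u, v \in \mathfrak{F}(\Omega;\mu)$ and $g \in P_0$, note that $g \circ (v-u)$ is measurable (since $g$ is continuous), and observe that it suffices to prove, for all $x \in \Omega$,
\[
\bigl(u(x) + (g\circ(v-u))(x)\bigr)^2 + \bigl(v(x) - (g\circ(v-u))(x)\bigr)^2 \leq u(x)^2 + v(x)^2,
\]
because integrating this inequality between nonnegative measurable functions against $\mu$ (which is legitimate in $[0,+\infty]$ by monotonicity and additivity of the integral, even when the integrals defining $Q$ are infinite) and multiplying by $\tfrac12$ yields exactly $Q(u + g\circ(v-u)) + Q(v - g\circ(v-u)) \leq Q(u) + Q(v)$.

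Next, abbreviating $a := u(x)$, $b := v(x)$, $w := b - a$, and $s := g(w) = (g\circ(v-u))(x)$, I would expand the left-hand side and cancel $a^2 + b^2$ to see that the displayed pointwise inequality is equivalent to $2s\,(a - b + s) \leq 0$, i.e.\ to $g(w)\bigl(g(w) - w\bigr) \leq 0$. This I would check in three cases. For $w = 0$ we have $g(0) = 0$ (since $0 \notin \operatorname{spt}(g)$), so the product vanishes. For $w > 0$, Remark~\ref{rem:gcomposedwithu} gives $g(w) > 0$ together with $|g(w)| \leq |w|$, hence $0 < g(w) \leq w$, so $g(w) - w \leq 0$ and the product is nonpositive. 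For $w < 0$, the same remark gives $g(w) < 0$ and $g(w) \geq w$ (from $|g(w)| \leq |w| = -w$), so $g(w) - w \geq 0$ and the product is again nonpositive. This establishes the pointwise inequality in all cases and hence the lemma.

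There is no substantive obstacle here; the statement reduces to the two-variable computation above. The only points requiring care are confirming that the passage from the pointwise inequality to the inequality for $Q$ remains valid when $Q$ takes the value $+\infty$ (it does, because all four integrands are nonnegative), and invoking the properties of $g \in P_0$ from Remark~\ref{rem:gcomposedwithu} with the correct signs in the $w > 0$ and $w < 0$ cases.
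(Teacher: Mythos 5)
Your proof is correct and streamlines the paper's argument. Both proofs rest on the same algebraic kernel: for $g\in P_0$ one has $g(w)\bigl(g(w)-w\bigr)\leq 0$ (equivalently $|g(w)|^2\leq w\,g(w)$), which the paper derives from $|g(y)|\leq|y|$ and $\operatorname{sign}(g(y))=\operatorname{sign}(y)$. The difference is organizational. The paper first assumes $Q(u),Q(v)<+\infty$ without loss of generality, then proves a preparatory integrability fact --- $g\circ(v-u)\in L^1(\Omega;\mu)\cap L^\infty(\Omega;\mu)$, using that $g$ is bounded and vanishes on $[-a,a]$ so that $\{|v-u|\geq a\}$ has finite $\mu$-measure --- and only then expands $Q\bigl(u+g\circ(v-u)\bigr)+Q\bigl(v-g\circ(v-u)\bigr)$ as $Q(u)+Q(v)$ plus a correction, invoking H\"older's inequality to ensure each term in the expansion is finite. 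You instead establish the pointwise inequality $\bigl(u+g\circ(v-u)\bigr)^2+\bigl(v-g\circ(v-u)\bigr)^2\leq u^2+v^2$ on $\Omega$ and observe that, because all four integrands are nonnegative measurable functions, monotonicity and additivity of the integral in $[0,+\infty]$ give the inequality for $Q$ directly. This removes the need for the WLOG reduction, the integrability lemma, and the H\"older step at no cost, and it makes transparent that the only facts about $g$ used are $g(0)=0$, $|g(y)|\leq|y|$, and $y\,g(y)\geq 0$, i.e.\ precisely the content of Remark~\ref{rem:gcomposedwithu}.
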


\begin{proof} To prove the inequality in \eqref{Pconvex} we may assume without loss of generality that $Q(u),Q(v)<+\infty$, otherwise the inequality holds automatically. Therefore let $g\in P_0$ and $v,u \in L^2(\Omega,\mu)$. By definition of $P_0$ we have that $0 \notin \text{spt}(g)$ and hence there exists an $a>0$ such that $g|_{[-a,a]}=0$. Moreover since $\text{spt}(g')$ is compact we note there exists a $C>0$ such that $|g|\leq C$ on $\mathbb{R}$.

Since $v-u \in L^2(\Omega,\mu)$, the set $\{x \in \Omega \,|\, |v(x)-u(x)|\geq a\}$ has finite $\mu$-measure. We then deduce that $|g(u-v)|\leq C$ and
\begin{align*}
\int_{\Omega} |g(u-v)|d\mu &= \int_{\{x \in \Omega \,|\, |v(x)-u(x)|\geq a\}} |g(u-v)| d\mu \leq
C\mu \big( \{x \in \Omega \,|\, |v(x)-u(x)|\geq a\} \big)\\
 &< +\infty .
\end{align*}
Therefore $g\circ(v-u)\in L^1(\Omega;\mu)\cap L^{\infty}(\Omega;\mu)$. We then make the following expansion:
\begin{align*}
&\hspace{0.5cm} Q(u+g\circ(v-u)) + Q(v-g\circ(v-u))\\
&= Q(u) + Q(v) 
+ \int_{\Omega} \big( |g\circ(v-u)(x)|^2-(v-u)(x)g\circ(v-u)(x) \big) \, d\mu(x).
\end{align*}
In particular, by H\"older's inequality, this expansion is well defined because each term is integrable. We recall from Remark~\ref{rem:gcomposedwithu} that, by definition of $P_0$, for all $y\in \mathbb{R}$, $|g(y)|\leq |y|$ and $\operatorname{sign}(g(y))= \operatorname{sign}(y)$, and thus also $$|g(y)|^2 \leq |y| \, |g(y)| = y \operatorname{sign}(y) \, |g(y)| = y \operatorname{sign}(g(y)) \, |g(y)| = yg(y).$$ Using this in the integrand in the equality above with $y=(v-u)(x)$, we conclude that
$$Q(u+g\circ(v-u)) + Q(v-g\circ(v-u)) \leq Q(u) + Q(v).$$
\end{proof}

Next we state some key results from \cite{benilan1991comp} regarding m-accretive operators. First we require a definition.

\begin{definition}\label{def:sumtopology}
Let $1\leq r,s \leq +\infty$. Let $(u_n)_{n\in \mathbb{N}} \subset \mathfrak{F}(\Omega;\mu)$ and $u_\infty \in \mathfrak{F}(\Omega;\mu)$. We say that $u_n \rightarrow u_{\infty}$ as $n \rightarrow \infty$ in the topology of $L^r(\Omega;\mu)+L^s(\Omega;\mu)$ if there exist sequences $(f_n)_{n\in \mathbb{N}} \subset L^r(\Omega;\mu)$ and $(g_n)_{n\in \mathbb{N}} \subset L^s(\Omega;\mu)$ and functions $f_\infty \in L^r(\Omega;\mu)$ and $g_\infty \in L^s(\Omega;\mu)$ such that the following hold: (i) For all $n\in \mathbb{N}_{\infty}$, $u_n =f_n+g_n$, (ii) $f_n\rightarrow f_{\infty}$ in $L^r(\Omega;\mu)$ as $n \rightarrow \infty$, and (iii) $g_n \rightarrow g_{\infty}$ in $L^s(\Omega;\mu)$ as $n \rightarrow \infty$.
\end{definition}

\begin{theorem}[Ph. Benilan, M. G. Crandall, 1991] \label{BenCran1991}
Let $1\leq p <+\infty$ and $\Phi: \mathfrak{F}(\Omega;\mu) \rightarrow (-\infty,+\infty]$. Suppose $\Phi$ is lower semicontinuous with respect to the topology of $L^2(\Omega;\mu)+L^p(\Omega;\mu)$ and $P_0$-convex on $L^p(\Omega;\mu)$. Assume that $(0,0) \in \partial_{L^p} \Phi$. Then $\overline{\partial_{L^p} \Phi}$ is m-accretive on $L^p(\Omega;\mu)$ and
\begin{equation}\label{eq:subgradidentity}
    \overline{\partial_{L^2}\Phi \cap (L^p(\Omega;\mu)\times L^p(\Omega;\mu))} = \overline{\partial_{L^p} \Phi},
\end{equation}
where the pointwise closures are taken in the topology of $L^p(\Omega;\mu)$.
\end{theorem}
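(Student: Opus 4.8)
The plan is to deduce the statement from the theory of completely accretive operators of \cite{benilan1991comp}; up to bookkeeping it is \cite[Lemma~7.1]{benilan1991comp} together with the general facts on ranges and closures of completely accretive operators in \cite[Sections~2 and~3]{benilan1991comp}, the only ingredient to verify being that each space $L^r(\Omega;\mu)$, $1\le r<\infty$, is normal in the sense of \cite[Definition~2.8]{benilan1991comp}. Two preliminary observations streamline the argument. First, convergence in $L^2(\Omega;\mu)$ and convergence in $L^p(\Omega;\mu)$ each imply convergence in $L^2(\Omega;\mu)+L^p(\Omega;\mu)$, so the assumed lower semicontinuity of $\Phi$ in the $L^2+L^p$ topology is at least as strong as lower semicontinuity in the $L^2$ topology and in the $L^p$ topology separately. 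Second, $(0,0)\in\partial_{L^p}\Phi$ forces $0\in\operatorname{dom}(\Phi)$, so $\Phi|_{L^2(\Omega;\mu)}$ and $\Phi|_{L^p(\Omega;\mu)}$ are proper, and $(0,0)$ provides the reference point against which the truncation and $L^q$-estimates below are run.

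For the m-accretivity of $\overline{\partial_{L^p}\Phi}$ on $L^p(\Omega;\mu)$, accretivity is immediate from Lemma~\ref{lemAcrete} (accretivity of $\partial_{L^p}\Phi$ with respect to the $L^r$ norm for every $r$, in particular $r=p$) and Remark~\ref{rem:intersectionclosure} (passing accretivity to the closure). The work is the range condition: by Proposition~\ref{prop:maccretive} it suffices to show $\operatorname{Range}(I_{L^p(\Omega;\mu)}+\overline{\partial_{L^p}\Phi})=L^p(\Omega;\mu)$, i.e.\ that $u+v=f$ is solvable with $v\in\overline{\partial_{L^p}\Phi}(u)$. I would first treat $f\in L^1\cap L^\infty$: since $\Phi|_{L^2(\Omega;\mu)}$ is proper, convex (by the Benilan--Crandall argument behind Proposition~\ref{Pprop}) and lower semicontinuous, Theorem~\ref{thm:partiallambdamaccretive} with $\lambda=0$ gives that $\partial_{L^2}\Phi$ is m-accretive on $L^2(\Omega;\mu)$, so there are $u\in L^2(\Omega;\mu)$ and $v=f-u\in\partial_{L^2}\Phi(u)$. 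A truncation argument in the spirit of \cite{benilan1991comp} — the minimizer of $w\mapsto\Phi(w)+\frac12\Vert w-f\Vert_{L^2(\Omega;\mu)}^2$ is unchanged upon truncation to the level $\Vert f\Vert_\infty$, because truncation does not increase that quantity and, by $P_0$-convexity, does not increase $\Phi$ — then forces $u$, and hence $v$, into $L^1\cap L^\infty\subseteq L^p(\Omega;\mu)$. The pair $(u,v)$ thus lies in $\partial_{L^2}\Phi\cap(L^p(\Omega;\mu)\times L^p(\Omega;\mu))$ and, by the same truncation/$P_0$-convexity considerations needed for \eqref{eq:subgradidentity} below, in $\partial_{L^p}\Phi$, so $f\in\operatorname{Range}(I_{L^p(\Omega;\mu)}+\partial_{L^p}\Phi)$. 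Since $L^1\cap L^\infty$ is dense in $L^p(\Omega;\mu)$ and, by Proposition~\ref{prop:accretivecontraction}, $R_1(\partial_{L^p}\Phi)$ extends to a contraction on $L^p(\Omega;\mu)$ with graph inside $\overline{\partial_{L^p}\Phi}$, a standard closure argument yields surjectivity of $I_{L^p(\Omega;\mu)}+\overline{\partial_{L^p}\Phi}$; the same computation shows $\overline{\partial_{L^2}\Phi\cap(L^p(\Omega;\mu)\times L^p(\Omega;\mu))}$ is m-accretive on $L^p(\Omega;\mu)$ as well.

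Finally, for the identity \eqref{eq:subgradidentity}: both of its sides are m-accretive on $L^p(\Omega;\mu)$, hence maximal accretive by Proposition~\ref{propMaccret}, so it is enough to prove one inclusion. I would aim for $\overline{\partial_{L^2}\Phi\cap(L^p(\Omega;\mu)\times L^p(\Omega;\mu))}\subseteq\overline{\partial_{L^p}\Phi}$, by taking a pair $(u,v)\in\partial_{L^2}\Phi$ with $u,v\in L^p(\Omega;\mu)$, approximating $u$ and $v$ by bounded, localized functions — the natural approximants being the ``$P_0$-truncations'' $w\mapsto w+g\circ(h-w)$ with $g\in P_0$, cf.\ Remark~\ref{rem:gcomposedwithu} — and using the $P_0$-convexity inequality \eqref{Pconvex} to keep the energy of the approximants controlled, so that the defining inequality of $\partial_{L^p}\Phi$ in Definition~\ref{Subdiffdef} survives in the $L^p$-limit. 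This is the main obstacle: the two subdifferentials test against genuinely different classes of perturbations, and with only lower semicontinuity of $\Phi$ available one cannot pass between $L^2$-admissible and $L^p$-admissible perturbations by a naive density argument, as lower semicontinuity gives an inequality in the wrong direction. It is exactly here that the full strength of $P_0$-convexity is used, and in practice I would quote \cite[Lemma~7.1]{benilan1991comp} and the completely-accretive machinery of \cite[Sections~2 and~3]{benilan1991comp} rather than reprove it.
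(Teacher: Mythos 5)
Your argument takes a genuinely different route from the paper's, but it has gaps you partly acknowledge, and one that you do not. The paper's proof hinges on an operator you never introduce: $\partial_{L^{1\cap\infty}}\Phi$, the subdifferential tested against $L^{1\cap\infty}(\Omega;\mu):=L^1(\Omega;\mu)\cap L^\infty(\Omega;\mu)$. The paper reads off from the proof of \cite[Theorem~7.4]{benilan1991comp} that $\overline{\partial_{L^{1\cap\infty}}\Phi}$ is m-accretive on $L^p(\Omega;\mu)$, then observes from Definition~\ref{Subdiffdef} that $\partial_{L^{1\cap\infty}}\Phi$ sits inside both $\partial_{L^p}\Phi$ and $\partial_{L^2}\Phi\cap(L^p\times L^p)$, whose closures are accretive by Lemma~\ref{lemAcrete} and Remark~\ref{rem:intersectionclosure}. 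Maximality (Proposition~\ref{propMaccret}) then squeezes all three closures into equality, giving both the m-accretivity claim and \eqref{eq:subgradidentity} at once. Your maximality idea at the end is in the right spirit, but without the intermediate operator you are forced to prove an inclusion between $\overline{\partial_{L^2}\Phi\cap(L^p\times L^p)}$ and $\overline{\partial_{L^p}\Phi}$ directly, which for $p<2$ is precisely the hard step you concede you cannot complete and would have to cite. The paper's device of a common sub-operator sidesteps that entirely.

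Beyond the acknowledged gap, there is a more concrete problem in your range argument. You invoke Proposition~\ref{Pprop} and Theorem~\ref{thm:partiallambdamaccretive} with $\lambda=0$ to conclude that $\partial_{L^2}\Phi$ is m-accretive on $L^2(\Omega;\mu)$, which requires $\Phi|_{L^2(\Omega;\mu)}$ to be convex. But the hypothesis is $P_0$-convexity on $L^p(\Omega;\mu)$, and for $p>2$ one has $L^p(\Omega;\mu)\subsetneq L^2(\Omega;\mu)$; $P_0$-convexity on a proper subset does not pass upward to $L^2$, so Proposition~\ref{Pprop} only yields convexity of $\Phi|_{L^p(\Omega;\mu)}$ in that regime. (The paper is careful about this elsewhere: Remark~\ref{rem:restrictedPhin} derives convexity of $\Phi|_{L^2}$ from $P_0$-convexity on $L^{\min(2,p)}$, which always contains $L^2$, not from $P_0$-convexity on $L^p$.) Your detour through the $L^2$ subdifferential for solving $u+v=f$ thus rests on an unverified convexity claim when $p>2$, on top of the truncation argument being only sketched. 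The range of $I+\overline{\partial_{L^p}\Phi}$ is exactly the content of \cite[Theorem~7.4]{benilan1991comp}; it is cleaner to cite it for the intermediate operator as the paper does than to reconstruct a partial version of its proof and still have to cite the same source for the remaining inclusion.
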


\begin{proof}
See \cite[Theorem 7.4]{benilan1991comp}. The second part is not explicitly stated within the work but is contained within the proof. In particular the authors prove,
$ \overline{\partial_{L^{1\cap\infty}}\Phi} $ is m-accretive on $L^p(\Omega)$ where the prior operator is defined in the same manner from Definition~\ref{Subdiffdef} and $L^{1\cap\infty}(\Omega):=L^1(\Omega)\cap L^\infty(\Omega)$. 
We then observe from Definition~\ref{Subdiffdef} that $\partial_{L^{1\cap\infty}}\Phi \subset \partial_{L^p}\Phi$ and additionally $\partial_{L^{1\cap\infty}}\Phi \subset \partial_{L^2}\Phi \cap (L^p(\Omega;\mu)\times L^p(\Omega;\mu))$. By Lemma~\ref{lemAcrete} we note that both operators on the RHS of the previous inclusions are accretive on $L^p(\Omega)$. By applying m-accretivity of $ \overline{\partial_{L^{1\cap\infty}}\Phi} $ we conclude,
$$ \overline{\partial_{L^2}\Phi \cap (L^p(\Omega;\mu)\times L^p(\Omega;\mu))}=\overline{\partial_{L^{1\cap\infty}}\Phi} = \overline{\partial_{L^p}\Phi}.$$
\end{proof}

\section{Banach stackings}\label{sec:Banachstackings}

In this section we define Banach stackings. We need this concept to compare elements from different Banach spaces that exist inside a larger metric space. Importantly, as we will see in Definition~\ref{BanStack}, if $(X_n)$ is a sequence of Banach spaces, the Banach-stacking construct will allow us to define convergence for sequences $(x_n)_{n\in \mathbb{N}}$ with $x_n\in X_n$, even if the Banach space $X_\infty$ which contains the limiting elements does not contain the spaces $X_n$. The case in which the spaces $X_n$ \textit{do} embed (in a $1$-Lipschitz-continuous manner) into $X_\infty$ provides a special case of a Banach stacking, as per part~1 of Examples~\ref{BSexmpl}.

One of our main results, stated in Theorem~\ref{theorem1}, is an extension of a theorem by Brezis and Pazy to Banach stackings. The original theorem in \cite[Theorem 3.1]{brezis1972convergence} provides a sufficient condition for the convergence of semigroups generated (in the sense of Definition~\ref{def:semigroup}) by a sequence of operators that are all defined on the same Banach space. In Theorem~\ref{theorem1} we extend this to the setting in which the operators are defined on (potentially) different Banach spaces in a Banach stacking. This setting will feature in our main results: Proposition~\ref{OpConvergence}, Theorem~\ref{Hilthm}, and Theorem~\ref{thm:P0theorem1}.

After defining Banach stackings and proving some basic properties, we give a number of concrete examples of Banach stackings in Examples~\ref{BSexmpl}. We will be interested in one of these in particular, namely the space $TL^p$ (Definition~\ref{def:TLp}), which was introduced in \cite{garcia2016continuum}. The second part of this section is devoted to establishing some results for $TL^p$ that will be relevant for Section~\ref{sec:TLp}.

From here on we use the term `sequence' also for infinite tuples indexed by $\mathbb{N}_\infty$ rather than by the usual $\mathbb{N}$. In cases where ambiguity may arise, we explicitly mention the index set. Terminology such as `bounded sequence' generalises straightforwardly to such sequences. 

We often need to use sequences $(x_n)$ (where $n$ varies over $\mathbb{N}$ or $\mathbb{N}_\infty$) that satisfy that, for all $n$, $x_n \in X_n$, where $(X_n)$ is a sequence of sets. We indicate this element-wise set membership by $(x_n) \subset_n X_n$. If all $X_n$ are the same set $X$, we use the standard notation $(x_n)\subset X$.

\begin{definition}\label{BanStack}
Suppose we have
\begin{itemize}
    \item a sequence of Banach spaces $\big((X_n,\Vert \cdot \Vert_n)\big)_{n\in \mathbb{N}_\infty}$;
    \item a metric space $(\mathcal{M},d)$; and
    \item a sequence $(\xi_n)_{n\in \mathbb{N}_\infty}$ of functions $\xi_n:X_n \rightarrow \mathcal{M}$.
\end{itemize}  

Given a sequence $(x_n)_{n\in \mathbb{N}_\infty} \subset_n X_n$, we say it is convergent if $\xi_n(x_n) \rightarrow \xi_{\infty}(x_{\infty})$ in $\mathcal{M}$ as $n\to\infty$. In that case we write
$$ x_n \underset{\mathcal{M}}\longtwoheadrightarrow x_{\infty} \qquad \text{as } n\to\infty.$$

We call the collection $\big((X_n, \xi_n)_{n\in \mathbb{N}_\infty}, \mathcal{M}\big)$ a Banach stacking\footnote{In the interest of concise notation, we do not explicitly include the Banach space norms $\|\cdot\|_n$ and metric $d$ in the notation of a Banach stacking. Where ambiguities may arise, we will specify these.} if all of the following hold.
\begin{enumerate}[(i)]
    \item For all $n\in \mathbb{N}_\infty$, $\xi_n$ is $1$-Lipschitz continuous.
    \item For all $x_{\infty} \in X_{\infty}$ there exists a sequence $(x_n)_{n\in \mathbb{N}} \subset_n X_n$ such that $x_n \underset{\mathcal{M}}\longtwoheadrightarrow x_{\infty}$, as $n\to\infty$. 
    \item Addition and scaling are continuous with respect to $\mathcal{M}$. In other words, for all sequences $(x_n)_{n\in \mathbb{N}_\infty} \subset_n X_n$ and $(y_n)_{n\in \mathbb{N}_\infty} \subset_n X_n$ for which $ x_n \underset{\mathcal{M}}\longtwoheadrightarrow x_{\infty}$ and $ y_n \underset{\mathcal{M}}\longtwoheadrightarrow y_{\infty}$, as $n\to\infty$, and for all $\lambda\in \mathbb{R}$, it holds that
    \[
        x_n+y_n \underset{\mathcal{M}}\longtwoheadrightarrow x_{\infty}+y_{\infty} ,
    \]
    and
    \[
        \lambda x_n \underset{\mathcal{M}}\longtwoheadrightarrow \lambda x_{\infty},
    \]
    as $n\to\infty$.
    \item The norms of $X_n$ are continuous over the metric space $\mathcal{M}$. In other words, for all sequences $(x_n)_{n\in \mathbb{N}_\infty} \subset_n X_n$ for which $x_n \underset{\mathcal{M}}\longtwoheadrightarrow x_{\infty}$ as $n\rightarrow \infty$, it holds that
    \[
    \Vert x_n \Vert_{n} \rightarrow \Vert x_{\infty} \Vert_{\infty} \qquad \text{as } n\rightarrow \infty.
    \]
\end{enumerate}
We refer to $X_{\infty}$ as the limit space and $\mathcal{M}$ as the unifying space. If we want to emphasise the Banach space whose zero element we are considering, then, for $n\in \mathbb{N}_\infty$, we write $0_n$ for the zero of $X_n$. For all $n,m\in \mathbb{N}_{\infty}$ and $x\in X_n,y\in X_m$ we define
\begin{equation}\label{eq:dnm}
d^{n,m}(x,y):= d(\xi_n(x),\xi_m(y)).
\end{equation}
\end{definition}

For an illustration of this concept we refer back to figure~\ref{fig:banach}.

\begin{remark}
Given a Banach stacking $\big((X_n, \xi_n)_{n\in \mathbb{N}_\infty}, \mathcal{M}\big)$ and an increasing sequence $(k_n)_{n\in \mathbb{N}}\subset \mathbb{N}$. Set $k_\infty:=\infty$. Then $\big((X_{k_n}, \xi_{k_n})_{n\in \mathbb{N}_\infty}, \mathcal{M}\big)$ also defines a Banach stacking. Moreover, for a sequence $(x_n)_{n\in\mathbb{N}_\infty}\subset_n X_n$ we write $x_{k_n}\underset{\mathcal{M}}\longtwoheadrightarrow x_{\infty}$ as $n\rightarrow \infty$ if the subsequence $(x_{k_n})_{n\in \mathbb{N}_\infty}$ converges in the Banach stacking $\big((X_{k_n}, \xi_{k_n})_{n\in \mathbb{N}_\infty}, \mathcal{M}\big)$.         
\end{remark}

\begin{definition}
Let $\big((X_n, \xi_n)_{n\in \mathbb{N}_\infty}, \mathcal{M}\big)$ be a Banach stacking and $\mathcal{A}$ be a set. Assume, for all $a\in \mathcal{A}$, there exists a sequence $\big(x_n(a)\big)_{n\in\mathbb{N}_\infty}\subset_n X_n$. We say that $x_n(a)$ converges to $x_\infty(a)$ uniformly in $a\in \mathcal{A}$ in the Banach stacking $\big((X_n, \xi_n)_{n\in \mathbb{N}_\infty}, \mathcal{M}\big)$ if
$$ \sup_{a\in\mathcal{A}} d\big( \xi_n(x_n(a)),\xi_\infty(x_\infty(a)) \big) \rightarrow 0 \qquad \text{as } n\rightarrow \infty.$$
\end{definition}

\begin{remark}\label{rem:triangle}
In the Banach stacking setting of Definition~\ref{BanStack}, given $l,m,n \in \mathbb{N}_{\infty}$, $x\in X_l$, $y \in X_m$, and $z\in X_n$, the triangle inequality for the metric $d$ implies that
$$ d^{l,n}(x,z) \leq d^{l,m}(x,y) + d^{m,n}(y,z).$$

Furthermore, the Lipschitz continuity of $\xi_n$ with $1$ as a Lipschitz constant in condition~(i) of Definition~\ref{BanStack} implies that, for all $n\in \mathbb{N}_\infty$ and all $x,y\in X_n$,
\[
d^{n,n}(x,y) \leq \|x-y\|_n.
\]
\end{remark}

\begin{proposition}
\label{zeroConvergence}
Let $\big((X_n, \xi_n)_{n\in \mathbb{N}_\infty}, \mathcal{M}\big)$ be a Banach stacking. Then $0_n \underset{\mathcal{M}}\longtwoheadrightarrow 0_{\infty}$ as $n \rightarrow \infty$. 
\end{proposition}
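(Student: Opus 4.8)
The plan is to use axiom (ii) of a Banach stacking together with the continuity of scaling in axiom (iii). First I would invoke axiom (ii) applied to the element $0_\infty \in X_\infty$: there exists a sequence $(y_n)_{n\in\mathbb{N}} \subset_n X_n$ such that $y_n \underset{\mathcal{M}}\longtwoheadrightarrow 0_\infty$ as $n\to\infty$. This gives me \emph{some} convergent sequence with the right limit, but not necessarily the constant sequence $(0_n)$ that I actually want.

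The key trick is then to scale this sequence by $0 \in \mathbb{R}$. For each $n\in\mathbb{N}_\infty$ we have $0\cdot y_n = 0_n$ (and $0 \cdot y_\infty = 0 \cdot 0_\infty = 0_\infty$, matching the limit). By the continuity of scaling in axiom (iii), since $y_n \underset{\mathcal{M}}\longtwoheadrightarrow 0_\infty$ it follows that $0\cdot y_n \underset{\mathcal{M}}\longtwoheadrightarrow 0 \cdot 0_\infty$, i.e. $0_n \underset{\mathcal{M}}\longtwoheadrightarrow 0_\infty$ as $n\to\infty$. That is exactly the claimed convergence.

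I don't anticipate a genuine obstacle here — this is a short formal consequence of the axioms. The only small point to be careful about is making sure axiom (iii) is being applied with its hypotheses genuinely satisfied: we need a convergent sequence $(y_n)$ with a known limit, which is precisely what axiom (ii) supplies, and we need $\lambda = 0$, which is permitted since axiom (iii) quantifies over all $\lambda\in\mathbb{R}$. One could alternatively phrase the argument by picking any $y_\infty \in X_\infty$ whatsoever and any approximating sequence for it, then scaling by $0$; the choice $y_\infty = 0_\infty$ is just the most transparent. So the proof is essentially: apply (ii) to obtain $(y_n)$ with $y_n \underset{\mathcal{M}}\longtwoheadrightarrow 0_\infty$, then apply the scaling part of (iii) with $\lambda = 0$ to conclude $0_n \underset{\mathcal{M}}\longtwoheadrightarrow 0_\infty$.
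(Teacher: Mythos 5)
Your proof is correct, and it takes a genuinely different route from the paper's. You get a sequence $(y_n) \subset_n X_n$ with $y_n \underset{\mathcal{M}}\longtwoheadrightarrow 0_\infty$ via axiom~(ii), then apply the scaling part of axiom~(iii) with $\lambda = 0$ to conclude $0_n = 0\cdot y_n \underset{\mathcal{M}}\longtwoheadrightarrow 0\cdot 0_\infty = 0_\infty$. The paper instead combines the $1$-Lipschitz property of $\xi_n$ (axiom~(i)) with the norm continuity (axiom~(iv)): from the triangle inequality, $d^{n,\infty}(0_n, 0_\infty) \le d^{n,n}(0_n, x_n) + d^{n,\infty}(x_n, 0_\infty) \le \|x_n\|_n + d^{n,\infty}(x_n, 0_\infty)$, and both terms vanish because $\|x_n\|_n \to \|0_\infty\|_\infty = 0$ by axiom~(iv). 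Your argument is shorter and uses a smaller portion of the axiom list (only (ii) and (iii)), whereas the paper's argument uses (i), (ii), and (iv). Both are valid; yours is arguably cleaner, while the paper's has the incidental merit of illustrating the quantitative interplay between the embedding Lipschitz bound and norm continuity, a pattern reused in several later proofs such as Lemma~\ref{thResLem} and Theorem~\ref{theorem1}. One small formality worth noting: axiom~(iii) is phrased with two convergent sequences, but since it is universally quantified you may simply take the second sequence equal to the first and then use only the scaling conclusion, so your invocation is legitimate.
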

\begin{proof}
By property~(ii) of Definition~\ref{BanStack} there exists a sequence $(x_n)_{n\in \mathbb{N}} \subset_n X_n$ such that $x_n \underset{\mathcal{M}}\longtwoheadrightarrow 0_{\infty}$ as $n \rightarrow \infty$. By property (iv) of Definition \ref{BanStack} we have $\Vert x_n \Vert_n \rightarrow 0$ as $n\rightarrow \infty$. Then, by Remark~\ref{rem:triangle}, we have
\begin{align*}
d^{n,\infty}(0_n,0_{\infty}) &\leq d^{n,n}(0_n,x_n)+ d^{n,\infty}(x_n,0_{\infty})
\leq \Vert x_n \Vert_n + d^{n,\infty}(x_n,0_{\infty})
\rightarrow 0,
\end{align*}
as $n\to\infty$, where the second inequality follows from property (i) in Definition~\ref{BanStack}.
\end{proof}

As a matter of independent interest, we note that the lower bound on the metric derivatives of absolutely continuous curves, a property established in \cite[Theorem 3.9]{SerfatyGam}, also holds over a Banach stacking. A precise statement is given below. The definitions of absolutely continuous curves and metric derivatives are given in Definition~\ref{def:ACcurves} and Theorem~\ref{thm:metderivative}.

\begin{proposition}\label{prop:derivativelowbound}
Let $\big((X_n, \xi_n)_{n\in \mathbb{N}_\infty}, \mathcal{M}\big)$ be a Banach stacking, $(u_n)_{n\in \mathbb{N}} \subset_n AC(0,T;X_n)$, and $u_\infty \in AC(0,T;X_\infty)$ (see Definition~\ref{def:ACcurves}). Assume that, for all $t\in(0,T)$,
 ${u_n(t) \underset{\mathcal{M}}\longtwoheadrightarrow u_\infty(t)}$ as $n\rightarrow \infty$. 
Then, for all $t\in(0,T)$,
$$ \liminf_{n\rightarrow \infty} \int_0^t |u_n'|^2(s) \, ds \geq \int_0^t |u_\infty'|^2(s) \, ds .$$
Here each metric derivative $|u_n'|$ is defined with respect to the metric on $X_n$, as per the definition in Theorem~\ref{thm:metderivative}.
\end{proposition}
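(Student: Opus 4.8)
The plan is to reduce the statement to the single-metric-space lower semicontinuity result of \cite[Theorem 3.9]{SerfatyGam}, which is stated for a sequence of curves in a fixed metric space converging pointwise to a limit curve, by pushing everything forward through the embeddings $\xi_n$ into $\mathcal{M}$. First I would recall from Theorem~\ref{thm:metderivative} that for a curve $v\in AC(0,T;X_n)$ the metric derivative $|v'|(s)$ exists for a.e.\ $s$ and is characterised as the a.e.\ limit of $d^{n,n}(v(s+h),v(s))/|h|$; by condition~(i) of Definition~\ref{BanStack}, $\xi_n$ is $1$-Lipschitz, so $\xi_n\circ u_n$ is an absolutely continuous curve in $\mathcal{M}$ and, writing $m_n$ for its metric derivative in $(\mathcal{M},d)$, we have $m_n(s)\leq |u_n'|(s)$ for a.e.\ $s\in(0,T)$. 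Hence it suffices to bound $\int_0^t |u_\infty'|^2(s)\,ds$ by $\liminf_n \int_0^t m_n^2(s)\,ds$.

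Next I would observe that the curves $\xi_n\circ u_n$ and $\xi_\infty\circ u_\infty$ all live in the single metric space $(\mathcal{M},d)$, and that by hypothesis $\xi_n(u_n(t))\to\xi_\infty(u_\infty(t))$ in $\mathcal{M}$ for every $t\in(0,T)$. This is precisely the pointwise-convergence assumption required by \cite[Theorem 3.9]{SerfatyGam} (or the analogous lower-semicontinuity statement for metric derivatives of $AC$ curves in the standard metric-space gradient-flow literature, e.g.\ \cite[Theorem 1.1.2]{greenbook} combined with the fact that metric speed is the metric derivative). Applying that result to the sequence $(\xi_n\circ u_n)_{n\in\mathbb{N}}$ and limit $\xi_\infty\circ u_\infty$ in $\mathcal{M}$ yields, for all $t\in(0,T)$,
$$
\liminf_{n\rightarrow\infty}\int_0^t m_n^2(s)\,ds \;\geq\; \int_0^t \widetilde{m}_\infty^2(s)\,ds,
$$
where $\widetilde{m}_\infty$ is the metric derivative of $\xi_\infty\circ u_\infty$ in $\mathcal{M}$.

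The final step, and the one I expect to be the main obstacle, is to show $\widetilde{m}_\infty(s)=|u_\infty'|(s)$ for a.e.\ $s$, i.e.\ that composing the limit curve with $\xi_\infty$ does \emph{not} decrease the metric speed. The inequality $\widetilde{m}_\infty\leq |u_\infty'|$ is the easy direction ($1$-Lipschitz continuity of $\xi_\infty$). For the reverse inequality one cannot simply invoke Lipschitz continuity; instead one uses the continuity of the norm of $X_\infty$ over $\mathcal{M}$ (condition~(iv)) together with continuity of subtraction (condition~(iii)): for $s,s'\in(0,T)$, the sequence of differences $u_\infty(s')-u_\infty(s)$, viewed as a constant sequence in $X_\infty$, is itself $\mathcal{M}$-convergent, so $\|u_\infty(s')-u_\infty(s)\|_\infty$ is recovered from $d^{\infty,\infty}$ only up to the caveat that condition~(iv) is stated for genuinely varying sequences. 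The clean way around this is to note that condition~(i) applied to the constant sequence gives $d^{\infty,\infty}(x,y)\leq\|x-y\|_\infty$ for all $x,y\in X_\infty$ (as recorded in Remark~\ref{rem:triangle}), while an application of conditions~(iii) and~(iv) to the sequences $\big(u_\infty(s')-u_\infty(s)\big)_n$ (constant in $n$) forces equality $d^{\infty,\infty}(x,y)=\|x-y\|_\infty$; thus the metric induced by $\xi_\infty$ on $X_\infty$ coincides with the norm metric, and therefore $\widetilde{m}_\infty=|u_\infty'|$ a.e. Combining this with the two displayed inequalities above gives
$$
\liminf_{n\rightarrow\infty}\int_0^t |u_n'|^2(s)\,ds \;\geq\; \liminf_{n\rightarrow\infty}\int_0^t m_n^2(s)\,ds \;\geq\; \int_0^t |u_\infty'|^2(s)\,ds,
$$
which is the claim. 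The remaining routine points are measurability of $m_n$ and $|u_n'|$, which follow from Theorem~\ref{thm:metderivative}, and the observation that the restriction of the hypotheses to $\mathbb{N}$ rather than $\mathbb{N}_\infty$ is harmless since the limit curve is fixed.
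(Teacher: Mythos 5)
Your proposal correctly identifies the crux: you need to compare the metric derivative $\widetilde m_\infty$ of $\xi_\infty\circ u_\infty$ in $\mathcal{M}$ with the metric derivative $|u_\infty'|$ of $u_\infty$ in $X_\infty$, and you flag the reverse inequality as ``the main obstacle.'' Unfortunately, the fix you propose for this obstacle does not work, and the approach as a whole therefore proves a strictly weaker statement than the proposition.

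The issue is twofold. First, the proposed argument does not type-check: for $n\in\mathbb{N}$ the $n$-th entry of a sequence $(x_n)_{n\in\mathbb{N}_\infty}\subset_n X_n$ must lie in $X_n$, not $X_\infty$, so one cannot feed a sequence that is ``constant in $n$'' and valued in $X_\infty$ into conditions~(iii) and~(iv). Second, and more importantly, the conclusion you try to extract --- that $d^{\infty,\infty}(x,y)=\|x-y\|_\infty$ for all $x,y\in X_\infty$, i.e.\ that $\xi_\infty$ is an isometry onto its image --- is simply not a consequence of the axioms. Take $X_n=X_\infty=\mathbb{R}$ with its usual norm, $\xi_n=\mathrm{id}$ for every $n\in\mathbb{N}_\infty$, and $\mathcal{M}=\mathbb{R}$ equipped with the metric $d(a,b):=|\arctan a-\arctan b|$. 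Conditions~(i)--(iv) of Definition~\ref{BanStack} are readily checked (convergence in $\mathcal{M}$ is equivalent to ordinary convergence in $\mathbb{R}$, and $\arctan$ is $1$-Lipschitz), so this is a Banach stacking, but $d^{\infty,\infty}(x,y)=|\arctan x-\arctan y|<|x-y|$ whenever $x\neq y$. For a curve $u_\infty$ one then has $\widetilde m_\infty(s)=|u_\infty'|(s)/(1+u_\infty(s)^2)$, which is strictly smaller than $|u_\infty'|(s)$ away from the origin, so the chain of inequalities in your last display bottoms out at $\int_0^t\widetilde m_\infty^2$, which is genuinely smaller than $\int_0^t|u_\infty'|^2$. (There is also a secondary concern that $\mathcal{M}$ need not be complete, so the single-space lower-semicontinuity result you invoke may not directly apply, but that is a lesser issue than the failure of the isometry claim.)

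The paper's proof takes a route that circumvents this entirely: it never compares metric derivatives in $\mathcal{M}$. Instead it argues by contradiction, extracts a weakly $L^2(0,S)$-convergent subsequence $g$ of $(|u_{n_k}'|)$, uses conditions~(iii) and~(iv) of the Banach stacking (applied to the genuine sequences $u_{n_k}(t)-u_{n_k}(r)$, valued in $X_{n_k}$) to convert convergence in $\mathcal{M}$ into convergence of the norms $\|u_{n_k}(t)-u_{n_k}(r)\|_{n_k}\to\|u_\infty(t)-u_\infty(r)\|_\infty$, and then exploits the characterisation in Theorem~\ref{thm:metderivative} of $|u_\infty'|$ as the minimal admissible function in~\eqref{eq:ACadmissable} to deduce $g\geq|u_\infty'|$ a.e., concluding via weak lower semicontinuity of the $L^2$ norm. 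The key distinction is that the paper's argument always works with the norms $\|\cdot\|_n$ themselves (which condition~(iv) controls) rather than with the pushforward distances in $\mathcal{M}$ (which are only bounded from above by the norms, per Remark~\ref{rem:triangle}). To salvage your approach you would need to carry out essentially this same norm-level argument to establish $\widetilde m_\infty\geq|u_\infty'|$ along the relevant sequences, at which point you have effectively reproduced the paper's proof and the detour through Serfaty's theorem is superfluous.
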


\begin{proof}
See Appendix~\ref{sec:B}.
\end{proof}

So far we have defined a Banach stacking for the index set $\mathbb{N}_{\infty}$. However, it will also be useful to be able to have an arbitrary topological space $I$ as the index set. We extend our previous definition to allow for such an index set.

\begin{definition}\label{def:BanStackI}
Let $I$ be a topological space. Suppose $(\mathcal{M},d)$ is a metric space, ${\big((X_i, \|\cdot\|_i)\big)_{i\in I}}$ a collection of Banach spaces, and $(\xi_i)_{i\in I}$ a collection of maps $\xi_i :X_i \rightarrow \mathcal{M}$. Then we say that $\big((X_i,\xi_i)_{i\in I},\mathcal{M} \big)$ is a Banach stacking if, for all sequences $(i_n)_{n\in \mathbb{N}_\infty} \subset I$ with $i_n \rightarrow i_{\infty}$ as $n\rightarrow \infty$, we have that $\big((X_{i_n},\xi_{i_n})_{n\in \mathbb{N}_{\infty}},\mathcal{M} \big)$ is a Banach stacking according to Definition~\ref{BanStack}.
\end{definition}

\begin{remark}
For notational simplicity, in Definitions~\ref{BanStack} and~\ref{def:BanStackI} we have not incorporated the norms of the Banach spaces in the notations $\big((X_n,\xi_n)_{n\in \mathbb{N}_\infty},\mathcal{M} \big)$ and $\big((X_i,\xi_i)_{i\in I},\mathcal{M} \big)$ for Banach stackings. In concrete examples, it will be clearly stated what the intended norms are. In the abstract setting we write $\|\cdot\|_n$ for the norm on $X_n$ and $\|\cdot\|_i$ for the norm on $X_i$.
\end{remark}

In the proof of Lemma~\ref{lem:Resolve} we will need to make use of a density argument, which we establish in the following lemma.

\begin{lemma}\label{thResLem}
Let $\big((X_n, \xi_n)_{n\in \mathbb{N}_\infty}, \mathcal{M}\big)$ be a Banach stacking. Assume that, for all $n\in \mathbb{N}_\infty$, $G_n: X_n \to X_n$ is a $1$-Lipschitz-continuous function and $\mathcal{A} \subset X_{\infty}$ is a dense subset such that, for all $y_\infty \in \mathcal{A}$, there exists a sequence $(y_n)_{n\in \mathbb{N}} \subset_n X_n$ such that $y_n \underset{\mathcal{M}}\longtwoheadrightarrow y_{\infty}$ and $G_n(y_n) \underset{\mathcal{M}}\longtwoheadrightarrow G_{\infty}(y_{\infty})$ as $n\rightarrow \infty$. If $(x_n)_{n\in \mathbb{N}_\infty} \subset_n X_n$ is a sequence such that $x_n \underset{\mathcal{M}}\longtwoheadrightarrow x_{\infty}$ as $n\rightarrow \infty$, then $G_n(x_n) \underset{\mathcal{M}}\longtwoheadrightarrow G_{\infty}(x_{\infty})$ as $n\rightarrow \infty$.
\end{lemma}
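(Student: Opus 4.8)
The plan is to prove the statement by a standard density/equicontinuity argument exploiting the uniform $1$-Lipschitz bound on the maps $G_n$ together with the continuity of the metric structure in a Banach stacking. The key quantitative input is that, by property~(i) of Definition~\ref{BanStack} and Remark~\ref{rem:triangle}, for all $n\in\mathbb{N}_\infty$ and all $x,y\in X_n$ we have $d^{n,n}(G_n(x),G_n(y))\le \|G_n(x)-G_n(y)\|_n \le \|x-y\|_n$. This is the ``equicontinuity'' that lets us transfer convergence from the dense set $\mathcal{A}$ to an arbitrary convergent sequence.

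First I would fix a sequence $(x_n)_{n\in\mathbb{N}_\infty}\subset_n X_n$ with $x_n\underset{\mathcal{M}}\longtwoheadrightarrow x_\infty$ and fix $\varepsilon>0$. Using density of $\mathcal{A}$ in $X_\infty$, choose $y_\infty\in\mathcal{A}$ with $\|x_\infty-y_\infty\|_\infty<\varepsilon$, and let $(y_n)_{n\in\mathbb{N}}\subset_n X_n$ be the approximating sequence guaranteed by the hypothesis, so $y_n\underset{\mathcal{M}}\longtwoheadrightarrow y_\infty$ and $G_n(y_n)\underset{\mathcal{M}}\longtwoheadrightarrow G_\infty(y_\infty)$. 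Now estimate, via the triangle inequality of Remark~\ref{rem:triangle},
\[
d^{n,\infty}\big(G_n(x_n),G_\infty(x_\infty)\big) \le d^{n,n}\big(G_n(x_n),G_n(y_n)\big) + d^{n,\infty}\big(G_n(y_n),G_\infty(y_\infty)\big) + d^{\infty,\infty}\big(G_\infty(y_\infty),G_\infty(x_\infty)\big).
\]
The middle term tends to $0$ as $n\to\infty$ by the choice of $(y_n)$. The last term is bounded by $\|G_\infty(y_\infty)-G_\infty(x_\infty)\|_\infty \le \|y_\infty-x_\infty\|_\infty<\varepsilon$ using the $1$-Lipschitz property of $G_\infty$. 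For the first term, the $1$-Lipschitz property of $G_n$ gives $d^{n,n}(G_n(x_n),G_n(y_n))\le \|x_n-y_n\|_n$; and since $x_n-y_n\underset{\mathcal{M}}\longtwoheadrightarrow x_\infty-y_\infty$ by property~(iii) of Definition~\ref{BanStack}, property~(iv) gives $\|x_n-y_n\|_n\to\|x_\infty-y_\infty\|_\infty<\varepsilon$. Hence $\limsup_{n\to\infty} d^{n,\infty}(G_n(x_n),G_\infty(x_\infty)) \le 2\varepsilon$, and since $\varepsilon>0$ was arbitrary this limsup is $0$, i.e.\ $G_n(x_n)\underset{\mathcal{M}}\longtwoheadrightarrow G_\infty(x_\infty)$.

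I do not anticipate a serious obstacle here; the proof is essentially a bookkeeping exercise in combining the four axioms of a Banach stacking. The one point requiring a little care is that the first term must be controlled by passing through $X_n$ (not $X_\infty$), which is why I bound $d^{n,n}(G_n(x_n),G_n(y_n))$ by the $X_n$-norm $\|x_n-y_n\|_n$ and only then invoke norm-continuity (property~(iv)) to pass to the limit — one cannot directly subtract elements of different spaces $X_n$. A secondary subtlety is that $\|x_n-y_n\|_n$ need not be exactly $<\varepsilon$ for every $n$, only in the limit, which is why the argument is phrased with $\limsup$ and an extra $\varepsilon$ of slack rather than trying to get a clean bound for each fixed $n$.
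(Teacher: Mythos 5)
Your proof is correct and follows essentially the same density-plus-equicontinuity argument as the paper's proof, with the same triangle-inequality decomposition into three terms and the same use of properties~(i), (iii), and~(iv) of Definition~\ref{BanStack}. One small remark: your bound $d^{n,n}(G_n(x_n),G_n(y_n))\le \|G_n(x_n)-G_n(y_n)\|_n\le \|x_n-y_n\|_n$ (property~(i) first, then $1$-Lipschitz continuity of $G_n$) is actually slightly more careful than the paper's intermediate step $d^{n,n}(G_n(x_n),G_n(y_n))\le d^{n,n}(x_n,y_n)$, since $G_n$ is only $1$-Lipschitz with respect to $\|\cdot\|_n$ and not a priori with respect to the induced pseudometric $d^{n,n}$, though both chains arrive at the same final inequality.
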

\begin{proof}
Let $(x_n)_{n\in \mathbb{N}_\infty} \subset_n X_n$ be a sequence such that $x_n \underset{\mathcal{M}}\longtwoheadrightarrow x_{\infty}$ as $n\rightarrow \infty$ and let $\varepsilon>0$. By density of $\mathcal{A}$, there exists a $y_{\infty}\in \mathcal{A}$ such that $\Vert x_{\infty} - y_{\infty} \Vert_{\infty} \leq \varepsilon$. By assumption there exists a sequence $(y_n)_{n\in \mathbb{N}} \subset_n X_n$ such that $y_n \underset{\mathcal{M}}\longtwoheadrightarrow y_{\infty}$ and $G_n(y_n) \underset{\mathcal{M}}\longtwoheadrightarrow G_{\infty}(y_{\infty}) $ as $n\rightarrow \infty$.

By properties (iii) and (iv) in Definition~\ref{BanStack}, we have $\Vert y_n - x_n \Vert_n \rightarrow \Vert y_{\infty} - x_{\infty} \Vert_{\infty}$ as $n\rightarrow \infty$. Moreover, 
\begin{align*}
    d^{n,\infty}\big(G_n(x_n),G_\infty(x_{\infty}) \big) &\leq d^{n,n}\big(G_n(x_n), G_n(y_n) \big) + d^{n,\infty}\big(G_n(y_n), G_\infty(y_{\infty}) \big)\\
    &\hspace{0.4cm} +  d^{\infty,\infty}\big(G_\infty(y_{\infty}), G_\infty(x_{\infty}) \big)\\
    &\leq d^{n,n}\big(x_n, y_n \big) + d^{n,\infty}\big(G_n(y_n), G_\infty(y_{\infty}) \big)  +  d^{\infty,\infty}\big(y_{\infty}, x_{\infty} \big)\\
    &\leq \Vert x_n - y_n \Vert_n + d^{n,\infty}\big(G_n(y_n), G_\infty(y_{\infty}) \big) + \Vert x_{\infty} - y_{\infty} \Vert_{\infty}.
\end{align*}
The first inequality holds by the triangle inequality, the second because, for all $n\in \mathbb{N}_\infty$, $G_n$ is a contraction, and the third by property (i) in Definition~\ref{BanStack}. Taking the limit superior on both sides, we obtain
\[
    \limsup_{n\rightarrow \infty}  d^{n,\infty}\big(G_n(x_n), G_\infty (x_{\infty}) \big) \leq 2\Vert x_{\infty} - y_{\infty} \Vert_{\infty}
    \leq 2\varepsilon.
\]
Since $\varepsilon$ was chosen arbitrarily we conclude the desired result.
\end{proof}

In Examples~\ref{BSexmpl} in Appendix~\ref{sec:BanStackEx} we give a few examples of Banach stackings. The last one of those is of greatest interest to us in this paper and in Proposition~\ref{tlpBanach} we prove it is indeed a Banach stacking. To be able to define this particular Banach stacking and work with it, we first give the definitions of certain concepts from optimal transport.

\begin{definition}
Let $(\Omega,\Sigma)$ be a $\sigma$-algebra and consider two probability measures $\mu$ and $\nu$ defined on $\Omega$. We define $\Gamma(\mu,\nu)$ to be the set of probability measures $\pi$ on $(\Omega \times\Omega, \Sigma \otimes \Sigma)$ that satisfy, for all $A,B \in \Sigma$, $\pi(A \times \Omega)=\mu(A)$ and
$\pi(\Omega \times B) = \nu(B)$.
We refer to $\Gamma(\mu,\nu)$ as the set of transport plans.
\end{definition}

\begin{definition}\label{def:Wasserstein}
Let $\Omega \subset \mathbb{R}^d$ be an open subset and let $1\leq p < +\infty$. Denote by $\mathcal{P}_p(\Omega)$ the set of Borel probability measures on $\Omega$ with finite $p^{\text{th}}$ moment about zero. On $\mathcal{P}_p(\Omega)$ we define a metric $d_p$ given by
$$ d_p(\mu,\nu) := \inf_{\pi \in \Gamma(\mu,\nu)} \left(\int_{\Omega \times \Omega} |x-y|^p \, d\pi(x,y) \right)^{1/p} .$$
This is referred to as the $p$-Wasserstein metric.

If $\Omega$ is bounded, then we also define the $\infty$-transport distance on $\mathcal{P}(\Omega)$, which is the set of all Borel probability measures on $\Omega$. This distance is given by the metric $d_{\infty}$ that is defined as
$$ d_{\infty}(\mu,\nu):= \inf_{\pi \in \Gamma(\mu,\nu)} \operatorname{esssup}_{\pi} \left(\Omega\times \Omega \ni (x,y) \mapsto |x-y| \in [0,\infty)\right).$$ 
\end{definition}

To help understand the definition of $d_\infty$ above, we recall that if $f$ is a $\pi$-measurable real-valued function, then the essential supremum of $f$ with respect to $\pi$, denoted by $\operatorname{essup}_\pi(f)$, is the infimum of all $s\in \mathbb{R}$ such that $\pi\left(f^{-1}([s,+\infty))\right)=0$.

\begin{definition}\label{def:TLp}
Let $\Omega \subset \mathbb{R}^d$ be an open subset and let $1\leq p < +\infty$. Denote $TL^p(\Omega)$ to be the set of all pairs $(u,\mu)$ with $\mu \in \mathcal{P}_p(\Omega)$ and $u\in L^p(\Omega;\mu)$. We define the metric $d_{TL^p}$ on $TL^p(\Omega)$ by
$$ d_{TL^p(\Omega)}\big( (u,\mu),(v,\nu) \big) := \inf_{\pi \in \Gamma(\mu,\nu)} \left(\int_{\Omega \times \Omega} |u(x)-v(y)|^p + |x-y|^p \, d\pi(x,y) \right)^{1/p}.$$
\end{definition}

We note that both $d_p$, $d_\infty$, and $d_{TL^p}$ do in fact define metrics and we refer the reader to \cite{garcia2016continuum} and references therein for a more detailed description.

We will need Propositions~\ref{opLemma} and~\ref{tlpBS} regarding $p$-stagnating sequences (Definition~\ref{def:pstagnating}) and optimal transport distances, based on very similar results from \cite{garcia2016continuum}, to prove that the example in part~6 of Examples~\ref{BSexmpl} indeed forms a Banach stacking. 

For the remainder of this section we assume that $p\in [1,+\infty)$ and $\Omega$ is an open subset of $\mathbb{R}^d$.

\begin{definition}\label{def:pstagnating} Let $\mu$ and, for all $n\in \mathbb{N}$, $\mu_n$ be Borel probability measures on $\Omega$. We say
a sequence of transport plans $(\pi_n)_{n\in \mathbb{N}} \subset_n \Gamma(\mu_n,\mu)$ is $p$-stagnating if
$$ \int_{\Omega\times \Omega} |x-y|^p \, d\pi_n(x,y) \rightarrow 0 \qquad \text{as } n\to\infty. $$ 
\end{definition}

\begin{remark}
\label{rem:p-stagnating}
Comparing Definitions~\ref{def:Wasserstein} and~\ref{def:pstagnating}, we observe that there exists a $p$-stagnating sequence $(\pi_n)_{n\in\mathbb{N}}\subset \Gamma(\mu_n,\mu_\infty)$ if and only if $(\mu_n)_{n\in\mathbb{N}}$ converges to $\mu_\infty$ in the metric space $(\mathcal{P}_p(\Omega),d_p)$. The `only if' statement follows directly from the definitions, while the `if' statement requires an approximating sequence of transport plans converging to the infimum in the $p$-Wasserstein metric.
\end{remark}

\begin{proposition} \label{opLemma}
Given a Borel probability measure $\mu$ on $\Omega$, a $p$-stagnating sequence $(\pi_n)_{n\in \mathbb{N}} \subset \Gamma(\mu,\mu)$, and $u\in L^p(\Omega,\mu)$, we have
$$ \int_{\Omega \times \Omega} |u(x)-u(y)| \, d\pi_n(x,y) \rightarrow 0 \qquad \text{as } n\to\infty.$$ 
\end{proposition}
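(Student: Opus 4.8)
The plan is to use a density argument in $L^p(\Omega;\mu)$: first establish the claim for a class of "nice" functions where the estimate is essentially immediate, then bootstrap to general $u\in L^p(\Omega;\mu)$ using the fact that each $\pi_n$ is a coupling of $\mu$ with itself, so the $L^1$ functional $u\mapsto \int_{\Omega\times\Omega}|u(x)-u(y)|\,d\pi_n(x,y)$ is controlled by $\|u\|_{L^1(\Omega;\mu)}$ uniformly in $n$. Concretely, for Lipschitz $u$ with Lipschitz constant $L$ we have $|u(x)-u(y)|\le L|x-y|$, so
\[
\int_{\Omega\times\Omega}|u(x)-u(y)|\,d\pi_n(x,y)\le L\int_{\Omega\times\Omega}|x-y|\,d\pi_n(x,y)\le L\left(\int_{\Omega\times\Omega}|x-y|^p\,d\pi_n(x,y)\right)^{1/p}\to 0
\]
by Hölder's inequality (since $\pi_n$ is a probability measure) and $p$-stagnation. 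This handles bounded Lipschitz functions, or more conveniently $C_c^\infty(\Omega)$ functions extended suitably; whichever dense class is cleanest for $L^p(\Omega;\mu)$.

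Next I would handle the general case. Given $u\in L^p(\Omega;\mu)$ and $\varepsilon>0$, pick $w$ in the dense class with $\|u-w\|_{L^p(\Omega;\mu)}\le\varepsilon$, hence also $\|u-w\|_{L^1(\Omega;\mu)}\le\varepsilon$ (after possibly adjusting the constant, using that $\mu$ is a probability measure so $\|\cdot\|_{L^1}\le\|\cdot\|_{L^p}$). Then split using the triangle inequality:
\begin{align*}
\int_{\Omega\times\Omega}|u(x)-u(y)|\,d\pi_n
&\le \int_{\Omega\times\Omega}|u(x)-w(x)|\,d\pi_n + \int_{\Omega\times\Omega}|w(x)-w(y)|\,d\pi_n + \int_{\Omega\times\Omega}|w(y)-u(y)|\,d\pi_n.
\end{align*}
The first and third integrals equal $\|u-w\|_{L^1(\Omega;\mu)}$ exactly, because the first (resp. second) marginal of $\pi_n$ is $\mu$; so each is $\le\varepsilon$. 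The middle integral tends to $0$ as $n\to\infty$ by the Lipschitz case. Taking $\limsup_{n\to\infty}$ gives $\limsup_n\int|u(x)-u(y)|\,d\pi_n\le 2\varepsilon$, and since $\varepsilon>0$ was arbitrary, the limit is $0$.

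The only mild subtlety—and the place I'd be most careful—is the choice of dense class and the extension/restriction technicalities: $\Omega$ is an arbitrary open subset of $\mathbb{R}^d$ and $\mu$ an arbitrary Borel probability measure on it, so one wants a dense subset of $L^p(\Omega;\mu)$ consisting of functions that are (globally) Lipschitz on $\Omega$, or at least whose composition bound $|w(x)-w(y)|\le L|x-y|$ holds $\pi_n$-a.e. Bounded Lipschitz functions are dense in $L^p$ of any finite Borel measure on a metric space (e.g. via truncation and mollification, or by the standard fact that $C_b$ is dense and can be approximated by Lipschitz functions), so this is routine but should be cited or spelled out in one line. Everything else is just Hölder's inequality plus the marginal property of transport plans.
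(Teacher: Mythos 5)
Your proof is correct, and it is essentially the same density argument used in \cite[Lemma 3.10]{garcia2016continuum}, which is the result the paper cites here without reproducing a proof: approximate $u$ in $L^1(\Omega;\mu)$ by a (bounded) Lipschitz function $w$, use the marginal property of $\pi_n\in\Gamma(\mu,\mu)$ to show the two error terms each equal $\|u-w\|_{L^1(\Omega;\mu)}$, and kill the middle term via the Lipschitz bound $|w(x)-w(y)|\le L|x-y|$ together with Jensen/H\"older and $p$-stagnation. The one point worth spelling out if you write this up fully is the density of bounded Lipschitz functions in $L^p(\Omega;\mu)$ for an arbitrary Borel probability measure $\mu$ on an open $\Omega\subset\mathbb{R}^d$; this follows from inner regularity of finite Borel measures on Polish spaces (density of simple functions plus Lipschitz approximation of indicators of compact/open sets), and it is indeed routine, but a one-line citation would be appropriate.
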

\begin{proof}
   This is proven in \cite[Lemma 3.10]{garcia2016continuum}. In \cite{garcia2016continuum} the domain $\Omega$ ($D$ in the notation of \cite{garcia2016continuum}) is assumed to be bounded, but abandoning that assumption does not influence the proof of \cite[Lemma 3.10]{garcia2016continuum}. 
\end{proof}

\begin{proposition} \label{tlpBS}
Let $(u_n,\mu_n)_{n\in \mathbb{N}_\infty}$ be a sequence in $TL^p(\Omega)$. Then the following are equivalent.

\begin{enumerate}[(i)]
\item In the $TL^p(\Omega)$ metric, $(u_n,\mu_n)\rightarrow (u_\infty,\mu_\infty)$ as $n\rightarrow \infty$.
    \item There exists a $p$-stagnating sequence $(\pi_n)_{n\in\mathbb{N}} \subset_n \Gamma(\mu_n,\mu_{\infty})$ such that
    \begin{equation}
\label{BSeq2}
\int_{\Omega \times \Omega} |u_n(x)-u_{\infty}(y)|^p \, d\pi_n(x,y) \rightarrow 0 \qquad \text{as } n\to\infty.
\end{equation}
    \item There exists at least one $p$-stagnating sequence $(\hat \pi_n)_{n\in\mathbb{N}} \subset_n \Gamma(\mu_n,\mu_{\infty})$. Moreover for every $p$-stagnating sequence $(\pi_n)_{n\in\mathbb{N}} \subset_n \Gamma(\mu_n,\mu_{\infty})$ \eqref{BSeq2} holds.
\end{enumerate}
\end{proposition}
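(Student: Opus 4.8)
The plan is to prove the cycle of implications $(iii) \Rightarrow (ii) \Rightarrow (i) \Rightarrow (iii)$, since $(iii) \Rightarrow (ii)$ is immediate (pick any one of the $p$-stagnating sequences guaranteed by $(iii)$, and \eqref{BSeq2} holds for it). This structure is the natural one because $(iii)$ is the strongest statement and $(i)$ the weakest-looking, so we want $(i)$ feeding back into $(iii)$ via a genuine argument about optimal plans.

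For $(ii) \Rightarrow (i)$: given a $p$-stagnating sequence $(\pi_n)$ with \eqref{BSeq2}, the elementary inequality $(a+b)^{1/p} \le a^{1/p} + b^{1/p}$ (or just the fact that $\pi_n$ is \emph{a} competitor in the infimum defining $d_{TL^p}$) gives
\[
d_{TL^p(\Omega)}\big((u_n,\mu_n),(u_\infty,\mu_\infty)\big)^p \le \int_{\Omega\times\Omega} |u_n(x)-u_\infty(y)|^p \, d\pi_n(x,y) + \int_{\Omega\times\Omega} |x-y|^p \, d\pi_n(x,y),
\]
and both terms on the right tend to $0$ by hypothesis (the second because $(\pi_n)$ is $p$-stagnating), so $d_{TL^p}\to 0$.

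For $(i) \Rightarrow (iii)$: first, convergence in $TL^p$ forces $d_p(\mu_n,\mu_\infty) \to 0$, since $d_p(\mu_n,\mu_\infty) \le d_{TL^p}((u_n,\mu_n),(u_\infty,\mu_\infty))$ (drop the $|u(x)-v(y)|^p$ term from the cost); hence by Remark~\ref{rem:p-stagnating} at least one $p$-stagnating sequence in $\Gamma(\mu_n,\mu_\infty)$ exists. Now fix an \emph{arbitrary} $p$-stagnating sequence $(\pi_n) \subset_n \Gamma(\mu_n,\mu_\infty)$; we must show \eqref{BSeq2} holds for it. Let $\sigma_n$ be a sequence of near-optimal plans for $d_{TL^p}((u_n,\mu_n),(u_\infty,\mu_\infty))$, so in particular $\int |u_n(x)-u_\infty(y)|^p\, d\sigma_n \to 0$ and $\int |x-y|^p\, d\sigma_n \to 0$; this $(\sigma_n)$ is itself $p$-stagnating. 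The strategy is to "glue" $\pi_n$ and $\sigma_n$ along their common marginal $\mu_\infty$: using the gluing lemma, build a measure $\gamma_n$ on $\Omega\times\Omega\times\Omega$ with $(x,z)$-marginal $\pi_n$ and $(z,y)$-marginal $\sigma_n$ (here $x$ lives under $\mu_n$ via $\pi_n$, $z$ under $\mu_\infty$, $y$ under $\mu_\infty$ via $\sigma_n$). Then for the first pair of coordinates, $|u_n(x) - u_\infty(z)|^p$ decomposes via the triangle inequality in $L^p$-type fashion as bounded (up to a constant $2^{p-1}$) by $|u_n(x)-u_\infty(y)|^p + |u_\infty(y) - u_\infty(z)|^p$, integrate against $\gamma_n$: the cross term $\int |u_\infty(y)-u_\infty(z)|^p$ is controlled because the $(z,y)$-marginal of $\gamma_n$ is $\sigma_n$, which transports $\mu_\infty$ to $\mu_\infty$ and is $p$-stagnating, so Proposition~\ref{opLemma} (applied with $p$ in place of $1$, or the $p$-power version thereof) sends it to $0$; and $\int|u_n(x)-u_\infty(y)|^p\,d\gamma_n = \int|u_n(x)-u_\infty(y)|^p\,d\sigma_n \to 0$. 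Reassembling, $\int_{\Omega\times\Omega}|u_n(x)-u_\infty(z)|^p\, d\pi_n(x,z) \to 0$, which is exactly \eqref{BSeq2} for $(\pi_n)$.

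The main obstacle I anticipate is the gluing/triangle step: one needs the right "$p$-triangle inequality" bookkeeping (the $2^{p-1}$ constants are harmless for convergence to $0$, but must be tracked), and crucially one needs a $p$-power analogue of Proposition~\ref{opLemma} — that is, $\int_{\Omega\times\Omega}|w(x)-w(y)|^p\, d\rho_n(x,y)\to 0$ for $p$-stagnating $(\rho_n)\subset\Gamma(\mu_\infty,\mu_\infty)$ and $w\in L^p(\Omega;\mu_\infty)$. This either follows by rerunning the proof of \cite[Lemma 3.10]{garcia2016continuum} with $|w(x)-w(y)|^p$ in place of $|w(x)-w(y)|$ (the approximation of $w$ by bounded continuous functions works verbatim), or can be cited directly if \cite{garcia2016continuum} states it; I would simply invoke Proposition~\ref{opLemma} after noting the proof goes through for the $p$-th power, or alternatively reduce to the $p=1$ case by applying Proposition~\ref{opLemma} to the function $|w|^p$ together with an $L^1$ continuity argument. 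Care is also needed that all the plans involved are genuine probability measures with the claimed marginals, which the gluing lemma supplies provided the intermediate marginals ($\mu_\infty$ in both cases) agree — which they do by construction.
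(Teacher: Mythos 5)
Your proposal is correct in outline and arrives at the statement, but it takes a genuinely different route for the hard implication. The paper handles (ii) $\Leftrightarrow$ (iii) by citing \cite[Lemma 3.11]{garcia2016continuum} and then only needs the easy equivalence (i) $\Leftrightarrow$ (ii) (near-optimal plans give (i) $\Rightarrow$ (ii); the competitor estimate gives (ii) $\Rightarrow$ (i), exactly as you wrote). Your cycle $(iii)\Rightarrow(ii)\Rightarrow(i)\Rightarrow(iii)$ is logically sound, but your (i) $\Rightarrow$ (iii) step via gluing is, in effect, a self-contained reproof of the cited Lemma 3.11. That is a reasonable trade: it buys a self-contained argument at the cost of reconstructing a nontrivial piece of the Garc\'ia Trillos--Slep\v{c}ev theory, whereas the paper keeps its proof short by leaning on the citation.

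Two things to tighten in the gluing step. First, the bookkeeping: you say you glue $\pi_n$ and $\sigma_n$ ``along their common marginal $\mu_\infty$'' and take the $(z,y)$-marginal of $\gamma_n$ to be $\sigma_n$ with $z\sim\mu_\infty$; but $\sigma_n\in\Gamma(\mu_n,\mu_\infty)$ has first marginal $\mu_n$, so that assignment is inconsistent. The correct coupling is to glue along the \emph{common first marginal} $\mu_n$: take $\gamma_n$ on $\Omega^3$ with $(x,z)$-marginal $\pi_n$ and $(x,y)$-marginal $\sigma_n$ (so $x\sim\mu_n$, $y,z\sim\mu_\infty$); then the $(y,z)$-marginal $\rho_n\in\Gamma(\mu_\infty,\mu_\infty)$ is $p$-stagnating (via $|y-z|^p\le 2^{p-1}(|y-x|^p+|x-z|^p)$ and the $p$-stagnation of both $\sigma_n$ and $\pi_n$), and your estimate $|u_n(x)-u_\infty(z)|^p\le 2^{p-1}(|u_n(x)-u_\infty(y)|^p+|u_\infty(y)-u_\infty(z)|^p)$ closes the argument. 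Second, you correctly note you need a $p$-power analogue of Proposition~\ref{opLemma} (the version in this paper has conclusion $\int|u(x)-u(y)|\,d\pi_n\to 0$, not the $p$-th power); the reduction to the $p=1$ case via $|w|^p$ does not work because $|w(x)-w(y)|^p\neq \bigl||w(x)|^p-|w(y)|^p\bigr|$, so you should either invoke the $p$-power version from the source directly or rerun the Lipschitz-approximation argument with the $p$-th power, which does go through.
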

\begin{proof}
It is immediate that (iii) implies (ii). In fact, by \cite[Lemma 3.11]{garcia2016continuum} (ii) and (iii) are equivalent. As also noted in the proof of Proposition~\ref{opLemma}, in \cite{garcia2016continuum} the domain $\Omega$ is assumed to be bounded; moreover, \cite[Lemma 3.11]{garcia2016continuum} also requires $(\mu_n)_{n\in\mathbb{N}}$ to converge weakly as measures to $\mu_\infty$. The proof of \cite[Lemma 3.11]{garcia2016continuum} remains valid, however, even if both these assumptions are dropped. 

Let $(\varepsilon_n)_{n\in\mathbb{N}} \subset (0,\infty)$ be a sequence that converges to $0$. By Definition~\ref{def:TLp}, there exists a sequence $(\pi_n)\subset_n\Gamma(\mu_n,\mu_\infty)$ such that, for all $n\in \mathbb{N}$, 
\[
d_{TL^p(\Omega)}^p\big( (u_n,\mu_n),(u_\infty,\mu_\infty) \big) + \varepsilon_n = \int_{\Omega \times \Omega} |u(x)-v(y)|^p + |x-y|^p \, d\pi_n(x,y).
\]
Thus (i) implies immediately that $(\pi_n)$ is $p$-stagnating and satisfies \eqref{BSeq2}. Hence (ii) holds.

Conversely, if (ii) is satisfied, then
\[
d_{TL^p(\Omega)}^p\big( (u_n,\mu_n),(u_\infty,\mu_\infty) \big) \leq \int_{\Omega\times\Omega} |u_n(x)-u_\infty(y)|^p + |x-y|^p \, d\pi_n(x,y) \to 0 \qquad \text{as } n\to\infty,
\]
hence (i) holds.

For completeness we mention that the above proof of the equivalence of (i) and (ii) mirrors the proof of \cite[Proposition 3.12]{garcia2016continuum}, in particular the equivalence of claims 1 and 3 in that proposition. Again, we observe that \cite[Proposition 3.12]{garcia2016continuum} assumes a bounded domain $\Omega$. Without that assumption the proof remains valid, if the statement of weak convergence of $(\mu_n)_{n\in\mathbb{N}}$ is removed from claim 3 in \cite[Proposition 3.12]{garcia2016continuum}. 
\end{proof}

We remind the reader that if $\Omega_1$ and $\Omega_2$ are measurable spaces (with corresponding $\sigma$-algebras), $\mu$ is a measure on $\Omega_1$, and $f: \Omega_1\to \Omega_2$ a function, then the pushforward measure $f\#\mu$ is a measure on $\Omega_2$ defined by, for all measurable subsets $A\in \Omega_2$, $f\#\mu(A):=\mu(f^{-1}(A))$.

\begin{proposition}\label{tlpBanach}
For all $\mu \in \mathcal{P}_p(\Omega)$, define $\xi_{\mu}: L^p(\Omega;\mu) \rightarrow TL^p(\Omega)$ by $\xi_{\mu}(u) := (u,\mu)$. Then $\big((L^p(\Omega,\mu), \xi_{\mu})_{\mu \in \mathcal{P}_p(\Omega)}, TL^p(\Omega)\big)$ (see part~6 of Examples~\ref{BSexmpl}) is a Banach stacking. 
\end{proposition}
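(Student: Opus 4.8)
The plan is to verify the four axioms (i)--(iv) of Definition~\ref{BanStack} for every sequence $(\mu_n)_{n\in\mathbb{N}_\infty}\subset\mathcal{P}_p(\Omega)$ with $\mu_n\to\mu_\infty$ in $(\mathcal{P}_p(\Omega),d_p)$, where $X_n:=L^p(\Omega;\mu_n)$ and $\xi_n:=\xi_{\mu_n}$ as defined in the statement. Recall (Remark~\ref{rem:p-stagnating}) that such convergence $\mu_n\to\mu_\infty$ is equivalent to the existence of a $p$-stagnating sequence $(\pi_n)\subset_n\Gamma(\mu_n,\mu_\infty)$; this will be the standing assumption throughout, since Definition~\ref{def:BanStackI} only requires the Banach-stacking axioms along convergent index sequences.

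\textbf{Axiom (i): $1$-Lipschitz continuity of $\xi_\mu$.} Fix $\mu\in\mathcal{P}_p(\Omega)$ and $u,v\in L^p(\Omega;\mu)$. Using the identity map on $\Omega$ to build the diagonal transport plan $\pi:=(\mathrm{id},\mathrm{id})\#\mu\in\Gamma(\mu,\mu)$, we get
\[
d_{TL^p(\Omega)}\big((u,\mu),(v,\mu)\big)\le\left(\int_{\Omega\times\Omega}|u(x)-v(y)|^p+|x-y|^p\,d\pi(x,y)\right)^{1/p}=\|u-v\|_{L^p(\Omega;\mu)}.
\]
Hence $\xi_\mu$ is $1$-Lipschitz.

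\textbf{Axiom (ii): density of the limit space via convergent sequences.} Given $u_\infty\in L^p(\Omega;\mu_\infty)$, I would like to produce $(u_n)\subset_n L^p(\Omega;\mu_n)$ with $(u_n,\mu_n)\to(u_\infty,\mu_\infty)$ in $TL^p(\Omega)$. By Proposition~\ref{tlpBS} it suffices to find such $u_n$ for which \eqref{BSeq2} holds along the given $p$-stagnating sequence $(\pi_n)$. Disintegrate $\pi_n$ with respect to its second marginal $\mu_\infty$, writing $\pi_n=\int_\Omega \pi_n^{\,y}\,d\mu_\infty(y)$, and define $u_n$ by a suitable barycentric-projection / stitching construction from $u_\infty$ (for instance, first approximate $u_\infty$ by a continuous compactly supported $\tilde u$, set $u_n:=\tilde u$ and control the error via Proposition~\ref{opLemma}; a standard diagonal argument then removes the approximation). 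The key estimate is $\int|u_n(x)-u_\infty(y)|^p\,d\pi_n\to0$, which follows from continuity of $\tilde u$, $p$-stagnation, and uniform integrability. This is the step that most closely mirrors the continuity results in \cite{garcia2016continuum}; I expect the combinatorics of the approximation-plus-diagonal argument to be the main technical obstacle, though no genuinely new idea is needed.

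\textbf{Axioms (iii) and (iv): continuity of addition, scaling, and norms.} Suppose $(u_n,\mu_n)\to(u_\infty,\mu_\infty)$ and $(v_n,\mu_n)\to(v_\infty,\mu_\infty)$ in $TL^p(\Omega)$, with a common $p$-stagnating sequence $(\pi_n)$ witnessing both convergences (this is legitimate: by Proposition~\ref{tlpBS}(iii), \emph{every} $p$-stagnating sequence works, so we may pick one and use it for both). Then
\[
\left(\int|(u_n+v_n)(x)-(u_\infty+v_\infty)(y)|^p\,d\pi_n\right)^{1/p}\le\left(\int|u_n(x)-u_\infty(y)|^p\,d\pi_n\right)^{1/p}+\left(\int|v_n(x)-v_\infty(y)|^p\,d\pi_n\right)^{1/p}\to0
\]
by Minkowski's inequality, giving $(u_n+v_n,\mu_n)\to(u_\infty+v_\infty,\mu_\infty)$; scaling by $\lambda\in\mathbb{R}$ is immediate since $|\lambda u_n(x)-\lambda u_\infty(y)|^p=|\lambda|^p|u_n(x)-u_\infty(y)|^p$. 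For axiom (iv), note that by Proposition~\ref{tlpBS}(ii) applied with $v_n\equiv 0$ we have $\|u_n\|_{L^p(\Omega;\mu_n)}=(\int|u_n(x)|^p\,d\mu_n(x))^{1/p}=(\int|u_n(x)|^p\,d\pi_n(x,y))^{1/p}$, and the triangle inequality in $L^p(\pi_n)$ together with \eqref{BSeq2} gives
\[
\Big|\,\|u_n\|_{L^p(\Omega;\mu_n)}-\|u_\infty\|_{L^p(\Omega;\mu_\infty)}\,\Big|\le\left(\int_{\Omega\times\Omega}|u_n(x)-u_\infty(y)|^p\,d\pi_n(x,y)\right)^{1/p}\to0,
\]
using also that $\|u_\infty\|_{L^p(\Omega;\mu_\infty)}=(\int|u_\infty(y)|^p\,d\pi_n(x,y))^{1/p}$ since the second marginal of $\pi_n$ is $\mu_\infty$. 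This establishes all four axioms, so $\big((L^p(\Omega,\mu),\xi_\mu)_{\mu\in\mathcal{P}_p(\Omega)},TL^p(\Omega)\big)$ is a Banach stacking.
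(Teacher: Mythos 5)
Your verification of axioms (i), (iii), and (iv) matches the paper's: diagonal transport plan for (i), a common $p$-stagnating sequence via Proposition~\ref{tlpBS}(iii) plus Minkowski for (iii), and the marginal identity plus Minkowski for (iv). (Two small citation quibbles: the identity $\int|u_n(x)|^p\,d\mu_n=\int|u_n(x)|^p\,d\pi_n$ is just the defining marginal property of $\Gamma(\mu_n,\mu_\infty)$, not a consequence of Proposition~\ref{tlpBS}(ii); this is harmless.)

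Axiom (ii) is where the real work is, and here you diverge from the paper. Two remarks. First, the direction of your disintegration is backwards: disintegrating $\pi_n$ with respect to its \emph{second} marginal $\mu_\infty$ gives conditional measures $\pi_n^y$ over $x$, which lets you map a function of $x$ to a function of $y$ — the opposite of what you need. The paper disintegrates with respect to the \emph{first} marginal $\mu_n$, obtaining $\eta_x^{(n)}$ over $y$, and sets $u_n(x):=\int u_\infty(y)\,d\eta_x^{(n)}(y)$, which is directly a function in $L^p(\Omega;\mu_n)$. It then builds a $p$-stagnating sequence $\tau_n\in\Gamma(\mu_\infty,\mu_\infty)$ out of the disintegration, applies Jensen, and uses Proposition~\ref{opLemma} (which is stated precisely for plans with \emph{both} marginals equal to $\mu_\infty$, so this is the point where it legitimately applies). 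Second, your ``for instance'' fallback — take $\tilde u\in C_c(\Omega)$ close to $u_\infty$ in $L^p(\Omega;\mu_\infty)$, set $u_n:=\tilde u$, and do a diagonal argument — does work, but the key estimate $\int|\tilde u(x)-\tilde u(y)|^p\,d\pi_n\to0$ should be justified via uniform continuity of $\tilde u$ plus $p$-stagnation (split on $|x-y|<\delta$ versus $|x-y|\ge\delta$ and use Chebyshev), not via Proposition~\ref{opLemma}: that proposition requires equal marginals, which $\pi_n\in\Gamma(\mu_n,\mu_\infty)$ does not have. Your route also implicitly needs density of $C_c(\Omega)$ in $L^p(\Omega;\mu_\infty)$ for an arbitrary Borel probability measure $\mu_\infty$ (true, but a nontrivial Radon-measure fact). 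The paper's conditional-expectation construction is more self-contained: it never leaves the measure-theoretic framework already set up, needs no density of nice functions, and produces the recovery sequence in one step rather than by a diagonal.
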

\begin{proof}
Let $(\mu_n)_{n\in\mathbb{N}_\infty} \subset \mathcal{P}_p(\Omega)$ be a sequence such that $\mu_n \rightarrow \mu_{\infty}$ as $n \rightarrow \infty$ with respect to the $p$-Wasserstein distance. By Definition~\ref{def:BanStackI} it is sufficient to prove that $\big( (L^p(\Omega,\mu_n)_{n\in\mathbb{N}_\infty},\xi_n),TL^p(\Omega) \big)$ forms a Banach stacking, where $\xi_n(u):=(u,\mu_n)$. We shall do so by proving conditions~(i)--(iv) from Definition~\ref{BanStack}. For convenience we let $d$ denote the $TL^p$ metric from Definition~\ref{def:TLp}.

\begin{enumerate}
    \item Let $n\in \mathbb{N}_{\infty}$ and $u,v\in L^p(\Omega;\mu_n)$. Define the Borel map $\mathcal{D}:\Omega \rightarrow \Omega\times \Omega$ by $\mathcal{D}(x):=(x,x)$ and define the transport plan $\pi:= \mathcal{D}\# \mu_n$. Because $\pi \in \Gamma(\mu_n, \mu_n)$, it follows from \eqref{eq:dnm} and Definition~\ref{def:TLp} that
    \begin{align*}
      d^{n,n}(u,v) &\leq \left( \int_{\Omega \times \Omega} |u(x)-v(y)|^p + |x-y|^p \, d\pi(x,y) \right)^{1/p}\\
      &=\left( \int_{\Omega}  |u(x)-v(x)|^p \, d\mu_n(x) \right)^{1/p}
      = \Vert u-v \Vert_{L^p},
    \end{align*}
 as required by condition~(i) in Definition~\ref{BanStack}.

    \item Let $u_{\infty} \in L^p(\Omega;\mu_{\infty})$ and let $(\pi_n)_{n\in \mathbb{N}} \subset_n \Gamma(\mu_n,\mu_{\infty})$ be a $p$-stagnating sequence of transport plans. Such a sequence exists by Remark~\ref{rem:p-stagnating}, since $\mu_n\to \mu_\infty$ with respect to the $p$-Wasserstein distance as $n\to\infty$. Let $n\in \mathbb{N}$. By the disintegration theorem, see \cite[Theorem 5.3.1]{greenbook} for $\mu_n$-a.e. $x\in \Omega$ there exists a Borel probability measure $\eta_x^{(n)}$ on $\Omega$ such that, for all $\pi_n$-integrable functions $f:\Omega\times \Omega \to \mathbb{R}$ the map
    $$x \mapsto \int_{\Omega} f(x,y) \, d\eta_x^{(n)}(y) $$
    is $\mu_n$-integrable and 
    $$ \int_{\Omega \times \Omega} f(x,y) \, d\pi_n(x,y) = \int_{\Omega} \int_{\Omega} f(x,y) d\eta_x^{(n)}(y) \, d\mu_n(x).$$
   Here we are disintegrating the measure $\pi_n$ on $\Omega\times\Omega$ with respect to the function $(x,y)\mapsto x$ defined on $\Omega\times\Omega$. We note that the map $(x,y)\mapsto u_{\infty}(y)$ is $\pi_n$-integrable, so we can define, for $\mu_n$-a.e. $x\in \Omega$,    
    $$ u_n(x) := \int_{\Omega} u_{\infty}(y) \, d\eta_x^{(n)}(y)$$
   and this function $u_n$ is $\mu_n$-integrable. We wish to show that $(u_n)_{n\in \mathbb{N}}$ converges to $u_{\infty}$ in the $TL^p$ topology as $n\to \infty$. For this purpose, for all $n\in \mathbb{N}$, we define the Borel probability measure $\tau_n$ on $\Omega \times \Omega$ such that for a Borel set $A\subset \Omega \times \Omega$ we have 
    $$ \tau_n(A) = \int_{\Omega}\int_{(y,z)\in A} \, d\eta_x^{(n)}(y) \, d\eta_x^{(n)}(z) \,  d\mu_n(x). $$
    We note also, given a Borel set $B\subset \Omega$, that
    \[
        \tau_n(B\times \Omega) = \int_{\Omega} \int_{B} \int_{\Omega} \, d\eta_x^{(n)}(z) \, d\eta_x^{(n)}(y) \, d\mu_n(x)
        = \int_{(x,y)\in \Omega \times B} \, d\pi_n(x,y) = \mu_{\infty}(B) .
    \]
    A similar argument shows that $\tau_n(\Omega \times B) = \mu_{\infty}(B)$ and thus $\tau_n \in \Gamma(\mu_{\infty},\mu_{\infty})$. Next we show that $\tau_n$ is $p$-stagnating. Indeed 
    \begin{align*}
    &\left(\int_{\Omega \times \Omega} |y-z|^p \, d\tau_n(y,z) \right)^{1/p} = \left(\int_{\Omega}\int_{\Omega} \int_{\Omega} |y-x+x-z|^p \, d\eta_x^{(n)}(y) \, d\eta_x^{(n)}(z) \, d\mu_n(x) \right)^{1/p} \\
    &\leq \left( \int_{\Omega}\int_{\Omega} \int_{\Omega} |y-x|^p \, d\eta_x^{(n)}(y) \, d\eta_x^{(n)}(z) \, d\mu_n(x) \right)^{1/p} + \left( \int_{\Omega}\int_{\Omega} \int_{\Omega} |x-z|^p  \, d\eta_x^{(n)}(y) \, d\eta_x^{(n)}(z) \, d\mu_n(x) \right)^{1/p} \\
    &= 2\left( \int_{\Omega\times \Omega} |x-y|^p \, d\pi_n(x,y) \right)^{1/p} \rightarrow 0 \qquad \text{as } n\to\infty.
    \end{align*}
    The inequality above follows from Minkowski's inequality and the second equality holds since $(\pi_n)_{n\in \mathbb{N}}$ is $p$-stagnating by assumption. Next we note that 
    \begin{align*}
    \int_{\Omega\times \Omega} |u_n(x)-u_{\infty}(y)|^p \, d\pi_n(x,y) 
    &= \int_{\Omega\times \Omega} \left|\int_{\Omega} \big(u_{\infty}(z)-u_{\infty}(y)\big) \, d\eta_x^{(n)}(z)\right|^p \, d\pi_n(x,y) \\
    &\leq \int_{\Omega}\int_{\Omega} \int_{\Omega} |u_{\infty}(z) - u_{\infty}(y)|^p \, d\eta_x^{(n)}(z) \, d\eta_x^{(n)}(y) \, d\mu_n(x) \\
    &= \int_{\Omega}\int_{\Omega} \int_{\Omega} |u_{\infty}(z) - u_{\infty}(y)|^p \, d\tau_n(z,y) \rightarrow 0,
    \end{align*}
   as $n\to\infty$. The inequality above holds by Jensen's inequality and the convergence at the end holds by Proposition~\ref{opLemma} since the sequence $(\tau_n)_{n\in \mathbb{N}}$ is stagnating. Thus the sequence $(u_n)_{n\in \mathbb{N}} \subset_n L^p(\Omega;\mu_n)$ converges to $u_{\infty}$ in $TL^p(\Omega)$ in the sense of Banach stackings (i.e., as in Definition~\ref{BanStack}), as required.

    \item Assume we have two sequences $(u_n)_{n\in \mathbb{N}_{\infty}}\subset_n L^p(\Omega;\mu_n)$ and $(v_n)_{n\in \mathbb{N}_{\infty}} \subset_n L^p(\Omega;\mu_n)$ such that $(u_n)_{n\in \mathbb{N}}$ converges to $u_{\infty}$ and $(v_n)_{n\in \mathbb{N}}$ converges to $v_{\infty}$ in $TL^p(\Omega)$ in the sense of Banach stackings. By definition of $\xi_n$, this means that part (i) of Proposition~\ref{tlpBS} is satisfied for both $(u_n, \mu_n)_{n\in\mathbb{N}_\infty}$ and $(v_n, \mu_n)_{n\in\mathbb{N}_\infty}$ and thus, by part (iii) there exists a $p$-stagnating sequence $(\pi_n)_{n\in \mathbb{N}} \subset_n \Gamma(\mu_n,\mu_{\infty})$ which satisfies \eqref{BSeq2} for both $(u_n)_{n\in\mathbb{N}_\infty}$ and $(v_n)_{n\in\mathbb{N}_\infty}$. Then, by Minkowski's inequality,
    \begin{align*}
        &\left( \int_{\Omega\times\Omega} |u_n(x)+v_n(x) - u_{\infty}(y) -v_{\infty}(y)|^p \, d\pi_n(x,y) \right)^{1/p} \\
        &\leq 
        \left( \int_{\Omega\times\Omega} |u_n(x) - u_{\infty}(y)|^p \, d\pi_n(x,y) \right)^{1/p}
        + \left( \int_{\Omega\times\Omega} |v_n(x) -v_{\infty}(y)|^p \, d\pi_n(x,y) \right)^{1/p} \\
        &\rightarrow 0 \qquad \text{as } n\to\infty.
    \end{align*}
    Again by Proposition~\ref{tlpBS} we now obtain that $u_n + v_n \underset{TL^p(\Omega)}\longtwoheadrightarrow u_{\infty} + v_{\infty}$.
    Via a similar argument we also find that, for all $\lambda \in \mathbb{R}$, $\lambda x_n \underset{TL^p(\Omega)}\longtwoheadrightarrow \lambda x_{\infty}.$

    \item Assume the sequence $(u_n)_{n\in \mathbb{N}_\infty} \subset_n L^p(\Omega;\mu_n)$ is such that $(u_n,\mu_n)$ converges to $(u_{\infty},\mu_{\infty})$. By comparing the definitions of $d_p$ and $d_{TL^p(\Omega)}$ in Definitions~\ref{def:Wasserstein} and~\ref{def:TLp}, respectively, we see that $(\mu_n)_{n\in\mathbb{N}}$ converges to $\mu_\infty$ in the metric space $(\mathcal{P}_p(\Omega),d_p)$. Hence, by Remark~\ref{rem:p-stagnating}, a $p$-stagnating sequence $(\pi_n)_{n\in \mathbb{N}} \subset_n  \Gamma(\mu_n,\mu_{\infty})$ exists. Then, Minkowski's inequality,
    \begin{align*}
    &\big|\Vert u_n \Vert_{L^p} - \Vert u_{\infty} \Vert_{L^p}\big| \\
    &= \left|\left(\int_{\Omega \times \Omega}  |u_n(x)|^p \, d\pi_n(x,y) \right)^{1/p} - \left(\int_{\Omega \times \Omega}|u_{\infty}(y)|^p \, d\pi_n(x,y) \right)^{1/p}\right| \\
    &\leq \left(\int_{\Omega \times \Omega} |u_n(x)-u_{\infty}(y)|^p \, d\pi_n(x,y) \right)^{1/p} \rightarrow 0 \qquad \text{as } n\to\infty,
    \end{align*}
    where convergence follows from \eqref{BSeq2}.
\end{enumerate}
\end{proof}

The next lemma gives sufficient conditions to obtain convergence in $TL^r$ from convergence in $TL^p$ with $r>p$.

\begin{lemma}
\label{tlplem}
    Let $\Omega \subset \mathbb{R}^N$ be open and let $\mu,\nu$ be Borel probability measures on $\Omega$. Let $1\leq p <+\infty$ and $p<q\leq+\infty$. Also let $r\in [p,q)$. Let $u\in L^p(\Omega;\mu)$ and $v\in L^p(\Omega;\nu)$.
    
    Assume there exists a $C<+\infty$ such that $\max\left(\Vert u \Vert_{L^q(\Omega;\mu)}, \Vert v \Vert_{L^q(\Omega;\nu)}\right) \leq C$. Then there exists a $\theta \in (0,1]$ (that is determined by $p$, $r$, and $q$) such that
$$ d_{TL^r} \big( (u,\mu) ,(v,\nu) \big) \leq d_r(\mu,\nu) + (2C)^{\frac{q(1-\theta)}{r}} d_{TL^p}\big( (u,\mu) , (v,\nu) \big)^{\frac{\theta p}{r}}.$$

\end{lemma}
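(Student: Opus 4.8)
The plan is to fix $\varepsilon>0$, prove the asserted inequality with an extra $+\varepsilon$ on the right, and then let $\varepsilon\downarrow0$; we may assume $d_r(\mu,\nu)<+\infty$ and $d_{TL^p}((u,\mu),(v,\nu))<+\infty$, since otherwise the bound is vacuous. The exponent $\theta$ will be $\theta:=\tfrac{q-r}{q-p}\in(0,1]$ (interpreting $\theta=1$ when $q=+\infty$); setting $\lambda:=\tfrac{\theta p}{r}=\tfrac{p(q-r)}{r(q-p)}$ one checks $\tfrac1r=\tfrac{\lambda}{p}+\tfrac{1-\lambda}{q}$ and $1-\lambda=\tfrac{q(1-\theta)}{r}$, so the target reads $d_{TL^r}((u,\mu),(v,\nu))\le d_r(\mu,\nu)+d_{TL^p}((u,\mu),(v,\nu))^{\lambda}(2C)^{1-\lambda}$ (for $q=+\infty$ read $(2C)^{(r-p)/r}$). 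Note also that the hypotheses force $u\in L^r(\Omega;\mu)$ and $v\in L^r(\Omega;\nu)$ (finite measures, $p\le r<q$), so that all $TL^r$ quantities make sense.

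The core of the argument is an interpolation estimate for the ``function-mismatch'' part of the $TL^r$ cost. Pick $\pi\in\Gamma(\mu,\nu)$ almost optimal for $d_{TL^p}$, i.e.\ $\int_{\Omega\times\Omega}\bigl(|u(x)-v(y)|^p+|x-y|^p\bigr)\,d\pi\le d_{TL^p}((u,\mu),(v,\nu))^p+\varepsilon$. Writing $a(x,y):=|u(x)-v(y)|$, this gives $\|a\|_{L^p(\pi)}\le\bigl(d_{TL^p}((u,\mu),(v,\nu))^p+\varepsilon\bigr)^{1/p}$, while Minkowski's inequality and the moment hypothesis give $\|a\|_{L^q(\pi)}\le\|u\|_{L^q(\Omega;\mu)}+\|v\|_{L^q(\Omega;\nu)}\le 2C$. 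The $L^p$--$L^q$ interpolation inequality then yields
\[
\|a\|_{L^r(\pi)}\le\|a\|_{L^p(\pi)}^{\lambda}\,\|a\|_{L^q(\pi)}^{1-\lambda}\le\bigl(d_{TL^p}((u,\mu),(v,\nu))^p+\varepsilon\bigr)^{\lambda/p}(2C)^{1-\lambda},
\]
which, on letting $\varepsilon\downarrow0$, is exactly the second term on the right-hand side of the lemma.

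To bring in $d_r(\mu,\nu)$ I would use the triangle inequality for the metric $d_{TL^r}$ together with a near-$d_r$-optimal plan. Take $\gamma\in\Gamma(\mu,\nu)$ with $\int_{\Omega\times\Omega}|x-y|^r\,d\gamma\le d_r(\mu,\nu)^r+\varepsilon$, glue $\pi$ and $\gamma$ over their common $\mu$-marginal to obtain a measure $\sigma$ on $\Omega\times\Omega\times\Omega$ with $(\pi_1,\pi_2)\#\sigma=\pi$ and $(\pi_1,\pi_3)\#\sigma=\gamma$, and estimate $d_{TL^r}((u,\mu),(v,\nu))$ using the admissible coupling $(\pi_1,\pi_3)\#\sigma\in\Gamma(\mu,\nu)$ with the values $u,v$ attached to the first and third coordinates: then $d_{TL^r}((u,\mu),(v,\nu))^r\le\int\bigl(|u(x_1)-v(x_3)|^r+|x_1-x_3|^r\bigr)\,d\sigma$, whose spatial part is $\int|x-y|^r\,d\gamma\le d_r(\mu,\nu)^r+\varepsilon$, and whose function part is split via $|u(x_1)-v(x_3)|\le|u(x_1)-v(x_2)|+|v(x_2)-v(x_3)|$, the first summand being the interpolated quantity from the previous paragraph (computed along $\pi$).

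The step I expect to be the main obstacle is controlling the remaining cross term $\bigl(\int|v(x_2)-v(x_3)|^r\,d\sigma\bigr)^{1/r}$, which quantifies the incompatibility between the function-optimal plan $\pi$ and the spatially-optimal plan $\gamma$: a priori the optimal $TL^p$ transport can move mass ``the wrong way'' spatially (its $r$-th order displacement need not be controlled by $d_r(\mu,\nu)$ when $\Omega$ is unbounded), so the two transports genuinely differ. Making the final estimate collapse to $d_r(\mu,\nu)$ plus the interpolation term, rather than a sum of three terms, is the delicate point. The natural fixes are either to work with transport \emph{maps} rather than plans (so the cross term vanishes identically) and choose, in place of the $d_r$-optimal map, a map $S$ with $S\#\mu=\nu$ optimizing a mixed cost such as $\int_\Omega\bigl(|u(x)-v(S(x))|^p+|x-S(x)|^r\bigr)\,d\mu$ so that $\|u-v\circ S\|_{L^p(\Omega;\mu)}$ and $\bigl(\int|x-S(x)|^r\,d\mu\bigr)^{1/r}$ are controlled together, or to estimate the cross term directly by noting it can be absorbed into the slack of the interpolation bound; either way, reconciling the two transports—not the interpolation—is where the real work lies, and the general (plan, non-map) case would then follow by the usual approximation of transport plans by maps.
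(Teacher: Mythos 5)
Your interpolation step agrees exactly with the paper's: both bound $\int_{\Omega\times\Omega}|u(x)-v(y)|^r\,d\pi$ by H\"older against the $p$-th and $q$-th moments of $|u(x)-v(y)|$ along the \emph{same} plan $\pi$, with $\theta=\frac{q-r}{q-p}$. The genuine gap is the one you flag yourself: after gluing a near-$d_{TL^p}$-optimal plan $\pi$ and a near-$d_r$-optimal plan $\gamma$ over the common $\mu$-marginal, the cross term $\int_{\Omega^3}|v(x_2)-v(x_3)|^r\,d\sigma$ is never estimated. Neither sketched fix is carried through: if you pass to transport maps and minimize a mixed cost such as $\int_\Omega\big(|u(x)-v(S(x))|^p+|x-S(x)|^r\big)\,d\mu$, nothing forces the resulting $\|u-v\circ S\|_{L^p(\mu)}$ and $\big(\int|x-S(x)|^r\,d\mu\big)^{1/r}$ to be controlled by $d_{TL^p}\big((u,\mu),(v,\nu)\big)$ and $d_r(\mu,\nu)$ separately, so you have merely traded one reconciliation problem for another; and ``absorbing the cross term into the slack of the interpolation'' is asserted, not argued. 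As written, the proposal is incomplete.

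For orientation, the paper's own proof is much shorter and contains no gluing: for an arbitrary $\pi\in\Gamma(\mu,\nu)$ it asserts
\[ d_{TL^r}\big((u,\mu),(v,\nu)\big)\leq d_r(\mu,\nu)+\Big(\int_{\Omega\times\Omega}|u(x)-v(y)|^r\,d\pi\Big)^{1/r}, \]
interpolates the second summand as you do, and then infimizes over $\pi$. However, Minkowski's inequality together with the definition of $d_{TL^r}$ only yields $d_{TL^r}\leq\big(\int|x-y|^r\,d\pi\big)^{1/r}+\big(\int|u(x)-v(y)|^r\,d\pi\big)^{1/r}$, with the $r$-th spatial moment of the \emph{same} plan $\pi$ on the right rather than the infimal quantity $d_r(\mu,\nu)$, and the infimum over $\pi$ cannot then be split between the two summands. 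Replacing $\big(\int|x-y|^r\,d\pi\big)^{1/r}$ by $d_r(\mu,\nu)$ while simultaneously retaining control of the $p$-th functional moment along a near-$d_{TL^p}$-optimal plan is precisely the plan-reconciliation issue you run into; your cross term is not an artefact of choosing the gluing route but the place where that reconciliation must actually be carried out, and the paper's one-line Minkowski step does not supply it.
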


\begin{proof} Let $\theta := \frac{q-r}{q-p} \in (0,1]$ such that $r= \theta p + (1-\theta) q$ and let $\pi \in \Gamma(\mu,\nu)$. We note that
\begin{align*}
&\int_{\Omega \times \Omega} |u(x)-v(y)|^r \, d\pi(x,y)
= \int_{\Omega \times \Omega} |u(x)-v(y)|^{p\theta}|u(x)-v(y)|^{q(1-\theta)} \, d\pi(x,y)\\
&\leq \left( \int_{\Omega \times \Omega} |u(x)-v(y)|^p \, d\pi(x,y) \right)^{\theta} \left( \int_{\Omega \times \Omega} |u(x)-v(y)|^q \, d\pi (x,y)\right)^{1-\theta}\\
&\leq \left( \int_{\Omega \times \Omega} |u(x)-v(y)|^p \, d\pi(x,y) \right)^{\theta} \left(\left( \int_{\Omega \times \Omega} |u(x)|^q \, d\pi(x,y) \right)^{\frac{1}{q}} +\left( \int_{\Omega \times \Omega} |v(y)|^q \, d\pi(x,y) \right)^{\frac{1}{q}} \right)^{q(1-\theta)}\\
&= \left( \int_{\Omega \times \Omega} |u(x)-v(y)|^p \, d\pi(x,y) \right)^{\theta} \left( \Vert u \Vert_{L^q} + \Vert v \Vert_{L^q} \right)^{q(1-\theta)}\\
&\leq  \left( 2C \right)^{q(1-\theta)} \left( \int_{\Omega \times \Omega} |u(x)-v(y)|^p \, d\pi (x,y) \right)^{\theta}.
\end{align*}
The first inequality holds by H\"older's inequality, the second by Minkowski's inequality. By again applying Minkowski's inequality, now in combination with Definitions~\ref{def:Wasserstein} and~\ref{def:TLp}, we obtain
\begin{align*}
d_{TL^r} \big( (u,\mu) ,(v,\nu) \big) &\leq d_r(\mu,\nu) +  \left( \int_{\Omega \times \Omega} |u(x)-v(y)|^r \, d\pi(x,y) \right)^{\frac{1}{r}} ,\\
&\leq d_r(\mu,\nu) +  \left( 2C \right)^{\frac{q(1-\theta)}{r}} \left( \int_{\Omega \times \Omega} |u(x)-v(y)|^p \, d\pi (x,y) \right)^{\frac{\theta}{r}}.
\end{align*}
Taking the infimum of the above over all $\pi\in \Gamma(\mu,\nu)$ yields our result.

\end{proof}

\begin{lemma}
\label{tlplem2}
Let $\Omega \subset \mathbb{R}^N$ be open and let $(\mu_n)_{n\in \mathbb{N}_\infty}$ be a sequence of Borel probability measures on $\Omega$. Let $1\leq p<q <+\infty$. Suppose we have a sequence $(u_n)_{n\in \mathbb{N}_\infty} \subset_n L^q(\Omega;\mu_n)$ such that $(u_n,\mu_n)$ converges to $(u_{\infty},\mu_{\infty})$ in $TL^q(\Omega)$ as $n\rightarrow \infty$. Then $(u_n,\mu_n)$ converges to $(u_{\infty},\mu_{\infty})$ in $TL^p(\Omega)$ as $n\rightarrow \infty$. 
\end{lemma}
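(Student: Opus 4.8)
The plan is to prove the quantitative statement that, for all Borel probability measures $\mu,\nu$ on $\Omega$ with finite $q$-th moment and all $u\in L^q(\Omega;\mu)$, $v\in L^q(\Omega;\nu)$,
\[
d_{TL^p}\big((u,\mu),(v,\nu)\big) \le 2^{\frac1p-\frac1q}\, d_{TL^q}\big((u,\mu),(v,\nu)\big),
\]
and then apply it with $(u,\mu)=(u_n,\mu_n)$ and $(v,\nu)=(u_\infty,\mu_\infty)$ and let $n\to\infty$.

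First I would check that the objects lie in $TL^p(\Omega)$: since each $\mu_n$ is a probability measure, finiteness of its $q$-th moment implies finiteness of its $p$-th moment (as $p\le q$), so $\mu_n\in\mathcal{P}_p(\Omega)$; likewise $L^q(\Omega;\mu_n)\subset L^p(\Omega;\mu_n)$, hence $u_n\in L^p(\Omega;\mu_n)$ and $(u_n,\mu_n)\in TL^p(\Omega)$, and the same holds for $(u_\infty,\mu_\infty)$. Next, fix any $\pi\in\Gamma(\mu,\nu)$ and set $s:=p/q\in(0,1)$. Concavity of $t\mapsto t^s$ gives the elementary power-mean inequality $a^s+b^s\le 2^{1-s}(a+b)^s$ for $a,b\ge 0$; with $a=|u(x)-v(y)|^q$ and $b=|x-y|^q$ this reads
\[
|u(x)-v(y)|^p+|x-y|^p \le 2^{1-s}\big(|u(x)-v(y)|^q+|x-y|^q\big)^{s}.
\]
Integrating against the probability measure $\pi$ and applying Jensen's inequality to the concave map $t\mapsto t^s$,
\[
\int_{\Omega\times\Omega} |u(x)-v(y)|^p+|x-y|^p \, d\pi \le 2^{1-s}\left(\int_{\Omega\times\Omega} |u(x)-v(y)|^q+|x-y|^q \, d\pi\right)^{s}.
\]
Taking $p$-th roots and then the infimum over $\pi\in\Gamma(\mu,\nu)$ on the right-hand side yields the displayed bound, and letting $n\to\infty$ with the $TL^q$-convergence hypothesis completes the proof.

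There is no genuine obstacle here; the only point that needs a little care is that one must not assume the plan $\pi$ that is optimal (or near-optimal) for $d_{TL^q}$ is also optimal for $d_{TL^p}$ — but this is not needed, since for \emph{every} $\pi\in\Gamma(\mu,\nu)$ the left-hand integral already dominates $d_{TL^p}\big((u,\mu),(v,\nu)\big)^p$ (by Definition~\ref{def:TLp}), so one is free to minimise the right-hand integral independently. As an alternative that avoids computing the explicit constant, one may instead use Proposition~\ref{tlpBS}: $TL^q$-convergence supplies a $q$-stagnating sequence $(\pi_n)\subset_n\Gamma(\mu_n,\mu_\infty)$ with $\int_{\Omega\times\Omega}|u_n(x)-u_\infty(y)|^q\,d\pi_n\to 0$, and Hölder's inequality on each probability space $(\Omega\times\Omega,\pi_n)$ then gives both $\int_{\Omega\times\Omega}|x-y|^p\,d\pi_n\to 0$ and $\int_{\Omega\times\Omega}|u_n(x)-u_\infty(y)|^p\,d\pi_n\to 0$, so that $(\pi_n)$ witnesses $TL^p$-convergence via part~(ii) of that proposition.
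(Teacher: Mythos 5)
Your primary argument is correct, and it proves slightly more than the lemma asks: you establish the quantitative comparison $d_{TL^p}\big((u,\mu),(v,\nu)\big)\le 2^{1/p-1/q}\,d_{TL^q}\big((u,\mu),(v,\nu)\big)$, from which the convergence statement is immediate. The power-mean inequality $a^s+b^s\le 2^{1-s}(a+b)^s$, the application of Jensen to the concave map $t\mapsto t^s$ against the probability measure $\pi$, and the observation that the resulting bound holds for every $\pi\in\Gamma(\mu,\nu)$ (so one may minimise the right-hand side independently) are all handled correctly; the preliminary check that $\mu_n\in\mathcal{P}_p(\Omega)$ and $u_n\in L^p(\Omega;\mu_n)$ via H\"older on a probability space is also in order.

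The paper takes a slightly different but closely related route. It invokes Proposition~\ref{tlpBS} to produce a $q$-stagnating sequence $(\pi_n)\subset_n\Gamma(\mu_n,\mu_\infty)$ with $\int|u_n(x)-u_\infty(y)|^q\,d\pi_n\to0$, applies H\"older (with exponent $q/p$) to each of the two integrals $\int|u_n(x)-u_\infty(y)|^p\,d\pi_n$ and $\int|x-y|^p\,d\pi_n$ separately, and concludes each tends to zero; this is exactly your ``alternative'' sketch at the end. The mechanism underlying both arguments is the same: the embedding $L^q\hookrightarrow L^p$ for probability measures. What your primary argument buys is a clean, uniform Lipschitz estimate between the two $TL$-metrics, with an explicit constant $2^{1/p-1/q}$, which the paper's proof does not extract. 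What the paper's version buys is that it slots directly into the framework of $p$-stagnating sequences (Definition~\ref{def:pstagnating} and Proposition~\ref{tlpBS}) used throughout Section~\ref{sec:Banachstackings}, at the cost of being purely qualitative. Both proofs are valid; yours is marginally stronger.
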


\begin{proof}
First we note that, since $\mu_n$ is a finite measure, if $u_n\in L^q(\Omega;\mu_n)$, then, by H\"older's inequality, also $u_n \in L^p(\Omega;\mu_n)$. By Proposition~\ref{tlpBS} there exists a $q$-stagnating sequence $(\pi_n)_{n\in \mathbb{N}}$ of transport plans $\pi_n \in \Gamma(\mu_n,\mu_{\infty})$ and
$$\int_{\Omega \times \Omega} |u_n(x)-u_{\infty}(y)|^q \, d\pi_n(x,y) \rightarrow 0 \qquad \text{as } n\rightarrow \infty.$$

By H\"older's inequality we get
\begin{align*}
&\int_{\Omega \times \Omega} |u_n(x)-u_{\infty}(y)|^p \, d\pi_n(x,y)
= \int_{\Omega \times \Omega} 1 \cdot |u_n(x)-u_{\infty}(y)|^p \, d\pi_n(x,y) \\
&\leq \left( \int_{\Omega \times \Omega} 1^{\frac{q}{q-p}} d\pi_n \right)^{\frac{q-p}{q}} \left( \int_{\Omega \times \Omega} |u_n(x)-u_{\infty}(y)|^q  d\pi_n \right)^{\frac{p}{q}} \\
&= \left( \int_{\Omega \times \Omega} |u_n(x)-u_{\infty}(y)|^q  d\pi_n \right)^{\frac{p}{q}}\rightarrow 0 \qquad \text{as } n\rightarrow \infty.
\end{align*}
A similar argument shows that $$ \int_{\Omega \times \Omega} |x-y|^p \, d\pi_n(x,y) \leq \left( \int_{\Omega \times \Omega} |x-y|^q  d\pi_n \right)^{\frac{p}{q}}\rightarrow 0 \qquad \text{as } n\rightarrow \infty,$$ where the convergence follows from Definition~\ref{def:pstagnating}. 

These two estimates together imply the desired convergence.
\end{proof}

Next we introduce a useful lemma regarding $TL^p$ convergence.
\begin{definition}
Let $\Omega$ be a metric space and let $\Sigma$ be the Borel $\sigma$-algebra generated by the open subsets of $\Omega$. Given a sequence of probability measures $(\mu_n)_{n\in \mathbb{N}_{\infty}}$ on $(\Omega,\Sigma)$ we say $(\mu_n)$ converges weakly to $\mu_{\infty}$ as $n\rightarrow \infty$ if for all continuous bounded functions $f:\Omega \rightarrow \mathbb{R}$ we have $\int fd\mu_n \rightarrow \int fd\mu_{\infty}$ as $n\rightarrow \infty$. 
\end{definition}

\begin{lemma}
\label{tlplem3}
Let $\Omega \subset \mathbb{R}^N$ be open and let $(\mu_n)_{n\in \mathbb{N}_\infty}$ be a sequence of Borel probability measures on $\Omega$. Let $1\leq p <+\infty$ and suppose $(u_n)_{n\in \mathbb{N}_\infty} \subset_n L^p(\Omega;\mu_n)$ is a sequence such that $((u_n,\mu_n))$ converges to $(u_{\infty},\mu_{\infty})$ in $TL^p(\Omega)$ as $n\rightarrow \infty$. Then $(u_n \# \mu_n)$ converges weakly to $u_{\infty} \# \mu_{\infty}$ as $n\rightarrow \infty$.  
\end{lemma}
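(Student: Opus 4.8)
The plan is to reduce the statement to testing against bounded Lipschitz functions, and then to exploit a $p$-stagnating sequence of transport plans supplied by Proposition~\ref{tlpBS}. Since each $u_n$ is Borel measurable and each $\mu_n$ is a probability measure, every $u_n\#\mu_n$ is a Borel probability measure on $\mathbb{R}$, so the claim makes sense. I would begin by recalling the standard fact (a form of the Portmanteau theorem) that a sequence $(\nu_n)$ of Borel probability measures on $\mathbb{R}$ converges weakly to a Borel probability measure $\nu$ if and only if $\int_\mathbb{R} f\, d\nu_n \to \int_\mathbb{R} f\, d\nu$ for every bounded Lipschitz-continuous $f:\mathbb{R}\to\mathbb{R}$; the non-trivial direction follows by approximating the indicator of an arbitrary closed set $C$ from above by the Lipschitz functions $x\mapsto \max\!\big(0,\,1-\delta^{-1}\operatorname{dist}(x,C)\big)$ and letting $\delta\downarrow 0$, which yields $\limsup_n \nu_n(C)\le \nu(C)$. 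Hence it suffices to prove $\int_\mathbb{R} f\, d(u_n\#\mu_n) \to \int_\mathbb{R} f\, d(u_\infty\#\mu_\infty)$ for each bounded $f$ with Lipschitz constant $L$.

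Next I would invoke Proposition~\ref{tlpBS} (the equivalence of~(i) and~(ii)): since $(u_n,\mu_n)\to(u_\infty,\mu_\infty)$ in $TL^p(\Omega)$, there is a $p$-stagnating sequence $(\pi_n)_{n\in\mathbb{N}} \subset_n \Gamma(\mu_n,\mu_\infty)$ with $\int_{\Omega\times\Omega} |u_n(x)-u_\infty(y)|^p \, d\pi_n(x,y) \to 0$ as $n\to\infty$. Because $\pi_n$ has first marginal $\mu_n$ and second marginal $\mu_\infty$, the change-of-variables formula for pushforwards gives $\int_\mathbb{R} f\, d(u_n\#\mu_n) = \int_\Omega f(u_n(x))\, d\mu_n(x) = \int_{\Omega\times\Omega} f(u_n(x))\, d\pi_n(x,y)$ and, likewise, $\int_\mathbb{R} f\, d(u_\infty\#\mu_\infty) = \int_{\Omega\times\Omega} f(u_\infty(y))\, d\pi_n(x,y)$. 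Subtracting, then applying the Lipschitz bound and Jensen's inequality (valid since $\pi_n$ is a probability measure and $t\mapsto t^p$ is convex on $[0,\infty)$),
\[
\left| \int_\mathbb{R} f\, d(u_n\#\mu_n) - \int_\mathbb{R} f\, d(u_\infty\#\mu_\infty) \right| \le L\int_{\Omega\times\Omega} |u_n(x)-u_\infty(y)|\, d\pi_n(x,y) \le L\left( \int_{\Omega\times\Omega} |u_n(x)-u_\infty(y)|^p\, d\pi_n(x,y) \right)^{1/p},
\]
and the right-hand side tends to $0$. This establishes the required convergence for every bounded Lipschitz $f$ and hence, by the Portmanteau fact above, the weak convergence $u_n\#\mu_n \Rightarrow u_\infty\#\mu_\infty$.

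The only mildly delicate point is the reduction to Lipschitz test functions: a general bounded continuous function on $\mathbb{R}$ need not be uniformly continuous, so the estimate above does not apply to it verbatim. I would handle this either by citing the Portmanteau characterization as above, or, if a more self-contained route is preferred, by first observing that $(u_n\#\mu_n)$ is tight. Indeed $\int_\mathbb{R} |t|^p\, d(u_n\#\mu_n)(t) = \int_\Omega |u_n(x)|^p \, d\mu_n(x) = \|u_n\|_{L^p(\Omega;\mu_n)}^p$, and this is bounded in $n$ because convergence of $(u_n,\mu_n)$ in $TL^p(\Omega)$ together with condition~(iv) of the Banach-stacking structure established in Proposition~\ref{tlpBanach} forces $\|u_n\|_{L^p(\Omega;\mu_n)} \to \|u_\infty\|_{L^p(\Omega;\mu_\infty)} < +\infty$; Markov's inequality then gives uniform control of the tails. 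One then combines tightness (via Prokhorov, passing to subsequences) with the identification of every weak subsequential limit through the Lipschitz test functions handled above. Beyond this bookkeeping, the argument is routine.
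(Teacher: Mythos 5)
Your proof is correct and follows essentially the same approach as the paper: reduce to bounded Lipschitz test functions via Portmanteau, take a $p$-stagnating sequence from Proposition~\ref{tlpBS}, and apply the Lipschitz estimate against $\pi_n$. The only cosmetic difference is that you apply Jensen/H\"older to control the $L^1$ integral by the $L^p$ one directly, whereas the paper first reduces to $p=1$ via Lemma~\ref{tlplem2}; both are equally valid.
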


\begin{proof}
By Lemma~\ref{tlplem2} we can assume without loss of generality that $p=1$. By the Portmanteau theorem (see Remark~\ref{Portmanteau}) it suffices to show that, for all bounded and Lipschitz-continuous functions $f:\mathbb{R} \rightarrow \mathbb{R}$,
$$   \int_{\mathbb{R}} f(t) d(u_n \# \mu_n)(t) \rightarrow  \int_{\mathbb{R}} f(t) d(u_{\infty} \# \mu_{\infty})(t) \qquad \text{as } n\to \infty .$$
Fix $f$ as above and assume it has a Lipschitz constant $L>0$. Also choose a $p$-stagnating sequence $(\pi_n)_{n\in \mathbb{N}}$ of transport plans $\pi_n \in \Gamma(\mu_n,\mu_{\infty})$. Then
\begin{align*}
    \left| \int_{\mathbb{R}} f(t) d(u_n \# \mu_n)(t) -  \int_{\mathbb{R}} f(t) d(u_{\infty} \# \mu_{\infty})(t) \right| 
    &=  \left| \int_{\Omega} f(u_n(x)) d\mu_n(x) -  \int_{\Omega} f(u_{\infty}(y)) d\mu_{\infty}(y) \right| \\
    &= \left| \int_{\Omega} f(u_n(x)) - f(u_{\infty}(y)) \; \; d\pi_n (x,y) \right| \\
    &\leq \int_{\Omega} |f(u_n(x)) - f(u_{\infty}(y))| \; \; d\pi_n (x,y) \\
    &\leq L \int_{\Omega} |u_n(x) - u_{\infty}(y)| \; \; d\pi_n (x,y)  
    \rightarrow 0,
\end{align*}
as $n\to \infty$. The convergence follows from Proposition~\ref{tlpBS}.
\end{proof}

\begin{remark}
\label{Portmanteau}
The Portmanteau theorem refers to a collection of results that give equivalent definitions for weak convergence of measures. The precise statement we need in in the proof of Lemma~\ref{tlplem3} is as follows. 

Let $(\mu_n)_{n\in \mathbb{N}_{\infty}}$ be a sequence of Borel probability measures on $\mathbb{R}$ such that for all bounded Lipschitz-continuous functions $f:\mathbb{R} \rightarrow \mathbb{R}$ we have $\int_\mathbb{R} f d\mu_n \rightarrow \int_\mathbb{R} f d\mu_{\infty}$ as $n\rightarrow \infty$. Then $(\mu_n)$ converges weakly to $\mu_{\infty}$. 

We refer the reader to \cite[Theorem 2.1]{Billingsley99} for details. We note here that that theorem does not quite state exactly what we claimed above; our claim, however, is implied by the proof in \cite{Billingsley99}. In particular, that same proof still holds if `bounded, uniformly continuous function' in \cite[Theorem~2.1 condition~(ii)]{Billingsley99} (and its proof) is replaced by `bounded, Lipschitz-continuous function', because the function $f$ in \cite[formula (1.1)]{Billingsley99} which is used in the proof is not only bounded and uniformly continuous, but also Lipschitz continuous.
\end{remark}

We conclude this section with a remark regarding possible generalizations of the $TL^p$ metric.

\begin{remark}
For a function $f:\Omega\rightarrow \mathbb{R}$, we define its graph $Gf:\Omega \rightarrow \Omega\times\mathbb{R}$ by $Gf(x):=(x,f(x))$. Moreover for each $\mu\in\mathcal{P}_p(\Omega)$, define the map $G_{\mu}:L^p(\Omega;\mu)\rightarrow \mathcal{P}_p(\Omega\times\mathbb{R})$ by $G_{\mu}(u):=Gu\#\mu$. Let $u\in L^p(\Omega;\mu)$ and $v\in L^p(\Omega;\nu)$. From the definition of the $TL^p$ metric we obtain that
$$ d_{TL^p}((u,\mu),(v,\nu))= d_p(G_\mu (u),G_{\nu}(v)),$$
where $d_p$ is the $p$-Wasserstein metric on $\mathcal{P}_p(\Omega\times\mathbb{R})$ (see Definition~\ref{def:Wasserstein}). 

Thus we can think of the metric $d_{TL^p}$ as the $p$-Wasserstein metric between the probability measures induced on the graph of the functions. So rather than use to the Banach stacking $\big((L^p(\Omega,\mu), \xi_{\mu})_{\mu \in \mathcal{P}_p(\Omega)}, TL^p(\Omega)\big)$ we could equivalently use the Banach stacking $\big((L^p(\Omega,\mu), G_{\mu})_{\mu \in \mathcal{P}_p(\Omega)}, \mathcal{P}_p(\Omega\times\mathbb{R}) \big)$. We will not do this to keep in line with the common notation in the literature.

Nonetheless, this alternative perspective does motivate possible generalizations. In particular, we could swap the $p$-Wasserstein metric on $\mathcal{P}_p(\Omega\times\mathbb{R})$ for a different metric between probability measures.
\end{remark}

\section{Extension of a result by Brezis and Pazy to Banach stackings}\label{sec:BrazisPazyextension}

In this section we shall provide a generalisation of the Brezis--Pazy theorem from \cite[Theorem 3.1]{brezis1972convergence}. The original theorem concerns $\omega$-accretive operators on a Banach space and the semigroup they generate. Instead, in Theorem~\ref{theorem1} we consider $\omega$-accretive operators defined on a sequence of Banach spaces that together form a Banach stacking. 

First we state a lemma contained in \cite[Proof of Theorem I]{CrandallLiggett71}; see also \cite[Lemma 2.4]{brezis1972convergence}. This lemma is used in  the proof of the classical Brezis--Pazy theorem and will be important for our generalisation as well.

Given a non-empty set $B\subset X$, we define
\begin{equation}\label{eq:infnorm}
\inf \Vert B \Vert := \inf \{\Vert z \Vert \}_{z\in B}.
\end{equation}
We also remind the reader that for $\omega$-accretive operators that satisfy the range condition~\eqref{rangeCon} for some $\delta>0$, we can interpret $R_{\lambda}(A)$ as a Lipschitz continuous function from $X\rightarrow X$ when $\lambda\in \mathfrak{I}_{w}\cap (0,\delta)$. This is discussed further in Remark~\ref{accremark}.

\begin{lemma}\label{lem:lemmafromCL}
Let $A$ be an $\omega$-accretive operator on a Banach space $X$ satisfying the range condition~\eqref{rangeCon} and suppose $x\in D(A)$. Then we have, for all $t, \tau \in [0,+\infty)$,
\begin{equation}
\label{resolve1}
|S_A(t)x-R_{t/n}^n(A)x| \leq \frac{2t}{\sqrt{n}} \inf \Vert A(x) \Vert  e^{4\omega t}
\end{equation}
and
\begin{equation}
\label{continuity1}
|S_A(t)x-S_A(\tau)x| \leq 2|t-\tau| \inf \Vert A(x) \Vert [e^{4\omega t}+e^{2\omega(t+\tau)}].
\end{equation}
\end{lemma}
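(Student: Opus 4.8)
The plan is to derive both estimates from a single uniform bound comparing iterated resolvents with different step sizes, and then pass to the limit via the Crandall--Liggett representation $S_A(t)x=\lim_{k\to\infty}R^k_{t/k}(A)x$ of Theorem~\ref{CranLig}. Fix $x\in D(A)$ and abbreviate $J_\lambda:=R_\lambda(A)$. By the range condition together with Remark~\ref{accremark}, Remark~\ref{rem:whyrangecon}, and Proposition~\ref{Omega1}, for every $\lambda$ in the admissible interval $\mathfrak{I}_\omega\cap(0,\delta)$ the map $J_\lambda$ is a single-valued $(1-\lambda\omega)^{-1}$-Lipschitz function defined on $\overline{D(A)}$ with range in $D(A)$, so all iterates $J_\lambda^m$ are well defined there; any step size appearing below may be assumed to lie in this interval, since for \eqref{resolve1} the regime of interest is $n$ large and in the remaining arguments we send $k\to\infty$.

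I would begin with the elementary one-step estimate: if $y\in A(x)$, then $J_\lambda(x+\lambda y)=x$, so the Lipschitz bound gives $\Vert J_\lambda x-x\Vert=\Vert J_\lambda x-J_\lambda(x+\lambda y)\Vert\le\tfrac{\lambda\Vert y\Vert}{1-\lambda\omega}$, and taking the infimum over $y\in A(x)$ yields $\Vert J_\lambda x-x\Vert\le\tfrac{\lambda}{1-\lambda\omega}\inf\Vert A(x)\Vert$. Iterating the Lipschitz bound gives $\Vert J_\lambda^m x-J_\lambda^{m-1}x\Vert\le(1-\lambda\omega)^{-(m-1)}\Vert J_\lambda x-x\Vert$, whence $\Vert J_\lambda^m x-x\Vert\le\inf\Vert A(x)\Vert\sum_{j=1}^m\lambda(1-\lambda\omega)^{-j}$. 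These inequalities control the boundary values in the recursion below.

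The heart of the argument is the two-parameter estimate. Assuming without loss of generality $0<\lambda\le\mu$ and writing $a_{m,n}:=\Vert J_\lambda^m x-J_\mu^n x\Vert$, the resolvent identity $J_\mu z=J_\lambda\bigl(\tfrac\lambda\mu z+(1-\tfrac\lambda\mu)J_\mu z\bigr)$ (applied with $z=J_\mu^{n-1}x$), combined with the Lipschitz bound for $J_\lambda$ and the triangle inequality, gives
\begin{equation*}
a_{m,n}\le\frac{1}{1-\lambda\omega}\Bigl[\tfrac\lambda\mu\,a_{m-1,n-1}+\bigl(1-\tfrac\lambda\mu\bigr)a_{m-1,n}\Bigr].
\end{equation*}
Unrolling this against the boundary data from the previous paragraph --- over the $m$ contractions the second index decreases a $\mathrm{Binomial}(m,\lambda/\mu)$ number of times --- and estimating the resulting binomial sum by Cauchy--Schwarz in terms of its mean $m\lambda$ and variance $m\lambda(\mu-\lambda)$, exactly as in \cite[Proof of Theorem~I]{CrandallLiggett71} (see also \cite[Lemma~2.4]{brezis1972convergence}), produces a bound of the shape
\begin{equation*}
\Vert J_\lambda^m x-J_\mu^n x\Vert\le C(\omega;m\lambda,n\mu)\,\inf\Vert A(x)\Vert\,\bigl[(m\lambda-n\mu)^2+m\lambda^2+n\mu^2\bigr]^{1/2},
\end{equation*}
where $C$ collects the accumulated factors $(1-\lambda\omega)^{-1}$ and can be taken of exponential type in $\omega\max(m\lambda,n\mu)$.

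Finally I would specialise and pass to the limit. For \eqref{resolve1}, take $\lambda=t/k$ with $m=k$, and $\mu=t/n$ with $n$ steps, so that $m\lambda-n\mu=0$, $m\lambda^2=t^2/k\to0$ and $n\mu^2=t^2/n$; letting $k\to\infty$, Theorem~\ref{CranLig} gives $J^k_{t/k}x\to S_A(t)x$ while the right-hand side converges, and a crude accounting of the $\omega$-factors yields $\tfrac{2t}{\sqrt n}\inf\Vert A(x)\Vert\,e^{4\omega t}$. For \eqref{continuity1}, take $\lambda=t/k$, $\mu=\tau/k$ and $m=n=k$ (after swapping $t,\tau$ if necessary so that $\lambda\le\mu$); then $m\lambda-n\mu=t-\tau$ while $m\lambda^2,n\mu^2\to0$, so passing to the limit gives $\Vert S_A(t)x-S_A(\tau)x\Vert\le C\inf\Vert A(x)\Vert\,|t-\tau|$, and carrying the exponential dependence through both directions produces the stated $2|t-\tau|\inf\Vert A(x)\Vert[e^{4\omega t}+e^{2\omega(t+\tau)}]$. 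The main obstacle is the third step: solving the two-parameter recursion --- in particular controlling the boundary contributions when the walk reaches the axis $n=0$, and bundling the many factors $(1-\lambda\omega)^{-1}$ into a single exponential that stays bounded as $k\to\infty$ rather than degenerating. Once that estimate is in hand, the specialisations and limits are routine.
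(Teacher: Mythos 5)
Your proposal follows precisely the Crandall--Liggett argument that the paper's proof invokes by citation (the paper simply points to formulas (1.10) and (1.11) in the original \cite{CrandallLiggett71} source and does not reproduce the computation). Your reconstruction of the key steps is accurate: the one-step resolvent bound, the telescoping iterate bound, the resolvent identity leading to the two-parameter recursion $a_{m,n}\le(1-\lambda\omega)^{-1}[\tfrac\lambda\mu a_{m-1,n-1}+(1-\tfrac\lambda\mu)a_{m-1,n}]$, and the binomial/Cauchy--Schwarz estimate are exactly the ingredients of the cited proof, and your specialisations to $\lambda=t/k,\mu=t/n$ for \eqref{resolve1} and to $\lambda=\tau/k,\mu=t/k$ (after ordering) for \eqref{continuity1} correctly produce the claimed scalings $t/\sqrt n$ and $|t-\tau|$. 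You are candid that the constant bookkeeping (the factor $2$ and the precise exponents $e^{4\omega t}$, $e^{2\omega(t+\tau)}$) is not fully worked out, which is indeed the one part of the argument that requires careful tracking of accumulated $(1-\lambda\omega)^{-1}$ factors, but this does not change the structure of the proof.
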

\begin{proof}
This is proven as part of the proof of Theorem I in \cite{CrandallLiggett71}; see in particular \cite[formulas (1.10) and (1.11)]{CrandallLiggett71}. To avoid confusion, we note that $J_{t/n}^n$ in \cite{CrandallLiggett71} corresponds to our $R_{t/n}^n$.
\end{proof}

We note that in Lemma~\ref{lem:lemmafromCL} $x$ is contained in the domain of the semigroup as a consequence of Theorem~\ref{CranLig}. This lemma gives us a quantitative estimate on how the semigroup of an $\omega$-accretive operator is approximated by the resolvent maps. Moreover we have an estimate on the modulus of continuity of the semigroup.

\begin{theorem}[Brezis--Pazy for Banach stackings] \label{theorem1}
Let $((X_n,\xi_n)_{n \in \mathbb{N}_{\infty}},\mathcal{M})$ be a Banach stacking. Let $(\omega_n)_{n \in \mathbb{N}_{\infty}}$ be a sequence of real numbers bounded above by $\omega\in\mathbb{R}$. Let $(A_n)_{n\in\mathbb{N}_\infty}$ be a sequence of $\omega_n$-accretive operators $A_n$ on $X_n$ that each satisfy the range condition~\eqref{rangeCon}. Assume that there exists a $\delta\in \mathfrak{I}_{\omega}$ such that, for all $\lambda \in (0,\delta)$ and for all sequences $(z_n)_{n\in\mathbb{N}_\infty}\subset \overline{D(A_n)}$ such that $z_n\underset{\mathcal{M}}\longtwoheadrightarrow z_\infty$ as $n\rightarrow \infty$, we have $R_{\lambda}(A_n)z_n \underset{\mathcal{M}}\longtwoheadrightarrow R_{\lambda}(A_{\infty})z_\infty$ as $n\rightarrow \infty$. 
Then, for all sequences $(x_n)_{n\in\mathbb{N}_\infty} \subset_n \overline{D(A_n)}$ that satisfy $x_n \underset{\mathcal{M}}\longtwoheadrightarrow x_{\infty}$ and for all $t\geq 0$, we have that $S_{A_n}(t)x_n \underset{\mathcal{M}}\longtwoheadrightarrow S_{A_{\infty}}(t)x_{\infty}$ as $n\rightarrow \infty$.
Moreover, for all $T\in [0,+\infty)$ the convergence is uniform in $t$ on the interval $[0,T]$.
\end{theorem}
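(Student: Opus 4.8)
The plan is to mimic the structure of the classical Brezis--Pazy proof, using the Banach-stacking machinery to replace the single-space norm estimates. The starting point is the decomposition, for a fixed $t>0$, $n\in\mathbb{N}$, and $m\in\mathbb{N}$,
\[
d^{n,\infty}\big(S_{A_n}(t)x_n, S_{A_\infty}(t)x_\infty\big) \leq d^{n,n}\big(S_{A_n}(t)x_n, R_{t/m}^m(A_n)x_n\big) + d^{n,\infty}\big(R_{t/m}^m(A_n)x_n, R_{t/m}^m(A_\infty)x_\infty\big) + d^{\infty,\infty}\big(R_{t/m}^m(A_\infty)x_\infty, S_{A_\infty}(t)x_\infty\big).
\]
The first and third terms are controlled by Lemma~\ref{lem:lemmafromCL} (formula~\eqref{resolve1}): they are bounded by $\frac{2t}{\sqrt m}\inf\Vert A_n(x_n)\Vert e^{4\omega_n t}$ and the analogous expression at $\infty$. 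To make these small uniformly in $n$ I first reduce to $x_n\in D(A_n)$ by a density argument (using property~(ii) of Definition~\ref{BanStack}, Lemma~\ref{thResLem}, and the fact that $S_{A_n}(t)$ is $e^{\omega_n t}$-Lipschitz by Corollary~\ref{SemigroupCon}); then I need a uniform bound $\sup_n \inf\Vert A_n(x_n)\Vert <\infty$, which for the dense class of initial data can be arranged because convergence of the resolvents forces convergence (hence boundedness) of the Yosida-type quantities $\tfrac1\lambda(x_n - R_\lambda(A_n)x_n)$. Once the first and third terms are $\leq \varepsilon$ for $m$ large (uniformly in $n$ and in $t\in[0,T]$, since $e^{4\omega_n t}\leq e^{4\omega T}$ when $\omega\geq 0$, and is bounded otherwise), it remains to handle the middle term for that fixed $m$.

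For the middle term I iterate the resolvent-convergence hypothesis. Write $R_{t/m}^m(A_n)x_n$ as the $m$-fold composition; I claim that for each $k\in\{0,\dots,m\}$, $R_{t/m}^k(A_n)x_n \underset{\mathcal{M}}\longtwoheadrightarrow R_{t/m}^k(A_\infty)x_\infty$ as $n\to\infty$. The base case $k=0$ is the hypothesis $x_n\underset{\mathcal{M}}\longtwoheadrightarrow x_\infty$. For the inductive step, suppose $y_n := R_{t/m}^k(A_n)x_n \underset{\mathcal{M}}\longtwoheadrightarrow y_\infty$; since $y_n\in\operatorname{Range}(R_{t/m}(A_n))\subset D(A_n)\subset\overline{D(A_n)}$ (Remark~\ref{rem:whyrangecon}), and provided $t/m\in(0,\delta)$ — which holds for $m$ large — the resolvent-convergence hypothesis applied to the sequence $(y_n)$ yields $R_{t/m}(A_n)y_n \underset{\mathcal{M}}\longtwoheadrightarrow R_{t/m}(A_\infty)y_\infty$, i.e.\ $R_{t/m}^{k+1}(A_n)x_n\underset{\mathcal{M}}\longtwoheadrightarrow R_{t/m}^{k+1}(A_\infty)x_\infty$. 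Taking $k=m$ gives that the middle term tends to $0$ as $n\to\infty$ for fixed $m$. Combining, $\limsup_{n\to\infty} d^{n,\infty}(S_{A_n}(t)x_n,S_{A_\infty}(t)x_\infty)\leq 2\varepsilon$, and letting $\varepsilon\downarrow 0$ gives pointwise convergence in $t$.

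For the uniformity in $t\in[0,T]$, I upgrade the pointwise statement by an equicontinuity argument. The continuity estimate~\eqref{continuity1} of Lemma~\ref{lem:lemmafromCL}, after the same density reduction and uniform bound on $\inf\Vert A_n(x_n)\Vert$, shows the family $\{t\mapsto \xi_n(S_{A_n}(t)x_n)\}_{n}$ together with $t\mapsto\xi_\infty(S_{A_\infty}(t)x_\infty)$ is uniformly equicontinuous on $[0,T]$ (the right-hand side of~\eqref{continuity1} is bounded by $C|t-\tau|$ with $C$ independent of $n$); combined with pointwise convergence on the dense-initial-data subclass and a standard $\varepsilon/3$ argument over a finite $\varepsilon$-net of $[0,T]$, this gives uniform convergence there. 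A further density/contraction argument (using $e^{\omega_n t}\leq e^{\omega T}$ for the Lipschitz constants, via Corollary~\ref{SemigroupCon}) extends uniform convergence from the dense subclass of initial conditions to all $(x_n)\subset_n\overline{D(A_n)}$ with $x_n\underset{\mathcal{M}}\longtwoheadrightarrow x_\infty$. The main obstacle I anticipate is the density reduction: one must produce, for each $x_\infty$ in a dense subset of $\overline{D(A_\infty)}$, an approximating sequence $x_n\in D(A_n)$ with $x_n\underset{\mathcal{M}}\longtwoheadrightarrow x_\infty$ \emph{and} $\sup_n\inf\Vert A_n(x_n)\Vert<\infty$, and to check this is compatible with the resolvent-convergence hypothesis — this is where the interplay between the Banach-stacking axioms and the accretivity structure is most delicate, and where the handling of $\overline{D(A_n)}$ versus $D(A_n)$ must be done carefully.
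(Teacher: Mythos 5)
Your overall strategy matches the paper's: the triangle-inequality decomposition through $R_{t/m}^m$, the iteration of the resolvent-convergence hypothesis to handle the middle term (the paper packages this in Lemma~\ref{thm1lem1}), and the Lipschitz-plus-pointwise-implies-uniform upgrade (the paper's Lemma~\ref{thm1lem2}). The error estimates from Lemma~\ref{lem:lemmafromCL} are used in the same role. You have correctly identified that the bottleneck is the density reduction, and you are right to be worried about it, because as formulated your plan has a genuine gap.

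The gap is this: you want to "reduce to $x_n\in D(A_n)$ by a density argument" with a uniform bound $\sup_n\inf\Vert A_n(x_n)\Vert<\infty$, but nothing in the hypotheses produces a dense subset of $\overline{D(A_\infty)}$ together with approximating sequences valued in $D(A_n)$, let alone with uniformly controlled $\inf\Vert A_n(x_n)\Vert$. Property~(ii) of Definition~\ref{BanStack} only gives sequences in $X_n$, not in $D(A_n)$, and Lemma~\ref{thResLem} assumes you already have such sequences — it cannot manufacture them. Your Yosida observation is the right seed, but it is pointed at the wrong target: the quantity $\tfrac1\lambda\bigl(x_n - R_\lambda(A_n)x_n\bigr)$ lies in $A_n\bigl(R_\lambda(A_n)x_n\bigr)$, not in $A_n(x_n)$, so it bounds $\inf\Vert A_n(R_\lambda(A_n)x_n)\Vert$, not $\inf\Vert A_n(x_n)\Vert$. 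The resolution in the paper is to stop trying to put $x_n$ itself into $D(A_n)$ and instead define $z_n:=R_\lambda(A_n)x_n$ and run the entire $m\to\infty$ and equicontinuity argument with $z_n$ in place of $x_n$. This gives all three needed properties for free: $z_n\in D(A_n)$ by definition of the resolvent; $z_n\underset{\mathcal{M}}\longtwoheadrightarrow z_\infty:=R_\lambda(A_\infty)x_\infty$ by the resolvent-convergence hypothesis; and $\inf\Vert A_n(z_n)\Vert_n\le\lambda^{-1}(\Vert x_n\Vert_n+\Vert z_n\Vert_n)$, which converges by properties~(iii)--(iv) and hence is bounded. One then passes back to $x_n$ via $\Vert S_{A_n}(t)x_n-S_{A_n}(t)z_n\Vert_n\le e^{\omega T}\Vert x_n-z_n\Vert_n$ (Corollary~\ref{SemigroupCon}) and the observation that $\Vert x_n-z_n\Vert_n\to\Vert x_\infty-z_\infty\Vert_\infty\le\lambda\inf\Vert A_\infty(x_\infty)\Vert_\infty$, which vanishes as $\lambda\downarrow0$. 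That last step is where a density argument genuinely enters, but only at the $\infty$ level (from $x_\infty\in D(A_\infty)$ to $x_\infty\in\overline{D(A_\infty)}$), and it is the elementary contraction argument you sketch at the end; no $D(A_n)$-valued approximating sequences are ever required. So the missing idea is precisely the substitution $x_n\rightsquigarrow R_\lambda(A_n)x_n$, and once that is made the rest of your outline goes through.
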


For the proof of Theorem~\ref{theorem1} we need the following two lemmas. 

If $X$ is a set and $f:X\to X$ a function, then, for all $K\in\mathbb{N}$, we denote by $f^{(\circ K)}$ be the $K$-fold composition of $f$ with itself, i.e. $f^{(\circ K)} := \underbrace{f \circ \ldots \circ f}_{K \text {times}}$.

\begin{lemma} \label{thm1lem1}
Let $((X_n,\xi_n)_{n \in \mathbb{N}_{\infty}},\mathcal{M})$ be a Banach stacking. Suppose $(f_n)_{n\in \mathbb{N}_\infty}$ is a sequence of maps $f_n: X_n \rightarrow X_n$ such that, for all sequences $(x_n)_{n\in \mathbb{N}_\infty} \subset_n X_n$, we have that $x_n \underset{\mathcal{M}}\longtwoheadrightarrow x_{\infty}$ as $n\rightarrow \infty$ implies $f_n(x_n) \underset{\mathcal{M}}\longtwoheadrightarrow f_{\infty}(x_{\infty})$ as $n\rightarrow \infty$. Then, for all $K\in \mathbb{N}$ and all sequences $(x_n)_{n\in \mathbb{N}_\infty} \subset_n X_n$, we have that $x_n \underset{\mathcal{M}}\longtwoheadrightarrow x_{\infty}$ as $n\rightarrow \infty$ implies $f_n^{(\circ K)}(x_n) \underset{\mathcal{M}}\longtwoheadrightarrow f_{\infty}^{(\circ K)}(x_{\infty})$ as $n\rightarrow \infty$.
\end{lemma}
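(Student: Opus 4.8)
The plan is to prove this by a straightforward induction on $K$, using the single-step convergence hypothesis as the engine. The base case $K=1$ is precisely the assumption in the statement of the lemma. For the inductive step, suppose the claim holds for some $K\in\mathbb{N}$; that is, for every sequence $(x_n)_{n\in\mathbb{N}_\infty}\subset_n X_n$ with $x_n\underset{\mathcal{M}}\longtwoheadrightarrow x_\infty$ as $n\to\infty$, we have $f_n^{(\circ K)}(x_n)\underset{\mathcal{M}}\longtwoheadrightarrow f_\infty^{(\circ K)}(x_\infty)$ as $n\to\infty$.

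To carry out the step, let $(x_n)_{n\in\mathbb{N}_\infty}\subset_n X_n$ be a sequence with $x_n\underset{\mathcal{M}}\longtwoheadrightarrow x_\infty$ as $n\to\infty$. Define, for each $n\in\mathbb{N}_\infty$, the element $y_n:=f_n^{(\circ K)}(x_n)\in X_n$, so that $(y_n)_{n\in\mathbb{N}_\infty}\subset_n X_n$. By the inductive hypothesis, $y_n\underset{\mathcal{M}}\longtwoheadrightarrow y_\infty$ as $n\to\infty$, where $y_\infty=f_\infty^{(\circ K)}(x_\infty)$. Now apply the single-step hypothesis (the $K=1$ case) to the sequence $(y_n)_{n\in\mathbb{N}_\infty}$: this yields $f_n(y_n)\underset{\mathcal{M}}\longtwoheadrightarrow f_\infty(y_\infty)$ as $n\to\infty$. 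Since $f_n(y_n)=f_n\big(f_n^{(\circ K)}(x_n)\big)=f_n^{(\circ K+1)}(x_n)$ and similarly $f_\infty(y_\infty)=f_\infty^{(\circ K+1)}(x_\infty)$, this is exactly the statement for $K+1$, completing the induction.

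There is essentially no obstacle here: the only subtlety is the purely bookkeeping point that the associativity of composition lets us regard an $(K+1)$-fold composition as one application of $f_n$ to a $K$-fold composition, and that the single-step hypothesis is quantified over \emph{all} convergent sequences, so it may legitimately be applied to the new sequence $(y_n)$ manufactured from the inductive hypothesis. One should perhaps note at the outset that $f_n^{(\circ K)}$ is well-defined as a map $X_n\to X_n$ for each $n$, which is immediate since each $f_n$ maps $X_n$ into itself; this is what guarantees $(y_n)_{n\in\mathbb{N}_\infty}\subset_n X_n$ so that the inductive hypothesis and the base hypothesis can be invoked.
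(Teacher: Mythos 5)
Your proof is correct and follows precisely the approach the paper indicates (the paper's proof simply states ``This follows by induction on $K$''). You have supplied the details of the base case and inductive step cleanly, with the key observation correctly identified: the single-step hypothesis is quantified over all convergent sequences, so it may be applied to the auxiliary sequence $(y_n)$ produced by the inductive hypothesis.
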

\begin{proof}
This follows by induction on $K$.
\end{proof}

\begin{lemma} \label{thm1lem2}
Let $(h_n)_{n\in \mathbb{N}}$ be a sequence of Lipschitz-continuous functions $h_n: [0,T] \rightarrow \mathbb{R}$ with the same Lipschitz constant, i.e., there exists a $C>0$ such that, for all $t,\tau \in [0,T]$,
$|h_n(t)-h_n(\tau)| \leq C|t-\tau|$. If $(h_n(t))$ converges pointwise to $0$, it also converges uniformly to $0$ for $t\in [0,T]$.
\end{lemma}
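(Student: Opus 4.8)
This is a standard "equicontinuity + pointwise convergence implies uniform convergence" argument, essentially a one-direction version of the Arzelà–Ascoli theorem, so the plan is short.

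\medskip

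The plan is to use a covering argument on the compact interval $[0,T]$. First I would fix $\varepsilon>0$. Since all the $h_n$ share the Lipschitz constant $C$, partition $[0,T]$ by a finite set of points $0=\tau_0<\tau_1<\cdots<\tau_m=T$ with mesh smaller than $\frac{\varepsilon}{3C}$, so that for any $t\in[0,T]$ there is some $\tau_j$ with $|t-\tau_j|\leq \frac{\varepsilon}{3C}$. Because pointwise convergence holds at each of the finitely many points $\tau_0,\dots,\tau_m$, there exists an $N\in\mathbb{N}$ such that, for all $n\geq N$ and all $j\in\{0,\dots,m\}$, $|h_n(\tau_j)|<\frac{\varepsilon}{3}$.

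\medskip

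Then for arbitrary $t\in[0,T]$ and $n\geq N$, pick $\tau_j$ with $|t-\tau_j|\leq \frac{\varepsilon}{3C}$ and estimate, using the common Lipschitz bound,
\[
|h_n(t)| \leq |h_n(t)-h_n(\tau_j)| + |h_n(\tau_j)| \leq C|t-\tau_j| + |h_n(\tau_j)| < \frac{\varepsilon}{3} + \frac{\varepsilon}{3} < \varepsilon.
\]
Since this bound is independent of $t$, it follows that $\sup_{t\in[0,T]}|h_n(t)|\leq \varepsilon$ for all $n\geq N$, which is exactly uniform convergence of $(h_n)$ to $0$ on $[0,T]$.

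\medskip

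There is essentially no obstacle here; the only thing to be slightly careful about is that the finite partition and the threshold $N$ are chosen in the right order (partition first, then $N$ depending on the partition), and that the Lipschitz bound controls the variation of \emph{every} $h_n$ simultaneously, which is precisely what the shared constant $C$ provides. (Note the statement does not even require continuity of a limit function; $0$ is trivially continuous, so no issue arises there.)
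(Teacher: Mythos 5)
Your proof is correct. However, it takes a different route from the paper's argument. You give a direct covering (equicontinuity) argument: fix $\varepsilon>0$, choose a finite partition of $[0,T]$ with mesh below $\varepsilon/(3C)$, use pointwise convergence at the finitely many partition points to pick a single threshold $N$, and fill in the gaps with the shared Lipschitz bound. The paper instead argues by contradiction: if uniform convergence fails, there is an $\varepsilon>0$ and a sequence $(t_n)\subset[0,T]$ with $|h_n(t_n)|\geq\varepsilon$; by compactness of $[0,T]$ one extracts a convergent subsequence $t_n\to t_*$, and the Lipschitz bound transfers the lower bound from $h_n(t_n)$ to $h_n(t_*)$, contradicting pointwise convergence at $t_*$. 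Your version is constructive and yields an explicit threshold once the partition is fixed; the paper's is shorter, trading the explicit $\varepsilon$-net for one application of Bolzano--Weierstrass. Both are standard and fully rigorous; neither buys anything essential over the other for this lemma.
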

\begin{proof}
Suppose this fails. Then there exists an $\varepsilon>0$ and a sequence $(t_n)_{n\in\mathbb{N}} \subset [0,T]$ such that, for all $n\in \mathbb{N}$,
$|h_n(t_n)| \geq \varepsilon$. 
By compactness of $[0,T]$ there exists a $t^*\in [0,T]$ such that, up to taking a subsequence, $t_n \rightarrow t_*\in [0,T]$ as $n\to \infty$. From this we get that 
\[
    |h_n(t_*)| \geq |h_n(t_n)| - |h_n(t_*)-h_n(t_n)| \geq |h_n(t_n)| - C|t_n - t_*| 
    \geq \varepsilon - C|t_n - t_*| 
\]
and thus $ 0 = \limsup_{n\to\infty} |h_n(t_*)| \geq \varepsilon.$ This gives us a contradiction.
\end{proof}
\begin{proof}[Proof of Theorem~\ref{theorem1}]
Fix $T<+\infty$. Let $(x_n)_{n\in \mathbb{N}_{\infty}} \subset_n \overline{D(A_n)}$ be a sequence such that $x_n \underset{\mathcal{M}}\longtwoheadrightarrow x_{\infty}$ as $n\rightarrow \infty$. We first prove the result under the assumption that $x_\infty \in D(A_\infty)$. After that, we will extend the proof to all $x_\infty \in \overline{D(A_\infty)}$.

Fix $\lambda \in (0,\delta)$ and define, for all $n\in \mathbb{N}_\infty$, $z_n:=R_{\lambda}(A_n)x_n$. We will show that, on $[0,T]$, the sequence $(S_{A_n}(t)z_n)$ converges to $S_{A_{\infty}}(t)z_{\infty}$ uniformly in $t$. 

To show pointwise convergence first, fix $t>0$ and $K\in \mathbb{N}$ such that $K>t \omega$. Then
\begin{align*}
d^{n,\infty}( S_{A_n}(t)z_n , S_{A_{\infty}}(t)z_{\infty} )
&\leq d^{n,n}(  S_{A_n}(t)z_n ,  R_{t/K}^K(A_n)z_n  ) + d^{n,\infty}( R_{t/K}^K(A_n)z_n  ,  R_{t/K}^K(A_{\infty}) z_{\infty} ) \\
&\hspace{0.4cm}+ d^{\infty,\infty}( R_{t/K}^K(A_{\infty}) z_{\infty} , S_{A_{\infty}}(t)z_{\infty} )\\
&\leq \Vert S_{A_n}(t)z_n  - R_{t/K}^K(A_n)z_n \Vert_n + \Vert R_{t/K}^K(A_{\infty}) z_{\infty} - S_{A_{\infty}}(t)z_{\infty} \Vert_{\infty} \\
&\hspace{0.4cm}+ d^{n,\infty}(  R_{t/K}^K(A_n)z_n  ,  R_{t/K}^K(A_{\infty}) z_{\infty} )\\
&\leq \frac{2t}{\sqrt{K}}\big( \inf \Vert A_n(z_n) \Vert_n+ \inf \Vert A_{\infty}(z_{\infty}) \Vert_{\infty} \big)e^{4\omega t} \\
&\hspace{0.4cm}+ d^{n,\infty}( R_{t/K}^K(A_n)z_n  , R_{t/K}^K(A_{\infty}) z_{\infty}).
\end{align*}

The first inequality follows from the triangle inequality for $d$ and the second from condition~(i) in Definition~\ref{BanStack} (see also Remark~\ref{rem:triangle}). The third inequality follows from \eqref{resolve1}. By applying Lemma~\ref{thm1lem1} we find that the sequence $(R_{t/K}^K(A_n)z_n)$ converges to $R_{t/K}^K(A_{\infty}) z_{\infty}$ as $n\rightarrow \infty$. Taking the limit superior as $n\rightarrow \infty$ in the above inequality we obtain
$$
\limsup_{n \rightarrow \infty} d^{n,\infty}( S_{A_n}(t)z_n , S_{A_{\infty}}(t)z_{\infty} )
\leq \limsup_{n \rightarrow \infty}  \frac{2t}{\sqrt{K}}\big( \inf \Vert A_n(z_n) \Vert_n+ \inf \Vert A_{\infty}(z_{\infty}) \Vert_{\infty} \big) e^{4\omega t}.
$$

As a consequence of the definition of resolvent in Definition~\ref{def:operatorsoperations}, for all $n\in \mathbb{N}_{\infty}$, $\frac{1}{\lambda}(I_{X_n}-R_{\lambda}(A_n))x_n \in A_n(z_n)$. It follows that, for all $n\in \mathbb{N}_{\infty}$,
\begin{equation}
\label{operatorBound}
\inf \Vert A_n(z_n) \Vert \leq \frac{\Vert x_n \Vert_n + \Vert R_{\lambda}(A_n)x_n \Vert_n }{\lambda} .
\end{equation}
Thus
\begin{align*}
\limsup_{n \rightarrow \infty} d^{n,\infty}( S_{A_n}(t)z_n , S_{A_{\infty}}(t)z_{\infty} )
&\leq \limsup_{n \rightarrow \infty}  \frac{2te^{4\omega t}}{\lambda\sqrt{K}}\big( \Vert x_n \Vert_n + \Vert R_{\lambda}(A_n)x_n \Vert_n \\ 
&\hspace{2.7cm}+  \Vert x_{\infty} \Vert_{\infty} + \Vert R_{\lambda}(A_{\infty})x_{\infty} \Vert_{\infty} \big) \\
&= \frac{4te^{4\omega t}}{\lambda\sqrt{K}} \big(  \Vert x_{\infty} \Vert_{\infty} + \Vert R_{\lambda}(A_{\infty})x_{\infty} \Vert_{\infty} \big).
\end{align*}
The final equality above follows from condition~(iv) in Definition~\ref{BanStack}. We note that to use this condition, we require that $(R_{\lambda}(A_n))$ converges pointwise to $R_\lambda(A_\infty)$, as assumed in the theorem. The integer $K$ was chosen arbitrarily, so taking $K \rightarrow \infty$, we deduce that $S_{A_n}(t)z_n \underset{\mathcal{M}}\longtwoheadrightarrow S_{A_{\infty}}(t)z_{\infty}$ as $n\rightarrow \infty$. 

To get uniform convergence, we aim to make use of Lemma~\ref{thm1lem2}. For each $n\in \mathbb{N}_{\infty}$ we define
\begin{equation*}
    h_n(t) := d^{n,\infty}( S_{A_n}(t)z_n , S_{A_{\infty}}(t)z_{\infty} ).
\end{equation*} 
Then $(h_n)$ converges to $0$ pointwise on $[0,T]$ as $n \rightarrow \infty$. Let $t,\tau \in [0,T]$. We have that
\begin{align*}
|h_n(t)-h_n(\tau)| &= |d^{n,\infty}( S_{A_n}(t)z_n , S_{A_{\infty}}(t)z_{\infty} ) - d^{n,\infty}( S_{A_n}(\tau)z_n , S_{A_{\infty}}(t)z_{\infty} ) \\
& \hspace{0.6cm} + d^{n,\infty}( S_{A_n}(\tau)z_n , S_{A_{\infty}}(t)z_{\infty} ) - d^{n,\infty}( S_{A_n}(\tau)z_n , S_{A_{\infty}}(\tau)z_{\infty} )|\\
&\leq d^{n,n}( S_{A_n}(t)z_n , S_{A_n}(\tau)z_n )
+ d^{\infty,\infty}( S_{A_{\infty}}(t)z_{\infty} , S_{A_{\infty}}(\tau)z_{\infty} )\\
&\leq 2|t-\tau| \big( \inf \Vert A_n(z_n) \Vert_n + \inf \Vert A_{\infty}(z_{\infty}) \Vert_{\infty} \big)[e^{4\omega t}+e^{2\omega(t+\tau)}]\\
&\leq \frac{2|t-\tau|}{\lambda} \big( \Vert x_n \Vert_n + \Vert R_{\lambda}(A_n)x_n \Vert_n  \\
& \hspace{1.8cm} +\Vert x_{\infty} \Vert_{\infty} + \Vert R_{\lambda}(A_{\infty})x_{\infty} \Vert_{\infty} \big)[e^{4\omega t}+e^{2\omega(t+\tau)}].
\end{align*}
The first inequality holds by the (reverse) triangle inequality (Remark~\ref{rem:triangle}), the second by \eqref{continuity1}, and the third follows from \eqref{operatorBound}. By condition~(iv) in Definition~\ref{BanStack} (which we can use again by the assumption of pointwise convergence of $(R_{\lambda}(A_n))$ to $R_{\lambda}(A_\infty)$), we have that $(\Vert x_n \Vert_n+\Vert R_{\lambda}(A_n)x_n \Vert_n)$ converges to $\Vert x_\infty \Vert_\infty+\Vert R_{\lambda}(A_\infty)x_\infty \Vert_\infty$ and thus $L := \sup_{n\in \mathbb{N}_{\infty}} \Vert x_n \Vert_n +\Vert R_{\lambda}(A_n)x_n \Vert_n <+\infty$. Hence, for each $n\in  \mathbb{N}_{\infty}$, we have the bound
$$ |h_n(t) - h_n(\tau)| \leq \frac{8Le^{4\omega T}|t-\tau|}{\lambda}.$$

With the above control we apply Lemma~\ref{thm1lem2} to deduce that $(h_n(t))$ converges to $0$ uniformly for $t\in[0,T]$. Hence $(S_{A_n}(t)z_n)$ converges to $S_{A_{\infty}}(t)z_{\infty}$ uniformly in $t$ on $[0,T]$.

The next step is to obtain a uniform estimate over $[0,T]$ for the distance between $S_{A_n}(t)x_n$ and $S_{A_n}(t)z_n$. Indeed, for all $n\in \mathbb{N}_\infty$,
\begin{align}\label{eq:dnn_estimate}
d^{n,n}( S_{A_n}(t)x_n , S_{A_n}(t)z_n ) &\leq \Vert S_{A_n}(t)x_n - S_{A_n}(t)z_n\Vert_n
\leq e^{\omega T}\Vert x_n - z_n \Vert_n \notag \\
&= e^{\omega T}\Vert (I-R_{\lambda}(A_n)) x_n \Vert_n .
\end{align}
The first inequality follows from condition~(i) in Definition~\ref{BanStack} (see also Remark~\ref{rem:triangle}), the second from Corollary~\ref{SemigroupCon}, the third from our definition of $z_n$. We apply conditions~(iii) and~(iv) from Definition~\ref{BanStack} (again in combination with the assumption of pointwise convergence of $(R_\lambda(A_n))$) to find that 
$$ \Vert (I-R_{\lambda}(A_n))x_n \Vert_n \rightarrow  \Vert (I-R_{\lambda}(A_{\infty}))x_{\infty} \Vert_{\infty} \qquad \text{as } n\rightarrow \infty.$$
As previously observed, $\frac{1}{\lambda}(I-R_{\lambda}(A_{\infty}))x_{\infty} \in A_{\infty}(x_{\infty})$. It follows that 
\begin{equation}\label{eq:I-R_estimate}
\Vert (I-R_{\lambda}(A_{\infty}))x_{\infty} \Vert_{\infty} \leq \lambda \inf \Vert A_{\infty}(x_{\infty}) \Vert_{\infty}.
\end{equation}
Since $x_\infty \in D(A_\infty)$, $\inf \Vert A_{\infty}(x_{\infty}) \Vert_{\infty}< +\infty$ and thus there exists a constant $C\in (0, +\infty)$ independent of $\lambda$ such that
$$\limsup_{n \rightarrow \infty} \Vert (I-R_{\lambda}(A_n))x_n \Vert_n \leq \lambda C. $$
Hence
$$ \limsup_{n \rightarrow \infty} \sup_{t\in [0,T]}  d^{n,n}( S_{A_n}(t)x_n , S_{A_n}(t)z_n ) \leq e^{\omega T}\lambda C.$$
By the triangle inequality,
\begin{align*}
&d^{n,\infty}( S_{A_n}(t)x_n , S_{A_{\infty}}(t)x_{\infty} ) \\
&\leq d^{n,n}( S_{A_n}(t)x_n , S_{A_n}(t)z_n ) + d^{n,\infty}( S_{A_n}(t)z_n , S_{A_{\infty}}(t)z_{\infty} ) + d^{\infty,\infty}( S_{A_{\infty}}(t)z_{\infty} , S_{A_{\infty}}(t)x_{\infty} ).
\end{align*}
Using \eqref{eq:dnn_estimate} for $n=\infty$ together with \eqref{eq:I-R_estimate} and since $(d^{n,\infty}( S_{A_n}(t)z_n , S_{A_{\infty}}(t)z_{\infty}))$ converges to $0$ uniformly on $[0,T]$, we get
$$
\limsup_{n \rightarrow \infty} \sup_{t\in [0,T]} d^{n,\infty}( S_{A_n}(t)x_n , S_{A_{\infty}}(t)x_{\infty} ) \leq 2e^{\omega T}\lambda C.
$$
Since this holds for all $\lambda\in (0,\delta)$, we can take the limit $\lambda \downarrow 0$ in order to conclude uniform (in $t$) convergence of $S_{A_n}(t)x_n$ to $S_{A_\infty}(t)x_\infty$ on $[0,T]$ as required. This proves the required result under the condition that $x_\infty\in D(A_\infty)$.

Now we assume that $x_\infty \in \overline{D(A_\infty)}$. Given $\varepsilon>0$, let $y_{\infty}\in D(A_{\infty})$ be such that $\Vert x_{\infty}-y_{\infty}\Vert_{\infty} \leq \varepsilon$. Let $(y_n)_{n\in \mathbb{N}} \subset_n X_n$ be a sequence such that $y_n \underset{\mathcal{M}}\longtwoheadrightarrow y_{\infty}$; such a sequence exists by condition~(ii) in Definition~\ref{BanStack}. Since $y_\infty\in D(A_\infty)$, by the first part of this proof it follows that $S_{A_n}(t)y_n \underset{\mathcal{M}}\longtwoheadrightarrow S_{A_{\infty}}(t)y_{\infty} $ as $n\rightarrow \infty$ pointwise for all $t>0$ and uniformly in $t$ on $[0,T]$. By condition~(iii) in Definition~\ref{BanStack} we know that $x_n-y_n \underset{\mathcal{M}}\longtwoheadrightarrow x_{\infty}-y_{\infty}$, thus condition~(iv) implies
$\Vert x_n - y_n \Vert_n \rightarrow \Vert x_{\infty} - y_{\infty} \Vert_{\infty} $ as $n\rightarrow \infty$. Hence for $n$ sufficiently large we have $\Vert x_n - y_n \Vert_n  \leq 2\varepsilon$.

For all $t>0$,
\begin{align*}
d^{n,\infty}(  S_{A_n}(t)x_n ,  S_{A_{\infty}}(t)x_{\infty} )
&\leq d^{n,n}(  S_{A_n}(t)x_n  ,  S_{A_n}(t)y_n ) +
d^{n,\infty}(  S_{A_n}(t)y_n  , S_{A_{\infty}}(t)y_{\infty} ) \\
&\hspace{0.4cm} +d^{\infty,\infty}( S_{A_{\infty}}(t)y_{\infty} ,  S_{A_{\infty}}(t)x_{\infty} )\\
&\leq \Vert S_{A_n}(t)x_n - S_{A_n}(t)y_n \Vert_n + \Vert S_{A_{\infty}}(t)y_{\infty} - S_{A_{\infty}}(t)x_{\infty} \Vert_{\infty} \\ 
&\hspace{0.4cm} + d^{n,\infty}(  S_{A_n}(t)y_n  ,  S_{A_{\infty}}(t)y_{\infty} )\\
&\leq e^{\omega T}\Vert y_n - x_n \Vert_n + e^{\omega T}\Vert y_{\infty} - x_{\infty} \Vert_{\infty} + d^{n,\infty}(  S_{A_n}(t)y_n  ,  S_{A_{\infty}}(t)y_{\infty} )\\
&\leq 3 e^{\omega T} \varepsilon + d^{n,\infty}( S_{A_n}(t)y_n  ,  S_{A_{\infty}}(t)y_{\infty} ).
\end{align*}
The first inequality is the triangle inequality for $d$, the second follows from condition~(i) in Definition~\ref{BanStack}, and the third from Corollary~\ref{SemigroupCon}. It follows that
$$ \limsup_{n \rightarrow \infty} \sup_{t\in [0,T]} d^{n,\infty}( S_{A_n}(t)x_n  ,  S_{A_{\infty}}(t)x_{\infty} ) \leq 3 e^{\omega T} \varepsilon. $$
Taking the limit $\varepsilon\downarrow 0$ completes the proof.
\end{proof}
 
In the following remark we will discuss linear operators and $C_0$-semigroups on a Banach space $X$. A linear operator is a linear map $A:\operatorname{dom}(A)\rightarrow X$ where the domain $\operatorname{dom}(A)$ is a linear subspace of $X$. Given a linear operator $A$, $\hat{A}:=\{(x,y) \; | \; x\in \operatorname{dom}(A), \; y=A(x) \} \subset X \times X$ is an operator in the sense of Definition~\ref{def:operator}. The definitions of the resolvent operator of $A$ (Definition~\ref{def:operatorsoperations}) and semigroup generated by $A$ (Definition~\ref{def:semigroup}) translate similarly. Following \cite[Section 1.3]{Pazy1983}, we also define the resolvent set of $A$, denoted by $\rho(A)$, to be the set of all $\lambda \in \mathbb{R}$ such that $\lambda I- A:\operatorname{dom}(A)\rightarrow X$ is invertible, where $I$ is the identity map. Because $R_{\lambda^{-1}}(-\hat{A})=(\lambda I_X-\hat{A})^{-1}=\lambda^{-1}(I_X-\lambda^{-1}\hat{A})^{-1}$, for all $\lambda\in \rho(A)$, the resolvent operator $R_{\lambda^{-1}}(-\hat{A})$ of the linear operator $-\hat{A}$ is single-valued on its domain $X$ and linear.

A $C_0$-semigroup is a one-parameter collection $\{T(t)\}_{t\in[0,\infty)}$ of functions $T(t)$, such that, for all $t\in [0,\infty)$, $T(t)$ is a linear operator on $X$ with domain equal to $X$, the semigroup property is satisfied (i.e., for all $s,t \geq 0$, $T(t+s)=T(t)T(s)$) and, for all $x\in X$, $T(t)x\rightarrow x$ in the strong topology of $X$ as $t\downarrow 0$.

We say that a linear operator $A$ on a Banach space $X$ is within the set $G(M,\omega,X)$, for constants $M\geq 1$ and $\omega\in\mathbb{R}$, if the semigroup generated by $S_{-A}$ (as defined in Definition~\ref{def:semigroup}) has domain $X$, is a $C_0$-semigroup, and satisfies, for all $t\geq 0$, $\Vert S_{-A}(t)\Vert \leq Me^{\omega t}$. We point out the minus sign in the generator of the semigroup, which is due to a difference in convention between the current paper and \cite{Pazy1983}, whose results we wish to use. In particular, the underlying Cauchy equation for $S_{-A}$ is given by \cite[Equation 1.1]{Pazy1983} which has the opposite sign for the operator $A$ to our Cauchy equation in \eqref{AbCauchy}. After scaling the norm by $M^{-1}$, we can apply \cite[Corollary 3.8]{Pazy1983} to deduce that, if $A \in G(M,\omega,X)$, then the operator $-\hat{A}$ is $\omega$-accretive and satisfies the range condition \eqref{rangeCon}. Indeed, $\omega$-accretivity follows by a similar argument to that in Proposition~\ref{prop:accretivecontraction}, using conclusion (ii) from \cite[Corollary 3.8]{Pazy1983}; moreover, if $\lambda^{-1}$ is in the resolvent set $\rho(A)$, then $I-\lambda^{-1} A$ is surjective on $X$ and thus, again by conclusion (ii) from \cite[Corollary 3.8]{Pazy1983}, the range condition~\eqref{rangeCon} is satisfied for $-\hat{A}$, specifically for $\lambda \in (0,\omega^{-1})$.

\begin{remark}
\label{TrotterKato}
In this remark, whenever $B:X\rightarrow X$ is a linear map on a Banach space $X$ with norm $\|\cdot\|$, we write $\Vert B \Vert:= \sup_{x\neq 0} \Vert B(x)\Vert/\Vert x\Vert$ for the standard operator norm of $B$. 

There are a number of similarities between Theorem~\ref{theorem1} and the Trotter--Kato theorem regarding approximations of $C_0$-semigroups from \cite[Theorem 2.1]{Ito1998} (see also \cite{Kato95,Trotter58}) which we will now detail. 

To explain \cite[Theorem 2.1]{Ito1998} we require a sequence $(X_n)_{n\in \mathbb{N}}$ of Banach spaces and a Banach space $Z$ together with linear maps $E_n: X_n \rightarrow Z$ and $P_n:Z\rightarrow X_n$ for each $n\in \mathbb{N}$, that satisfy the following properties:
\begin{enumerate}[(a)]
    \item there exist $M_1>0$ and $M_2>0$ such that, for all $n\in \mathbb{N}$, $\Vert P_n \Vert \leq M_1$ and $\Vert E_n \Vert \leq M_2$; 
    \item for all $x\in Z$, $\Vert E_nP_n x - x\Vert \rightarrow 0$ as $n\rightarrow \infty$; 
    \item for all $n\in \mathbb{N}$, $P_nE_n=I_{X_n}$.
\end{enumerate}
Additionally we assume $X_\infty$ is a closed linear subspace of $Z$, $M\geq 1$ and $\omega\in \mathbb{R}$ are constants, and, for all $n\in\mathbb{N}_\infty$, $A_n \in G(M,\omega,X_n)$ is a linear operator.

Then \cite[Theorem 2.1]{Ito1998} states that for all $x\in X_\infty$, $(E_nS_{A_n}(t)P_n x)_{n\in\mathbb{N}}$ converges uniformly (in $t$) on bounded intervals to $S_{A_\infty}(t)x$ if and only if there exists a $\lambda \in \bigcap_{n\in\mathbb{N}_\infty} \rho(A_n) \subset \mathbb{R}$, such that, for all $x\in X_\infty$, $(E_nR_{\lambda}(A_n)P_nx)_{n\in\mathbb{N}}$ converges to $R_{\lambda}(A_\infty)x$. 

To draw a connection between \cite[Theorem 2.1]{Ito1998} and our result we define, for all $n\in \mathbb{N}$, $Z_n:=E_nX_n$ and $Z_\infty:=X_\infty$. Let $\iota_n: Z_n\rightarrow Z$ be the inclusion map. Then $((Z_n,\iota_n)_{n \in \mathbb{N}_{\infty}},Z)$ forms a Banach stacking (as in part~1 of Examples~\ref{BSexmpl}); property (b) is necessary for the required density condition in part (ii) of Definition~\ref{BanStack}  to hold. We observe that, for all $n\in \mathbb{N}$ and all $x\in Z_n$, $E_n P_n x = x$; this holds since, if $x\in Z_n$, then there exists a $y\in X_n$ such that $x=E_ny$ and thus $E_n P_n x = E_n P_n E_n y = E_n y = x$. Thus, restricted to $Z_n$, the operators $E_n$ and $P_n$ are each other's inverses.

Next we define, for all $n\in\mathbb{N}$,  $\widetilde{A_n}:=E_nA_nP_n$, which is a linear operator on $Z_n$. As a consequence of property~(c), for all $\lambda\in\mathbb{R}$ and all $x\in Z_n$, $R_{\lambda}(\widetilde{A_n})=E_nR_\lambda(A_n)P_n$. Thus, by the Crandall--Liggett formula in Theorem~\ref{CranLig} and the condition in property~(a) on the operators $E_n$, $S_{\widetilde{A_n}}(\cdot)=E_nS_{A_n}(\cdot)P_n$. Hence \cite[Theorem 2.1]{Ito1998} equivalently states that, for all $x\in X_\infty$, $S_{\widetilde{A_n}}(t)x$ converges uniformly (in $t$) on bounded intervals to $S_{A_\infty}(t)x$ if and only if there exists a $\lambda \in \rho(A_\infty) \bigcap_{n\in\mathbb{N}} \rho(\widetilde{A_n}) \subset \mathbb{R}$ such that, for all $x\in Z_\infty$, $(R_{\lambda}(\widetilde{A_n})x)_{n\in\mathbb{N}}$ converges to $R_{\lambda}(A_\infty)x$. This closely resembles the statement of Theorem~\ref{theorem1}.

Thus the setting of \cite[Theorem 2.1]{Ito1998} is a particular of example of that of Theorem~\ref{theorem1}. The latter covers possibly nonlinear and multivalued operators, whereas \cite[Theorem 2.1]{Ito1998} applies only to linear, and thus single-valued, operators. For Theorem~\ref{theorem1} we require the operators $A_n$ to be $\omega$-accretive. If $\hat A$, corresponding to a linear operator $A$, is $\omega$-accretive, satisfies the range condition in \eqref{rangeCon}, and has a domain that is dense in $X$, then $-A$ is in fact in $G(1,\omega,X)$ (see Corollary~\ref{SemigroupCon}). As noted just before this remark, if $-\hat{A}$ is in $G(1,\omega,X)$, then $\hat{A}$ is $\omega$-accretive and satisfies the range condition \eqref{rangeCon}. Thus, for linear operators, $\omega$-accretivity and the range condition together are equivalent to membership of $G(1,\omega,X)$, up to a sign change in the operator; since these concepts are also applicable to nonlinear operators, we can view them as the nonlinear analogues to membership of $G(1,\omega,X)$.

The restriction to linear operators in \cite[Theorem 2.1]{Ito1998} allows for a stronger conclusion than in Theorem~\ref{theorem1}. In particular, convergence of the resolvents for one admissible choice of $\lambda$ suffices to conclude uniform convergence of the semigroups. Furthermore, \cite[Theorem 2.1]{Ito1998} also contains the converse implication that concludes pointwise convergence of the resolvents from uniform convergence of the semigroups. In the current work, we do not explore this implication for semigroups generated by nonlinear operators and we are unsure if it holds in our setting. 

\end{remark}

\begin{remark} Another possible avenue for further research is to extend Theorem~\ref{theorem1} to cover convergence of mild solutions of a Cauchy problem with forcing,
$$\begin{cases}
u'+Au \ni f \\
u(0)=x. \\
\end{cases}  $$
To guarantee the existence of mild solutions we have to trade the assumption of the range condition~\eqref{rangeCon} for $\omega$-m-accretivity (see \cite[Theorem A.29]{andreu2010nonlocal}). For nontrivial $f$ we lose the semigroup structure of the solutions.

In the setting of a Banach stacking we would have a converging sequence of forcing functions $f_n\in L^1(0,T;X_n)$. Which notion of convergence is suitable to conclude convergence of the mild solutions is a question for future research.  
\end{remark}

\section{Convergence of semigroups and gradient flows}\label{sec:convresolvents}

In this section we prove convergence results for gradient flows and nonlinear semigroups in the setting of a Banach stacking, using the machinery we have built so far. Each of the main results uses our generalization of the Brezis--Pazy theorem given in theorem~\ref{theorem1}. In particular, this theorem obtains uniform convergence of nonlinear semigroups from pointwise convergence of the resolvents of the semigroup-generating operators. The three subsections of Section~\ref{sec:convresolvents} illustrate three different situations in which we can find pointwise convergence of resolvents, and thus convergence of the corresponding semigroups:
\begin{enumerate}[(1)]
    \item pointwise convergence of resolvents over a Banach stacking via direct inspection of the operators;
    \item pointwise convergence of resolvents of subdifferentials of a sequence of $\Gamma$-converging functions on a Banach stacking of Hilbert spaces;
    \item pointwise convergence of resolvents of subdifferentials of a sequence of $\Gamma$-converging functionals on the $TL^p(\Omega)$ Banach stacking of Proposition~\ref{tlpBanach}.
\end{enumerate}
In the second and third cases above, the corresponding semigroups are in fact gradient flows. In the first case we consider more general semigroups, but the conditions on the generating operators in that case are not always straightforward to check in specific situations.

\subsection{Direct convergence of accretive operators}

In order to deduce pointwise convergence of resolvents it stands to reason that we need only pointwise convergence of the operators. However, one difficulty is that an operator on $X$ can be any subset of $X\times X$ and thus need not correspond to a single-valued function. We clarify what we mean by pointwise convergence in this case below.

\begin{proposition}
\label{OpConvergence}
Let $\big((X_n, \xi_n)_{n\in \mathbb{N}_\infty}, \mathcal{M}\big)$ be a Banach stacking. Suppose $(A_n)_{n\in \mathbb{N}_\infty}$ is a sequence of $\omega_n$ -accretive operators $A_n$ on $X_n$ that each satisfy the range condition~\eqref{rangeCon}, with the the sequence $(\omega_n)_{n\in \mathbb{N}_{\infty}}$ bounded above by some constant $\omega\in \mathbb{R}$.

Suppose that, for all $(x_\infty,y_\infty)\in A_\infty$, there exists a sequence $\big((x_n,y_n)\big)_{n\in \mathbb{N}} \subset_n A_n$ such that $x_n\underset{\mathcal{M}}\longtwoheadrightarrow x_{\infty}$ and $y_n\underset{\mathcal{M}}\longtwoheadrightarrow y_{\infty}$ as $n\to\infty$. Then, for all $\lambda \in \mathfrak{I}_{\omega}$ (as defined in \eqref{OmegaInterval}) and all sequences $(z_n)_{n\in\mathbb{N}_\infty} \subset_n \overline{D(A_n)}$ for which $z_n \underset{\mathcal{M}}\longtwoheadrightarrow z_{\infty}$ as $n\to\infty$, we have $R_\lambda(A_n)z_n\underset{\mathcal{M}}\longtwoheadrightarrow R_\lambda(A_\infty)z_{\infty}$ as $n\to\infty$. Moreover, for all $t\geq 0$, $S_{A_n}(t)z_n \underset{\mathcal{M}}\longtwoheadrightarrow S_{A_{\infty}}(t)z_{\infty}$ as $n\rightarrow \infty$ and, for all $T\in [0,+\infty)$, the convergence is uniform in $t$ on the interval $[0,T]$.
\end{proposition}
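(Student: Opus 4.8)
The plan is to deduce pointwise convergence of the resolvents $R_\lambda(A_n)$ and then feed this into Theorem~\ref{theorem1}. Fix $\lambda\in\mathfrak{I}_{\omega}$. I would first record the elementary observations that, since $\omega_n\le\omega$, the definition~\eqref{OmegaInterval} gives $\mathfrak{I}_{\omega}\subseteq\mathfrak{I}_{\omega_n}$ for every $n$ and $\tfrac1{1-\lambda\omega_n}\le\tfrac1{1-\lambda\omega}=:L$. Hence, combining $\omega_n$-accretivity with the range condition~\eqref{rangeCon} (in the form, standard for $\omega_n$-accretive operators, that ensures $\overline{D(A_n)}\subseteq\operatorname{Range}(I_{X_n}+\lambda A_n)$ for all $\lambda\in\mathfrak{I}_{\omega_n}$), each $R_\lambda(A_n)$ may be viewed, as in Remark~\ref{accremark} and Proposition~\ref{Omega1}, as a single-valued $L$-Lipschitz map on $\overline{D(A_n)}$, so that $R_\lambda(A_n)z_n$ makes sense for every $z_n\in\overline{D(A_n)}$.

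The core step, which parallels the proof of Lemma~\ref{thResLem}, is to build good recovery sequences. Given $x_\infty\in D(A_\infty)$, set $w_\infty:=R_\lambda(A_\infty)x_\infty$; by Definition~\ref{def:operatorsoperations} we then have $\big(w_\infty,\tfrac{x_\infty-w_\infty}\lambda\big)\in A_\infty$, so the hypothesis of the proposition supplies a sequence $\big((a_n,b_n)\big)_{n\in\mathbb{N}}\subset_n A_n$ with $a_n\underset{\mathcal{M}}\longtwoheadrightarrow w_\infty$ and $b_n\underset{\mathcal{M}}\longtwoheadrightarrow\tfrac{x_\infty-w_\infty}\lambda$. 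Setting $x_n:=a_n+\lambda b_n$, which lies in $\operatorname{Range}(I_{X_n}+\lambda A_n)$ by construction, one gets $R_\lambda(A_n)x_n=a_n$ (single-valuedness by Remark~\ref{rem:contraction}), while condition~(iii) of Definition~\ref{BanStack} yields $x_n\underset{\mathcal{M}}\longtwoheadrightarrow x_\infty$ and $R_\lambda(A_n)x_n=a_n\underset{\mathcal{M}}\longtwoheadrightarrow w_\infty=R_\lambda(A_\infty)x_\infty$.

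With this in hand I would run a density argument. Let $(z_n)_{n\in\mathbb{N}_\infty}\subset_n\overline{D(A_n)}$ satisfy $z_n\underset{\mathcal{M}}\longtwoheadrightarrow z_\infty$ and fix $\varepsilon>0$. Since $D(A_\infty)$ is dense in $\overline{D(A_\infty)}$, choose $\tilde z_\infty\in D(A_\infty)$ with $\|z_\infty-\tilde z_\infty\|_\infty\le\varepsilon$ and let $(\tilde z_n)$ be the recovery sequence just constructed for $\tilde z_\infty$. Then, by the triangle inequality for $d$ (Remark~\ref{rem:triangle}), condition~(i) of Definition~\ref{BanStack}, and $L$-Lipschitzness of the resolvents,
\begin{align*}
d^{n,\infty}\big(R_\lambda(A_n)z_n,R_\lambda(A_\infty)z_\infty\big)
&\le L\|z_n-\tilde z_n\|_n+d^{n,\infty}\big(R_\lambda(A_n)\tilde z_n,R_\lambda(A_\infty)\tilde z_\infty\big)\\
&\quad+L\|\tilde z_\infty-z_\infty\|_\infty .
\end{align*}
The middle term vanishes by the previous paragraph; conditions~(iii) and~(iv) of Definition~\ref{BanStack} give $\|z_n-\tilde z_n\|_n\to\|z_\infty-\tilde z_\infty\|_\infty\le\varepsilon$; and the last term is at most $L\varepsilon$. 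Thus $\limsup_{n}d^{n,\infty}(R_\lambda(A_n)z_n,R_\lambda(A_\infty)z_\infty)\le 2L\varepsilon$, and letting $\varepsilon\downarrow0$ yields $R_\lambda(A_n)z_n\underset{\mathcal{M}}\longtwoheadrightarrow R_\lambda(A_\infty)z_\infty$, for every $\lambda\in\mathfrak{I}_{\omega}$.

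To conclude, I would pick any $\delta\in\mathfrak{I}_{\omega}$, note $(0,\delta)\subseteq\mathfrak{I}_{\omega}$, and observe that all hypotheses of Theorem~\ref{theorem1} are now met; it then delivers $S_{A_n}(t)z_n\underset{\mathcal{M}}\longtwoheadrightarrow S_{A_\infty}(t)z_\infty$ as $n\to\infty$, uniformly for $t\in[0,T]$. I expect the main obstacle to be a matter of care rather than of depth: ensuring $R_\lambda(A_n)$ is genuinely defined on $\overline{D(A_n)}$ across the whole interval $\mathfrak{I}_{\omega}$ (not only below $A_n$'s own range-condition threshold), and correctly routing the recovery sequences $x_n=a_n+\lambda b_n$ through $\operatorname{Range}(I_{X_n}+\lambda A_n)$ so that the density/perturbation estimate closes; everything else is bookkeeping with axioms~(i)--(iv) of Definition~\ref{BanStack}.
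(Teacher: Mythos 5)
Your proof is correct, and the core construction is the same as the paper's: given the pair $\big(R_\lambda(A_\infty)z_\infty,\ \tfrac{z_\infty - R_\lambda(A_\infty)z_\infty}{\lambda}\big)\in A_\infty$, apply the hypothesis to obtain $(a_n,b_n)\in A_n$ and set $\hat z_n:=a_n+\lambda b_n$, which lies in $\operatorname{Range}(I_{X_n}+\lambda A_n)$ by construction and satisfies $R_\lambda(A_n)\hat z_n=a_n$. The one place where you diverge from the paper is the additional density step, and it is unnecessary. You restrict the recovery-sequence construction to $x_\infty\in D(A_\infty)$, then approximate a general $z_\infty\in\overline{D(A_\infty)}$ by a nearby $\tilde z_\infty\in D(A_\infty)$ and pass through an $\varepsilon$-estimate. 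But the construction never requires the input point to lie in $D(A_\infty)$: the hypothesis is applied to the \emph{output} $w_\infty=R_\lambda(A_\infty)x_\infty$, which always belongs to $D(A_\infty)$ (since $\operatorname{Range}(R_\lambda(A_\infty))\subset D(A_\infty)$, cf.\ Remark~\ref{rem:whyrangecon}), so the construction works verbatim for any $x_\infty$ in the domain of the resolvent. This is exactly what the paper does: it builds $\hat z_n\to z_\infty$ directly for the given $z_\infty$, observes $\Vert z_n-\hat z_n\Vert_n\to 0$ by conditions~(iii) and~(iv) of Definition~\ref{BanStack} (since both sequences share the limit $z_\infty$), and concludes with a single triangle-plus-Lipschitz estimate---no $\varepsilon$, no density. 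Your argument is thus a correct but slightly longer route to the same place; removing the density layer, the two proofs coincide. The final appeal to Theorem~\ref{theorem1} is handled in the same way in both.
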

\begin{proof}
Consider a convergent sequence $(z_n)_{n\in \mathbb{N}_\infty}\subset_n X_n$ in the Banach stacking sense, which exists by part (ii) of Definition~\ref{BanStack}, and let $\lambda \in \mathfrak{I}_{\omega}$. Define $x_{\infty} := R_{\lambda}(A_{\infty})z_{\infty}$ and let $y_{\infty} \in A_{\infty}(x_{\infty})$ be such that $x_{\infty}+ \lambda y_{\infty} = z_{\infty}$. Then, by the premise of the proposition, there exist sequences $(x_n)_{n\in \mathbb{N}} \subset_n X_n$ and $(y_n)_{n\in \mathbb{N}} \subset_n A_n(x_n)$ such that 
$x_n \underset{\mathcal{M}}\longtwoheadrightarrow x_{\infty}$ and $y_n \underset{\mathcal{M}}\longtwoheadrightarrow y_{\infty}$ as $n \to \infty$. 

Define $\hat{z}_n := x_n + \lambda y_n$. By condition~(iii) in Definition~\ref{BanStack} we have that $\hat{z}_n \underset{\mathcal{M}}\longtwoheadrightarrow z_{\infty}$ as $n\rightarrow \infty$. Moreover, we have
$$ R_{\lambda}(A_n)\hat{z}_n = x_n \underset{\mathcal{M}}\longtwoheadrightarrow x_{\infty} = R_{\lambda}(A_{\infty}) z_{\infty} \quad \text{as } n \to \infty$$
and
\begin{align*}
    d^{n,\infty}(  R_{\lambda}(A_n)z_n , R_{\lambda}(A_{\infty})z_{\infty} ) &\leq d^{n,n}( R_{\lambda}(A_n)z_n  , R_{\lambda}(A_n)\hat{z}_n ) 
    + d^{n,\infty}(  R_{\lambda}(A_n)\hat{z}_n  , R_{\lambda}(A_{\infty})z_{\infty} ) \\
    &\leq \Vert R_{\lambda}(A_n)z_n - R_{\lambda}(A_n)\hat{z}_n \Vert_n
    + d^{n,\infty}( R_{\lambda}(A_n)\hat{z}_n ,  R_{\lambda}(A_{\infty})z_{\infty} ) \\
    &\leq \frac{1}{1-\lambda \omega}\Vert z_n - \hat{z}_n \Vert_n
    + d^{n,\infty}( R_{\lambda}(A_n)\hat{z}_n  , R_{\lambda}(A_{\infty})z_{\infty} ).
\end{align*}
The first inequality holds by the triangle inequality, the second by condition~(i) of Definition~\ref{BanStack}, and the third by Proposition~\ref{Omega1} (see also Remark~\ref{accremark}) in combination with the upper bound on $(\omega_n)_{n\in \mathbb{N}_\infty}$.

By conditions~(iii) and~(iv) of Definition~\ref{BanStack} we have that $\Vert z_n - \hat{z}_n \Vert_n \rightarrow 0$ as $n\rightarrow \infty$ and thus
$$ \lim_{n\rightarrow \infty} d^{n,\infty}(R_{\lambda}(A_n)z_n ,  R_{\lambda}(A_{\infty})z_{\infty} )=0. $$
Convergence of the semigroups, pointwise for all $t\geq 0$ and uniformly on $[0,T]$, then follows directly from Theorem~\ref{theorem1}.
\end{proof}

The approach detailed in this subsection is the most straightforward one we present. The later results in Theorems~\ref{Hilthm} and~\ref{thm:P0theorem1} use $\Gamma$-convergence and a compactness assumption to deduce convergence of the resolvents. In Proposition~\ref{OpConvergence} no kind of compactness is assumed; instead we assume that for all $(x_\infty,y_\infty)$ a sequence converging to it exists, which is sometimes easier to check. In many applications, such as when the semigroups describe gradient flows, the operators generating the semigroups may be difficult to analyse directly, that is why the variational approach in the rest of Section~\ref{sec:convresolvents} is also of interest.

\subsection{Banach stacking of Hilbert spaces}\label{sec:BanachStackHilbert}

In this section $(H_n)_{n\in\mathbb{N}_\infty}$ is a sequence of Hilbert spaces and $\big((H_n, \xi_n)_{n\in \mathbb{N}_\infty}, \mathcal{M}\big)$ is a Banach stacking. We consider a sequence $(\Phi_n)_{n\in\mathbb{N}_\infty}$ of functions $\Phi_n: H_n \rightarrow (-\infty,+\infty]$ that satisfy the following assumptions. We define $\Gamma$-convergence and equicoercivity of a sequence of functions over a Banach stacking in Definition~\ref{BSdefs}.

\begin{assumptions}
\label{assumHilbert}
\begin{enumerate}
\item The sequence $(\Phi_n)_{n\in \mathbb{N}}$ $\Gamma$-converges to $\Phi_{\infty}$ over the Banach stacking $\big((H_n, \xi_n)_{n\in \mathbb{N}_\infty}, \mathcal{M}\big)$, as $n \rightarrow \infty$.
\item For all sequences $(x_n)_{n\in \mathbb{N}} \subset_n H_n$ for which the sequences $\left(\Vert x_n \Vert_n\right)_{n\in \mathbb{N}}$ and  $\left(\Phi_n(x_n)\right)_{n\in \mathbb{N}}$ are bounded, there exists an $x_\infty \in H_\infty$ and a subsequence $(x_{n_k})_{k\in\mathbb{N}}$ of $(x_n)_{n\in \mathbb{N}}$ such that $x_{n_k}\underset{\mathcal{M}}\longtwoheadrightarrow x_\infty$ as $k\rightarrow \infty$.
\item There exists a $\lambda\in \mathbb{R}$ such that, for all $n\in \mathbb{N}_{\infty}$, $\Phi_n$ is proper, $\lambda$-convex, and lower semicontinuous.
\end{enumerate}
\end{assumptions}

Under these assumptions, we prove the following theorem.
\begin{theorem}
\label{Hilthm}
Let $\big((H_n, \xi_n)_{n\in \mathbb{N}_\infty}, \mathcal{M}\big)$ be a Banach stacking as above and $(\Phi_n)_{n\in\mathbb{N}_\infty}$ a sequence of functions $\Phi_n: H_n \rightarrow (-\infty,+\infty]$ that satisfies parts~1--3 of Assumptions~\ref{assumHilbert}. Let $(x_n)_{n\in \mathbb{N}_\infty} \subset_n \overline{\operatorname{dom}(\Phi_n)}$ be a convergent sequence. For all $n\in \mathbb{N}_{\infty}$, let $u_n:[0,+\infty)\rightarrow H_n$ be the gradient flow of $\Phi_n$ starting at $x_n$ (as in Definition \ref{gradflowdef}). Then, for all $T \in [0,+\infty)$, $(u_n(t))_{n\in \mathbb{N}}$ converges uniformly to $u_{\infty}(t)$ for $t\in[0,T]$ over the Banach stacking $\big((H_n, \xi_n)_{n\in \mathbb{N}_\infty}, \mathcal{M}\big)$. Additionally, for all $t>0$,
$$ \Phi_n(u_n(t))\rightarrow \Phi_{\infty}(u_{\infty}(t)) \qquad \text{as }  n\rightarrow \infty.$$
\end{theorem}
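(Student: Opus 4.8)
\textbf{Proof plan for Theorem~\ref{Hilthm}.}

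The plan is to realise each gradient flow $u_n$ as the semigroup generated by $\partial^{\lambda}\Phi_n$ via Lemma~\ref{lem:gradflowsemigroup}, so that uniform convergence of the flows follows from the generalised Brezis--Pazy theorem (Theorem~\ref{theorem1}) once we establish pointwise convergence of the resolvents $R_{\gamma}(\partial^{\lambda}\Phi_n)$ over the Banach stacking. Thus the first and central task is: given $\gamma \in \mathfrak{I}_{-\lambda}$ and a convergent sequence $(z_n)_{n\in\mathbb{N}_\infty}\subset_n H_n$ with $z_n\underset{\mathcal{M}}\longtwoheadrightarrow z_\infty$, show $R_{\gamma}(\partial^{\lambda}\Phi_n)z_n\underset{\mathcal{M}}\longtwoheadrightarrow R_{\gamma}(\partial^{\lambda}\Phi_\infty)z_\infty$. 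By Lemma~\ref{ProxResolve} this resolvent equals the proximal operator $J_{\gamma}(\Phi_n)$, i.e.\ the unique minimiser $w_n$ of $w\mapsto \Phi_n(w)+\frac{1}{2\gamma}\|w-z_n\|_n^2$. Since $\gamma\in\mathfrak{I}_{-\lambda}$ this functional is $(\tfrac{1}{\gamma}+\lambda)$-convex with $\tfrac1\gamma+\lambda>0$, so it is uniformly convex and has well-behaved minimisers.

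The argument for convergence of the $w_n$ is the standard $\Gamma$-convergence plus compactness machinery. First, one shows the perturbed functionals $\Psi_n(\cdot):=\Phi_n(\cdot)+\frac{1}{2\gamma}\|\cdot-z_n\|_n^2$ $\Gamma$-converge over the Banach stacking to $\Psi_\infty:=\Phi_\infty(\cdot)+\frac1{2\gamma}\|\cdot-z_\infty\|_\infty^2$: the $\liminf$ inequality uses part~1 of Assumptions~\ref{assumHilbert} together with the continuity of the norm term along convergent sequences (conditions (iii) and (iv) of Definition~\ref{BanStack}, plus Proposition~\ref{zeroConvergence}); the recovery sequence is obtained by taking a recovery sequence for $\Phi_\infty$ at $w_\infty$ and using norm continuity. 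Second, using a comparison point (e.g.\ a recovery sequence for some fixed element of $\operatorname{dom}(\Phi_\infty)$) one gets a uniform bound on $\Psi_n(w_n)$, hence—via $\lambda$-convexity of $\Phi_n$ and the quadratic term—a uniform bound on $\|w_n\|_n$ and on $\Phi_n(w_n)$; part~2 of Assumptions~\ref{assumHilbert} then yields a subsequence with $w_{n_k}\underset{\mathcal{M}}\longtwoheadrightarrow \tilde w$. The $\Gamma$-limit inequality identifies $\tilde w$ as a minimiser of $\Psi_\infty$, which by uniqueness (Proposition~\ref{envelopeProp}, part~1) equals $w_\infty$. A subsequence-of-subsequence argument upgrades this to convergence of the whole sequence, and moreover $\Psi_n(w_n)\to\Psi_\infty(w_\infty)$, whence $\Phi_n(w_n)\to\Phi_\infty(w_\infty)$, i.e.\ $[\Phi_n]^\gamma(z_n)\to[\Phi_\infty]^\gamma(z_\infty)$. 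Feeding the resolvent convergence into Theorem~\ref{theorem1} (with $A_n=\partial^\lambda\Phi_n$, $\omega_n=-\lambda$, which is $(-\lambda)$-m-accretive and hence satisfies the range condition by Theorem~\ref{thm:partiallambdamaccretive} and Remark~\ref{rem:rangecondition}) gives uniform convergence of $u_n(t)=S_{\partial^\lambda\Phi_n}(t)x_n$ to $u_\infty(t)$ on $[0,T]$.

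It remains to prove the energy convergence $\Phi_n(u_n(t))\to\Phi_\infty(u_\infty(t))$ for every $t>0$, and this is where the absence of well-preparedness is handled and where Theorem~\ref{GradFlowTheorem} does the work. For the $\liminf$ direction, since $u_n(t)\underset{\mathcal{M}}\longtwoheadrightarrow u_\infty(t)$, the $\Gamma$-convergence $\liminf$ inequality gives $\Phi_\infty(u_\infty(t))\le\liminf_{n}\Phi_n(u_n(t))$. For the $\limsup$ direction, Theorem~\ref{GradFlowTheorem} gives $\Phi_n(u_n(t))\le [\Phi_n]^{\kappa(t,\lambda)}(x_n)$ for all $n$; by Remark~\ref{rem:kappa} we have $\kappa(t,\lambda)\in\mathfrak{I}_{-\lambda}$, so the first part of the proof (the Moreau-envelope convergence, applied with $\gamma=\kappa(t,\lambda)$ and $z_n=x_n$) yields $[\Phi_n]^{\kappa(t,\lambda)}(x_n)\to[\Phi_\infty]^{\kappa(t,\lambda)}(x_\infty)$. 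Applying Theorem~\ref{GradFlowTheorem} once more to the limiting flow, $\Phi_\infty(u_\infty(t))\le[\Phi_\infty]^{\kappa(t,\lambda)}(x_\infty)$; one then needs to close the gap. The cleanest way is to note that the envelope bound combined with the $\liminf$ inequality forces
\[
\Phi_\infty(u_\infty(t))\le\liminf_n\Phi_n(u_n(t))\le\limsup_n\Phi_n(u_n(t))\le\limsup_n[\Phi_n]^{\kappa(t,\lambda)}(x_n)=[\Phi_\infty]^{\kappa(t,\lambda)}(x_\infty),
\]
so the subtle point is to show the outer terms actually coincide; this uses that $u_\infty(t)$ is itself the proximal-type limit realising $[\Phi_\infty]^{\kappa(t,\lambda)}(x_\infty)$ along the minimising-movement scheme (Remark~\ref{rem:minimizingmovement}), together with lower semicontinuity of $\Phi_\infty$ and the convergence $[\Phi_\infty]^{t_n}\to[\Phi_\infty]^{t_*}$ from Proposition~\ref{envelopeProp2}.

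\textbf{Main obstacle.} The delicate step is the energy $\limsup$ bound: uniform convergence of the curves only gives $u_n(t)\underset{\mathcal{M}}\longtwoheadrightarrow u_\infty(t)$, which by itself controls $\liminf\Phi_n(u_n(t))$ but not $\limsup$, and without well-preparedness one cannot simply invoke continuity of the energy. The resolution must go through the quantitative Moreau-envelope estimate of Theorem~\ref{GradFlowTheorem} together with the convergence of Moreau envelopes established in the first part of the proof (itself relying on Proposition~\ref{envelopeProp2}); verifying that these two bounds pinch the energy to the correct limiting value—rather than merely to the envelope bound—is the crux of the argument.
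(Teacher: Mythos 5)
Your first two steps (resolvent convergence via $\Gamma$-convergence and equicoercivity of the perturbed functionals, then Theorem~\ref{theorem1}) match the paper's proof closely, with $\omega_n=-\lambda$ and the m-accretivity coming from Theorem~\ref{thm:partiallambdamaccretive}. The gap is in the energy-convergence step, and it is a genuine one: your proposed resolution does not close.

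You bound $\Phi_n(u_n(t))\le[\Phi_n]^{\kappa(t,\lambda)}(x_n)$ by applying Theorem~\ref{GradFlowTheorem} from the initial point $x_n$, take the limit to get
\[
\Phi_\infty(u_\infty(t))\le\liminf_n\Phi_n(u_n(t))\le\limsup_n\Phi_n(u_n(t))\le[\Phi_\infty]^{\kappa(t,\lambda)}(x_\infty),
\]
and then hope the outer two terms coincide because ``$u_\infty(t)$ is itself the proximal-type limit realising $[\Phi_\infty]^{\kappa(t,\lambda)}(x_\infty)$.'' That is not true. The minimising-movement limit in \eqref{eq:minimizinglimit} involves an $n$-fold composition of proximal steps, not a single proximal step with parameter $\kappa(t,\lambda)$; the one-step Moreau envelope is strictly \emph{larger} than the energy along the flow in general. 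The paper's Remark~\ref{remark:quadratic} computes this gap explicitly for $\Phi(x)=\frac\lambda2 x^2$ and shows $[\Phi]^{\kappa(t,\lambda)}(x_0)-\Phi(u(t))>0$ for all $t>0$. So your chain of inequalities does not pinch, and invoking Proposition~\ref{envelopeProp2} or lower semicontinuity of $\Phi_\infty$ will not help: they say nothing that could make $[\Phi_\infty]^{\kappa(t,\lambda)}(x_\infty)$ collapse to $\Phi_\infty(u_\infty(t))$.

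The correct move, and the one the paper makes, is to apply Theorem~\ref{GradFlowTheorem} not from the initial data $x_n$ but from the flow position $u_n(t)$, running the flow for a further short time $\gamma>0$. By the semigroup property, $u_n(t+\gamma)$ is the value at time $\gamma$ of the gradient flow started at $u_n(t)$, so $\Phi_n(u_n(t+\gamma))\le[\Phi_n]^{\kappa(\gamma,\lambda)}(u_n(t))$. Since $u_n(t)\underset{\mathcal{M}}\longtwoheadrightarrow u_\infty(t)$ and the envelope convergence you established holds along any convergent sequence of arguments, $[\Phi_n]^{\kappa(\gamma,\lambda)}(u_n(t))\rightarrow[\Phi_\infty]^{\kappa(\gamma,\lambda)}(u_\infty(t))$, and by Remark~\ref{rem:envelopesupport} the latter is $\le\Phi_\infty(u_\infty(t))$. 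This gives the time-delayed bound $\limsup_n\Phi_n(u_n(t+\gamma))\le\Phi_\infty(u_\infty(t))$ for all small $\gamma>0$. To remove the delay, one uses that each $t\mapsto\Phi_n(u_n(t))$ is non-increasing (Remark~\ref{rem:gradientflow}), hence $g(t):=\limsup_n\Phi_n(u_n(t))$ is non-increasing and therefore a.e.\ continuous; at a continuity point $s$ of $g$ one may let $\gamma\downarrow0$ to get $g(s)\le\Phi_\infty(u_\infty(s))$, and at a general $t>0$ one sandwiches $g(t)$ between values of $g$ at nearby continuity points, using the continuity of $t\mapsto\Phi_\infty(u_\infty(t))$ on $(0,+\infty)$ (Remark~\ref{rem:maxslope}). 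That is what makes the two outer terms in your chain meet.
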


\begin{proof}
Let $\lambda\in \mathbb{R}$ be such that, for all $n\in \mathbb{N}_\infty$, $\Phi_n$ is $\lambda$-convex. Such a $\lambda$ exists by part~3 of Assumptions~\ref{assumHilbert}.

By Lemma~\ref{lem:gradflowsemigroup} the gradient flow $u_n$ coincides with the semigroup $S_{\partial^{\lambda}\Phi_n}$ acting on $x_n$. This allows us to prove uniform convergence of the gradient flows by making use of Theorem~\ref{theorem1}. To do so, we will show that, for $\gamma\in \mathfrak{I}_{-\lambda}$, the resolvents converge pointwise over $\big((H_n, \xi_n)_{n\in \mathbb{N}_\infty}, \mathcal{M}\big)$. (We recall that $\mathfrak{I}_{-\lambda}$ is defined in \eqref{OmegaInterval}.) To prove convergence of $(\Phi_n(u_n(t)))$ we will use properties of $\Gamma$-convergence and the bound in terms of a Moreau envelope in Theorem~\ref{GradFlowTheorem}.

Let $(h_n)_{n\in \mathbb{N}_\infty} \subset_n H_n$ be convergent. Let $\gamma \in \mathfrak{I}_{-\lambda}$ and define for each $n\in \mathbb{N}_{\infty}$ the function $\mathfrak{H}_n[\gamma,h_n]:H_n \rightarrow [0,+\infty]$ by
 $$ \mathfrak{H}_n[\gamma,h_n](v) := \frac{1}{2\gamma}\Vert v-h_n \Vert_n^2 + \Phi_n(v).$$
By Lemma~\ref{ProxResolve} (see also Definition~\ref{MoreauProx}), for each $n\in \mathbb{N}_{\infty}$,
 \begin{equation}\label{eq:Risargmin}
 R_{\gamma}(\partial^{\lambda} \Phi_n)h_n = \underset{v\in H_n}{\argmin} \, \mathfrak{H}_n[\gamma,h_n](v).
 \end{equation}
 In particular we note that the minimizer on the right-hand side is unique. 
 Moreover by the definition of the Moreau envelope in Definition~\ref{MoreauProx} we have
 \begin{equation}\label{eq:envelopeisinf}
 [\Phi_n]^{\gamma}(h_n) = \inf_{v\in H_n} \mathfrak{H}_n[\gamma,h_n](v).
 \end{equation}

We will show that the sequence $\left(\mathfrak{H}_n[\gamma,h_n]\right)_{n\in \mathbb{N}}$ $\Gamma$-converges to $\mathfrak{H}_{\infty}[\gamma,h_{\infty}]$ over the Banach stacking $\big((H_n,\xi_n)_{n\in\mathbb{N}_\infty,\mathcal{M}}\big)$, in the sense of Definition~\ref{BSdefs}. Indeed let $(v_n)_{n\in \mathbb{N}_{\infty}} \subset_n H_n$ be convergent. By properties~(iii) and~(iv) in Definition~\ref{BanStack} we have that $\Vert v_n - h_n \Vert_n^2 \rightarrow  \Vert v_{\infty} - h_{\infty} \Vert_{\infty}^2$ as $n\rightarrow \infty$. Thus
\begin{align*}
\liminf_{n \rightarrow \infty} \mathfrak{H}_n[\gamma,h_n](v_n) &= \liminf_{n \rightarrow \infty} \Phi_n(v_n) + \liminf_{n \rightarrow \infty} \frac{1}{2\gamma} \Vert v_n - h_n \Vert_n^2 \\
&= \liminf_{n \rightarrow \infty} \Phi_n(v_n) +  \frac{1}{2\gamma} \Vert v_{\infty} - h_{\infty} \Vert_{\infty}^2 \\
&\geq \Phi_{\infty}(v_{\infty})  +  \frac{1}{2\gamma} \Vert v_{\infty} - h_{\infty} \Vert_{\infty}^2 \\
& = \mathfrak{H}_{\infty}[\gamma,h_{\infty}](v_{\infty}).
\end{align*}
The first and second equalities hold by the convergence of $(\|v_n-h_n\|_n^2)$ and the inequality holds by part~1 of Assumptions~\ref{assumHilbert}. To obtain a recovery sequence, let $(v_n)_{n\in \mathbb{N}_{\infty}} \subset_n H_n$ be a convergent sequence such that $\limsup_{n \rightarrow \infty}\Phi_n(v_n) \leq \Phi_{\infty}(v_{\infty})$; such a sequence exists by part~1 of Assumptions~\ref{assumHilbert}. Similar to above, we find that 
$$\limsup_{n\rightarrow \infty} \mathfrak{H}_n[\gamma,h_n](v_n) = \limsup_{n \rightarrow \infty} \Phi_n(v_n) +  \frac{1}{2\gamma} \Vert v_{\infty} - h_{\infty} \Vert_{\infty}^2 \leq  \mathfrak{H}_{\infty}[\gamma,h_{\infty}](v_{\infty}).$$
As required, we conclude that $\left(\mathfrak{H}_n[\gamma,h_n]\right)_{n\in \mathbb{N}}$ $\Gamma$-converges to $\mathfrak{H}_{\infty}[\gamma,h_{\infty}]$ as $n\rightarrow \infty$. 

Next we wish to show that $\left(\mathfrak{H}_n[\gamma,h_n]\right)_{n\in \mathbb{N}_{\infty}}$ is equicoercive over the Banach stacking $\big((H_n,\xi_n)_{n\in\mathbb{N}_\infty,\mathcal{M}}\big)$, in the sense of Definition~\ref{BSdefs}. Suppose we have a sequence $(v_n)_{n\in \mathbb{N}_{\infty}} \subset_n H_n$ and a constant $C>0$ such that, for all $n\in \mathbb{N}_{\infty}$, $\mathfrak{H}_n[\gamma,h_n](v_n)\leq C$. We observe that, for all $n\in \mathbb{N}_{\infty}$, $\Phi_n(v_n)\leq C $. 

Choose $\delta>0$ such that $\delta+\gamma \in \mathfrak{I}_{-\lambda}$, which is possible by openness of $\mathfrak{I}_{-\lambda}$. We recall that, by assumption, for all $n\in \mathbb{N}_{\infty}$, $\Phi_n$ is $\lambda$-convex and thus $\mathfrak{H}_n[\gamma+\delta,h_n]$ is $(\frac{1}{\gamma+\delta} + \lambda)$-convex, with $\frac{1}{\gamma+\delta} + \lambda>0$. Since also, as proved above, $(\mathfrak{H}_n[\gamma+\delta,h_n])_{n\in \mathbb{N}}$ $\Gamma$-converges to $\mathfrak{H}_{\infty}[\gamma+\delta,h_{\infty}]$, we can apply Lemma~\ref{Lem:lambdagammabound} to deduce that there exists an $L>0$ such that, for all $n\in \mathbb{N}_{\infty}$ and for all $w\in H_n$, $$\mathfrak{H}_n[\gamma+\delta,h_n](w) \geq -L.$$ 
It follows that 
\begin{align*}
\mathfrak{H}_n[\gamma,h_n](v_n) &= \left( \frac{\delta}{2\gamma(\gamma+\delta)} +\frac{1}{2(\gamma+\delta)} \right) \Vert v_n-h_n \Vert_n^2 + \Phi_n(v_n) \\
&= \frac{\delta}{2\gamma(\gamma+\delta)}  \Vert v_n-h_n \Vert_n^2 + \mathfrak{H}_n[\gamma,h_n](v_n) 
\geq \frac{\delta}{2\gamma(\gamma+\delta)}  \Vert v_n-h_n \Vert_n^2 - L
\end{align*}
and thus 
$$ \Vert v_n-h_n \Vert_n^2 \leq \frac{2\gamma(\gamma+\delta)}{\delta}(L+C).$$
Given that $(h_n)_{n\in \mathbb{N}_\infty}$ is convergent, we note that there exists a constant $K>0$ such that, for all $n\in \mathbb{N}_\infty$,
\begin{align*}
    \Vert v_n \Vert_n &\leq \Vert h_n \Vert_n + \Vert h_n - v \Vert_n
    \leq K + \sqrt{\frac{2\gamma}{\delta}(\gamma+\delta)(L+C)} 
    <+\infty.
\end{align*}
Thus $(\Vert v_n \Vert_n)_{n\in \mathbb{N}}$ and $(\Phi_n(v_n))_{n\in \mathbb{N}}$ are bounded. Part~2 of Assumptions~\ref{assumHilbert} then implies that there exists an $x_\infty\in X_\infty$ and a subsequence $(x_{n_k})_{k\in\mathbb{N}}$ such that $x_{n_k}\underset{\mathcal{M}}\longtwoheadrightarrow x_{\infty}$ as $k \rightarrow \infty$. Hence $\left(\mathfrak{H}_n[\gamma,h_n]\right)_{n\in \mathbb{N}}$ is equicoercive. Therefore, using Proposition~\ref{prop:gammaconBS}, we find from \eqref{eq:Risargmin} that $R_{\gamma}(\partial^{\lambda} \Phi_n)h_n \underset{\mathcal{M}}\longtwoheadrightarrow R_{\gamma}(\partial^{\lambda} \Phi_{\infty})h_{\infty} $ as $n\rightarrow \infty$. For later use we also note that  Proposition~\ref{prop:gammaconBS} and \eqref{eq:envelopeisinf} imply that 
\begin{equation}\label{eq:convergentenvelopes}
[\Phi_n]^{\gamma}(h_n)\rightarrow [\Phi_{\infty}]^{\gamma}(h_{\infty}) \qquad \text{as } n\rightarrow \infty.
\end{equation}

By property~(iii) in Assumptions~\ref{assumHilbert} and Theorem~\ref{thm:partiallambdamaccretive} we have that, for all $n\in \mathbb{N}_\infty$, $\partial^{\lambda} \Phi_n$ is ($-\lambda$)-m-accretive and thus, by Remark~\ref{rem:rangecondition}, satisfies the range condition~\eqref{rangeCon}. We now apply Theorem~\ref{theorem1} and Lemma~\ref{lem:gradflowsemigroup} to deduce for all $t\geq 0$, as $n\rightarrow \infty$,
 $$ u_n(t) = S_{\partial^{\lambda} \Phi_n}(t)x_n \underset{\mathcal{M}}\longtwoheadrightarrow S_{\partial^{\lambda} \Phi_{\infty}}(t)x_{\infty} =u_{\infty}(t).$$
 Moreover, given $T\in [0,+\infty)$, this convergence is uniform on $[0,T]$.

 For a proof of the final statement of the theorem, define, for all $n\in \mathbb{N}_\infty$, the function $f_n:[0,+\infty) \rightarrow (-\infty,+\infty]$ by
 $f_n(t) := \Phi_n\big( u_n(t) \big).$
 By Remark~\ref{rem:gradientflow} (see also Remark~\ref{rem:maxslope}), for all $n\in \mathbb{N}_{\infty}$, $f_n$ is non-increasing. Given the previously established convergence of $(u_n(t))_{n\in \mathbb{N}}$ plus the $\Gamma$-convergence of $(\Phi_n)_{n\in \mathbb{N}}$ we have, for all $t\geq 0$, 
 \begin{equation}\label{eq:liminffn}
 \liminf_{n\rightarrow \infty} f_n(t) \geq f_{\infty}(t).
 \end{equation}
Then, since $\gamma\in \mathfrak{I}_{-\lambda}$ and $t\geq 0$ we have
\begin{align*}
f_n(t+\gamma) &= \Phi_n \big(u_n(t+ \gamma) \big)
\leq [\Phi_n]^{\kappa(\gamma,\lambda)}\big( u_n(t) \big) 
\rightarrow [\Phi_{\infty}]^{\kappa(\gamma,\lambda)}\big( u_{\infty}(t) \big) 
\leq f_{\infty}(t).
\end{align*}
The first inequality holds by Theorem~\ref{GradFlowTheorem} (which also contains the definition of $\kappa(\gamma,\lambda)$), where we used the uniqueness and semigroup property of the gradient flow (see Remark~\ref{rem:gradientflow}) to interpret $u_n(t+\gamma)$ as the value at time $\gamma$ of the gradient flow with initial value $u_n(t)$. The convergence of the Moreau envelopes follows from \eqref{eq:convergentenvelopes} (which holds for all $\gamma \in \mathfrak{I}_{-\lambda}$), since $\kappa(\gamma,\lambda)\in \mathfrak{I}_{-\lambda}$ by Remark~\ref{rem:kappa}. The second inequality holds by Remark~\ref{rem:envelopesupport}.
  
 In particular we have, for all $\gamma\in \mathfrak{I}_{-\lambda}$ and for all $t\geq 0$, 
 \begin{equation}\label{eq:limsupfn}
 \limsup_{n \rightarrow \infty} f_n(t+\gamma) \leq f_{\infty}(t).
 \end{equation}
 Define $g:[0,+\infty)\rightarrow [0,+\infty]$ by
 $ g(t):= \limsup_{n\rightarrow \infty} f_n(t).$
 Since, for all $n\in \mathbb{N}$, $f_n$ is non-increasing, also $g$ is non-increasing and hence $g$ is continuous a.e. on $[0,+\infty)$. Assume $s$ is a continuity point of $g$ (i.e., $g$ is continuous at $s$), then
 $$ g(s) = \lim_{\gamma \downarrow 0} g(s+\gamma) \leq f_{\infty}(s).$$ 
 We are allowed to take the limit $\gamma\downarrow 0$, because \eqref{eq:limsupfn} holds for all $\gamma\in \mathfrak{I}_{-\lambda}$. It follows that 
 $$ f_{\infty}(s) \leq \liminf_{n\rightarrow \infty} f_n(s) \leq \limsup_{n\rightarrow \infty} f_n(s) \leq f_{\infty}(s)$$
and so $f_n(s) \rightarrow f_\infty(s)$ as $n\rightarrow \infty$. In particular $g(s)=f_{\infty}(s)$.

Now let $t>0$, let $(a_k)_{k\in \mathbb{N}}$ and $(b_k)_{k\in\mathbb{N}}$ be two sequences with $0\leq a_k < t < b_k$ such that, for all $k\in \mathbb{N}$, $a_k$ and $b_k$ are continuity points of $g$, $(a_k)$ and $(b_k)$ both converge to $t$, $(a_k)$ is strictly increasing, and $(b_k)$ is strictly decreasing. We note that such sequences exist, since the set of continuity points of $g$, being the complement of a null set in $[0,+\infty)$, is dense in $[0,+\infty)$. As explained in Remark~\ref{rem:maxslope}, the function $f_{\infty}$ is continuous on $(0,+\infty)$ and hence
\[
f_{\infty}(t) = \lim_{k \rightarrow \infty} f_\infty(a_k) = \lim_{k \rightarrow \infty} g(a_k) 
\geq g(t).
\]
Similarly it follows that
\[
f_{\infty}(t) = \lim_{k \rightarrow \infty} f_\infty(b_k) = \lim_{k \rightarrow \infty} g(b_k) 
\leq g(t).
\]

Thus, for all $t>0$, $f_{\infty}(t)=g(t)=\limsup_{n\rightarrow \infty}f_n(t)$. Together with the inequality for the limit inferior in \eqref{eq:liminffn} this implies that, for all $t>0$, $f_n(t)\rightarrow f_{\infty}(t)$ as $n\rightarrow \infty$. This concludes our proof. 
\end{proof}

We expect $TL^2(\Omega)$ (see part~6 of Examples~\ref{BSexmpl}) to be a useful example of a Banach stacking of Hilbert spaces in which Theorem~\ref{Hilthm} can be applied.

\subsection{Banach stacking of \texorpdfstring{$L^p$}{Lp} spaces: \texorpdfstring{$TL^p(\Omega)$}{TLp(Omega)}}\label{sec:TLp}

For this subsection (Section~\ref{sec:TLp}) let $\Omega \subset \mathbb{R}^d$ be an open subset and $(\mu_n)_{n\in \mathbb{N}_{\infty}}$ a sequence of Borel probability measures. We consider a sequence $(\Phi_n)_{n\in \mathbb{N}_{\infty}}$  of functionals $\Phi_n: \mathfrak{F}(\Omega;\mu_n) \rightarrow (-\infty,+\infty]$. We recall that the space of measurable functions $\mathfrak{F}(\Omega;\mu_n)$ is defined at the start of Section~\ref{sec:subdifferentials}. 

As in Section~\ref{sec:BanachStackHilbert}, again we are interested in the convergence of gradient flows over Hilbert spaces, in particular the convergence of the gradient flows of $(\Phi_n|_{L^2(\Omega;\mu_n)})_{n\in \mathbb{N}_{\infty}}$ over the Hilbert spaces $L^2(\Omega;\mu_n)$. We will be working, however, with different assumptions than in Section~\ref{sec:BanachStackHilbert}. In particular we will assume $\Gamma$-convergence and equicoercivity in the topology of $TL^p(\Omega)$, which, if $p\neq 2$, is not a Banach stacking of Hilbert spaces. 

Moreover, in the case when $p<2$ the assumption of equicoercivity which we will make in Assumptions~\ref{assump} is strictly weaker than the analogous assumption of relative compactness in Assumptions~\ref{assumHilbert}. The relevance of such an assumption is explained in Section~\ref{sec:convgradflowintro}. 

To deal with this technical difficulty we will assume (in Assumptions~\ref{assump}) that the functionals $\Phi_n$ are $P_0$-convex (see Definition~\ref{def:Pconvex}), rather than $\lambda$-convex. Henceforth for this subsection, fix $p\in [1,+\infty)$ and, for all $n\in \mathbb{N}_{\infty}$, define $X_n:=L^p(\Omega;\mu_n)$ with $\|\cdot\|_n$ denoting the corresponding $L^p(\Omega;\mu_n)$ norm. In this subsection, for any subset $A\subset X_n$, $\overline{A}$ always denotes the closure of $A$ with respect to the topology on $X_n$. We also assume that $(\mu_n)_{n\in \mathbb{N}}$ converges to $\mu_{\infty}$ as $n\rightarrow \infty$ in the $\max(2,p)$-Wasserstein metric (see Definition~\ref{def:Wasserstein}).

\begin{definition}
\label{movementeq}
Let $n\in \mathbb{N}_\infty$, $h\in X_n$, and $\lambda >0$. We define the functional $\mathfrak{M}[\lambda,h;\Phi_n]: \mathfrak{F}(\Omega;\mu_n) \rightarrow (-\infty,+\infty]$ by
$$ \mathfrak{M}[\lambda,h;\Phi_n](u) := \frac{1}{2}\int_{\Omega} |u-h|^2 \, d\mu_n + \lambda \Phi_n(u).$$
We also define the functional $\mathfrak{N}[\lambda;\Phi_n]:L^2(\Omega;\mu_n) \rightarrow (-\infty,+\infty]$ by
$$ \mathfrak{N}[\lambda;\Phi_n](u) := \frac{1}{2}\int_{\Omega} |u|^2 \, d\mu_n + \lambda \Phi_n(u).$$
\end{definition}

We note the similarity between the functionals $\mathfrak{M}[\lambda,h;\Phi_n]$ in Definition~\ref{movementeq} and the functionals $\mathfrak{H}_n[\gamma,h_n]$ in the proof of Theorem~\ref{Hilthm}. Once again the minimizers of these functionals will be important for the proof of convergence of the gradient flows. 

We note the following connection with the Moreau envelope from Definition~\ref{MoreauProx}:
$$ [\Phi_n]^{\gamma}(u) = \frac1\gamma \inf_{v\in L^2(\Omega;\mu_n)} \mathfrak{M}[\gamma,u;\Phi_n](v).$$
Moreover, the proximal operator $J_\gamma(\Phi_n)$, also from Definition~\ref{MoreauProx}, coincides with the minimizers of $\mathfrak{M}[\gamma,u;\Phi_n]$, in the sense that $(u,w)\in J_\gamma(\Phi_n)$ if and only if $w\in \argmin_{v\in L^2(\Omega;\mu_n)} \mathfrak{M}[\gamma,u;\Phi_n](v)$.

For the following assumptions, we recall Definition~\ref{def:Pconvex} for the definition of $P_0$-convexity and note that the definitions of $\Gamma$-convergence and equicoercivity over Banach stackings are given in Definition~\ref{BSdefs}.

\begin{assumptions}\label{assump}
\begin{enumerate}[(i)]
\item There exists a subset $\mathcal{A}\subset L^{\max(2,p)}(\Omega;\mu_{\infty})$ such that $\mathcal{A}$ is dense in $X_{\infty}$. Moreover, for all $\lambda>0$, for all $h_{\infty}\in \mathcal{A}$, and for all sequences $(h_n)_n \subset_n L^{\max(2,p)}(\Omega;\mu_n)$ such that $h_n\underset{TL^{\max(2,p)}(\Omega)}\longtwoheadrightarrow h_{\infty}$ as $n\rightarrow \infty$, the following hold:
\begin{itemize}
    \item the sequence $\left(\mathfrak{M}[\lambda,h_n;\Phi_n]\right)_{n\in\mathbb{N}}$ $\Gamma$-converges to $\mathfrak{M}[\lambda,h_{\infty};\Phi_{\infty}]$ as $n\rightarrow \infty$ over the Banach stacking $\big((X_n, \xi_n)_{n\in \mathbb{N}_\infty}, TL^p(\Omega)\big)$ and
    \item the sequence $\big( \mathfrak{M}[\lambda,h_n;\Phi_n] \big)_{n\in \mathbb{N}}$ is equicoercive over the Banach stacking $\big((X_n, \xi_n)_{n\in \mathbb{N}_\infty}, TL^p(\Omega) \big)$.
\end{itemize}
\item For all $n\in \mathbb{N}_{\infty}$, $\Phi_n: \mathfrak{F}(\Omega;\mu_n) \rightarrow (-\infty,+\infty]$ is non-negative, $P_0$-convex on $L^{\min (2,p)}(\Omega;\mu_n)$, lower semicontinuous with respect to the topology of $L^{\min (2,p)}(\Omega;\mu_n)$, and $\Phi_n(0)=0$.
\end{enumerate}
\end{assumptions}

\begin{remark}\label{rem:00subdiff}
By assumption~(ii) in Assumptions~\ref{assump}, for all $n\in \mathbb{N}_\infty$, $\Phi_n$ is non-negative and $\Phi_n(0)=0$. As a consequence, for all $n\in \mathbb{N}_\infty$ and for all $j\in [1,+\infty)$, $(0,0) \in \partial_{L^j} \Phi_n$, as can be checked directly from Definition~\ref{Subdiffdef}.
Moreover, $\Phi_n(0)=0\neq +\infty$ also implies that $\Phi_n$ is proper. 
\end{remark}

\begin{remark}
\label{rem:lowsemcont}
Let $j\geq \min(2,p)$. Since $\mu_n$ is a probability measure (and thus a finite measure), by H\"older's inequality we have that $L^j(\Omega;\mu_n)\subset L^{\min (2,p)}(\Omega;\mu_n)$ and convergence in $L^j(\Omega;\mu_n)$ implies convergence in $L^{\min (2,p)}(\Omega;\mu_n)$. It follows that lower semicontinuity with respect to $L^{\min (2,p)}(\Omega;\mu_n)$ implies lower semicontinuity with respect to $L^j(\Omega;\mu_n)$. Moreover, recalling Definition~\ref{def:sumtopology}, we note that convergence in the topology of $L^2(\Omega;\mu_n)+L^p(\Omega;\mu_n)$ implies convergence in the topology of $L^{\min (2,p)}(\Omega;\mu_n)$. Hence it also follows that lower semicontinuity with respect to $L^{\min (2,p)}(\Omega;\mu_n)$ implies lower semicontinuity with respect to $L^2(\Omega;\mu_n)+L^p(\Omega;\mu_n)$. 
\end{remark}

\begin{remark}\label{rem:restrictedPhin}
As a consequence of assumption~(ii) in Assumptions~\ref{assump} and Remark~\ref{rem:lowsemcont}, for all $n\in \mathbb{N}_\infty$, the restriction $\Phi_n|_{L^2(\Omega;\mu_n)}$ of $\Phi_n$ to $L^2(\Omega;\mu_n)$ is a lower semicontinuous function with respect to the topology of $L^2(\Omega;\mu_n)$. Furthermore, by Remark~\ref{rem:P0subset}, $\Phi_n|_{L^2(\Omega;\mu_n)}$ is $P_0$-convex on $L^2(\Omega;\mu_n)$, and, since $\Phi_n(0)=0$ and $0\in L^2(\Omega;\mu_n)$, $\Phi_n|_{L^2(\Omega;\mu_n)}$ is also proper. It follows from Proposition~\ref{Pprop} that $\Phi_n|_{L^2(\Omega;\mu_n)}$ is a convex function. So we may consider its gradient flow. 
\end{remark}

\begin{lemma}
\label{lem:Resolve}
Let $\big((X_n, \xi_n)_{n\in \mathbb{N}_\infty}, TL^p(\Omega)\big)$ be the Banach stacking of part~6 of Examples~\ref{BSexmpl} and let $(\Phi_n)_{n\in \mathbb{N}_\infty}$ be a sequence of functionals $\Phi_n: \mathfrak{F}(\Omega;\mu_n) \rightarrow (-\infty,+\infty]$. Assume Assumptions~\ref{assump} are satisfied. Suppose $(z_n)_{n\in \mathbb{N}_{\infty}} \subset_n X_n$ is a sequence such that $z_n\underset{TL^p(\Omega)}\longtwoheadrightarrow z_{\infty} $ as $n\rightarrow \infty$. Then, for all $\lambda>0$,
$$ R_{\lambda}(\overline{\partial_{L^p} \Phi_n})z_n \underset{TL^p(\Omega)}\longtwoheadrightarrow  R_{\lambda}(\overline{\partial_{L^p} \Phi_{\infty}})z_{\infty} \qquad \text{as } n\to \infty.$$
\end{lemma}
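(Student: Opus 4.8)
\textbf{Proof plan for Lemma~\ref{lem:Resolve}.}

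The plan is to reduce the statement to an application of Lemma~\ref{thResLem} (the density argument for Banach stackings) with $G_n := R_\lambda(\overline{\partial_{L^p}\Phi_n})$, together with a characterization of the resolvent of $\overline{\partial_{L^p}\Phi_n}$ as a minimizer of a movement-type functional. First I would fix $\lambda>0$ and verify the structural hypotheses. By Remark~\ref{rem:00subdiff}, $(0,0)\in\partial_{L^p}\Phi_n$ for all $n\in\mathbb{N}_\infty$; by Remark~\ref{rem:lowsemcont}, $\Phi_n$ is lower semicontinuous with respect to the topology of $L^2(\Omega;\mu_n)+L^p(\Omega;\mu_n)$, and by assumption~(ii) of Assumptions~\ref{assump} it is $P_0$-convex on $L^{\min(2,p)}(\Omega;\mu_n)$, hence (Remark~\ref{rem:P0subset}) on $L^p(\Omega;\mu_n)$. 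Thus Theorem~\ref{BenCran1991} applies: $\overline{\partial_{L^p}\Phi_n}$ is m-accretive on $X_n=L^p(\Omega;\mu_n)$, so by Remark~\ref{accremark} $R_\lambda(\overline{\partial_{L^p}\Phi_n})$ is a well-defined $1$-Lipschitz-continuous function from $X_n$ to $X_n$; call it $G_n$. This gives the $1$-Lipschitz hypothesis of Lemma~\ref{thResLem}.

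Next I would supply the dense set and the recovery sequences demanded by Lemma~\ref{thResLem}. Take $\mathcal{A}\subset L^{\max(2,p)}(\Omega;\mu_\infty)$ dense in $X_\infty$ as in assumption~(i). Fix $z_\infty\in\mathcal{A}$; I must produce a sequence $(z_n)\subset_n X_n$ with $z_n\underset{TL^p(\Omega)}\longtwoheadrightarrow z_\infty$ and $G_n(z_n)\underset{TL^p(\Omega)}\longtwoheadrightarrow G_\infty(z_\infty)$. For the first convergence I would use Proposition~\ref{tlpBanach} (condition~(ii) of a Banach stacking) to obtain $z_n\underset{TL^{\max(2,p)}(\Omega)}\longtwoheadrightarrow z_\infty$ — and since $\max(2,p)\geq p$, Lemma~\ref{tlplem2} (if $p<\max(2,p)$, i.e. $p<2$) or triviality (if $p\geq 2$) downgrades this to $TL^p(\Omega)$ convergence. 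The key identity I would establish is that, for $\lambda>0$ and $w\in X_n\cap L^2(\Omega;\mu_n)$,
\[
G_n(z_n) = R_\lambda(\overline{\partial_{L^p}\Phi_n})z_n = \argmin_{v\in X_n}\,\mathfrak{M}[\lambda,z_n;\Phi_n](v),
\]
paralleling the Moreau–resolvent identity of Lemma~\ref{ProxResolve} but now in the $L^p$ setting: by \eqref{eq:subgradidentity} of Theorem~\ref{BenCran1991}, $\overline{\partial_{L^p}\Phi_n} = \overline{\partial_{L^2}\Phi_n\cap(X_n\times X_n)}$, and the resolvent of the $L^2$-subdifferential restricted appropriately is, via Proposition~\ref{SubdiffProp}(3)--(4) and the fact that $\Phi_n|_{L^2}$ is proper convex lsc (Remark~\ref{rem:restrictedPhin}), exactly the minimizer of $v\mapsto\frac12\|v-z_n\|_{L^2}^2+\lambda\Phi_n(v)$ — noting $\Phi_n$ is $P_0$-convex, so by Lemma~\ref{lemAcrete} this minimizer lies in $X_n$ and the $L^p$-norm contraction controls everything. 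I would then invoke assumption~(i): since $z_n\underset{TL^{\max(2,p)}(\Omega)}\longtwoheadrightarrow z_\infty$ with $z_\infty\in\mathcal{A}$, the sequence $\big(\mathfrak{M}[\lambda,z_n;\Phi_n]\big)_n$ $\Gamma$-converges to $\mathfrak{M}[\lambda,z_\infty;\Phi_\infty]$ over $\big((X_n,\xi_n)_{n},TL^p(\Omega)\big)$ and is equicoercive. By Proposition~\ref{prop:gammaconBS} (convergence of minimizers under $\Gamma$-convergence plus equicoercivity), the minimizers converge: $G_n(z_n)\underset{TL^p(\Omega)}\longtwoheadrightarrow G_\infty(z_\infty)$. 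That furnishes the recovery data for every $z_\infty\in\mathcal{A}$.

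With the hypotheses of Lemma~\ref{thResLem} in place, it follows that for \emph{any} sequence $(z_n)_{n\in\mathbb{N}_\infty}\subset_n X_n$ with $z_n\underset{TL^p(\Omega)}\longtwoheadrightarrow z_\infty$ we have $G_n(z_n)\underset{TL^p(\Omega)}\longtwoheadrightarrow G_\infty(z_\infty)$, which is exactly the claim. The main obstacle I anticipate is the resolvent-equals-minimizer identity for $\overline{\partial_{L^p}\Phi_n}$: one must be careful that the minimizer of $\mathfrak{M}[\lambda,z_n;\Phi_n]$ (a priori only defined for $z_n\in L^2$, via the $L^2$-subdifferential) actually coincides with $R_\lambda$ of the \emph{closed $L^p$-subdifferential} $\overline{\partial_{L^p}\Phi_n}$, and that this holds for all $z_n\in X_n$ and not merely on $L^2\cap L^p$; this is where the structure from Theorem~\ref{BenCran1991}, the complete-accretivity Lemma~\ref{lemAcrete}, and the density/closure bookkeeping (the operators agree on the dense set $L^{1\cap\infty}$, and both resolvents are $1$-Lipschitz, hence agree everywhere on $X_n$) must be marshalled carefully. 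A secondary technical point is ensuring the interpolation step (Lemma~\ref{tlplem2}, and implicitly the norm-control in assumption~(i)) legitimately converts $TL^{\max(2,p)}$ data into $TL^p$ data when $p<2$, and handling the trivial inclusion when $p\geq 2$.
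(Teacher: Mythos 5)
Your plan follows essentially the same strategy as the paper's proof: reduce to Lemma~\ref{thResLem} with $G_n = R_\lambda(\overline{\partial_{L^p}\Phi_n})$, identify this resolvent with the minimizer of $\mathfrak{M}[\lambda,\cdot;\Phi_n]$ over $X_n$ via Theorem~\ref{BenCran1991} and the Moreau--resolvent identity, and invoke the $\Gamma$-convergence and equicoercivity of assumption~(i) together with Proposition~\ref{prop:gammaconBS} to get convergence of minimizers. The technical bookkeeping you correctly flag as the main obstacle --- establishing the resolvent--minimizer identity on a dense subset on which it can be checked, then extending by $1$-Lipschitz continuity --- is precisely what the paper carries out via the auxiliary operator $A_n := (\partial_{L^2}\mathfrak{N}[\lambda;\Phi_n]\cap(X_n\times X_n))^{-1}$, including a case split on $p\le 2$ versus $p>2$ to ensure the recovery sequence actually lies in $D(A_n)$.
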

\begin{proof}
In this proof, given an operator $A$ defined on some subspace of $X_n$, the notation $\overline{A}$ always refers to the pointwise closure with respect to the topology of $X_n$ (see Definition~\ref{def:pointwiseclosure}). 

First we prove some properties of the operators that are involved in this proof. Let $n\in \mathbb{N}_\infty$. We apply parts~1, 2, and~4 of Proposition~\ref{SubdiffProp} to deduce that
\begin{equation}
\label{resolveEq2}
I_{L^2(\Omega; \mu_n)}+\lambda \partial_{L^2} \Phi_n \subset \partial_{L^2} \mathfrak{N}[\lambda;\Phi_n].
\end{equation}
Taking an intersection with $X_n$ we get that
$$ I_{X_n}+\lambda \partial_{L^2} \Phi_n \cap \big(X_n \times X_n \big) \subset  \big(I_{L^2(\Omega; \mu_n)}+\lambda \partial_{L^2} \Phi_n \big)\cap \big(X_n \times X_n \big) \subset \partial_{L^2} \mathfrak{N}[\lambda;\Phi_n] \cap \big(X_n \times X_n \big).$$
Subsequently taking the pointwise closure on both sides of the inclusion above, we obtain 
$$ \overline{I_{X_n}+\lambda \partial_{L^2} \Phi_n \cap \big(X_n \times X_n \big)} \subset \overline{\partial_{L^2}\mathfrak{N}[\lambda;\Phi_n] \cap \big(X_n \times X_n \big)}.$$
Applying Proposition~\ref{Closureprop} we obtain that
$$ I_{X_n}+ \lambda\overline{\partial_{L^2} \Phi_n \cap \big(X_n \times X_n \big)} \subset \overline{\partial_{L^2}\mathfrak{N}[\lambda;\Phi_n] \cap \big(X_n \times X_n \big)}.$$

We can apply Theorem~\ref{BenCran1991} to $\Phi_n$ for each $n\in \mathbb{N}_{\infty}$. Indeed, for all $n\in\mathbb{N}_\infty$, $\Phi_n$ is $P_0$-convex by assumption~(ii) in Assumptions~\ref{assump}, $(0,0)\in \partial_{L^p} \Phi_n$ by Remark~\ref{rem:00subdiff}, and by Remark~\ref{rem:lowsemcont} $\Phi_n$ is lower semicontinuous with respect to the topology of $L^2(\Omega;\mu_n)+L^p(\Omega;\mu_n)$. Thus
$$ \overline{\partial_{L^2}\Phi \cap (X_n \times X_n)} = \overline{\partial_{L^p} \Phi}$$
and it follows that 
\begin{equation}
\label{resolveEq1}
 I_{X_n} + \lambda\overline{\partial_{L^p}\Phi_n} \subset \overline{\partial_{L^2}\mathfrak{N}[\lambda;\Phi_n] \cap \big(X_n \times X_n \big)}.
\end{equation}
We observe that $I_{L^2(\Omega;\mu_n)}+\lambda \partial_{L^2} \Phi_n$ has range equal to $L^2(\Omega;\mu_n)$, since by Theorem~\ref{thm:partiallambdamaccretive} $\partial_{L^2} \Phi_n|_{L^2(\Omega;\mu_n)}$ is m-accretive (see Definition~\ref{def:m-accretive}) and thus, by Remark~\ref{rem:adjustedsubdiff}, so is $\partial_{L^2} \Phi_n$. Theorem~\ref{thm:partiallambdamaccretive} can be used here since, by Remark~\ref{rem:restrictedPhin}, $\Phi_n|_{L^2(\Omega;\mu_n)}$ is proper, convex (thus $0$-convex), and lower semicontinuous. 
Since, by Definition~\ref{Subdiffdef}, the range of $\partial_{L^2} \mathfrak{N}[\lambda;\Phi_n]$ is included in $L^2(\Omega;\mu_n)$, from the inclusion (\ref{resolveEq2}) it follows that its range is in fact equal to $L^2(\Omega;\mu_n)$. Hence, by Remark~\ref{rem:domainrange}, $\big(\partial_{L^2} \mathfrak{N}[\lambda;\Phi_n]\big)^{-1}$ has domain equal to $L^2(\Omega;\mu_n)$.

By Lemmas~\ref{basicP0} and~\ref{quadP0} $\mathfrak{N}[\lambda;\Phi_n]$ is $P_0$-convex on $\mathfrak{F}(\Omega;\mu_n)$. We apply Lemma~\ref{lemAcrete} to find that $\partial_{L^2} \mathfrak{N}[\lambda;\Phi_n]$ is accretive with respect to the norm on $X_n$ and thus, by Remark~\ref{rem:intersectionclosure}, so is $\partial_{L^2} \mathfrak{N}[\lambda;\Phi_n] \cap \big( X_n \times X_n \big)$. By the same remark, $\overline{\partial_{L^2} \mathfrak{N}[\lambda;\Phi_n] \cap \big(X_n \times X_n \big) }$ is also accretive with respect to the norm on $X_n$. 

It follows from Theorem~\ref{BenCran1991} that $\overline{ \partial_{L^p} \Phi_n }$ is m-accretive on $X_n$ and thus so is $I_{X_n}+\lambda \overline{ \partial_{L^p} \Phi_n }$ by Proposition~\ref{accprop}. By Proposition~\ref{propMaccret}, Definition~\ref{def:maximalmonotone}, and equation \eqref{resolveEq1}, we have $$I_{X_n}+\lambda \overline{ \partial_{L^p} \Phi_n } = \overline{ \partial_{L^2} \mathfrak{N}[\lambda;\Phi_n]  \cap \big( X_n \times X_n\big)}$$
and thus, taking inverses, we find that
\begin{equation*}
\big( \overline{ \partial_{L^2} \mathfrak{N}[\lambda;\Phi_n] \cap \big(X_n \times X_n\big)} \big)^{-1} = R_{\lambda} \big( \overline{ \partial_{L^p} \Phi_n} \big) .
\end{equation*}
Define $A_n:= \big(\partial_{L^2} \mathfrak{N}[\lambda;\Phi_n] \cap \big(X_n \times X_n\big)\big)^{-1}$, which is an operator on $X_n$. In fact, it is an operator on $X_n \cap L^2(\Omega;\mu_n)$, since it is the inverse of the restriction of the operator $\partial_{L^2} \mathfrak{N}[\lambda;\Phi_n]$ (which itself is an operator on $L^2(\Omega;\mu_n)$) to $X_n \cap L^2(\Omega;\mu_n)$. Since $\mu_n$ is a finite measure, $X_n \cap L^2(\Omega;\mu_n) = L^{\max(2,p)}(\Omega;\mu_n)$ (as can be shown using H\"older's inequality) and thus $D(A_n) \subset L^{\max(2,p)}(\Omega;\mu_n)$. By Lemma~\ref{lem:inverseclosure} we know that $\overline{A_n} = \big( \overline{ \partial_{L^2} \mathfrak{N}[\lambda;\Phi_n] \cap \big(X_n \times X_n\big)} \big)^{-1}$. Since $\overline{ \partial_{L^p} \Phi_n}$ and thus also $R_{\lambda} \big( \overline{ \partial_{L^p} \Phi_n} \big)$ are operators on $X_n$, so is $\overline{A_n}$. By Proposition~\ref{prop:accretivecontraction}, since $\overline{ \partial_{L^p} \Phi_n }$ is m-accretive (and thus accretive) on $X_n$, $R_{\lambda} \big( \overline{ \partial_{L^p} \Phi_n} \big)$ is a contraction with respect to the norm on $X_n$ and hence so is $\overline{A_n}$. Given that $A_n \subset \overline{A_n}$ we get that $A_n$ is also a contraction with respect to the norm on $X_n$.

As mentioned in Remark~\ref{rem:contraction}, we can identify a contraction on $X_n$ with a $1$-Lipschitz-continuous map on a subset of $X_n$. Thus we can view $R_{\lambda}(\overline{\partial_{L^p} \Phi_n})$ as the continuous extension of $A_n$ onto all of $X_n$, given that $\overline{\partial_{L^p} \Phi_n}$ is m-accretive on $X_n$ and so $R_{\lambda}(\overline{\partial_{L^p} \Phi_n})$ has domain equal to $X_n$. Moreover, since $\overline{A_n}=R_{\lambda}(\overline{\partial_{L^p} \Phi_n})$ and $D(\overline{A_n})=X_n$, we have by Remark~\ref{rem:domainsofclosures} that $\overline{D(A_n)}\supset D(\overline{A_n})=X_n$, in particular the domain of $A_n$ is dense in $X_n$. 

We now have collected the prerequisites to prove the theorem. We wish to do so via the density argument in Lemma~\ref{thResLem} applied to the Banach stacking $\big((L^p(\Omega;\mu), \xi_{\mu})_{\mu \in \mathcal{P}_p(\Omega)}, TL^p(\Omega)\big)$ (part~6 of Examples~\ref{BSexmpl}), where for $\mathcal{A}$ in the lemma we will take the dense subset $\mathcal{A} \subset L^{\max(2,p)}(\Omega;\mu_\infty)$ whose existence is guaranteed by part~(i) of Assumptions~\ref{assump} and for $G_n$ we will use $R_\lambda(\overline{\partial_{L^p}\Phi_n})$. We note that $L^{\max(2,p)}(\Omega;\mu_\infty) \subset X_\infty$ and thus $\mathcal{A}\subset X_\infty$.

A key ingredient in the remainder of the proof is that, for all $h_\infty\in \mathcal{A}$, there exists a sequence $(h_n)_{n\in\mathbb{N}} \subset_n D(A_n)$ such that $h_n \underset{TL^{\max(2,p)}(\Omega)}\longtwoheadrightarrow h_{\infty}$ as $n\to \infty$, as we will prove shortly. We recall that $D(A_n) \subset L^{\max(2,p)}(\Omega;\mu_n)$, and thus such a sequence $(h_n)$ satisfies the conditions from part~(i) of Assumptions~\ref{assump}. Moreover, since $\mu_n$ is a finite measure, by H\"older's inequality we have $L^{\max(2,p)}(\Omega;\mu_n) \subset X_n$; furthermore, by Lemma~\ref{tlplem2}, $\max(2,p)\geq p$ implies that $h_n \underset{TL^p(\Omega)}\longtwoheadrightarrow h_{\infty}$ as $n\to \infty$. Thus, if it also holds that $R_\lambda(\overline{\partial_{L^p}\Phi_n}) h_n \underset{TL^p(\Omega)}\longtwoheadrightarrow R_\lambda(\overline{\partial_{L^p}\Phi_n}) h_\infty$ as $n\rightarrow \infty$, then all the requirements are fulfilled to make use of Lemma~\ref{thResLem} in the way explained in the previous paragraph. The fact that $(h_n)_{n\in\mathbb{N}} \subset_n D(A_n)$ and the availability of part~(i) of Assumptions~\ref{assump} will allow us to prove this required convergence condition on the resolvents.

We prove the existence such a sequence $(h_n)$ separately in two complementary cases: if $p\leq 2$ and if $p>2$. Let $h_\infty \in \mathcal{A}$.
\begin{itemize}
    \item[($p\leq2$)] In this case $\max(2,p)=2$, thus we need to prove the existence of a sequence $(h_n)_{n\in \mathbb{N}} \subset_n D(A_n)$ such that $h_n \underset{TL^2(\Omega)}\longtwoheadrightarrow h_{\infty}$ as $n\to \infty$. According to Proposition~\ref{tlpBanach}, $\big((L^2(\Omega;\mu), \xi_{\mu})_{\mu \in \mathcal{P}_2(\Omega)}, TL^2(\Omega)\big)$ is a Banach stacking, thus per property~(ii) of Banach stackings in Definition~\ref{BanStack}, there exists a sequence $(h_n)_{n\in \mathbb{N}} \subset_n L^2(\Omega;\mu_n)$ with the required convergence property. We claim that, for all $n\in \mathbb{N}$, $D(A_n)=L^2(\Omega;\mu_n)$, which completes the existence proof in the case that $p\leq 2$.
    
    To prove this claim, we recall from earlier in this proof that, for all $n\in \mathbb{N}_\infty$, $\operatorname{Range}(\partial_{L^2} \mathfrak{N}[\lambda;\Phi_n])=L^2(\Omega;\mu_n)$. Since $\mu_n$ is a finite measure, by H\"older's inequality we have $L^2(\Omega;\mu_n)\subset X_n$ and thus $\partial_{L^2} \mathfrak{N}[\lambda;\Phi_n] \cap \big(X_n \times X_n\big) = \partial_{L^2} \mathfrak{N}[\lambda;\Phi_n]$. Therefore $\operatorname{Range}\big(\partial_{L^2} \mathfrak{N}[\lambda;\Phi_n] \cap \big(X_n \times X_n\big)\big) = L^2(\Omega;\mu_n)$ and hence, by Remark~\ref{rem:domainrange}, for all $n\in \mathbb{N}_\infty \supset \mathbb{N}$, $D(A_n)=L^2(\Omega;\mu_n)$. 
    
    \item[($p>2$)] In this case $X_n = L^p(\Omega;\mu_n) = L^{\max(2,p)}(\Omega;\mu_n)$. Similarly to the previous case, there exists a sequence $(\Tilde{h}_n)_{n\in\mathbb{N}}\subset_n X_n$ such that $\Tilde{h}_n \underset{TL^p(\Omega)}\longtwoheadrightarrow h_{\infty}$ as $n\rightarrow \infty$, because, by Proposition~\ref{tlpBanach}, $\big((L^p(\Omega;\mu), \xi_{\mu})_{\mu \in \mathcal{P}_p(\Omega)}, TL^p(\Omega)\big)$ is a Banach stacking. We recall that earlier in this proof we established that $D(A_n)$ is dense in $X_n$, thus there exists a sequence $(h_n)_n \subset_n D(A_n)$ such that, for all $n\in \mathbb{N}$, $\Vert \Tilde{h}_n -h_n \Vert_{L^p(\Omega;\mu_n)}\leq \frac{1}{n}$. By the triangle inequality from Remark~\ref{rem:triangle} and property~(i) in Definition~\ref{BanStack}, it follows that, for all $n\in \mathbb{N}$,
    \begin{align*}
        d_{TL^p}^{n,\infty}(h_n,h_{\infty}) &\leq d_{TL^p}^{n,n}(h_n,\Tilde{h}_n) + d_{TL^p}^{n,\infty}(\Tilde{h}_n,h_{\infty})
        \leq \Vert h_n - \Tilde{h}_n \Vert_{L^p}  + d_{TL^p}^{n,\infty}(\Tilde{h}_n,h_{\infty}) \\
        &\leq \frac{1}{n} + d_{TL^p}^{n,\infty}(\Tilde{h}_n,h_{\infty}) \rightarrow 0
    \end{align*}
    as $n\rightarrow \infty$. Hence $h_n \underset{TL^p(\Omega)}\longtwoheadrightarrow h_{\infty}$ as $n\rightarrow \infty$ as required.
\end{itemize}

We return to the general setting with $p\in [1,+\infty)$. Let $h_\infty \in \mathcal{A}$ and let $(h_n)_{n\in \mathbb{N}}\subset_n D(A_n)$ be the corresponding sequence specified above. Since $D(A_n) \subset L^{\max(2,p)}(\Omega;\mu_n) \subset X_n$ and $\mathcal{A} \subset L^{\max(2,p)}(\Omega;\mu_n) \subset X_n$, we have, for all $n\in \mathbb{N}_\infty$, $h_n \in X_n$, and thus, by Remark~\ref{rem:inverseofrestriction}, $A_n(h_n) \in \big(\partial_{L^2} \mathfrak{N}[\lambda;\Phi_n] \big)^{-1}(h_n)$. We established earlier that $A_n$ is a contraction on $X_n$, which justifies interpreting it as a function as in Remark~\ref{rem:contraction}.

By Remark~\ref{rem:restrictedPhin}, for all $n\in \mathbb{N}_\infty$, the restriction of $\Phi_n$ to $L^2(\Omega;\mu_n)$ is lower semicontinuous with respect to the topology of $L^2(\Omega;\mu_n)$ and convex. Since $u\mapsto \frac12 \int_\Omega |u|^2 \, d\mu_n$ is continuous with respect to the same topology and convex, we deduce that $\mathfrak{N}[\lambda; \Phi_n]$ is lower semicontinuous with respect to the topology of $L^2(\Omega;\mu_n)$ and convex. Hence, by Lemma~\ref{subdiffLem}, we obtain, for all $n\in \mathbb{N}_{\infty}$,
\begin{align}\label{eq:Anhnargmin}
 A_n(h_n)&\in  \big(\partial_{L^2}  \mathfrak{N}[\lambda;\Phi_n] \big)^{-1}(h_n) 
 = \argmin_{w\in L^2(\Omega; \mu_n)} \left(  \mathfrak{N}[\lambda;\Phi_n](w) - \int_\Omega h_nw \, d\mu_n \right) \notag \\ 
&= \argmin_{w\in L^2(\Omega; \mu_n)} \left( \frac{1}{2} \int_\Omega |h_n -w|^2 \, d\mu_n + \lambda \Phi_n(w) - \frac{1}{2}\int_\Omega |h_n|^2 \, d\mu_n \right) \notag \\
&= \argmin_{w\in L^2(\Omega; \mu_n)} \mathfrak{M}[\lambda,h_n;\Phi_n](w).
\end{align}
We recall our observation, following Definition~\ref{movementeq}, that $J_\gamma(\Phi_n)$ coincides with the minimizers of $\mathfrak{M}[\lambda,h_n;\Phi_n]$. For reasons similar to the ones we gave above for $\mathfrak{N}[\lambda; \Phi_n]$, also $\mathfrak{M}[\lambda,h_n;\Phi_n]$ is lower semicontinuous with respect to the topology of $L^2(\Omega;\mu_n)$ and convex. Moreover, since $h_n \in L^{\max(2,p)}(\Omega) \subset L^2(\Omega)$ and $\Phi_n(0)=0$, $\mathfrak{M}[\lambda,h_n;\Phi_n](0)<+\infty$ and thus $\mathfrak{M}[\lambda,h_n;\Phi_n]$ is proper. Therefore, by part~1 of Proposition~\ref{envelopeProp}, $\mathfrak{M}[\lambda,h_n;\Phi_n]$ has a unique minimizer over $L^2(\Omega;\mu_n)$. By \eqref{eq:Anhnargmin}, this minimizer is necessarily given by $A_n(h_n)$. We recall that $A_n(h_n) \in X_n \cap L^2(\Omega;\mu_n)$, so, for all $n\in \mathbb{N}_\infty$,
\begin{align*}
     A_n(h_n) &= \argmin_{w\in L^2(\Omega; \mu_n)} \mathfrak{M}[\lambda,h_n;\Phi_n](w)
     = \argmin_{w\in X_n \cap L^2(\Omega; \mu_n)} \mathfrak{M}[\lambda,h_n;\Phi_n](w)\\ 
     &= \argmin_{w\in X_n} \mathfrak{M}[\lambda,h_n;\Phi_n](w).
\end{align*}
The first equality holds by uniqueness of the minimizer, the second because the minimizer lies in the subset $X_n \cap L^2(\Omega;\mu_n)$. The third equality follows because, for all ${w\in X_n\setminus L^2(\Omega;\mu_n)}$, $\mathfrak{M}[\lambda,h_n;\Phi_n](w)=+\infty$. We then apply assumption (i) in Assumptions~\ref{assump} alongside Proposition~\ref{prop:gammaconBS} to deduce
\begin{align*}
 A_n(h_n) &= \argmin_{w\in X_n} \mathfrak{M}[\lambda,h_n;\Phi_n](w)
&\underset{TL^p(\Omega)}\longtwoheadrightarrow \argmin_{w\in X_{\infty}} \mathfrak{M}[\lambda,h_{\infty};\Phi_{\infty}](w)
&=  A_{\infty}(h_{\infty})
\end{align*}
as $n\rightarrow \infty$. 

As established earlier in this proof, $ R_{\lambda} \big( \overline{ \partial_{L^p} \Phi_n} \big)$ is the continuous extension of $A_n$ onto all of $X_n$, so
$$  R_{\lambda} \big( \overline{ \partial_{L^p} \Phi_n} \big)h_n \underset{TL^p(\Omega)}\longtwoheadrightarrow R_{\lambda} \big( \overline{ \partial_{L^p} \Phi_{\infty}} \big)h_{\infty} \qquad \text{as } n\to\infty.$$
Together with the properties $(h_n)_{n\in\mathbb{N}} \subset_n X_n$ and $h_n \underset{TL^{\max(2,p)}(\Omega)}\longtwoheadrightarrow h_{\infty}$ as $n\to \infty$, that were proven earlier, this allows us to use Lemma~\ref{thResLem} to conclude that, for all sequences $(z_n)_{n\in \mathbb{N}} \subset_n X_n$ such that $z_n \underset{TL^p(\Omega)}\longtwoheadrightarrow z_{\infty} $ as $n\rightarrow \infty$,
$$  R_{\lambda} \big( \overline{ \partial_{L^p} \Phi_n} \big)z_n \underset{TL^p(\Omega)}\longtwoheadrightarrow R_{\lambda} \big( \overline{ \partial_{L^p} \Phi_{\infty}} \big)z_{\infty}.$$
\end{proof}

In the next steps of our analysis we will make use of the Moreau envelope from Definition~\ref{MoreauProx}. Given $\gamma>0$ and a function $\Psi:\mathfrak{F}(\Omega;\mu)\rightarrow (-\infty,+\infty]$, we will use the shorthand 
\begin{equation}\label{eq:Moreau2}
[\Psi]_2^{\gamma} := \left[\Psi|_{L^2(\Omega;\mu)}\right]^{\gamma}.
\end{equation}

\begin{lemma}
\label{lem:envelopes}
Suppose $(\Phi_n)_{n\in\mathbb{N}_{\infty}}$ satisfies Assumptions~\ref{assump}. Then, for all $\gamma>0$ and for all sequences ${(u_n)_{n\in\mathbb{N}_{\infty}}\subset_n X_n \cap L^2(\Omega;\mu_n)}$ such that $u_n \underset{TL^2(\Omega)}\longtwoheadrightarrow u_{\infty}$,
$$ [\Phi_n]_2^{\gamma}(u_n) \rightarrow  [\Phi_\infty]_2^{\gamma}(u_\infty) \qquad \text{as } n\to\infty.$$
\end{lemma}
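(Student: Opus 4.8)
The plan is to reduce the statement to the $\Gamma$-convergence and equicoercivity hypotheses of Assumptions~\ref{assump} applied to the functionals $\mathfrak{M}[\gamma,\cdot\,;\Phi_n]$, and then invoke Proposition~\ref{prop:gammaconBS}. Recall from \eqref{eq:Moreau2} and Definition~\ref{MoreauProx} that $[\Phi_n]_2^{\gamma}(u_n) = \inf_{v\in L^2(\Omega;\mu_n)}\big(\Phi_n(v) + \frac{1}{2\gamma}\|v-u_n\|_{L^2(\Omega;\mu_n)}^2\big)$, and by the identity noted after Definition~\ref{movementeq} this equals $\frac1\gamma \inf_{v\in L^2(\Omega;\mu_n)}\mathfrak{M}[\gamma,u_n;\Phi_n](v)$. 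So it suffices to show that $\inf_{v\in X_n}\mathfrak{M}[\gamma,u_n;\Phi_n](v)$ converges to $\inf_{v\in X_\infty}\mathfrak{M}[\gamma,u_\infty;\Phi_\infty](v)$; note that the infimum over $X_n$ agrees with the infimum over $L^2(\Omega;\mu_n)$ because $\mathfrak{M}[\gamma,u_n;\Phi_n]$ is $+\infty$ on $X_n\setminus L^2(\Omega;\mu_n)$ (as in the argument in the proof of Lemma~\ref{lem:Resolve}) and, if $p>2$, $X_n\subset L^2(\Omega;\mu_n)$ anyway.

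The first step is a density/approximation argument in the spirit of Lemma~\ref{thResLem}: Assumptions~\ref{assump}(i) only guarantees $\Gamma$-convergence and equicoercivity of $\big(\mathfrak{M}[\gamma,h_n;\Phi_n]\big)_n$ when $h_\infty$ lies in the dense subset $\mathcal{A}\subset X_\infty$ (with $h_\infty\in\mathcal{A}\subset L^{\max(2,p)}(\Omega;\mu_\infty)$ and $h_n\underset{TL^{\max(2,p)}(\Omega)}\longtwoheadrightarrow h_\infty$). So first I would prove the claim for $u_\infty\in\mathcal{A}$ along sequences $u_n$ converging in $TL^{\max(2,p)}(\Omega)$: for such sequences, Proposition~\ref{prop:gammaconBS} applied to the $\Gamma$-converging, equicoercive sequence $\big(\mathfrak{M}[\gamma,u_n;\Phi_n]\big)_n$ gives convergence of the minima, hence $[\Phi_n]_2^{\gamma}(u_n)\to[\Phi_\infty]_2^{\gamma}(u_\infty)$. (Here I also need the minima to be finite and attained; properness of $\mathfrak{M}[\gamma,u_n;\Phi_n]$ follows since $\Phi_n(0)=0$ and $u_n\in L^2$, exactly as in Lemma~\ref{lem:Resolve}, and attainment plus finiteness of the limiting infimum is part of the conclusion of Proposition~\ref{prop:gammaconBS}.)

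The second step upgrades to a general $u_\infty\in X_\infty$ and a general convergent sequence $u_n\underset{TL^2(\Omega)}\longtwoheadrightarrow u_\infty$ (note: $TL^2$, which if $p<2$ is a stronger mode than $TL^p$, consistent with the hypothesis; if $p\ge2$ then $TL^2=TL^{\max(2,p)}$). The tool is Lipschitz-type continuity of $u\mapsto[\Phi_n]_2^{\gamma}(u)$ in the $L^2$ norm, uniform in $n$: since $\partial_{L^2}\Phi_n$ is $m$-accretive (Theorem~\ref{thm:partiallambdamaccretive}, via Remark~\ref{rem:restrictedPhin}) and convex lower-semicontinuous, the Moreau envelope $[\Phi_n]_2^{\gamma}$ is $\frac{1}{\gamma}$-Lipschitz-smooth; more elementarily, from the defining infimum one gets $\big|[\Phi_n]_2^{\gamma}(u)-[\Phi_n]_2^{\gamma}(w)\big|\le \frac{1}{2\gamma}\big(\|u-w\|^2 + 2\|u-w\|\,\inf\{\|v-w\| : v\in J_\gamma(\Phi_n)w\}\big)$, and the proximal displacement $\|J_\gamma(\Phi_n)w - w\|$ is controlled by $2\gamma\,[\Phi_n]_2^{\gamma}(w)/\|\cdot\|$-type bounds when $\Phi_n\ge0$. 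Then given $\varepsilon>0$, pick $a_\infty\in\mathcal{A}$ with $\|u_\infty-a_\infty\|_\infty\le\varepsilon$ and a sequence $a_n\underset{TL^{\max(2,p)}(\Omega)}\longtwoheadrightarrow a_\infty$ with $a_n\in X_n\cap L^2(\Omega;\mu_n)$ (using Proposition~\ref{tlpBanach}/property (ii) of Banach stackings, exactly as in the proof of Lemma~\ref{lem:Resolve}); by property (iv) in Definition~\ref{BanStack}, $\|u_n-a_n\|_{L^2(\Omega;\mu_n)}\to\|u_\infty-a_\infty\|_\infty\le\varepsilon$, so for large $n$ it is $\le2\varepsilon$. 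Combining the uniform-in-$n$ Lipschitz estimate with Step 1 applied to $a_\infty$ and $a_n$, one gets $\limsup_n|[\Phi_n]_2^{\gamma}(u_n)-[\Phi_\infty]_2^{\gamma}(u_\infty)|\lesssim_\gamma \varepsilon$ (the constant depending on $\gamma$ and on a uniform bound for $[\Phi_n]_2^{\gamma}(a_n)$, which is itself bounded because it converges by Step 1), and letting $\varepsilon\downarrow0$ finishes the proof.

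The main obstacle I expect is Step 2: establishing the uniform-in-$n$ modulus of continuity of $[\Phi_n]_2^{\gamma}$ in a way that is genuinely independent of $n$ and of the (varying) measure $\mu_n$. The $\frac{1}{\gamma}$-Lipschitz-smoothness of a Moreau envelope of a convex l.s.c. function is standard (e.g.\ \cite[Proposition 12.30]{Bauschke}), which gives $|[\Phi_n]_2^{\gamma}(u)-[\Phi_n]_2^{\gamma}(w)|\le\frac1\gamma(\|u\|+\|w\|)\|u-w\|$ after also using convexity and $\Phi_n\ge0$ to bound the gradient; the subtlety is that this bound involves $\|u_n\|_{L^2(\Omega;\mu_n)}$, which is fine because by property (iv) of Definition~\ref{BanStack} it converges to $\|u_\infty\|_\infty$ and hence is uniformly bounded. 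Alternatively, one can avoid the smoothness machinery entirely by a direct two-sided comparison of the infimums defining $[\Phi_n]_2^{\gamma}(u_n)$ and $[\Phi_n]_2^{\gamma}(a_n)$, plugging the near-optimal $v$ for one into the other and using $\frac12\|v-u_n\|^2\le\frac12\|v-a_n\|^2+\|v-a_n\|\|a_n-u_n\|+\frac12\|a_n-u_n\|^2$; this keeps everything expressed in terms of the convergent quantities $\|a_n-u_n\|_{L^2(\Omega;\mu_n)}$ and the (bounded) values $[\Phi_n]_2^{\gamma}(a_n)$, and is probably the cleanest route to write up.
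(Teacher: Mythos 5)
Your core reduction and key tool are the same as the paper's: you rewrite $[\Phi_n]_2^{\gamma}(u_n)$ as $\frac{1}{\gamma}\inf_{v}\mathfrak{M}[\gamma,u_n;\Phi_n](v)$, observe that the infimum over $L^2(\Omega;\mu_n)$ coincides with the infimum over $X_n$ (because the $L^p$-contractivity of $R_\gamma(\partial_{L^2}\Phi_n)$, together with $R_\gamma(\partial_{L^2}\Phi_n)0=0$, pins the minimizer in $X_n$, and $\mathfrak{M}$ is $+\infty$ off $L^2$), and then invoke Proposition~\ref{prop:gammaconBS} with the $\Gamma$-convergence and equicoercivity furnished by Assumptions~\ref{assump}(i). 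The paper's proof stops there, applying Assumptions~\ref{assump}(i) directly to the given sequence $(u_n)$. Your Step~2 is a genuine addition: you point out that Assumptions~\ref{assump}(i), as stated, only supplies the $\Gamma$-convergence/equicoercivity package for limits $h_\infty$ lying in the dense subset $\mathcal{A}\subset L^{\max(2,p)}(\Omega;\mu_\infty)$, whereas the lemma asserts convergence of envelopes for \emph{all} $u_\infty\in X_\infty\cap L^2(\Omega;\mu_\infty)$. Your fix is sound and standard: since $\Phi_n\geq 0$ with $\Phi_n(0)=0$, the proximal map $J_\gamma(\Phi_n)$ fixes $0$ and is non-expansive, so $\nabla[\Phi_n]_2^{\gamma}(u)=\gamma^{-1}(u-J_\gamma(\Phi_n)u)$ has norm at most $2\|u\|/\gamma$, giving a locally Lipschitz bound on the Moreau envelopes that is uniform in $n$ on $L^2$-bounded sets; combined with $\|u_n-a_n\|_{L^2(\Omega;\mu_n)}\to\|u_\infty-a_\infty\|_\infty$ from property~(iv) of Definition~\ref{BanStack}, this closes the density loop. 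So your argument makes explicit a step the paper glosses over (the paper's direct application is certainly safe whenever Assumptions~\ref{assump}(i) holds with $\mathcal{A}$ equal to all of $L^{\max(2,p)}(\Omega;\mu_\infty)$, as Proposition~\ref{prop:assumpreduce} shows happens under Assumptions~\ref{assumpN}, but not under Assumptions~\ref{assump} alone). One small slip: in your parenthetical "if $p\ge 2$ then $TL^2=TL^{\max(2,p)}$" is only true when $p=2$; for $p>2$ one has $\max(2,p)=p$ and $TL^p$ convergence is strictly stronger than $TL^2$ convergence. This doesn't affect the argument, since your density step only needs $TL^2$ control of $\|u_n-a_n\|_{L^2}$ and you construct the recovery sequence $(a_n)$ directly in $TL^{\max(2,p)}$ via property~(ii) of the Banach stacking.
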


\begin{proof}
By Remark~\ref{rem:restrictedPhin}, for all $n\in \mathbb{N}_{\infty}$, $\Phi_n|_{L^2(\Omega;\mu_n)}$ is convex, proper and lower semicontinuous on $L^2(\Omega;\mu_n)$; thus the same properties hold for $\mathfrak{M}[\gamma,u_n;\Phi_n]$. Hence, by Lemma~\ref{ProxResolve} the unique minimizer of $\mathfrak{M}[\gamma,u_n;\Phi_n|_{L^2(\Omega;\mu_n)}]$ over $L^2(\Omega;\mu_n)$, and thus also the unique minimizer of $\mathfrak{M}[\gamma,u_n;\Phi_n]$ over $L^2(\Omega;\mu_n)$, is given by $R_{\gamma}(\partial_{L^2}\Phi_n)u_n$, where we can interpret $R_\gamma(\partial_{L^2}\Phi_n)$ as a $1$-Lipschitz-continuous function on $L^2(\Omega;\mu_n)$. Moreover, by Lemma~\ref{lemAcrete}, $R_{\gamma}(\partial_{L^2}\Phi_n)$ is accretive with respect to the $L^p(\Omega;\mu_n)$ norm and thus, per Proposition~\ref{prop:accretivecontraction}, it is also a contraction with respect to this norm.

Since $R_{\gamma}(\partial_{L^2}\Phi_n)0=0$ as a consequence of part~(ii) of Assumptions~\ref{assump} (in particular the non-negativity of $\Phi_n$ and the condition $\Phi_n(0)=0$), we deduce that
$$ \Vert R_{\gamma}(\partial_{L^2}\Phi_n)u_n \Vert_{L^p}\leq \Vert u_n \Vert_{L^p}<+\infty. $$
It follows that the minimizer of $\mathfrak{M}[\gamma,u_n;\Phi_n]$ over $L^2(\Omega;\mu_n)$ is also in $X_n$. Hence
$$ \inf_{v\in L^2(\Omega;\mu_n)}  \mathfrak{M}[\gamma,u_n;\Phi_n](v) = \inf_{v\in L^2(\Omega;\mu_n)\cap X_n}  \mathfrak{M}[\gamma,u_n;\Phi_n](v) = \inf_{v\in X_n} \mathfrak{M}[\gamma,h_n;\Phi_n](v).$$
The second equality holds because $\mathfrak{M}[\gamma,h_n;\Phi_n](v)=+\infty$ if $v\notin L^2(\Omega;\mu_n)$, as follows from Definition~\ref{movementeq}, since $h_n\in L^2(\Omega;\mu_n)$.

Finally, by part~(i) from Assumptions~\ref{assump} we can apply Proposition~\ref{prop:gammaconBS} to the sequence $(\mathfrak{M}[\gamma,u_n;\Phi_n])_{n\in\mathbb{N}_\infty}$ to conclude that
\begin{align*}
[\Phi_n]_2^{(\gamma)}(u_n) &= \frac{1}{\gamma}\inf_{v\in L^2(\Omega;\mu_n)}  \mathfrak{M}[\gamma,u_n;\Phi_n](v)
=\frac{1}{\gamma}\inf_{v\in X_n}  \mathfrak{M}[\gamma,u_n;\Phi_n](v)\\
&\rightarrow \frac{1}{\gamma}\inf_{v\in X_\infty} \mathfrak{M}[\gamma,u_\infty;\Phi_\infty](v) 
= [\Phi_{\infty}]_2^{(\gamma)}(u_{\infty}) \qquad \text{as } n\to\infty.
\end{align*} 
\end{proof}

\begin{lemma}
\label{mnthmlem}
Suppose $(\Phi_n)_{n\in\mathbb{N}_{\infty}}$ satisfies Assumptions~\ref{assump}.
For all $t\in [0,+\infty)$ and all sequences $(x_n)_{n\in \mathbb{N}_\infty} \subset_n \overline{D(\partial_{L^p}\Phi_n)}$ such that $x_n \underset{TL^p(\Omega)}\longtwoheadrightarrow x_{\infty}$ as $n\rightarrow \infty$,
$$ S_{\overline{\partial_{L^p} \Phi_n}}(t)x_n \underset{TL^p(\Omega)}\longtwoheadrightarrow S_{\overline{\partial_{L^p} \Phi_{\infty}}}(t)x_{\infty} \quad \text{as } n\to\infty.$$
Moreover, given $T \in [0,+\infty)$, the above convergence is uniform in $t$ on the interval $[0,T]$. 
\end{lemma}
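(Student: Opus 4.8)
\textbf{Proof plan for Lemma~\ref{mnthmlem}.}
The plan is to verify that the sequence of operators $(\overline{\partial_{L^p}\Phi_n})_{n\in\mathbb{N}_\infty}$ meets all the hypotheses of our generalized Brezis--Pazy theorem (Theorem~\ref{theorem1}) for the Banach stacking $\big((X_n,\xi_n)_{n\in\mathbb{N}_\infty}, TL^p(\Omega)\big)$, and then invoke that theorem. Recall from the proof of Lemma~\ref{lem:Resolve} that, for each $n\in\mathbb{N}_\infty$, $\overline{\partial_{L^p}\Phi_n}$ is m-accretive on $X_n$ by Theorem~\ref{BenCran1991} (whose hypotheses hold because $\Phi_n$ is $P_0$-convex on $L^p(\Omega;\mu_n)$ by part~(ii) of Assumptions~\ref{assump}, $(0,0)\in\partial_{L^p}\Phi_n$ by Remark~\ref{rem:00subdiff}, and $\Phi_n$ is lower semicontinuous with respect to the $L^2(\Omega;\mu_n)+L^p(\Omega;\mu_n)$ topology by Remark~\ref{rem:lowsemcont}). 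In particular each $\overline{\partial_{L^p}\Phi_n}$ is $0$-accretive (so we may take $\omega_n=\omega=0$), and by Remark~\ref{rem:rangecondition} it satisfies the range condition~\eqref{rangeCon} with $\mathfrak{I}_0=(0,+\infty)$, so we may take $\delta$ to be any positive number, say $\delta=1$.

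The remaining hypothesis of Theorem~\ref{theorem1} is pointwise convergence of the resolvents: for all $\lambda\in(0,\delta)$ and all sequences $(z_n)_{n\in\mathbb{N}_\infty}\subset_n\overline{D(\overline{\partial_{L^p}\Phi_n})}$ with $z_n\underset{TL^p(\Omega)}\longtwoheadrightarrow z_\infty$ as $n\to\infty$, one needs $R_\lambda(\overline{\partial_{L^p}\Phi_n})z_n\underset{TL^p(\Omega)}\longtwoheadrightarrow R_\lambda(\overline{\partial_{L^p}\Phi_\infty})z_\infty$ as $n\to\infty$. But this is exactly the content of Lemma~\ref{lem:Resolve}, which we just proved and which holds for \emph{all} $\lambda>0$ (so in particular for $\lambda\in(0,1)$) and for all $TL^p(\Omega)$-convergent sequences $(z_n)$ in $X_n$ --- note that $\overline{D(\overline{\partial_{L^p}\Phi_n})}\subset X_n$, so the hypothesis on $(z_n)$ is no stronger than what Lemma~\ref{lem:Resolve} requires. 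Thus all hypotheses of Theorem~\ref{theorem1} are in place.

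Applying Theorem~\ref{theorem1} with $A_n:=\overline{\partial_{L^p}\Phi_n}$, $\omega_n=\omega=0$, and $\mathcal{M}=TL^p(\Omega)$, we conclude that for every sequence $(x_n)_{n\in\mathbb{N}_\infty}\subset_n\overline{D(\overline{\partial_{L^p}\Phi_n})}$ with $x_n\underset{TL^p(\Omega)}\longtwoheadrightarrow x_\infty$ and every $t\geq0$, $S_{\overline{\partial_{L^p}\Phi_n}}(t)x_n\underset{TL^p(\Omega)}\longtwoheadrightarrow S_{\overline{\partial_{L^p}\Phi_\infty}}(t)x_\infty$ as $n\to\infty$, and that this convergence is uniform in $t$ on each interval $[0,T]$, $T<+\infty$. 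The only cosmetic point to reconcile is that the statement of the lemma writes the condition on the initial data as $(x_n)\subset_n\overline{D(\partial_{L^p}\Phi_n)}$ rather than $(x_n)\subset_n\overline{D(\overline{\partial_{L^p}\Phi_n})}$; since $\partial_{L^p}\Phi_n\subset\overline{\partial_{L^p}\Phi_n}$ we have $D(\partial_{L^p}\Phi_n)\subset D(\overline{\partial_{L^p}\Phi_n})$ and hence $\overline{D(\partial_{L^p}\Phi_n)}\subset\overline{D(\overline{\partial_{L^p}\Phi_n})}$, so the hypothesis in the lemma is (a priori) the more restrictive one and the conclusion follows. This completes the proof.

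I do not expect any genuine obstacle here: the lemma is essentially a packaging statement that feeds Lemma~\ref{lem:Resolve} into Theorem~\ref{theorem1}. The only thing to be careful about is bookkeeping --- confirming that every hypothesis of Theorem~\ref{theorem1} (each $A_n$ being $\omega_n$-accretive with $\omega_n$ bounded above by a common $\omega$, each satisfying the range condition, and the resolvent-convergence assumption) is literally met by the objects at hand, which it is once one recalls the m-accretivity established inside the proof of Lemma~\ref{lem:Resolve}.
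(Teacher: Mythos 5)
Your proposal is correct and takes essentially the same route as the paper: both check that each $\overline{\partial_{L^p}\Phi_n}$ is m-accretive via Theorem~\ref{BenCran1991} (using parts of Assumptions~\ref{assump} together with Remarks~\ref{rem:00subdiff} and~\ref{rem:lowsemcont}), take $\omega_n=\omega=0$, invoke the range condition via Remark~\ref{rem:rangecondition}, feed Lemma~\ref{lem:Resolve} into the resolvent-convergence hypothesis of Theorem~\ref{theorem1}, and conclude. The only cosmetic difference is that you reconcile the domain hypothesis via the inclusion $\overline{D(\partial_{L^p}\Phi_n)}\subset\overline{D(\overline{\partial_{L^p}\Phi_n})}$, whereas the paper cites the equality $\overline{D(\overline{A})}=\overline{D(A)}$ from Remark~\ref{rem:domainsofclosures}; both suffice.
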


\begin{proof}
By Lemma~\ref{lem:Resolve}, we have for all $\lambda>0$ and for all sequences $(z_n)_{n\in \mathbb{N}_\infty} \subset_n X_n$ such that $z_n\underset{TL^p(\Omega)}\longtwoheadrightarrow z_{\infty}$ as $n\rightarrow \infty$, that
$$ R_{\lambda}(\overline{\partial_{L^p} \Phi_n})z_n \underset{TL^p(\Omega)}\longtwoheadrightarrow  R_{\lambda}(\overline{\partial_{L^p} \Phi_{\infty}})z_{\infty} \qquad \text{as } n \rightarrow \infty.$$
By part~(ii) of Assumptions~\ref{assump} and Theorem~\ref{BenCran1991}, for all $n\in \mathbb{N}_\infty$, $\overline{\partial_{L^p} \Phi_n}$ is m-accretive on $X_n$. (For details we refer to the proof of Lemma~\ref{lem:Resolve}, where the same property was derived from part~(ii) of Assumptions~\ref{assump}, using Remarks~\ref{rem:00subdiff} and~\ref{rem:lowsemcont}.) In particular, by Proposition~\ref{prop:maccretive}, $\overline{\partial_{L^p} \Phi_n}$ is accretive on $X_n$. Furthermore, by Remark~\ref{rem:rangecondition}, $\overline{\partial_{L^p} \Phi_n}$ satisfies the range condition in \eqref{rangeCon}. Moreover, by Remark~\ref{rem:domainsofclosures}, $\overline{D(\overline{\partial_{L^p} \Phi_n})}= \overline{D(\partial_{L^p} \Phi_n)} $, so, for all $n\in\mathbb{N}_\infty$, $x_n \in\overline{D(\overline{\partial_{L^p} \Phi_n})} $. 

Thus all conditions of Theorem~\ref{theorem1} are satisfied (with, for all $n\in \mathbb{N}_\infty$, $\omega_n=0$). This theorem implies pointwise and uniform convergence of the sequence $\left(S_{\overline{\partial_{L^p} \Phi_n}}(\cdot)x_n\right)$, as required.
\end{proof}

\begin{lemma}
\label{lem:liminf}    
Suppose $(\Phi_n)_{n\in\mathbb{N}_{\infty}}$ satisfies Assumptions~\ref{assump}. Then, for all sequences $(x_n)_{n\in \mathbb{N}_\infty} \subset_n L^{\max(2,p)}(\Omega;\mu_n)$ for which $x_n \underset{TL^{\max(2,p)}(\Omega)}\longtwoheadrightarrow x_{\infty}$ as $n\rightarrow \infty$,
$$ \Phi_\infty(x_\infty)\leq\liminf_{n\rightarrow \infty} \Phi_n(x_n).$$
\end{lemma}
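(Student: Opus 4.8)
The plan is to extract the $\liminf$ inequality for $\Phi_n$ from the $\Gamma$-$\liminf$ inequality for the auxiliary functionals $\mathfrak{M}[\lambda,h_n;\Phi_n]$ supplied by part~(i) of Assumptions~\ref{assump}, by fixing a single reference point $h_\infty$ and cancelling the quadratic term that occurs on both sides of that inequality.

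First I would fix $\lambda=1$ and choose any $h_\infty\in\mathcal{A}$ (the set $\mathcal{A}$ is nonempty, being dense in $X_\infty$). Since $(\mu_n)$ converges to $\mu_\infty$ in the $\max(2,p)$-Wasserstein metric, and since $d_r\le d_q$ on probability measures whenever $r\le q$ (a standard consequence of Jensen's inequality applied to the transport plan), it also converges in the $2$- and $p$-Wasserstein metrics; hence by Proposition~\ref{tlpBanach} each of $\big((L^{\max(2,p)}(\Omega;\mu),\xi_\mu)_\mu,TL^{\max(2,p)}(\Omega)\big)$, $\big((L^2(\Omega;\mu),\xi_\mu)_\mu,TL^2(\Omega)\big)$, and $\big((X_n,\xi_n)_{n\in\mathbb{N}_\infty},TL^p(\Omega)\big)$ is a Banach stacking. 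Applying property~(ii) of Banach stackings (Definition~\ref{BanStack}) to the first of these yields a sequence $(h_n)_{n\in\mathbb{N}}\subset_n L^{\max(2,p)}(\Omega;\mu_n)$ with $h_n\underset{TL^{\max(2,p)}(\Omega)}\longtwoheadrightarrow h_\infty$. By Assumptions~\ref{assump}(i), the sequence $\big(\mathfrak{M}[1,h_n;\Phi_n]\big)_{n\in\mathbb{N}}$ then $\Gamma$-converges to $\mathfrak{M}[1,h_\infty;\Phi_\infty]$ over the Banach stacking $\big((X_n,\xi_n)_{n\in\mathbb{N}_\infty},TL^p(\Omega)\big)$.

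Next I would use the $\Gamma$-$\liminf$ inequality (from the definition of $\Gamma$-convergence over a Banach stacking, Definition~\ref{BSdefs}) with the test sequence $u_n:=x_n$. Here $x_n\in L^{\max(2,p)}(\Omega;\mu_n)\subset L^p(\Omega;\mu_n)=X_n$ since $\mu_n$ is finite, and $x_n\underset{TL^p(\Omega)}\longtwoheadrightarrow x_\infty$ --- directly when $p\ge2$ (as then $\max(2,p)=p$) and by Lemma~\ref{tlplem2} when $p<2$. This gives
$$ \liminf_{n\to\infty}\Bigl(\tfrac12\int_\Omega|x_n-h_n|^2\,d\mu_n+\Phi_n(x_n)\Bigr)\ \ge\ \tfrac12\int_\Omega|x_\infty-h_\infty|^2\,d\mu_\infty+\Phi_\infty(x_\infty). $$
I would then show the quadratic terms match in the limit. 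Both $(x_n)$ and $(h_n)$ converge in $TL^2(\Omega)$ (directly when $p\le2$, by Lemma~\ref{tlplem2} when $p>2$); by property~(iii) of the Banach stacking $\big((L^2(\Omega;\mu),\xi_\mu)_\mu,TL^2(\Omega)\big)$ it follows that $x_n-h_n\underset{TL^2(\Omega)}\longtwoheadrightarrow x_\infty-h_\infty$, and by property~(iv) that $\|x_n-h_n\|_{L^2(\Omega;\mu_n)}\to\|x_\infty-h_\infty\|_{L^2(\Omega;\mu_\infty)}$, so $\int_\Omega|x_n-h_n|^2\,d\mu_n\to\int_\Omega|x_\infty-h_\infty|^2\,d\mu_\infty$, a finite non-negative limit. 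Since adding a convergent real sequence commutes with $\liminf$, the left-hand side above equals $\tfrac12\int_\Omega|x_\infty-h_\infty|^2\,d\mu_\infty+\liminf_{n\to\infty}\Phi_n(x_n)$; cancelling the quadratic term from both sides gives $\liminf_{n\to\infty}\Phi_n(x_n)\ge\Phi_\infty(x_\infty)$ (trivially so if this $\liminf$ is $+\infty$), which is the claim.

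The argument is mostly bookkeeping once the reference point $h_\infty$ and the approximating sequence $(h_n)$ are in place. The step requiring the most care is the convergence of the quadratic term $\int_\Omega|x_n-h_n|^2\,d\mu_n$: it forces one to pass through the auxiliary $TL^2(\Omega)$ Banach stacking and to keep careful track of which $TL^q$-convergences are available, using Lemma~\ref{tlplem2} to downgrade $TL^{\max(2,p)}$-convergence to $TL^2$- and $TL^p$-convergence before invoking properties~(iii) and~(iv) of Definition~\ref{BanStack}.
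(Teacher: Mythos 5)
Your proof is correct and follows essentially the same route as the paper's: fix a reference $h_\infty\in\mathcal{A}$, build an approximating sequence $(h_n)$, apply the $\Gamma$-$\liminf$ inequality from part~(i) of Assumptions~\ref{assump} to the test sequence $x_n$, and cancel the quadratic term by using convergence of $L^2$-norms in the $TL^2$ Banach stacking. Your write-up is in fact slightly more careful on two points the paper treats tersely: it spells out that $x_n\underset{TL^p(\Omega)}\longtwoheadrightarrow x_\infty$ (needed to invoke the $\Gamma$-$\liminf$ over the $TL^p$ stacking), and that both $(x_n)$ and $(h_n)$ converge in $TL^2$ so that conditions~(iii) and~(iv) of Definition~\ref{BanStack} apply to the difference $x_n-h_n$.
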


\begin{proof}
Let $\mathcal{A}$ be as in part~(i) of Assumptions~\ref{assump} and $h_\infty \in \mathcal{A}$. By part~(ii) of Definition~\ref{BanStack} and Proposition~\ref{tlpBanach} there exists a sequence $(h_n)_n \subset_n L^{\max(2,p)}(\Omega;\mu_n)$ such that $h_n\underset{TL^{\max(2,p)}(\Omega)}\longtwoheadrightarrow h_{\infty}$ as $n\rightarrow \infty$. By part~(i) of Assumptions~\ref{assump} the sequence $\left(\mathfrak{M}[\lambda,h_n;\Phi_n]\right)_{n\in\mathbb{N}}$ $\Gamma$-converges to $\mathfrak{M}[\lambda,h_{\infty};\Phi_{\infty}]$ as $n\rightarrow \infty$ over the Banach stacking $\big((X_n, \xi_n)_{n\in \mathbb{N}_\infty}, TL^p(\Omega)\big)$. Let $(x_n)_{n\in \mathbb{N}_\infty} \subset_n L^{\max(2,p)}(\Omega;\mu_n)$ be a sequence for which $x_n \underset{TL^{\max(2,p)}(\Omega)}\longtwoheadrightarrow x_{\infty}$ as $n\rightarrow \infty$. Then, using $\Gamma$-convergence,
\begin{align*}
    \Phi_\infty(x_\infty) &= \mathfrak{M}[1,h_{\infty};\Phi_{\infty}](x_\infty) - \frac{1}{2}\int_\Omega |h_\infty - x_\infty|^2 \, d\mu_\infty \\
    &\leq \liminf_{n\rightarrow \infty}  \mathfrak{M}[1,h_n;\Phi_n](x_n) - \frac{1}{2}\int_\Omega |h_\infty - x_\infty|^2 \, d\mu_\infty \\
    &= \liminf_{n\rightarrow \infty} \left\{ \mathfrak{M}[1,h_n;\Phi_n](x_n) - \frac{1}{2}\int_\Omega |h_n - x_n|^2 \, d\mu_n \right\} 
    = \liminf_{n\rightarrow \infty} \Phi_n(x_n).
\end{align*}
The second equality holds because, by Proposition~\ref{tlpBanach}, $TL^2(\Omega)$ is a Banach stacking and thus in particular, by condition~(iv) of Definition~\ref{BanStack} we have continuity of the norms.
\end{proof}

We can now prove the main theorem of this subsection.

\begin{theorem}
\label{thm:P0theorem1}
Suppose $(\Phi_n)_{n\in\mathbb{N}_{\infty}}$ satisfies Assumptions~\ref{assump}. Let $(x_n)_{n\in \mathbb{N}_\infty} \subset_n L^{\max(2,p)}(\Omega;\mu_n)$ be a sequence that satisfies the following three conditions:
\begin{enumerate}[(a)]
    \item there exist a $q>\max(2,p)$ and a $C>0$ such that, for all $n\in \mathbb{N}_{\infty}$, we have ${\Vert x_n \Vert_{L^q(\Omega; \mu_n)} \leq C}$;
    \item for all  $n\in \mathbb{N}_{\infty}$, $x_n \in \overline{\operatorname{dom}(\Phi_n|_{L^2(\Omega;\mu_n)})}$ (where the closure is taken with respect to the topology of $L^2(\Omega;\mu_n)$); and
    \item $x_n \underset{TL^p(\Omega)}\longtwoheadrightarrow x_{\infty}$ as $n\rightarrow \infty$.    
\end{enumerate}
For all $n\in\mathbb{N}_{\infty}$, let $u_n:[0,+\infty)\rightarrow L^2(\Omega;\mu_n)$ be the gradient flow of $\Phi_n|_{L^2(\Omega;\mu_n)}$ starting from $x_n$ as specified in Definition~\ref{gradflowdef}. Then, for all $t\in [0,+\infty)$,
$$u_n(t) \underset{TL^{\max(2,p)}(\Omega)}\longtwoheadrightarrow u_{\infty}(t) \qquad \text{as } n\to\infty.$$
Given $T \in [0,+\infty)$, the above convergence is uniform in $t$ on the interval $[0,T]$. Moreover for any $t>0$ we have,
$$ \Phi_n(u_n(t))\rightarrow \Phi_\infty(u_\infty(t)) \qquad \text{as } n\to\infty.$$
\end{theorem}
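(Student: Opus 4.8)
The plan is to follow the same two-stage architecture as the proof of Theorem~\ref{Hilthm}: first establish uniform convergence of the flows via the generalised Brezis--Pazy machinery, then separately establish pointwise convergence of the energies along the flows using a Moreau-envelope sandwich and monotonicity.

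For the convergence of the flows, I would first reconcile the two semigroup descriptions. By Remark~\ref{rem:restrictedPhin}, for each $n\in\mathbb{N}_\infty$ the restriction $\Phi_n|_{L^2(\Omega;\mu_n)}$ is proper, convex (i.e.\ $0$-convex), and lower semicontinuous, so by Lemma~\ref{lem:gradflowsemigroup} (with $\lambda=0$) the gradient flow satisfies $u_n(t)=S_{\partial^0\Phi_n|_{L^2}}(t)x_n=S_{\partial_{L^2}\Phi_n}(t)x_n$, using Remark~\ref{rem:adjustedsubdiff}. Next I would use Theorem~\ref{BenCran1991}, applicable by part~(ii) of Assumptions~\ref{assump} together with Remarks~\ref{rem:00subdiff} and~\ref{rem:lowsemcont} (exactly as in the proof of Lemma~\ref{lem:Resolve}), to identify $\overline{\partial_{L^2}\Phi_n\cap(X_n\times X_n)}=\overline{\partial_{L^p}\Phi_n}$, and hence the $L^2$-gradient flow restricted to $X_n$ agrees with $S_{\overline{\partial_{L^p}\Phi_n}}(t)x_n$; the subtle point to check is that the $L^2$-flow starting from $x_n\in L^{\max(2,p)}(\Omega;\mu_n)$ stays in $X_n$ and coincides with the $L^p$-semigroup, which follows by the Crandall--Liggett formula \eqref{eq:gradflowandresolvents} since the proximal maps $R_{\gamma}(\partial_{L^2}\Phi_n)$ are $L^p$-contractions that fix $0$ (Lemma~\ref{lemAcrete} and the argument in Lemma~\ref{lem:envelopes}), so each iterate remains in $X_n$ with controlled $L^p$-norm. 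Then Lemma~\ref{mnthmlem} directly gives $u_n(t)\underset{TL^p(\Omega)}\longtwoheadrightarrow u_\infty(t)$ uniformly on $[0,T]$. To upgrade this to $TL^{\max(2,p)}$-convergence I would invoke Lemma~\ref{tlplem}: condition~(a) gives a uniform $L^q$ bound on $x_n$, and since $R_\gamma(\partial_{L^2}\Phi_n)$ contracts every $L^r$ norm (Lemma~\ref{lemAcrete}, Corollary~\ref{cor:semigroupAccretive}) and fixes $0$, the flow values $u_n(t)$ inherit a uniform $L^q$ bound; feeding $TL^p$-convergence, the uniform $L^q$ bound, and $\max(2,p)<q$ into Lemma~\ref{tlplem} yields $TL^{\max(2,p)}$-convergence, uniformly in $t$ because the $TL^p$-convergence is uniform and the bound is $t$-independent.

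For the energy convergence, the strategy mirrors the last part of the proof of Theorem~\ref{Hilthm}. Set $f_n(t):=\Phi_n(u_n(t))$; by Remark~\ref{rem:gradientflow} each $f_n$ is non-increasing and $f_\infty$ is continuous on $(0,\infty)$ by Remark~\ref{rem:maxslope}. The liminf inequality $f_\infty(t)\le\liminf_{n}f_n(t)$ for $t>0$ follows from Lemma~\ref{lem:liminf}, using that $u_n(t)\underset{TL^{\max(2,p)}(\Omega)}\longtwoheadrightarrow u_\infty(t)$. For the matching limsup bound, I would use Theorem~\ref{GradFlowTheorem} applied to $\Phi_n|_{L^2(\Omega;\mu_n)}$ (with $\lambda=0$, so $\kappa(\gamma,0)=\gamma$) together with the semigroup property: for $\gamma>0$ and $t\ge 0$,
\[
f_n(t+\gamma)=\Phi_n\bigl(u_n(t+\gamma)\bigr)\le [\Phi_n]_2^{\gamma}\bigl(u_n(t)\bigr),
\]
and then Lemma~\ref{lem:envelopes} gives $[\Phi_n]_2^{\gamma}(u_n(t))\to[\Phi_\infty]_2^{\gamma}(u_\infty(t))\le f_\infty(t)$, the last inequality by Remark~\ref{rem:envelopesupport}. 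Thus $\limsup_{n}f_n(t+\gamma)\le f_\infty(t)$ for all $\gamma>0$. Defining $g(t):=\limsup_n f_n(t)$, which is non-increasing and hence continuous a.e., the argument of Theorem~\ref{Hilthm} carries over verbatim: at continuity points of $g$ one sends $\gamma\downarrow0$ to get $g(s)\le f_\infty(s)$, combines with the liminf inequality to get $g(s)=f_\infty(s)$, and then for arbitrary $t>0$ one squeezes $g(t)$ between $g(a_k)=f_\infty(a_k)$ and $g(b_k)=f_\infty(b_k)$ along continuity points $a_k\uparrow t$, $b_k\downarrow t$, using continuity of $f_\infty$. Hence $f_n(t)\to f_\infty(t)$ for every $t>0$.

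The main obstacle I anticipate is the bookkeeping needed to show that Lemma~\ref{lem:envelopes} is applicable with $u_n=u_n(t)$: that lemma requires $u_n(t)\in X_n\cap L^2(\Omega;\mu_n)$ with $u_n(t)\underset{TL^2(\Omega)}\longtwoheadrightarrow u_\infty(t)$. Membership in $L^2$ is automatic since the flow lives in $L^2(\Omega;\mu_n)$; membership in $X_n$ with a uniform $L^p$ (indeed $L^q$) bound comes from the contraction properties above; and the $TL^2$-convergence is precisely the $TL^{\max(2,p)}$-statement when $p\le 2$, while for $p>2$ one has $X_n=L^{\max(2,p)}(\Omega;\mu_n)$ and must pass from $TL^p$- to $TL^2$-convergence, which again follows from Lemma~\ref{tlplem} using the uniform $L^q$ bound with $q>p\ge 2$. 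Threading these integrability reductions consistently — and making sure the uniform-in-$t$ claim survives each of them — is the delicate part; the rest is a faithful adaptation of the Theorem~\ref{Hilthm} argument.
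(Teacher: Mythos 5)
Your proposal follows the same overall architecture as the paper's proof: reconcile the semigroup descriptions, pass from $TL^p$ to $TL^{\max(2,p)}$ via Lemma~\ref{tlplem}, then run the Moreau-envelope sandwich from Theorem~\ref{Hilthm}. Steps 4--7 and the energy argument are essentially identical to what the paper does. But there is a genuine gap in your treatment of the step you yourself flag as subtle, namely identifying the $L^2$-gradient flow with the $L^p$-semigroup $S_{\overline{\partial_{L^p}\Phi_n}}$.

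Your justification --- that the proximal maps are $L^p$-contractions fixing $0$, hence each Crandall--Liggett iterate ``remains in $X_n$ with controlled $L^p$-norm'' --- only gives a uniform $L^q$ \emph{bound} on the iterates $R_{t/k}^k(\partial_{L^2}\Phi_n)x_n$. It does not establish their \emph{convergence} in $L^p(\Omega;\mu_n)$ when $p>2$, and \eqref{eq:gradflowandresolvents} only gives convergence in $L^2$. Without $L^p$-convergence, you have not shown that $u_n$ is a mild solution of the $L^p$-valued Cauchy problem, and in particular you have not shown $x_n\in\overline{D(\partial_{L^p}\Phi_n)}$, which is a hypothesis of Lemma~\ref{mnthmlem} and must be verified before it can be invoked. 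The paper's proof handles exactly this via the claim \eqref{eq:claiminproof}: it runs a backward-Euler argument with the quantitative error bound from \cite[Corollary 4.3]{barbu2010nonlinear}, then for $p>2$ interpolates between the $L^2$-rate and the uniform $L^q$ bound (with exponent $\theta$ satisfying $\frac{1}{p}=\frac{\theta}{2}+\frac{1-\theta}{q}$), and finally uses a diagonal argument over a sequence $(y_\ell)\subset D(\partial_{L^2}\Phi_n)$ converging to $x_n$ in $L^2$. That last step is precisely where hypothesis~(b) of the theorem enters. Your sketch never invokes~(b) beyond implicitly needing it for the existence of $u_n$, and never performs the interpolation that converts the $L^2$-error bound plus $L^q$-boundedness into $L^p$-convergence of the iterates; both are indispensable for the case $p>2$ and neither is supplied by ``controlled $L^p$-norm.''
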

\begin{proof}
By Remarks~\ref{rem:adjustedsubdiff} and~\ref{rem:restrictedPhin} and by Lemma~\ref{lem:gradflowsemigroup}, we have, for all $n\in\mathbb{N}_{\infty}$, $$u_n(t)=S_{\partial \Phi_n|_{L^2(\Omega;\mu_n)}}(t)x_n=S_{\partial_{L^2}\Phi_n}(t)x_n.$$ Additionally, for all $n\in \mathbb{N}_\infty$, $\Phi_n$ is convex and lower semicontinuous with respect to the topology of $L^2(\Omega;\mu_n)$, so by Theorem~\ref{thm:partiallambdamaccretive} $\partial_{L^2}\Phi_n$ is m-accretive on $L^2(\Omega;\mu_n)$. As a consequence, for all $\lambda>0$, the resolvent $R_\lambda(\partial_{L^2}\Phi_n)$ can be interpreted as a $1$-Lipschitz continuous map from $L^2(\Omega;\mu_n)$ to $L^2(\Omega;\mu_n)$, as per Remark~\ref{accremark}. In particular, $D(\partial_{L^2}\Phi_n) = L^2(\Omega;\mu_n)$. Moreover, by Remark~\ref{rem:rangecondition}, $\partial_{L^2}\Phi_n$ satisfies the range condition \eqref{rangeCon} on $L^2(\Omega;\mu_n)$. 

Define $A_n:=\partial_{L^2}\Phi_n\cap(L^p(\Omega;\mu_n)\times L^p(\Omega;\mu_n))$. By part~(ii) of Assumptions~\ref{assump} and Remark~\ref{rem:lowsemcont}, Theorem~\ref{BenCran1991} is applicable, hence $\overline{A_n}=\overline{\partial_{L^p}\Phi_n}$. We claim that, for all $n\in\mathbb{N}_{\infty}$,
\begin{equation}\label{eq:claiminproof}
x_n \in \overline{D(\partial_{L^p}\Phi_n)} \quad \text{and} \quad u_n(t)=S_{\overline{\partial_{L^p}\Phi_n}}(t)x_n.
\end{equation}
We will now prove this.

Let $T>0$ and $n\in\mathbb{N}_\infty$. For all $k\in\mathbb{N}$, let $[k]:=\mathbb{N}\cap[1,k]$, $[k]_0:=\{0\}\cup[k]$, and let $\mathcal{P}_k:=\left(0,\frac{T}{k},\frac{2T}{k},\ldots,\frac{(k-1)T}{k},T\right)$ be a $\frac{1}{k}$-discretisation of the interval $[0,T]$ as in Definition~\ref{def:epsdiscr}. For all $i \in [k]_0$, define $v^{(k)}_i:= R^i_{T/k}(\partial_{L^2}\Phi_n)x_n$; we recall the notation for repeated applications of the resolvent operator from Remark~\ref{rem:whyrangecon} and interpret $R^0_{T/k}(\partial_{L^2}\Phi_n)$ as the identity operator on $L^2(\Omega;\mu_n)$. Additionally define $\overline{v}^{(k)}:[0,T]\rightarrow L^2(\Omega;\mu_n)$ to be the piecewise-constant function such that $\overline{v}^{(k)}(0):=x_n$ and, for all $i\in[k-1]_0$ and all $t\in \left(\frac{iT}{k},\frac{(i+1)T}{k}\right]$, $\overline{v}^{(k)}(t):=v_{i+1}^{(k+1)}$. The triplet $(\mathcal{P}_{k},v^{(k)},0)$ is a $\frac{1}{k}$-approximate solution to the Cauchy equation \eqref{AbCauchy} (with $A=\partial_{L^2}\Phi_n$, $X=L^2(\Omega;\mu_n)$, and $x=x_n$), as in Definition~\ref{def:discsolution}. By \cite[Corollary 4.3]{barbu2010nonlinear} there exists a $\tilde C>0$ independent of $k$ (but not of $T$) such that, for all $y\in D(\partial_{L^2}\Phi_n)$,
\begin{equation}
\label{eq:mildbound}
\sup_{t\in[0,T]} \Vert u_n(t) - \overline{v}^{(k)}(t)\Vert_{L^2(\Omega;\mu_n)} \leq \tilde C\left( \Vert x_n -y\Vert_{L^2}+ \inf\Vert \partial_{L^2}\Phi_n (y)\Vert/\sqrt{k} \right),
\end{equation}
where we recall the definition in \eqref{eq:infnorm}. In particular, for all $t\in[0,T]$, $\overline{v}^{(k)}(t)\rightarrow u_n(t)$ as $k\rightarrow \infty$ in $L^2(\Omega;\mu_n)$. 

As a consequence of Lemma~\ref{lemAcrete}, for all $r\geq 1$, $\partial_{L^2}\Phi_n$ is accretive with respect to the $L^r(\Omega;\mu_n)$ norm and thus, by Proposition~\ref{prop:accretivecontraction}, for all $\lambda>0$, $R_\lambda(\partial_{L^2}\Phi_n)$ is a contraction with respect to the same norm. Taking $r=q$, with $q$ as in condition~(a) of the theorem with corresponding $C>0$, we observe that, for all $k\in \mathbb{N}$ and all $i\in[k]_0$, $\Vert v_i^{(k)}\Vert_{L^q(\Omega;\mu_n)}\leq \Vert x_n\Vert_{L^q(\Omega;\mu_n)} \leq C$. We then deduce that, for all $t\in[0,T]$, 
\begin{equation}\label{eq:overlinevbound}
\Vert \overline{v}^{(k)}(t)\Vert_{L^q(\Omega;\mu_n)}\leq \Vert x_n\Vert_{L^q(\Omega;\mu_n)}.
\end{equation}
By Lemma~\ref{lem:lscab} the $L^q$ norm is lower semicontinuous with respect to the topology of $L^2(\Omega;\mu_n)$, thus, taking the limit $k\rightarrow\infty$, we obtain 
\begin{equation}\label{eq:unbound}
\Vert u_n(t)\Vert_{L^q(\Omega;\mu_n)}\leq \Vert x_n \Vert_{L^q(\Omega;\mu_n)}.
\end{equation}

Next we make three observations. Firstly $v_i^{(k)}\in L^q(\Omega;\mu_n) \subset L^p(\Omega;\mu_n)$, secondly $ -\frac{k}{T}(v_{i}^{(k)}-v_{i-1}^{(k)})\in L^p(\Omega;\mu_n)$, and thirdly, by the definition of $v_i^{(k)}$, for all $i\in [k]$,
$$ -\frac{k}{T}(v_{i}^{(k)}-v_{i-1}^{(k)})\in\partial_{L^2}\Phi_n(v_i^{(k)}) .$$
Combining these we deduce that, for all $i\in [k]$, $ -\frac{k}{T}(v_i^{(k)}-v_{i-1}^{(k)})\in A_n(v_i^{(k)})$ and so $(\mathcal{P}_{k},v^{(k)},0)$ is a $\frac{1}{k}$-approximate solution to the Cauchy equation in \eqref{AbCauchy} with $X=L^2(\Omega;\mu_n)$, $A=A_n$, and $x=x_n$.

We want to prove that $u_n$ is a mild solution of the Cauchy equation \eqref{AbCauchy} with $X=L^p(\Omega;\mu_n)$, $A=A_n$, and $x=x_n$. To do so, it suffices to establish that $\overline{v}^{(k)}(t)$ converges uniformly on $t\in[0,T]$ to $u_n(t)$ in $L^p(\Omega;\mu_n)$. If $p\leq 2$ this is indeed true, since $\mu_n$ is a finite measure and \eqref{eq:mildbound} holds. Now assume that $p>2$. Let $\theta\in (0,1)$ be such that $\frac{1}{p}=\frac{\theta}{2}+\frac{1-\theta}{q}$. Then, by H\"older's inequality alongside the bounds in \eqref{eq:mildbound}, \eqref{eq:overlinevbound}, and \eqref{eq:unbound}, we obtain that, for all $y\in D(\partial_{L^2}\Phi_n)$,
\begin{align*}
    \sup_{t\in[0,T]}\Vert u_n(t) - \overline{v}^{(k)}(t)\Vert_{L^p(\Omega;\mu_n)} &\leq  \sup_{t\in[0,T]} \Vert u_n(t) - \overline{v}^{(k)}(t)\Vert_{L^2(\Omega;\mu_n)}^{\theta}\Vert u_n(t) - \overline{v}^{(k)}(t)\Vert_{L^q(\Omega;\mu_n)}^{1-\theta} \\
    &\leq C\left( \Vert x_n -y\Vert_{L^2}+ \inf\Vert \partial_{L^2}\Phi_n (y)\Vert/\sqrt{k} \right)^{\theta}\left( 2\Vert x_n \Vert_{L^q(\Omega;\mu_n)}\right)^{1-\theta}.
\end{align*}
By condition~(b) in the theorem, $x_n$ is in the closure of $ D(\partial_{L^2}\Phi_n)$ with respect to the topology of $L^2(\Omega;\mu_n)$ and thus, by applying the inequality just obtained to a sequence $(y_\ell)_{\ell\in\mathbb{N}} \subset D(\partial_{L^2}\Phi_n)$ that converges to $x_n$, a diagonal argument (with $\ell \to \infty$ and $k\to\infty$) shows that $\overline{v}^{(k)}(t)$ converges to $u_n(t)$ in $L^p(\Omega;\mu_n)$ uniformly for $t\in[0,T]$. Hence $u_n$ is a mild solution to \eqref{AbCauchy} on $[0,T]$ with $A=A_n$, $x=x_n$, and $X=L^p(\Omega;\mu_n)$, as desired. Since $T$ is arbitrary, $u_n$ is in fact a mild solution on $[0,+\infty)$. By part~(a) of Proposition~\ref{mildsolutionsProp}, for all $t\in[0,T]$, $u_n(t)\in \overline{D(A_n)}$, where the closure is now taken in $L^p(\Omega;\mu_n)$; thus in particular $x_n\in \overline{D(A_n)}$. To conclude the proof of our claim in \eqref{eq:claiminproof}, we recall that $\overline{A_n}= \overline{\partial_{L^p}\Phi_n}$; thus $x_n \in \overline{D(\partial_{L^p}\Phi_n)}$ and $u_n$ is a mild solution to \eqref{AbCauchy} with $A=\overline{\partial_{L^p}\Phi_n}$, $x=x_n$, and $X=L^p(\Omega;\mu_n)$. By Theorem~\ref{BenCran1991}, it holds that $\overline{\partial_{L^p}\Phi_n}$ is m-accretive on $L^p(\Omega;\mu_n)$ and thus, by Theorem~\ref{CranLig} it $u_n$ is the unique mild solution on $[0,+\infty)$. By Definition~\ref{def:semigroup} it follows that $u_n(t)=S_{\overline{\partial_{L^p}\Phi_n}}(t)x_n$, completing the proof of the claim in \eqref{eq:claiminproof}.

We now apply Lemma~\ref{mnthmlem} to deduce that, for all $t\in [0,+\infty)$, 
$$ u_n(t)= S_{\overline{\partial_{L^p} \Phi_n}}(t)x_n \underset{TL^p(\Omega)}\longtwoheadrightarrow S_{\overline{\partial_{L^p} \Phi_{\infty}}}(t)x_{\infty}=u_{\infty}(t) \qquad \text{as } n\to\infty.$$

Moreover, given $T \in [0,+\infty)$, the above convergence is uniform in $t$ on the interval $[0,T]$. Since the sequence $(u_n(t))_{n\in \mathbb{N}_\infty}$ is uniformly bounded in $L^q(\Omega; \mu_n)$ and thus also in $L^q(\Omega; \mu_n)$ (since $q>\max(2,p)$) and since, by assumption (at the start of Section~\ref{sec:TLp}) $(\mu_n)_{n\in\mathbb{N}}$ converges to $\mu_\infty$ as $n\to\infty$ in the $\max(2,p)$-Wasserstein metric, we can apply Lemma~\ref{tlplem} with $r:=\max(2,p)$ to deduce that
\begin{equation}
\label{thmEQ2}   
u_n(t) \underset{TL^{\max(2,p)}(\Omega)}\longtwoheadrightarrow u_{\infty}(t) \qquad \text{as } n\to\infty.
\end{equation}
Moreover, since the bound in Lemma~\ref{tlplem} is uniform in $t$, for every $T\in [0,+\infty)$ this convergence is uniform in $t$ on the interval $[0,T]$.

Next we apply Lemma~\ref{lem:liminf} to deduce, for all $t\in [0,+\infty)$, 
\begin{equation}
\label{thmInf}
\liminf_{n \rightarrow \infty} \Phi_n\big( u_n(t) \big) \geq \Phi_{\infty}\big( u_{\infty}(t) \big).    
\end{equation}

For notational convenience, define, for all $n\in \mathbb{N}_{\infty}$,
$f_n(t) := \Phi_n\big( u_n(t) \big)$. 
The inequality in \eqref{thmInf} becomes $$\liminf_{n \rightarrow \infty} f_n(t) \geq f_{\infty}(t).$$
Since, for all $n\in \mathbb{N}_\infty$ and all $t>0$, $u_n(t)\in L^2(\Omega;\mu_n)$, by the semigroup property of gradient flows and Theorem~\ref{GradFlowTheorem}, we have that, for all $\gamma>0$,
\begin{align*}
f_n(t+\gamma) &= \Phi_n\big( u_n(t+\gamma) \big) = \Phi_n|_{L^2(\Omega;\mu_n)}\big( u_n(t+\gamma) \big)
\leq [\Phi_n]_2^{\gamma} \big( u_n(t) \big).
\end{align*}
(We recall the notation from \eqref{eq:Moreau2}.) 
The convergence in \eqref{thmEQ2} and Lemma~\ref{tlplem2} allow us to apply Lemma~\ref{lem:envelopes} to deduce that
\begin{align*}
\limsup_{n \rightarrow \infty} f_n(t+\gamma) &\leq[\Phi_{\infty}]_2^{\gamma}\big( u_{\infty}(t) \big) 
\leq  \Phi_\infty|_{L^2(\Omega;\mu_\infty)}\big( u_\infty(t) \big)
=\Phi_{\infty}\big( u_{\infty}(t) \big)
=f_{\infty}(t),
\end{align*} 
where the second inequality follows from the bound in Remark~\ref{rem:envelopesupport}.

By the same method as that following \eqref{eq:limsupfn} at the end of the proof of Theorem~\ref{Hilthm} (where we can use $(0,+\infty)$ instead of $\mathfrak{I}_{-\lambda}$) we conclude that, for all $t>0$,
$f_n(t) \rightarrow f_{\infty}(t)$ as $n\rightarrow \infty$. We can use the same method, because, for all $n\in \mathbb{N}$, $f_n$ is non-increasing (see Remarks~\ref{rem:gradientflow} and~\ref{rem:maxslope}) and $f_\infty$ is continuous on $(0,+\infty)$ (see Remark~\ref{rem:maxslope}).
\end{proof}

Next we find conditions that imply part~(i) of Assumptions~\ref{assump} and that may be easier to check in practice. They include conditions reminiscent of $\Gamma$-convergence for the sequence $(\Phi_n)_{n\in\mathbb{N}_\infty}$, instead of the conditions on $(\mathfrak{M}[\lambda,\cdot,\Phi_n])_{n\in\mathbb{N}_\infty}$ in Assumptions~\ref{assump}. 

\begin{assumptions}\label{assumpN}
\begin{enumerate}[(i)] 
\item For all sequences $(u_n)_{n\in \mathbb{N}_\infty} \subset_n X_n$ such that $u_n \underset{TL^p(\Omega)}\longtwoheadrightarrow u_{\infty}$ as $n\rightarrow \infty$, 
$$ \liminf_{n\rightarrow \infty} \Phi_n(u_n) \geq \Phi_{\infty}(u_{\infty}).$$
\item For all $u_{\infty} \in L^{\max (2,p)}(\Omega;\mu_{\infty})$ there exists a sequence $(u_n)_{n\in \mathbb{N}} \subset_n L^{\max (2,p)}(\Omega;\mu_n)$ such that $u_n \underset{TL^{\max(2,p)}(\Omega)}\longtwoheadrightarrow u_{\infty}$ as $n\rightarrow \infty$ and
$$ \limsup_{n\rightarrow \infty} \Phi_n(u_n) \leq \Phi_{\infty}(u_{\infty}).$$
\item For all sequences $(x_n)_{n\in \mathbb{N}} \subset_n X_n$ for which the sequences $(\Vert x_n \Vert_n)_{n\in \mathbb{N}}$ and $(\Phi_n(x_n))_{n\in \mathbb{N}}$ are bounded, there exists an $x_\infty \in L^p(\Omega;\mu_\infty)$ and a subsequence $\big((x_{n_k},\mu_{n_k})\big)_{k\in\mathbb{N}}$ of $\big((x_n,\mu_n)\big)_{n\in\mathbb{N}}$ that converges to $(x_\infty,\mu_\infty)$ in $TL^p(\Omega)$.
\item For all $n\in \mathbb{N}_\infty$, $\Phi_n$ is non-negative. 
\item For all $\lambda>0$ there exists a strictly increasing function $f:[0,+\infty) \rightarrow [0,+\infty)$ such that $f(0)=0$ and $f(r)\rightarrow+\infty$ as $r\rightarrow \infty$, moreover for all $n \in \mathbb{N}$ and all $u\in X_n$,
$$ \mathfrak{N}[\lambda;\Phi_n](u) \geq f(\Vert u \Vert_n ).$$ 
\end{enumerate}
\end{assumptions}

\begin{remark}
\label{rem:assumpiv}
If part~(iv) of Assumptions~\ref{assumpN} is satisfied and $p\leq 2$, then also part~(v) of Assumptions~\ref{assumpN} holds. Indeed, for all $n\in \mathbb{N}_\infty$, $\mu_n$ is a probability measure and thus
$$ \frac{1}{2}\Vert u \Vert_n^2 \leq \frac{1}{2} \int |u|^2 \, d\mu_n \leq  \mathfrak{N}[\lambda;\Phi_n](u).  $$
The first inequality follows from H\"older's inequality, the second from the definition of $\mathfrak{N}[\lambda;\Phi_n]$ in Definition~\ref{movementeq}.
\end{remark}

For the next proposition, we recall that definitions of $\Gamma$-convergence and equicoercivity over a Banach stacking are given in Appendix~\ref{app:Gammaconvergence}.
\begin{proposition}
\label{prop:assumpreduce}
Suppose $\big(\Phi_n\big)_{n\in \mathbb{N}_{\infty}}$ satisfies Assumptions~\ref{assumpN}. Then for all sequences $(h_n)_{n\in \mathbb{N}_\infty} \subset_n L^2(\Omega;\mu_n)$ such that $h_n \underset{TL^2(\Omega)}\longtwoheadrightarrow h_{\infty}$ as $n\rightarrow \infty$ we have that, for all $\lambda>0$,
\begin{enumerate}[(a)]
    \item the sequence $(\mathfrak{M}[\lambda,h_n;\Phi_n])_{n\in \mathbb{N}_\infty}$ $\Gamma$-converges to $\mathfrak{M}[\lambda,h_{\infty};\Phi_{\infty}]$ as $n\rightarrow \infty$ over the Banach stacking $\big((X_n, \xi_n)_{n\in \mathbb{N}_\infty}, TL^p(\Omega)\big)$ and
    \item the sequence $\big( \mathfrak{M}[\lambda,h_n;\Phi_n] \big)_{n\in \mathbb{N}}$ is equicoercive over the Banach stacking $\big((X_n, \xi_n)_{n\in \mathbb{N}_\infty}, TL^p(\Omega) \big)$. 
\end{enumerate} 
In particular, part~1 from Assumptions~\ref{assump} holds with $\mathcal{A}:=L^{\max(2,p)}(\Omega;\mu_{\infty})$.
\end{proposition}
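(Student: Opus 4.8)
The plan is to prove the $\Gamma$-convergence in~(a) by separately establishing the lower-bound inequality and the existence of recovery sequences, to prove the equicoercivity in~(b) by a direct coercivity estimate, and then to read off part~(i) of Assumptions~\ref{assump} with $\mathcal{A}:=L^{\max(2,p)}(\Omega;\mu_\infty)$. For the latter I note that $\mathcal{A}\subset X_\infty$ trivially and that $\mathcal{A}$ is dense in $X_\infty=L^p(\Omega;\mu_\infty)$: this is automatic when $p>2$ (then $\mathcal{A}=X_\infty$), and for $p\le2$ it holds because bounded measurable functions are dense in $L^p(\Omega;\mu_\infty)$ and, $\mu_\infty$ being finite, lie in $L^2(\Omega;\mu_\infty)=\mathcal{A}$. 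Throughout I use freely that each $TL^r(\Omega)$ is a Banach stacking (Proposition~\ref{tlpBanach}), so that along convergent sequences its norms and the operations of addition and scaling are continuous (conditions~(iii)--(iv) of Definition~\ref{BanStack}), that convergence in $TL^q(\Omega)$ implies convergence in $TL^r(\Omega)$ whenever $r\le q$ (Lemma~\ref{tlplem2}), and that for the finite measures $\mu_n$ one has $L^{\max(2,p)}(\Omega;\mu_n)=L^2(\Omega;\mu_n)\cap L^p(\Omega;\mu_n)$.

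For the lower-bound inequality, fix $\lambda>0$, a sequence $(h_n)_{n\in\mathbb{N}_\infty}\subset_n L^2(\Omega;\mu_n)$ with $h_n\underset{TL^2(\Omega)}\longtwoheadrightarrow h_\infty$, and a sequence $(u_n)_{n\in\mathbb{N}_\infty}\subset_n X_n$ with $u_n\underset{TL^p(\Omega)}\longtwoheadrightarrow u_\infty$. One may assume $\liminf_n\mathfrak{M}[\lambda,h_n;\Phi_n](u_n)<+\infty$ and, after passing to a subsequence realising the liminf, that this quantity is bounded; since $\Phi_n\ge0$ (Assumptions~\ref{assumpN}(iv)), both $\tfrac12\|u_n-h_n\|_{L^2(\Omega;\mu_n)}^2$ and $\Phi_n(u_n)$ are then bounded, so in particular $u_n\in L^2(\Omega;\mu_n)$. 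Both $(u_n)$ and $(h_n)$ converge in the weaker topology $TL^{\min(2,p)}(\Omega)$, hence so does $u_n-h_n$ to $u_\infty-h_\infty$, and by Lemma~\ref{tlplem3} the pushforwards $(u_n-h_n)\#\mu_n$ converge weakly to $(u_\infty-h_\infty)\#\mu_\infty$. Testing this weak convergence against the bounded continuous functions $t\mapsto\min(t^2,M)$ and then letting $M\to\infty$ by monotone convergence yields $\liminf_n\|u_n-h_n\|_{L^2(\Omega;\mu_n)}^2\ge\|u_\infty-h_\infty\|_{L^2(\Omega;\mu_\infty)}^2$. Combining this with $\liminf_n\Phi_n(u_n)\ge\Phi_\infty(u_\infty)$ from Assumptions~\ref{assumpN}(i) and the super-additivity of $\liminf$ for nonnegative sequences gives $\liminf_n\mathfrak{M}[\lambda,h_n;\Phi_n](u_n)\ge\mathfrak{M}[\lambda,h_\infty;\Phi_\infty](u_\infty)$. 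For the recovery sequence, given $u_\infty\in X_\infty$ with $\mathfrak{M}[\lambda,h_\infty;\Phi_\infty](u_\infty)=+\infty$ any $u_n\underset{TL^p(\Omega)}\longtwoheadrightarrow u_\infty$ (which exists by Proposition~\ref{tlpBanach}) works; otherwise $u_\infty-h_\infty\in L^2(\Omega;\mu_\infty)$, so $u_\infty\in L^2(\Omega;\mu_\infty)\cap X_\infty=L^{\max(2,p)}(\Omega;\mu_\infty)$, and Assumptions~\ref{assumpN}(ii) provides $(u_n)_n\subset_n L^{\max(2,p)}(\Omega;\mu_n)\subset X_n$ with $u_n\underset{TL^{\max(2,p)}(\Omega)}\longtwoheadrightarrow u_\infty$ and $\limsup_n\Phi_n(u_n)\le\Phi_\infty(u_\infty)$. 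Since $\max(2,p)\ge2$ this also gives $u_n-h_n\underset{TL^2(\Omega)}\longtwoheadrightarrow u_\infty-h_\infty$, hence $\|u_n-h_n\|_{L^2(\Omega;\mu_n)}\to\|u_\infty-h_\infty\|_{L^2(\Omega;\mu_\infty)}$, and since $\max(2,p)\ge p$ it gives $u_n\underset{TL^p(\Omega)}\longtwoheadrightarrow u_\infty$; adding the two estimates yields $\limsup_n\mathfrak{M}[\lambda,h_n;\Phi_n](u_n)\le\mathfrak{M}[\lambda,h_\infty;\Phi_\infty](u_\infty)$, completing~(a).

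For the equicoercivity, let $(v_n)_{n\in\mathbb{N}}\subset_n X_n$ and $C>0$ satisfy $\mathfrak{M}[\lambda,h_n;\Phi_n](v_n)\le C$ for all $n$. From $\Phi_n\ge0$ we get $\|v_n-h_n\|_{L^2(\Omega;\mu_n)}\le\sqrt{2C}$ and $\Phi_n(v_n)\le C/\lambda$; since $\sup_n\|h_n\|_{L^2(\Omega;\mu_n)}<+\infty$ by continuity of the norm along $h_n\underset{TL^2(\Omega)}\longtwoheadrightarrow h_\infty$, also $\sup_n\|v_n\|_{L^2(\Omega;\mu_n)}<+\infty$. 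Writing $\mathfrak{N}[\lambda;\Phi_n](v_n)=\mathfrak{M}[\lambda,h_n;\Phi_n](v_n)+\int_\Omega v_nh_n\,d\mu_n-\tfrac12\int_\Omega|h_n|^2\,d\mu_n$ and applying Cauchy--Schwarz shows $\big(\mathfrak{N}[\lambda;\Phi_n](v_n)\big)_n$ is bounded; by the coercivity function $f$ from Assumptions~\ref{assumpN}(v) (available also when $p\le2$, by Remark~\ref{rem:assumpiv}), $\big(f(\|v_n\|_n)\big)_n$ is bounded, and since $f$ is strictly increasing with $f(r)\to+\infty$, $\big(\|v_n\|_n\big)_n$ is bounded. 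With $\big(\|v_n\|_n\big)_n$ and $\big(\Phi_n(v_n)\big)_n$ both bounded, Assumptions~\ref{assumpN}(iii) produces $v_\infty\in L^p(\Omega;\mu_\infty)=X_\infty$ and a subsequence with $v_{n_k}\underset{TL^p(\Omega)}\longtwoheadrightarrow v_\infty$, which is exactly equicoercivity over the $TL^p(\Omega)$ Banach stacking. Finally, part~(i) of Assumptions~\ref{assump} with $\mathcal{A}:=L^{\max(2,p)}(\Omega;\mu_\infty)$ follows at once, since any $(h_n)_n\subset_n L^{\max(2,p)}(\Omega;\mu_n)$ with $h_n\underset{TL^{\max(2,p)}(\Omega)}\longtwoheadrightarrow h_\infty$ automatically lies in $L^2(\Omega;\mu_n)$ and satisfies $h_n\underset{TL^2(\Omega)}\longtwoheadrightarrow h_\infty$ (Lemma~\ref{tlplem2}), so that (a) and (b) apply to it verbatim.

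The step I expect to be the main obstacle is the lower semicontinuity of the quadratic term $\|u_n-h_n\|_{L^2(\Omega;\mu_n)}^2$ along the merely $TL^{\min(2,p)}(\Omega)$-convergent sequence $u_n-h_n$: when $p<2$ the $L^2$ norm is strictly stronger than the ambient topology, so this is not available from the Banach-stacking axioms, and one has to route it through weak convergence of the pushforward measures (Lemma~\ref{tlplem3}) together with the lower-semicontinuity form of Portmanteau applied to the unbounded nonnegative lower-semicontinuous integrand $t\mapsto t^2$ (which must itself be justified, e.g.\ by monotone approximation by the bounded continuous truncations above). The remaining delicate point is purely organisational: tracking which of the topologies $TL^p$, $TL^2$, $TL^{\max(2,p)}$, $TL^{\min(2,p)}$ is in force in each estimate, and the corresponding inclusions of the Lebesgue spaces, which flip according to whether $p\le2$ or $p>2$.
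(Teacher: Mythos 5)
Your proof is correct and follows the same overall decomposition as the paper's (liminf inequality, recovery sequence, equicoercivity, density of $\mathcal{A}$), with two local sub-steps argued differently. For the lower-semicontinuity of the quadratic term $\frac12\int_\Omega|u_n-h_n|^2\,d\mu_n$ along the $TL^{\min(2,p)}$-convergent sequence, the paper invokes Skorokhod's representation theorem to realise the weak convergence of pushforward laws from Lemma~\ref{tlplem3} as almost-everywhere pointwise convergence on a common probability space and then applies Fatou; you instead test the weak convergence against the bounded continuous truncations $t\mapsto\min(t^2,M)$ and pass $M\to\infty$ by monotone convergence. Your route is more elementary (it avoids Skorokhod entirely, using only the bounded-continuous case of weak convergence plus a monotone approximation of the unbounded lower-semicontinuous integrand); the paper's route is shorter on the page but loads heavier machinery. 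For the equicoercivity bound relating $\mathfrak{N}[\lambda;\Phi_n]$ to $\mathfrak{M}[\lambda,h_n;\Phi_n]$, the paper applies the elementary inequality $|a+b|^2\le2|a|^2+2|b|^2$ directly under $f^{-1}$, while you expand the square and invoke Cauchy--Schwarz; these are interchangeable. Everything else—the splitting of the recovery step into the cases $u_\infty\notin L^2(\Omega;\mu_\infty)$ and $u_\infty\in L^{\max(2,p)}(\Omega;\mu_\infty)$, the passage through Assumptions~\ref{assumpN}(ii) and the Banach-stacking continuity of $L^2$ norms, the extraction of a $TL^p$-convergent subsequence from Assumptions~\ref{assumpN}(iii) and (v) (and Remark~\ref{rem:assumpiv}), and the final deduction of part~(i) of Assumptions~\ref{assump}—matches the paper's argument.
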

\begin{proof}
Let $(h_n)_{n\in\mathbb{N}_\infty} \subset_n L^2(\Omega;\mu_n)$ be a sequence such that $h_n \underset{TL^2(\Omega)}\longtwoheadrightarrow h_{\infty}$ as $n\rightarrow \infty$. 

First we prove (a). To establish the required inequality for the limit inferior, let $(u_n)_{n\in \mathbb{N}_\infty} \subset_n X_n$ be a sequence such that $u_n \underset{TL^p(\Omega)}\longtwoheadrightarrow u_{\infty}$ as $n\rightarrow \infty$. By Lemma~\ref{tlplem3} it follows that $((h_n-u_n) \# \mu_n)$ converges weakly to $(h_{\infty}-u_{\infty}) \# \mu_{\infty}$ as $n\rightarrow \infty$. Next we apply Skorokhod's representation theorem \cite[Theorem 6.7]{Billingsley99} to establish the existence of a probability space $(\Pi,\Sigma,\nu)$ and a sequence $(\mathcal{U}_n)_{n\in \mathbb{N}_\infty}$ of random variables $\mathcal{U}_n:\Pi \rightarrow \mathbb{R}$ such that the law of $\mathcal{U}_n$ is given by $(h_n-u_n)\# \mu_n$ and $(\mathcal{U}_n)_{n\in \mathbb{N}}$ converges pointwise to $\mathcal{U}_{\infty}$, $\nu$-a.e. on $\Pi$. Then
\begin{align*}
\liminf_{n\rightarrow \infty} \mathfrak{M}[\lambda,h_n;\Phi_n](u_n) &= \liminf_{n\rightarrow \infty} \left(  \frac{1}{2}\int_{\Omega} |h_n-u_n|^2 \, d\mu_n +\lambda \Phi_n(u_n) \right)\\
 &= \liminf_{n\rightarrow \infty} \left( \frac{1}{2}\mathbb{E}_{\nu}[|\mathcal{U}_n|^2] +\lambda \Phi_n(u_n) \right)\\
 &\geq  \frac{1}{2}\mathbb{E}_{\nu}[|\mathcal{U}_{\infty}|^2] +\lambda \Phi_{\infty}(u_{\infty})
 = \mathfrak{M}[\lambda,h_{\infty};\Phi_{\infty}](u_{\infty}).
\end{align*}
The second and third equalities hold by the law of the unconscious statistician combined with the definition of the pushforward measure and the inequality follows by Fatou's lemma (see \cite[Lemma 123B]{Fremlin1}) and part~(i) in Assumptions~\ref{assumpN}. This establishes the required inequality for the limit inferior.

To prove the inequality for the limit superior that is needed for $\Gamma$-convergence, let $u_{\infty}\in X_{\infty}=L^p(\Omega; \mu_\infty)$. First assume that $u_{\infty} \notin L^2(\Omega;\mu_{\infty})$. Let $(u_n)_{n\in \mathbb{N}} \subset_n X_n$ be a sequence such that $u_n \underset{TL^p(\Omega)}\longtwoheadrightarrow u_{\infty}$ as $n\rightarrow \infty$. Then we have
\[
 \limsup_{n\to\infty} \mathfrak{M}[\lambda,h_n;\Phi_n](u_n) \leq \mathfrak{M}[\lambda,h_{\infty};\Phi_{\infty}](u_{\infty})=+\infty.
\]
The right-hand side above is $+\infty$ because, by part~(iv) of Assumptions~\ref{assumpN}, each $\Phi_n$ is non-negative, and moreover $h_{\infty} \in  L^2(\Omega;\mu_{\infty})$ and $u_{\infty} \notin L^2(\Omega;\mu_{\infty})$, thus ${h_\infty-u_\infty \notin L^2(\Omega;\mu_{\infty})}$ and $ \int |h_{\infty} -u_{\infty}|^2 d\mu_{\infty} = +\infty$.

Now we assume that $u_\infty \in X_{\infty} \cap L^2(\Omega;\mu_{\infty})$. By part~(ii) in Assumptions~\ref{assumpN}, there exists a sequence $(u_n)_{n\in \mathbb{N}} \subset_n X_n$ such that $u_n \underset{TL^{\max(2,p)}(\Omega)}\longtwoheadrightarrow u_{\infty}$ as $n\rightarrow \infty$ and
$$ \limsup_{n\rightarrow \infty} \Phi_n(u_n) \leq \Phi_{\infty}(u_{\infty}).$$
By property~(iii) in Definition~\ref{BanStack} and Lemma~\ref{tlplem2} we have $h_n-u_n \underset{TL^2(\Omega)}\longtwoheadrightarrow h_{\infty}- u_{\infty}$ as $n\rightarrow \infty$. 
Then, by property~(iv) in Definition~\ref{BanStack}, 
$$ \int_{\Omega} |h_n -u_n|^2 \, d\mu_n \rightarrow \int_{\Omega} |h_{\infty} -u_{\infty}|^2 \, d\mu_{\infty} \qquad \text{as } n\to\infty.$$
Combining this with the earlier inequality for the limit superior of the $\Phi_n$, we deduce that
$n\rightarrow \infty$,
$$ \limsup_{n\rightarrow \infty} \mathfrak{M}[\lambda,h_n;\Phi_n](u_n) \leq  \mathfrak{M}[\lambda,h_{\infty};\Phi_{\infty}](u_{\infty}).$$
This proves $\Gamma$-convergence of $(\mathfrak{M}[\lambda,h_n;\Phi_n](u_n))$ to $\mathfrak{M}[\lambda,h_\infty;\Phi_\infty](u_\infty)$ and thus part~(a) of the Lemma.

Next we prove part~(b) of the lemma. Indeed we will show that the sequence $\big( \mathfrak{M}[\lambda,h_n;\Phi_n] \big)_{n\in \mathbb{N}}$ is equicoercive as a consequence of parts~(iii), (iv), and~(v) of Assumptions~\ref{assumpN}. Indeed, let $(u_n)_{n\in \mathbb{N}}\subset_n X_n$ be a sequence such that $(\mathfrak{M}[\lambda,h_n;\Phi_n](u_n))_{n\in \mathbb{N}}$ is bounded, say by a constant $L_1>0$. In particular, since, for all $n\in \mathbb{N}$, $\Phi_n$ is non-negative, the sequence $(\Phi_n(u_n))_{n\in\mathbb{N}}$ is bounded.

Part~(v) of Assumptions~\ref{assumpN} implies that, for all $n\in \mathbb{N}$, 
\begin{align*}
\Vert u_n \Vert_n &\leq f^{-1}( \mathfrak{N}[\lambda;\Phi_n](u_n) )
= f^{-1} \left(\frac{1}{2} \int |u_n -h_n +h_n|^2 \, d\mu_n + \Phi_n(u_n) \right) \\
&\leq f^{-1} \left( \int \big( |u_n -h_n|^2 + |h_n|^2 \big) \, d\mu_n + \Phi_n(u_n) \right)\\
&\leq f^{-1} \left( 2\mathfrak{M}[\lambda,h_n;\Phi_n](u_n) + \int |h_n|^2 \, d\mu_n \right).
\end{align*}
For all three inequalities we have used that $f^{-1}$ is non-decreasing because $f$ is. Moreover, for the third inequality, we use the non-negativity of $\Phi_n$ from part~(iv) of Assumptions~\ref{assumpN}.

Since $h_n \underset{TL^2(\Omega)}\longtwoheadrightarrow h_{\infty}$ as $n\rightarrow \infty$, we have that $\int_{\Omega} |h_n|^2 \, d\mu_n \rightarrow \int_{\Omega} |h_{\infty}|^2 \, d\mu_{\infty}$ as $n\rightarrow \infty$, by property~(iv) in Definition~\ref{BanStack}. So there exists a constant $L_2>0$ such that, for all $n\in \mathbb{N}_{\infty}$, $\int_{\Omega} |h_n|^2 \, d\mu_n \leq L_2$. Hence, for all $n\in \mathbb{N}$, 
$$ \Vert u_n \Vert_n \leq  f^{-1} \big( 2L_1 +L_2 \big).$$

Thus the sequences $(\Vert u_n \Vert_n)_{n\in \mathbb{N}}$ and $(\Phi_n(u_n))_{n\in \mathbb{N}}$ are bounded, so by part~(iii) of Assumptions~\ref{assumpN}, there exists an $x_\infty \in L^p(\Omega;\mu_\infty)$ and a subsequence $\big((x_{n_k},\mu_{n_k})\big)_{k\in\mathbb{N}}$ of $\big((x_n,\mu_n)\big)_{n\in\mathbb{N}}$ that converges to $(x_\infty,\mu_\infty)$ in $TL^p(\Omega)$. Hence the required equicoercivity holds.

Finally, let $\mathcal{A}:=L^{\max(2,p)}(\Omega;\mu_{\infty})$. Then $\mathcal{A}$ is a dense subset of $X_\infty$, because $\mathcal{A}\subset X_\infty$ (as before, since $\mu_\infty$ is a finite measure) and the set of simple functions is dense in both these spaces (see \cite[Proposition 6.7]{Folland99}). Moreover, if $h_\infty\in \mathcal{A}$ and $(h_n)_{n\in\mathbb{N}}\subset_n L^{\max(2,p)}(\Omega;\mu_n)$ is such that $h_n\underset{TL^{\max(2,p)}(\Omega)}\longtwoheadrightarrow h_{\infty}$ as $n\rightarrow \infty$, then $(h_n)_{n\in \mathbb{N}_\infty} \subset_n L^2(\Omega;\mu_n)$ and, by Lemma~\ref{tlplem2}, $h_n\underset{TL^2(\Omega)}\longtwoheadrightarrow h_{\infty}$ as $n\rightarrow \infty$. Thus, by what we have proven above, the required $\Gamma$-convergence and equicoercivity properties hold for $(\mathfrak{M}[\lambda,h_n;\Phi_n])_{n\in \mathbb{N}_\infty}$, to conclude that part~(i) of Assumptions~\ref{assump} holds. 
\end{proof}

\begin{remark}
By Lemma~\ref{tlplem2}, for all $p>1$, the topology of $TL^p(\Omega)$ is stronger than that of $TL^1(\Omega)$. So if we wish to establish the inequality for the limit inferior in part~(i) of Assumptions~\ref{assumpN}, then it suffices to do so for all sequences $(u_n)_{n\in \mathbb{N}_\infty}$ that are convergent in $TL^1(\Omega)$. Furthermore, since, for all $n\in \mathbb{N}_\infty$, $\mu_n$ is a probability measure and so $L^{\min (2,p)}(\Omega;\mu_n)\subset L^1(\Omega;\mu_n)$, it suffices to require that the functions $\Phi_n$ are $P_0$-convex on $L^1(\Omega;\mu_n)$, in place of the $P_0$-convexity condition on $L^{\min(2,p)}(\Omega;\mu_n)$ in part~(ii) of Assumptions~\ref{assump}.
\end{remark}

\subsection{Contributions of of Theorems~\ref{Hilthm} and~\ref{thm:P0theorem1}}

We conclude Section~\ref{sec:convresolvents} by summarising the main contributions of Theorem~\ref{Hilthm} and Theorem~\ref{thm:P0theorem1}. 
\begin{itemize}
    \item We use the bound on $\Phi(u(t))$ by a Moreau envelope from Theorem~\ref{GradFlowTheorem}, combined with $\Gamma$-convergence, in the proofs of Theorems \ref{Hilthm} and \ref{thm:P0theorem1} as a novel proof strategy to deduce convergence of the values of the gradient flows. This allows us to avoid the requirement of a well-preparedness assumption on the initial conditions such as can be found in \cite{EvoEq} and \cite{SerfatyGam}. The only assumption we require in these theorems for the initial conditions is that they lie in the closure of the effective domains of the functionals and possibly a uniform bound on their $L^q$-norms for some sufficiently large $q$. 
    \item We use a generalized Brezis--Pazy theorem (Theorem~\ref{theorem1}) to conclude uniform convergence of the gradient flows. In Section~\ref{sec:literature} we have discussed the results that other works obtained. To the best of our knowledge, no other work uses a Brezis--Pazy-type theorem to establish such results.
    \item Both theorems are formulated in the newly introduced setting of a Banach stacking. Theorem~\ref{Hilthm} is applicable to Banach stackings of Hilbert spaces, while Theorem~\ref{thm:P0theorem1} deals with the specific Banach stacking $TL^p(\Omega)$ that was introduced in \cite{garcia2016continuum}. 
    \item In Theorem~\ref{thm:P0theorem1} we show one strategy to overcome the mismatch (as illustrated by the total-variation example in Section~\ref{sec:convgradflowintro}) between the topology in which compactness holds ($TL^p(\Omega)$ with $p<2$) and the topology with respect to which the gradient flow is taken ($TL^2(\Omega)$) by strengthening our convexity assumption; namely by using the notion of convexity introduced in \cite{benilan1991comp} (which went unnamed in \cite{benilan1991comp}, but we refer to as $P_0$-convexity). 
\end{itemize}

\section*{}

\paragraph{Acknowledgements}

The authors acknowledge receiving funding from the Dutch Research Council (NWO) via its Open Competition Domain Science (ENW) - M.

\appendix

\section{Some basic properties of operators}\label{app:basicproperties}

\begin{remark}\label{rem:restricteddomain}
    The restricted operator $A|_Y$ in Definition~\ref{def:restriction} is an operator on a set $X$ and an operator on $Y$. Moreover, 
    \begin{align*}
D(A|_Y) &= \{x\in X \,| \, \text{there exists a } y\in X \text{ such that } (x,y)\in A|_Y\}\\
&= \{x\in Y \,| \, \text{there exists a } y\in Y \text{ such that } (x,y)\in A\}.
    \end{align*}
    Thus $D(A|_Y) \subset D(A) \cap Y$. The converse inclusion does not hold in general. For example, if $X:=\{0,1\}$, $Y:=\{0\}$, and $A:=\{(0,1), (1,1)\}$, then $D(A) = X$, hence $D(A)\cap Y = Y$, yet $A|_Y = \emptyset$, so $D(A|Y)=\emptyset \not\supset D(A)\cap Y$.

    If, however, $x\in D(A)\cap Y$ is such that $A(x)\cap Y \neq\emptyset$, then there exists a $y\in Y$ such that $(x,y)\in A$ and thus $x\in D(A|_Y)$.

Similarly, we have
\begin{align*}
    \operatorname{Range}(A) &= \{y\in X \,| \, \text{there exists an } x\in X \text{ such that } (x,y)\in A\}\\
    \operatorname{Range}(A|_Y) &= \{y\in Y \,| \, \text{there exists an } x\in Y \text{ such that } (x,y)\in A\}.
\end{align*}
Thus $\operatorname{Range}(A|_Y)\subset \operatorname{Range}(A)\cap Y$. The converse does not hold in general. As a counterexample, we can take the same sets $X$ and $Y$ as above, with $A:=\{(1,0),(1,1)\}$. Then $A|_Y=\emptyset$, thus $\operatorname{Range}(A) = X$ and $\operatorname{Range}(A)\cap Y=Y$, yet ${\operatorname{Range}(A|_Y)=\emptyset \not\supset \operatorname{Range}(A)\cap Y}$.

If, however, $y\in \operatorname{Range}(A)\cap Y$ is such that $A^{-1}\cap Y \neq\emptyset$, then $y\in \operatorname{Range}(A|_Y)$.
    
\end{remark}

\begin{remark}\label{rem:domainrange}
If $A$ is an operator on a set $X$, then ${D(A^{-1})=\operatorname{Range}(A) \subset X}$. From Definition~\ref{def:inverse} it follows that $(A^{-1})^{-1}=A$ and thus also ${\operatorname{Range}(A^{-1}) = D(A) \subset X}$.

Moreover, if $B$ is also an operator on $X$ with $A\subset B$, then $D(A)\subset D(B)$ and $\operatorname{Range}(A) \subset \operatorname{Range}(B)$.
\end{remark}

\begin{lemma}\label{lem:restrictionofinverse}
    Let $A$ be an operator on a set $X$ and $Y\subset X$. Then $(A|_Y)^{-1} = (A^{-1})|_Y$.
\end{lemma}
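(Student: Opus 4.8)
The statement is the set-theoretic identity $(A|_Y)^{-1} = (A^{-1})|_Y$ for an operator $A$ on a set $X$ and a subset $Y \subset X$. Since both sides are subsets of $X \times X$ (indeed of $Y \times Y$), the natural approach is simply to chase elements through the relevant definitions: Definition~\ref{def:restriction} for restriction, Definition~\ref{def:inverse} for the inverse, and then show mutual containment — or, better, unwind both sides to the same explicit description of a subset of $X\times X$ and observe they coincide.

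The plan is to start from the left-hand side. By Definition~\ref{def:restriction}, $A|_Y = A \cap (Y \times Y)$, so $(A|_Y)^{-1} = (A \cap (Y \times Y))^{-1}$. By Definition~\ref{def:inverse}, for any operator $B$ we have $B^{-1} = \{(x,y) \in X\times X \mid (y,x) \in B\}$. Applying this with $B = A \cap (Y\times Y)$: a pair $(x,y)$ lies in $(A|_Y)^{-1}$ iff $(y,x) \in A \cap (Y\times Y)$, i.e. iff $(y,x)\in A$ and $(y,x) \in Y\times Y$. Now $(y,x) \in A$ is equivalent to $(x,y)\in A^{-1}$, and $(y,x)\in Y\times Y$ is equivalent to $(x,y)\in Y\times Y$ (swapping coordinates preserves membership in a product of a set with itself). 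Hence $(x,y) \in (A|_Y)^{-1}$ iff $(x,y) \in A^{-1} \cap (Y\times Y)$, which by Definition~\ref{def:restriction} is exactly $(A^{-1})|_Y$. This establishes the equality.

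There is essentially no obstacle here — the only point requiring a moment's care is the symmetry observation that $(y,x) \in Y\times Y \iff (x,y)\in Y\times Y$, which is immediate because $Y\times Y$ is invariant under the coordinate swap. One could present the argument either as the single equivalence chain above or, if a more pedestrian style is preferred, as two inclusions $(A|_Y)^{-1} \subset (A^{-1})|_Y$ and $(A^{-1})|_Y \subset (A|_Y)^{-1}$, each proved by the same element chase. Given that the paper already proves many such routine facts in Appendix~\ref{app:basicproperties}, a compact one-paragraph proof via the equivalence chain is most appropriate, possibly noting in passing that this lemma (together with Remark~\ref{rem:restricteddomain}) underlies statements like Remark~\ref{rem:inverseofrestriction} used later.
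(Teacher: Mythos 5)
Your proof is correct and follows the same route as the paper: unwind both definitions to exhibit $(A|_Y)^{-1}$ as $\{(x,y)\mid (y,x)\in A\}\cap(Y\times Y)=(A^{-1})|_Y$, using the symmetry of $Y\times Y$ under coordinate swap. The paper's version is just a more terse rendering of the same equivalence chain.
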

\begin{proof}
    From the definitions of the restricted operator and the inverse it follows that
    \begin{align*}
        (A|_Y)^{-1} &= \{(x,y)\in X\times X| (y,x) \in A \cap (Y\times Y)\}\\
        &= \{(x,y)\in X \times X| (y,x)\in A\} \cap (Y\times Y) = (A^{-1})|_Y.
    \end{align*}
\end{proof}

\begin{remark}\label{rem:inverseofrestriction}
Lemma~\ref{lem:restrictionofinverse} implies that, if $(x,y)\in Y\times Y$, then $(x,y)\in (A|_Y)^{-1}$ if and only if $(x,y)\in A^{-1}$. In particular, since $D((A|_Y)^{-1})\subset Y$ and $\operatorname{Range}((A|_Y)^{-1})\subset Y$, if $x\in D((A|_Y)^{-1})$, then, for all $y\in (A|_Y)^{-1}(x)$, $(x,y) \in A^{-1}$.
\end{remark}

In the remainder of Appendix~\ref{app:basicproperties}, $X$ will denote a topological vector space.

\begin{remark}\label{rem:closureinclusions}
From Definition~\ref{def:pointwiseclosure} it is immediate that if $A$ and $B$ are operators on $X$ with $A \subset B$, then $\overline{A}\subset \overline{B}$. Moreover, for all $x\in X$, $A(x) \subset \overline{A}(x)$; hence also $D(A)\subset D(\overline{A})$. 

Furthermore, for all $x\in X$, $\overline{A(x)} \subset \overline{A}(x)$. Indeed, if $y\in \overline{A(x)}$, then there exists a sequence $(y_n)_{n\in \mathbb{N}} \subset A(x)$ such that $y_n \to y$, as $n\to \infty$. If we define, for all $n\in \mathbb{N}$, $x_n:=x$, then, for all $n\in \mathbb{N}$, $y_n\in A(x_n)$, and $(x_n,y_n)\to (x,y)$ as $n\to \infty$; hence, $y\in \overline{A}(x)$ by Definition~\ref{def:pointwiseclosure}.

The opposite inclusion does not always hold, as the following counterexample shows. Let $X=\mathbb{R}$ endowed with the Euclidean topology. Define $A\subset \mathbb{R} \times \mathbb{R}$ by, for all $x\in X$,
\[
A(x) := \begin{cases}
    \{x\}, &\text{if } x\neq 0,\\
    \{1\}, &\text{if } x=0,
\end{cases}
\]
and, for all $n\in\mathbb{N}$, $x_n:=y_n:=\frac1n$. Then, for all $n\in \mathbb{N}$, $y_n\in A(x_n)$, and $(x_n,y_n)\to (0,0)$ as $n\to \infty$. Hence $0\in \overline{A}(0)$. However, $\overline{A(0)}=\{1\}$ and thus $0 \not\in \overline{A(0)}$.
\end{remark}

\begin{remark}\label{rem:domainsofclosures}
The set $D(\overline{A})$ can be characterised as the set of all $x\in X$ such that there exist a $y\in X$ and sequences $(x_n)_{n\in\mathbb{N}}$ and $(y_n)_{n\in\mathbb{N}}$ in $X$ that converge to $x$ and $y$, respectively, such that, for all $n\in \mathbb{N}$, $(x_n,y_n)\in A$. Moreover, $\overline{D(A)}$ is the set of all $x\in X$ such that there exists a sequence $(x_n)_{n\in\mathbb{N}}$ in $X$ that converges to $x$ and a sequence $(y_n)_{n\in\mathbb{N}}$ in $X$, such that, for all $n\in \mathbb{N}$, $(x_n,y_n)\in A$. It follows that $D(\overline{A}) \subset \overline{D(A)}$. 

That the converse inclusion does not not hold, is shown by the following counterexample. Define $A:= \{(1/n, n) \in \mathbb{R}\times\mathbb{R}: n \in \mathbb{N}\}$. Then $\overline{A}=A$ and thus $D(\overline{A})=D(A) = \{1/n\in \mathbb{R}: n\in \mathbb{N}\}$. However, $\overline{D(A)} = D(A) \cup \{0\} \not\subset D(\overline{A})$.

From $D(\overline{A}) \subset \overline{D(A)}$ it follows that $\overline{D(\overline{A})} \subset \overline{D(A)}$. Moreover, it is true that $\overline{D(A)} \subset \overline{D(\overline{A})}$. Indeed, $\overline{D(\overline{A})}$ is the set of all $x\in X$ such that there exist sequences $(x_n)_{n\in\mathbb{N}}$ and $(y_n)_{n\in\mathbb{N}}$ in $X$ and, for all $n\in \mathbb{N}$, a sequence $(z_{n_k})_{k\in \mathbb{N}}$ in $A(x_n)$, such that $x_n \to x$ as $n\to\infty$ and $z_{n_k} \to y_n$ as $k\to \infty$. As we saw above, if $x\in \overline{D(A)}$, then there exist sequences $(x_n)_{n\in\mathbb{N}}$ and $(y_n)_{n\in\mathbb{N}}$ in $X$ such that $x_n\to x$ as $n\to\infty$ and, for all $n\in \mathbb{N}$, $(x_n,y_n)\in A$. By defining, for all $n,k\in \mathbb{N}$, $z_{n_k}:=y_n$, we obtain the sequence $(z_{n_k})_{k\in\mathbb{N}}$ necessary to conclude that $x\in \overline{D(\overline{A})}$.
\end{remark}

\begin{lemma}\label{lem:inverseclosure}
    If $A$ is an operator on $X$, then $\overline{A^{-1}} = (\overline{A})^{-1}$.
\end{lemma}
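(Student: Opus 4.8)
The plan is to prove the set equality $\overline{A^{-1}} = (\overline{A})^{-1}$ by establishing the two inclusions, and for each one it suffices to unwind the definitions of the inverse (Definition~\ref{def:inverse}) and of the pointwise closure (Definition~\ref{def:pointwiseclosure}) together with the elementary fact that the ``swap'' map $\sigma: X\times X \to X\times X$, $\sigma(x,y):=(y,x)$, is a homeomorphism of $X\times X$ with the product topology. Indeed, $A^{-1} = \sigma(A)$ by Definition~\ref{def:inverse}, and a homeomorphism commutes with topological closure, so $\overline{\sigma(A)} = \sigma(\overline{A})$; since $\sigma(\overline{A}) = (\overline{A})^{-1}$ again by Definition~\ref{def:inverse}, the result follows immediately. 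I would present this as the clean one-line argument, but spell it out enough to be self-contained.

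In more detail: First I would note that, because this paper works with operators as subsets of $X\times X$ and the closure in Definition~\ref{def:pointwiseclosure} is the closure in the product topology, a point $(x,y)$ lies in $\overline{B}$ (for any operator $B$) if and only if there exist nets (or, if $X$ is first countable, sequences, as used in Remark~\ref{rem:domainsofclosures}) $(x_\alpha)$ and $(y_\alpha)$ with $(x_\alpha, y_\alpha) \in B$, $x_\alpha \to x$, and $y_\alpha \to y$. For the inclusion $\overline{A^{-1}} \subset (\overline{A})^{-1}$, take $(x,y)\in \overline{A^{-1}}$; then there are nets with $(x_\alpha,y_\alpha)\in A^{-1}$, i.e.\ $(y_\alpha,x_\alpha)\in A$, and $x_\alpha\to x$, $y_\alpha\to y$. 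Hence $(y_\alpha,x_\alpha)\to(y,x)$ in $X\times X$, so $(y,x)\in\overline{A}$, which by Definition~\ref{def:inverse} means $(x,y)\in(\overline{A})^{-1}$. The reverse inclusion $(\overline{A})^{-1}\subset\overline{A^{-1}}$ is symmetric: if $(x,y)\in(\overline{A})^{-1}$ then $(y,x)\in\overline{A}$, so there are nets $(u_\alpha,v_\alpha)\in A$ with $(u_\alpha,v_\alpha)\to(y,x)$; then $(v_\alpha,u_\alpha)\in A^{-1}$ and $(v_\alpha,u_\alpha)\to(x,y)$, giving $(x,y)\in\overline{A^{-1}}$. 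Alternatively, apply the first inclusion with $A$ replaced by $A^{-1}$ and use $(A^{-1})^{-1}=A$ from Remark~\ref{rem:domainrange}.

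There is essentially no obstacle here; the only thing to be slightly careful about is whether to phrase the argument with sequences or nets, since $X$ is only assumed to be a topological vector space in this part of Appendix~\ref{app:basicproperties} and is not assumed first countable. Using the characterisation of closure via the continuity of the swap map $\sigma$ (closure is preserved by homeomorphisms) avoids this issue entirely and gives the shortest correct proof, so that is the route I would take; I would mention the net formulation only as the intuition. I would write out the $\sigma(A)=A^{-1}$ identity explicitly since it is immediate from Definition~\ref{def:inverse}, then conclude $\overline{A^{-1}}=\overline{\sigma(A)}=\sigma(\overline{A})=(\overline{A})^{-1}$.
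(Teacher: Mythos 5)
Your proof is correct, and it takes a genuinely different route from the paper's. The paper simply unwinds Definitions~\ref{def:inverse} and~\ref{def:pointwiseclosure} via sequences: $(x,y)\in\overline{A^{-1}}$ iff there are sequences $(x_n)\to x$, $(y_n)\to y$ with $(y_n,x_n)\in A$, and then observes this is the same condition as $(x,y)\in(\overline{A})^{-1}$. You instead factor the argument through the swap map $\sigma(x,y)=(y,x)$: since $\sigma$ is a homeomorphism of $X\times X$ and $A^{-1}=\sigma(A)$, the identity $\overline{A^{-1}}=\overline{\sigma(A)}=\sigma(\overline{A})=(\overline{A})^{-1}$ is immediate. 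The trade-off you flagged is real: the paper's sequence characterization of closure quietly requires first countability of $X$ (harmless for the Banach-space applications that actually occur in this paper, and consistent with the sequence-based Remark~\ref{rem:domainsofclosures}), whereas your homeomorphism argument is valid for an arbitrary topological vector space and sidesteps the question entirely. Your version is shorter and more robust; the paper's version is more concretely tied to the characterization of $\overline{A}$ that is used repeatedly elsewhere in the appendix. Both are sound in context.
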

\begin{proof}
    It follows from Definitions~\ref{def:inverse} and~\ref{def:pointwiseclosure} that $(x,y) \in \overline{A^{-1}}$ if and only if there exist sequences $(x_n)_{n\in\mathbb{N}}$ and $(y_n)_{n\in\mathbb{N}}$ in $X$ that converge to $x$ and $y$, respectively, and that satisfy, for all $n\in \mathbb{N}$, $(y_n,x_n)\in A$. A similar unfolding of the definitions shows that the same condition is equivalent to $(x,y)\in (\overline{A})^{-1}$.
\end{proof}

\begin{remark}
    Because $X$ is a vector space, the space of all operators on $X$ with summation and scalar multiplication as in Definition~\ref{def:operatorsoperations} is a vector space. Moreover, if $A$, $B$, and $C$ are operators on $X$, $x\in X$, $\lambda \in \mathbb{R}$, and $C\subset B$, then $(A+C)(x)\subset (A+B)(x)$ and $\lambda C(x) \subset \lambda B(x)$.
\end{remark}

\begin{proposition}
\label{Closureprop}
Let $A$ be an operator on $X$, then
$$ \overline{I_X+A} = I_X+\overline{A}.$$
Moreover, if $\lambda\in \mathbb{R}$, then $\lambda \overline{A} \subset \overline{\lambda A}$. If $\lambda\neq 0$, then also $\overline{\lambda A} \subset \lambda \overline{A}$.
\end{proposition}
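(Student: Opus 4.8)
\textbf{Proof plan for Proposition~\ref{Closureprop}.}

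The statement has three parts. For the first identity $\overline{I_X+A}=I_X+\overline{A}$, I would unfold both sides using Definition~\ref{def:pointwiseclosure}. A pair $(x,z)\in X\times X$ lies in $I_X+A$ precisely when $z = x + y$ for some $y$ with $(x,y)\in A$, i.e.\ $(x,z-x)\in A$. Hence $(x,z)\in \overline{I_X+A}$ if and only if there are sequences $(x_n)_n$, $(z_n)_n$ in $X$ converging to $x$, $z$ respectively with $(x_n, z_n-x_n)\in A$ for all $n$. Since $X$ is a topological vector space, subtraction is continuous, so $z_n - x_n \to z-x$; this is exactly the condition that $(x,z-x)\in\overline{A}$, i.e.\ that $(x,z)\in I_X+\overline{A}$. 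The converse direction is the same chain of equivalences read backwards, using that $z_n := x_n + y_n \to x + y = z$ when $x_n\to x$ and $y_n\to y$. So the first identity is a short continuity-of-addition argument.

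For the inclusion $\lambda\overline{A}\subset\overline{\lambda A}$, take $(x,w)\in\lambda\overline{A}$, so $w=\lambda y$ with $(x,y)\in\overline{A}$; pick $(x_n)_n\to x$, $(y_n)_n\to y$ with $(x_n,y_n)\in A$. Then $(x_n,\lambda y_n)\in\lambda A$ and, by continuity of scalar multiplication, $\lambda y_n\to\lambda y = w$, so $(x,w)\in\overline{\lambda A}$. Note this holds for every $\lambda\in\mathbb{R}$, including $\lambda=0$ (where both sides reduce to closures of operators with all second coordinates zero). For the reverse inclusion $\overline{\lambda A}\subset\lambda\overline{A}$ when $\lambda\neq 0$, I would simply apply the inclusion just proved to the operator $\lambda A$ and the scalar $1/\lambda$: this gives $\tfrac1\lambda\,\overline{\lambda A}\subset\overline{\tfrac1\lambda(\lambda A)}=\overline{A}$, and multiplying through by $\lambda$ (a bijection of $X$, hence order-preserving on the containment of operators) yields $\overline{\lambda A}\subset\lambda\overline{A}$. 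Alternatively one argues directly: if $(x,w)\in\overline{\lambda A}$ with approximating sequences $(x_n,w_n)\in\lambda A$, write $w_n=\lambda y_n$ with $(x_n,y_n)\in A$; then $y_n=w_n/\lambda\to w/\lambda$ by continuity of multiplication by the nonzero scalar $1/\lambda$, so $(x,w/\lambda)\in\overline{A}$ and $(x,w)\in\lambda\overline{A}$.

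There is no serious obstacle here; the only thing to be careful about is that the case $\lambda=0$ genuinely breaks the reverse inclusion (then $\overline{0\cdot A}$ can contain pairs $(x,0)$ for $x\in\overline{D(A)}$, while $0\cdot\overline A$ only has $(x,0)$ for $x\in D(\overline A)$, and $D(\overline A)\subsetneq\overline{D(A)}$ in general, cf.\ Remark~\ref{rem:domainsofclosures}), which is exactly why the hypothesis $\lambda\neq 0$ appears. So the mild "obstacle" is just bookkeeping: making sure each inclusion is stated with the correct hypothesis on $\lambda$ and that the domain subtleties are not silently assumed away. Everything else is a direct consequence of the joint continuity of addition and the continuity of scalar multiplication in the topological vector space $X$, applied to the sequential description of pointwise closure in Remark~\ref{rem:domainsofclosures}.
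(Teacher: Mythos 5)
Your proof is correct, and for the identity $\overline{I_X+A}=I_X+\overline{A}$ and the inclusion $\lambda\overline{A}\subset\overline{\lambda A}$ your argument is essentially the same as the paper's: unfold the pointwise closure via approximating sequences in $X\times X$ and use continuity of addition (resp.\ subtraction, resp.\ scalar multiplication). Your ``alternative'' direct argument for $\overline{\lambda A}\subset\lambda\overline{A}$ when $\lambda\neq 0$ is also exactly what the paper does. Where you genuinely improve on the paper is the first route you give for that reverse inclusion: applying the already-established $\mu\overline{B}\subset\overline{\mu B}$ with $\mu=1/\lambda$ and $B=\lambda A$, then using $(1/\lambda)(\lambda A)=A$ and the order-preservation of $C\mapsto\lambda C$, yields $\overline{\lambda A}\subset\lambda\overline{A}$ with no new sequence-chasing; this also makes it transparent that the only place $\lambda\neq 0$ is used is in the algebraic identity $\lambda\cdot(\tfrac1\lambda\cdot C)=C$, which fails at $\lambda=0$ exactly as your domain remark (and the paper's Remark~\ref{rem:countertomultiplication}) explains.
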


\begin{proof}
Let $y\in \overline{I_X+A}(x)$, i.e., there exist sequences $(x_n)_{n\in\mathbb{N}}, (y_n)_{n\in\mathbb{N}}\subset X$ such that $x_n \rightarrow x$ and $y_n \rightarrow y$ as $n\to \infty$ and, for all $n\in \mathbb{N}$, $y_n \in (I_X+A)(x_n)$. So, for every $n\in \mathbb{N}$ there exists an $a_n \in A(x_n)$ such that $y_n = x_n +a_n$. By this equality, $(a_n)_{n\in\mathbb{N}}$ converges to $y-x$ and thus $y-x\in \overline{A}(x)$. Therefore $y\in (I_X+\overline{A})(x)$ and 
$$ \overline{I_X+A} \subset I_X+\overline{A}.$$

To prove the converse inclusion now let $y\in (I_X+\overline{A})(x)$. So there exists an $a\in \overline{A}(x)$ such that $y=a+x$. Then there exist sequences $(x_n)_{n\in\mathbb{N}}, (a_n)_{n\in\mathbb{N}}\subset X$ such that $(x_n)$ converges to $x$, $(a_n)$ converges to $a$, and, for all $n\in \mathbb{N}$, $a_n \in A(x_n)$. It follows that $x_n+a_n \in (I_X+A)(x_n)$ and, since $(x_n+a_n)$ converges to $x+a$, we conclude that $y=x+a \in \overline{I_X +A}(x)$, as required.

To prove the second claim in the proposition, let $\lambda \in \mathbb{R}$. Then $(x,y)\in \lambda \overline{A}$ if and only if there exists a $z\in X$ such that $y=\lambda z$ and $(x,z)\in \overline{A}$. This in turn is equivalent to the existence of a $z\in X$ and of sequences $(x_n)_{n\in\mathbb{N}}$ and $(z_n)_{n\in\mathbb{N}}$ in $X$ such that $y=\lambda z$, $x_n\to x$ and $z_n \to z$ as $n\to \infty$, and, for all $n\in \mathbb{N}$, $(x_n,z_n)\in A$. We call this latter statement condition I.

On the other hand, $(x,y)\in \overline{\lambda A}$ if and only if there exist sequences $(\tilde x_n)_{n\in\mathbb{N}}$ and $(\tilde y_n)_{n\in\mathbb{N}}$ in $X$ such that $\tilde x_n\to x$ and $\tilde y_n\to y$ as $n\to \infty$ and, for all $n\in \mathbb{N}$, $(\tilde x_n,\tilde y_n)\in \lambda A$. This is equivalent to the existence of sequences $(\tilde x_n)_{n\in\mathbb{N}}$, $(\tilde y_n)_{n\in\mathbb{N}}$, and $(\tilde z_n)_{n\in\mathbb{N}}$ in $X$ such that $\tilde x_n\to x$ and $\tilde y_n\to y$ as $n\to \infty$ and, for all $n\in \mathbb{N}$, $(\tilde x_n,\tilde z_n)\in A$ and $\tilde y_n=\lambda \tilde z_n$. We name this latter statement condition II.

Assume that $(x,y)\in \lambda \overline{A}$. Let $z$ be the element of $X$ and $(x_n)_{n\in\mathbb{N}}$ and $(z_n)_{n\in\mathbb{N}}$ the sequences in $X$ whose existence is guaranteed by condition I. By defining, for all $n\in \mathbb{N}$, $\tilde x_n := x_n$, $\tilde y_n := \lambda z_n$ and $\tilde z_n := z_n$, we obtain sequences $(\tilde x_n)$, $(\tilde y_n)$, and $(\tilde z_n)$ that satisfy condition II, hence $(x,y)\in \overline{\lambda A}$.

Now instead assume that $\lambda \neq 0$ and $(x,y) \in \overline{\lambda A}$. Let $(\tilde x_n)_{n\in\mathbb{N}}$, $(\tilde y_n)_{n\in\mathbb{N}}$, and $(\tilde z_n)_{n\in\mathbb{N}}$ be the sequences in $X$ whose existence is guaranteed by condition II. Define $z:= \lambda^{-1} y$ and, for all $n\in \mathbb{N}$, $x_n:=\tilde x_n$ and $z_n:= \tilde z_n$. Then $x_n \to x$ and $z_n = \lambda^{-1} \tilde y_n \to \lambda^{-1} y = z$ as $n\to \infty$. Moreover, for all $n\in \mathbb{N}$, $(x_n,z_n)\in A$. Thus condition I is satisfied and therefore $(x,y) \in \lambda \overline{A}$.
\end{proof}

\begin{remark}\label{rem:countertomultiplication}
    Proposition~\ref{Closureprop} shows that, if $\lambda \neq 0$, then $\overline{\lambda A} \subset \lambda \overline{A}$. The same counterexample as in Remark~\ref{rem:domainsofclosures} shows that this is not necessarily the case if $\lambda=0$.

    For the operator $A$ defined in that remark we know that $\overline{A}=A$ and thus $0 \overline{A} = 0 A = {\{(1/n,0) \in \mathbb{R}\times\mathbb{R}: n \in \mathbb{N}\}}$. On the other hand $\overline{0 A} = 0 A \cup \{0,0\} \not\subset 0 \overline{A}$.    
\end{remark}

\section{Properties of \texorpdfstring{$\lambda$}{lambda}-convex functions and subdifferentials}\label{sec:C}

This appendix is dedicated to the statements and proofs of some known properties of $\lambda$-convex functions (Definition~\ref{def:lambdaconvex}) and subdifferentials (Definitions~\ref{def:subdifferential} and~\ref{def:subdiflambda}) that we need in this paper. Throughout we assume $H$ to be a Hilbert space with inner product $\langle \cdot , \cdot \rangle$ and corresponding norm $\| \cdot \|$.

\begin{lemma}\label{lem:lambdaconvexequivalent}
Let $\Phi:H\rightarrow (-\infty,+\infty]$ and $\lambda \in \mathbb{R}$. The function $\Phi$ is $\lambda$-convex if and only if the function $\Phi-\frac{\lambda}{2}\Vert \cdot \Vert^2$ is convex.
\end{lemma}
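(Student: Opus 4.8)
The statement is a standard equivalence, so the proof should be a direct unfolding of the two convexity conditions, with the key observation being the algebraic identity that relates the quadratic term $\|tx+(1-t)y\|^2$ to $t\|x\|^2+(1-t)\|y\|^2$ minus a $t(1-t)\|x-y\|^2$ correction. The plan is to set $\Psi := \Phi - \frac{\lambda}{2}\|\cdot\|^2$ and show that the defining inequality for $\lambda$-convexity of $\Phi$ (Definition~\ref{def:lambdaconvex}) is, after substitution, literally the same as the inequality for convexity of $\Psi$.

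First I would compute, for all $x,y\in H$ and $t\in[0,1]$, the parallelogram-type identity
\[
\|tx+(1-t)y\|^2 = t\|x\|^2 + (1-t)\|y\|^2 - t(1-t)\|x-y\|^2,
\]
which follows by expanding both sides in terms of the inner product (using $\|x-y\|^2 = \|x\|^2 - 2\langle x,y\rangle + \|y\|^2$). This is the only nontrivial computation and it is routine in a Hilbert space. Then I would note that $\Phi$ being $\lambda$-convex means
\[
\Phi(tx+(1-t)y) \leq t\Phi(x) + (1-t)\Phi(y) - \tfrac12 \lambda t(1-t)\|x-y\|^2,
\]
and that convexity of $\Psi$ means
\[
\Psi(tx+(1-t)y) \leq t\Psi(x) + (1-t)\Psi(y).
\]
Substituting $\Psi = \Phi - \frac{\lambda}{2}\|\cdot\|^2$ into the latter, moving the $\frac{\lambda}{2}\|\cdot\|^2$ terms to one side, and applying the identity above to rewrite $\|tx+(1-t)y\|^2$, the convexity inequality for $\Psi$ becomes exactly the $\lambda$-convexity inequality for $\Phi$. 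Since every step is an equivalence, this proves both implications simultaneously.

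I do not anticipate a genuine obstacle here; the only point requiring a little care is bookkeeping the signs when the $-\frac{\lambda}{2}$ terms are combined with the $-\frac12\lambda t(1-t)\|x-y\|^2$ term, and ensuring the argument is valid even when $\Phi$ (and hence $\Psi$) takes the value $+\infty$ — but in that case both sides of the relevant inequalities are $+\infty$ on the right (or the inequality holds vacuously), so the equivalence is unaffected. I would present this as a short chain of equivalent inequalities rather than as two separate implications.
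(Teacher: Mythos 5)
Your proposal is correct and takes essentially the same approach as the paper: both hinge on the algebraic identity $\|tx+(1-t)y\|^2 = t\|x\|^2 + (1-t)\|y\|^2 - t(1-t)\|x-y\|^2$ and then recognize that the $\lambda$-convexity inequality for $\Phi$ is, after rearrangement, precisely the convexity inequality for $\Phi - \tfrac{\lambda}{2}\|\cdot\|^2$. Your extra remark about the $+\infty$ case is a reasonable sanity check, though since $\|\cdot\|^2$ is finite-valued no indeterminate differences can arise and the paper does not need to address it separately.
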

\begin{proof}
Let $x, y\in H$ and $t\in [0,1]$. As a preliminary observation, we note that
    \begin{align*}
\|tx + (1-t)y\|^2 - t\|x\|^2 - (1-t) \|y\|^2 &= (t^2-t) \|x\|^2 + \big((1-t)^2-(1-t)\big) \|y\|^2\\
&\hspace{0.4cm} + 2 t (1-t) \langle x, y\rangle\\
&= -t(1-t) \left(\|x\|^2 + \|y\|^2- 2 \langle x, y\rangle \right)\\
&= -t(1-t) \|x-y\|^2.
    \end{align*}
Thus the definitional inequality for $\lambda$-convexity of $\Phi$ in Definition~\ref{def:lambdaconvex} is equivalent to
\[
\Phi( tx + (1-t)y) \leq t\Phi(x) +(1-t) \Phi(y) + \frac{1}{2} \lambda \left(\|tx + (1-t)y\|^2 - t\|x\|^2 - (1-t) \|y\|^2\right),
\]
which, through reordering of terms, is itself equivalent to the definitional inequality for convexity of $\Phi-\frac{\lambda}2\|\cdot\|$:
\[
\Phi( tx + (1-t)y) - \frac{\lambda}2\|tx + (1-t)y\|^2 \leq t \left(\Phi(x)-\frac{\lambda}2\|x\|^2\right) +(1-t) \left(\Phi(y)-\frac{\lambda}2 \|y\|^2\right).
\]
\end{proof}

\begin{lemma}\label{lem:lambdaconvexanstrictlyconvex}
Let $\gamma,\lambda\in \mathbb{R}$ and let $\Phi:H\rightarrow (-\infty,+\infty]$ be $\lambda$-convex. If $\gamma\leq \lambda$, then $\Phi$ is also $\gamma$-convex. In particular, if $\lambda \geq 0$, then $\Phi$ is $0$-convex, i.e. convex. Moreover, if instead $\lambda>0$, then $\Phi$ is strictly convex.
\end{lemma}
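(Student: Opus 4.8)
\textbf{Proof plan for Lemma~\ref{lem:lambdaconvexanstrictlyconvex}.}

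The plan is to reduce everything to the characterisation in Lemma~\ref{lem:lambdaconvexequivalent}, namely that $\Psi$ is $\mu$-convex if and only if $\Psi - \frac{\mu}{2}\|\cdot\|^2$ is convex. First I would prove the claim that $\gamma \le \lambda$ implies $\gamma$-convexity. By Lemma~\ref{lem:lambdaconvexequivalent}, $\lambda$-convexity of $\Phi$ means $\Phi - \frac{\lambda}{2}\|\cdot\|^2$ is convex. Then I write
\[
\Phi - \frac{\gamma}{2}\|\cdot\|^2 = \left(\Phi - \frac{\lambda}{2}\|\cdot\|^2\right) + \frac{\lambda-\gamma}{2}\|\cdot\|^2,
\]
which is a sum of the convex function $\Phi - \frac{\lambda}{2}\|\cdot\|^2$ and the function $\frac{\lambda-\gamma}{2}\|\cdot\|^2$. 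Since $\lambda - \gamma \ge 0$ and $x \mapsto \|x\|^2$ is convex (being the square of a norm, or directly from the computation in the proof of Lemma~\ref{lem:lambdaconvexequivalent}), the term $\frac{\lambda-\gamma}{2}\|\cdot\|^2$ is convex, hence so is the sum. Applying Lemma~\ref{lem:lambdaconvexequivalent} in the other direction gives that $\Phi$ is $\gamma$-convex. The "in particular" statement for $\lambda \ge 0$ is then the special case $\gamma = 0$.

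For the strict convexity claim when $\lambda > 0$, I would argue directly from the defining inequality. Fix distinct $x, y \in H$ and $t \in (0,1)$. Since $\Phi$ is $\lambda$-convex,
\[
\Phi(tx+(1-t)y) \le t\Phi(x) + (1-t)\Phi(y) - \frac{\lambda}{2}t(1-t)\|x-y\|^2.
\]
Here one should first note the inequality only has content when $\Phi(x), \Phi(y) < +\infty$; if either is $+\infty$ the strict inequality $\Phi(tx+(1-t)y) < t\Phi(x)+(1-t)\Phi(y)$ holds vacuously provided the left side is finite (and if the left side is also $+\infty$ there is nothing to check in the convexity inequality). Assuming $\Phi(x),\Phi(y)$ finite, since $x \ne y$, $t \in (0,1)$, and $\lambda > 0$, the subtracted term $\frac{\lambda}{2}t(1-t)\|x-y\|^2$ is strictly positive, so
\[
\Phi(tx+(1-t)y) < t\Phi(x) + (1-t)\Phi(y),
\]
which is exactly strict convexity.

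I do not anticipate a serious obstacle here; the main point requiring a little care is the bookkeeping around the extended-real-valued nature of $\Phi$ in the strict-convexity step (handling the $+\infty$ cases so the statement is genuinely correct as a statement about strict convexity for $(-\infty,+\infty]$-valued functions), and making sure to invoke convexity of $\|\cdot\|^2$ cleanly — which is already available from the algebraic identity established inside the proof of Lemma~\ref{lem:lambdaconvexequivalent}.
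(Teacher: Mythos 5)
Your proposal is correct. The paper's own proof is a one-liner noting that both claims follow directly from the defining inequality in Definition~\ref{def:lambdaconvex}: since $\gamma \le \lambda$ forces $-\frac{1}{2}\gamma t(1-t)\|x-y\|^2 \ge -\frac{1}{2}\lambda t(1-t)\|x-y\|^2$, the $\lambda$-convexity inequality immediately strengthens to the $\gamma$-convexity inequality, and strict convexity for $\lambda>0$ is read off the strictly negative correction term, as you do in your second part. Your first part instead detours through Lemma~\ref{lem:lambdaconvexequivalent} and the decomposition $\Phi-\frac{\gamma}{2}\|\cdot\|^2 = \bigl(\Phi-\frac{\lambda}{2}\|\cdot\|^2\bigr)+\frac{\lambda-\gamma}{2}\|\cdot\|^2$ into a sum of convex functions; this is a perfectly valid alternative, slightly longer but arguably more structural since it makes the additive shift explicit. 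One small remark on the bookkeeping in the strict-convexity step: when $\lambda>0$ and $\Phi(x),\Phi(y)<+\infty$, the $\lambda$-convexity inequality already forces $\Phi(tx+(1-t)y)<+\infty$, so the parenthetical about the left-hand side being $+\infty$ cannot actually arise — but including it does no harm.
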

\begin{proof}
This is a direct consequence of the definitional inequality for $\lambda$-convexity of $\Phi$ in Definition~\ref{def:lambdaconvex}.   
\end{proof}

\begin{lemma}
\label{subdiffLem}
Let $\Phi:H \rightarrow (-\infty,+\infty]$ be convex and lower semicontinuous. Then $(x,y)\in \partial \Phi$ if and only if
$$ x\in \underset{w\in H}{\argmin} \big( \Phi(w) - \langle y,w \rangle \big) .$$
\end{lemma}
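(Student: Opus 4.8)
The statement to prove is Lemma~\ref{subdiffLem}: for $\Phi:H\to(-\infty,+\infty]$ convex and lower semicontinuous, $(x,y)\in\partial\Phi$ if and only if $x\in\argmin_{w\in H}\big(\Phi(w)-\langle y,w\rangle\big)$. The plan is to observe that this is almost a tautology once one writes out Definition~\ref{def:subdifferential}, and that \emph{neither} convexity nor lower semicontinuity is actually needed for the proof; those hypotheses are included only because the lemma is used in contexts where they hold. I would therefore give a short direct argument from the definition.

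First I would unpack the definition: $(x,y)\in\partial\Phi$ means that for all $h\in H$, $\Phi(x+h)\geq\Phi(x)+\langle y,h\rangle$. Substituting $w:=x+h$ (which ranges over all of $H$ as $h$ does), this is equivalent to: for all $w\in H$, $\Phi(w)\geq\Phi(x)+\langle y,w-x\rangle$, i.e. $\Phi(w)-\langle y,w\rangle\geq\Phi(x)-\langle y,x\rangle$. Reading the last inequality as ``the value of the function $w\mapsto\Phi(w)-\langle y,w\rangle$ at $x$ is $\leq$ its value at every $w\in H$'' is precisely the statement that $x$ is a global minimizer of $w\mapsto\Phi(w)-\langle y,w\rangle$, i.e. $x\in\argmin_{w\in H}\big(\Phi(w)-\langle y,w\rangle\big)$. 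Since every step in this chain is an equivalence, both implications are established simultaneously.

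The only minor point to be careful about is the interpretation of $\argmin$ and of the arithmetic with $+\infty$: since $\Phi$ takes values in $(-\infty,+\infty]$ and $\langle y,w\rangle\in\mathbb{R}$, the expression $\Phi(w)-\langle y,w\rangle$ is well defined in $(-\infty,+\infty]$, and the inequality $\Phi(w)-\langle y,w\rangle\geq\Phi(x)-\langle y,x\rangle$ makes sense for all $w$; membership of $x$ in $\argmin$ is understood as this inequality holding for every $w\in H$ (which in particular forces $\Phi(x)<+\infty$ whenever $\Phi$ is proper, though we need not dwell on this). There is no real obstacle here — the ``hard part'' is merely to resist overcomplicating what is an unwinding of definitions; the convexity and lower-semicontinuity hypotheses play no role in the argument and can be mentioned as such, or simply left unused.
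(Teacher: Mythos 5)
Your proposal is correct and follows essentially the same route as the paper: both proofs unwind Definition~\ref{def:subdifferential} via the substitution $w=x+h$ and rearrange to the global-minimality inequality. Your observation that neither convexity nor lower semicontinuity is used in the argument is also accurate.
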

\begin{proof}
We note that $(x,y)\in \partial\Phi$ if and only if, for all $w\in H$,
$\Phi(w) \geq \Phi(x) + \langle w-x,y \rangle$, which is equivalent to, for all $w\in H$,
$\Phi(w) - \langle w,y \rangle \geq \Phi(x) - \langle x,y \rangle$. 
This proves the proposition.
\end{proof}

\begin{proposition}
\label{subdiffProp}
Let $\Phi:H \rightarrow (-\infty,+\infty]$ be proper, convex, and lower semicontinuous. Then the following hold.
\begin{enumerate}
    \item The function $\Phi$ is continuous on the interior of $\operatorname{dom}(\Phi)$.
    \item The function $\Phi$ is lower semicontinuous with respect to the weak topology on $H$.
    \item For all $x$ in the interior of $\operatorname{dom}(\Phi)$, $\partial \Phi(x)$ is non-empty.
    \item The set $D(\partial \Phi)$ is a subset of $\operatorname{dom}(\Phi)$ and dense in $\operatorname{dom}(\Phi)$.
\end{enumerate}
\end{proposition}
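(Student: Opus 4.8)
\textbf{Proof plan for Proposition~\ref{subdiffProp}.}
The four assertions are standard facts in convex analysis, and I would prove them in the order listed, since each one feeds into the next. For part~1, the plan is to fix a point $x_0$ in the interior of $\operatorname{dom}(\Phi)$ and use the local boundedness of a proper convex function: choose a small closed ball $\overline{B}(x_0,r)\subset \operatorname{dom}(\Phi)$, note that $\Phi$ is bounded above on a neighbourhood of $x_0$ (by convexity, the supremum over a ball is controlled by the values at a finite simplex of points whose convex hull contains the ball; here the inner-product/Hilbert structure is not essential, only finite-dimensionality of the local simplex, so one works in a suitable finite-dimensional slice or invokes the standard Banach-space statement), and then lower semicontinuity together with a convexity estimate gives a matching lower bound, yielding local Lipschitz continuity and in particular continuity. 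For part~2, the plan is shorter: $\Phi$ is convex and (strongly) lower semicontinuous, hence its sublevel sets $\{\Phi \leq c\}$ are convex and strongly closed; a convex strongly closed set in a Hilbert space is weakly closed (by Hahn--Banach separation, i.e. Mazur's lemma); therefore every sublevel set is weakly closed, which is exactly weak lower semicontinuity. I would cite \cite[Theorem 9.1]{Bauschke} for this, as is already done elsewhere in the paper.

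For part~3, the plan is to invoke the existence of a continuous supporting hyperplane at an interior point. Concretely, fix $x_0$ in the interior of $\operatorname{dom}(\Phi)$; by part~1, $\Phi$ is continuous (hence locally bounded) near $x_0$, so the epigraph of $\Phi$ has nonempty interior and $(x_0,\Phi(x_0))$ is a boundary point of the epigraph. Applying the geometric Hahn--Banach theorem to separate $(x_0,\Phi(x_0))$ from the interior of the epigraph produces a nonvertical supporting functional, whose ``slope'' is precisely an element $y\in H$ with $\Phi(w)\geq \Phi(x_0)+\langle y, w-x_0\rangle$ for all $w$, i.e. $y\in \partial\Phi(x_0)$; nonverticality of the separating hyperplane is guaranteed by the continuity established in part~1. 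Hence $\partial\Phi(x_0)\neq\emptyset$, so the interior of $\operatorname{dom}(\Phi)$ is contained in $D(\partial\Phi)$.

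For part~4, the inclusion $D(\partial\Phi)\subset \operatorname{dom}(\Phi)$ is immediate: if $(x,y)\in\partial\Phi$ then taking $h=0$ in Definition~\ref{def:subdifferential} is vacuous, but taking any $h$ and using that the right-hand side $\Phi(x)+\langle y,h\rangle$ must be finite (since the inequality $\Phi(x+h)\geq \Phi(x)+\langle y,h\rangle$ forces $\Phi(x)<+\infty$ once $\Phi$ is proper) shows $x\in\operatorname{dom}(\Phi)$. For density, the plan is: by part~3, $D(\partial\Phi)$ contains the interior of $\operatorname{dom}(\Phi)$, so it suffices to show the interior of $\operatorname{dom}(\Phi)$ is dense in $\operatorname{dom}(\Phi)$ --- and this is the step that genuinely requires care, because \emph{in infinite dimensions a convex set can have empty interior}. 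The standard remedy, which I would follow, is to use the relative interior with respect to the closed affine hull, or equivalently to approximate a point $x\in\operatorname{dom}(\Phi)$ by points of the form $x_t := (1-t)x + t x_0$ with $x_0$ a fixed point at which $\partial\Phi$ is nonempty (obtained, e.g., from a point where $\Phi$ is continuous; if no such point exists, one passes to the affine hull of $\operatorname{dom}(\Phi)$, where $\Phi$ restricted is still proper convex lsc and the argument applies). One then shows $x_t\in D(\partial\Phi)$ for $t\in(0,1]$ using the fact that a subgradient at $x_0$ together with the convexity inequality gives a subgradient at $x_t$ after a small perturbation, or more directly that $x_t$ lies in the relative interior. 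Letting $t\downarrow 0$ gives $x_t\to x$, so $D(\partial\Phi)$ is dense in $\operatorname{dom}(\Phi)$.

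The main obstacle is precisely the density claim in part~4: unlike the finite-dimensional case, one cannot simply say ``the interior of a convex set is dense in it,'' so the argument must either restrict to the affine hull of $\operatorname{dom}(\Phi)$ or work directly with line segments toward a point of continuity, and one must verify that such a point of continuity (or at least a point of $D(\partial\Phi)$) exists to serve as the ``center'' $x_0$. Everything else is a routine application of Hahn--Banach separation and the convexity inequality, and for these steps I would lean on the corresponding results in \cite{Bauschke} and \cite{barbu2010nonlinear} rather than reproving them in full detail.
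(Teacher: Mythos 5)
The paper's ``proof'' is purely a citation to \cite[Proposition 16.27, Corollary 16.39, Theorem 9.1]{Bauschke}, so any comparison is really a check of whether your sketches, taken on their own, are sound. Part~2 and part~3 are fine (Mazur's lemma for weak lsc; geometric Hahn--Banach to produce a nonvertical supporting hyperplane given continuity from part~1). But two of the sketches contain infinite-dimensional gaps.

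In part~1, the step ``the supremum over a ball is controlled by the values at a finite simplex of points whose convex hull contains the ball'' is a finite-dimensional argument: in an infinite-dimensional Hilbert space no ball lies in the convex hull of finitely many points. The correct Banach-space route to local boundedness above is to use the lower semicontinuity hypothesis together with Baire's theorem (write $\operatorname{int}\operatorname{dom}(\Phi)=\bigcup_n(\{\Phi\leq n\}\cap\operatorname{int}\operatorname{dom}(\Phi))$, where each sublevel set is closed by lsc, and conclude that some sublevel set has interior; then translate/scale). Your sketch uses lsc only for the lower bound, so as written the upper bound is unjustified, and ``invokes the standard Banach-space statement'' is left unexplained at exactly the point where the idea is needed.

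In part~4, the proposed remedy for density --- pass to the closed affine hull and use the relative interior --- does not work in general: in infinite dimensions a nonempty closed convex set can have empty interior \emph{even relative to its closed affine hull}, so the segment argument toward a relative-interior point may have nothing to converge to. Also, moving along a segment from $x_0\in D(\partial\Phi)$ does not in itself place $x_t$ in $D(\partial\Phi)$ unless $x_0$ is an interior (or relative-interior) point. The density assertion is the Br\o ndsted--Rockafellar theorem, which Bauschke's Corollary~16.39 records, and its standard proof goes through Ekeland's variational principle (equivalently, a Bishop--Phelps argument), not through interiority. If you want a self-contained argument rather than the citation, that is the tool to reach for.
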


\begin{proof}
Statements~1 and~3 follow from \cite[Proposition 16.27]{Bauschke}. Statement~4 follows from  \cite[Corollary 16.39]{Bauschke} and statement~2 is established by \cite[Theorem 9.1]{Bauschke}. 
\end{proof}

\begin{lemma}
\label{ConvexMin}
Let $\lambda>0$ and let $\Phi: H \rightarrow (-\infty,+\infty]$ be $\lambda$-convex and lower semicontinuous. Then $\inf_{x\in H}\Phi(x) > -\infty$ and $\Phi$ attains its minimum at a unique $x_*\in H$.
\end{lemma}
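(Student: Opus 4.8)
The plan is to prove this as the standard ``$\lambda$-convex coercivity + lower semicontinuity gives a unique minimizer'' result, in three stages: first a lower bound using $\lambda$-convexity to witness strong coercivity, then existence via the direct method using weak lower semicontinuity and boundedness of minimizing sequences, and finally uniqueness from strict convexity.

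First I would establish the coercivity-type lower bound. Since $\lambda > 0$ the definitional inequality in Definition~\ref{def:lambdaconvex} is a genuine strong-convexity estimate. Fix any point $x_0 \in \operatorname{dom}(\Phi)$ (which is non-empty, as $\Phi$ is understood to be proper here — or, if one prefers not to assume properness, treat the trivial case $\Phi \equiv +\infty$ separately). Applying the $\lambda$-convexity inequality with $y = x_0$ and letting $t \to 0$, together with the lower semicontinuity of $\Phi$ at $x_0$ is a bit delicate; cleaner is to use the subdifferential-style argument. Concretely, by Lemma~\ref{lem:lambdaconvexequivalent} the function $\Psi := \Phi - \frac{\lambda}{2}\|\cdot\|^2$ is convex, proper, and lower semicontinuous, so by part~3 of Proposition~\ref{subdiffProp} applied at a suitable interior point — or, more robustly, by the fact that a proper convex lsc function on a Hilbert space is bounded below by an affine function \cite[Theorem 9.19]{Bauschke} — there exist $z \in H$ and $c \in \mathbb{R}$ with $\Psi(x) \geq \langle z, x\rangle + c$ for all $x \in H$. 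Hence
\[
\Phi(x) \geq \frac{\lambda}{2}\|x\|^2 + \langle z, x\rangle + c \geq \frac{\lambda}{4}\|x\|^2 - \frac{1}{\lambda}\|z\|^2 + c
\]
for all $x\in H$, where the last step is Young's inequality. In particular $\inf_{x\in H}\Phi(x) \geq -\frac{1}{\lambda}\|z\|^2 + c > -\infty$, and moreover $\Phi(x)\to +\infty$ as $\|x\|\to\infty$.

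Next, existence of a minimizer by the direct method. Let $m := \inf_{x\in H}\Phi(x) \in (-\infty, +\infty)$ (finite since $\Phi$ is proper) and take a minimizing sequence $(x_n)_{n\in\mathbb{N}}$ with $\Phi(x_n) \to m$. By the coercive bound just proved, $\frac{\lambda}{4}\|x_n\|^2 \leq \Phi(x_n) + \frac{1}{\lambda}\|z\|^2 - c$, and since $(\Phi(x_n))$ is bounded, $(x_n)$ is bounded in $H$. By the Banach--Alaoglu theorem (reflexivity of $H$), up to a subsequence $x_n \rightharpoonup x_*$ weakly for some $x_* \in H$. By part~2 of Proposition~\ref{subdiffProp}, $\Phi$ is weakly lower semicontinuous, so $\Phi(x_*) \leq \liminf_{n\to\infty} \Phi(x_n) = m$, which forces $\Phi(x_*) = m$; thus the infimum is attained at $x_*$.

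Finally, uniqueness. By Lemma~\ref{lem:lambdaconvexanstrictlyconvex}, since $\lambda > 0$, $\Phi$ is strictly convex. If $x_*$ and $x_{**}$ were two distinct minimizers, then applying the definitional inequality in Definition~\ref{def:lambdaconvex} with $t = \tfrac12$ gives
\[
\Phi\!\left(\tfrac{x_* + x_{**}}{2}\right) \leq \tfrac12\Phi(x_*) + \tfrac12\Phi(x_{**}) - \tfrac{\lambda}{8}\|x_* - x_{**}\|^2 = m - \tfrac{\lambda}{8}\|x_* - x_{**}\|^2 < m,
\]
contradicting that $m$ is the infimum. Hence the minimizer is unique. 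The only real subtlety is making the lower-bound step fully rigorous — one must be slightly careful about invoking an affine minorant for $\Psi$ rather than trying to push limits through the $\lambda$-convexity inequality directly — but this is standard convex-analysis input; everything else is routine. I expect the affine-minorant / coercivity step to be the part requiring the most care to state cleanly, though it is not genuinely hard.
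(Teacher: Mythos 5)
Your proposal is correct and follows essentially the same route as the paper's proof: obtain a coercive quadratic lower bound on $\Phi$ from the convexity of $\Phi - \frac{\lambda}{2}\|\cdot\|^2$ (the paper fixes a point $(u,v)$ in its subdifferential and expands the subgradient inequality, where you invoke an affine minorant for that same convex function---the same underlying fact), then run the direct method using Banach--Alaoglu and weak lower semicontinuity (part 2 of Proposition~\ref{subdiffProp}), and close with strict convexity for uniqueness. The only substantive remark is that both your argument and the paper's tacitly require $\Phi$ proper (as you note), and your affine-minorant variant is arguably slightly more robust than the paper's appeal to part 3 of Proposition~\ref{subdiffProp}, which speaks only of interior points of $\operatorname{dom}(\Phi)$.
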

\begin{proof}
By the assumptions and Lemma~\ref{lem:lambdaconvexequivalent}, $\Phi - \frac{\lambda}{2}\Vert\cdot \Vert^2$ is convex and lower semicontinuous. Thus, by statement 3 in Proposition~\ref{subdiffProp}, $D(\partial(\Phi - \frac{\lambda}{2}\Vert\cdot \Vert^2))$ is not empty. If we fix $(u,v)\in \partial(\Phi - \frac{\lambda}{2}\Vert\cdot \Vert^2)$, then, for all $x\in H$, 
\begin{align*}
\Phi(x) &= \Phi(x) - \frac{\lambda}{2}\Vert x \Vert^2 + \frac{\lambda}{2}\Vert x \Vert^2
\geq \Phi(u) - \frac{\lambda}{2}\Vert u\Vert^2 +\langle v,x-u \rangle + \frac{\lambda}{2}\Vert x \Vert^2\\
&= \Phi(u) - \frac{\lambda}{2}\Vert u\Vert^2 -\langle v,u \rangle +\frac{\lambda}{2} \Vert x + \frac{1}{\lambda}v \Vert^2 - \frac{1}{2\lambda} \Vert v \Vert^2\\
&\geq \Phi(u) - \frac{\lambda}{2}\Vert u\Vert^2 - \frac{1}{2\lambda} \Vert v \Vert^2 -\langle v,u \rangle.
\end{align*}
Therefore
$$ \inf_{x\in H} \Phi(x) \geq \Phi(u) - \frac{\lambda}{2}\Vert u\Vert^2 - \frac{1}{2\lambda} \Vert v \Vert^2 -\langle v,u \rangle > -\infty .$$
Moreover, our calculation shows that $\Phi$ is coercive, i.e. $\Phi(x) \to \infty$ as $\|x\|\to \infty$, since it is bounded below by the coercive function $x\mapsto \frac{\lambda}{2} \Vert x + \frac{1}{\lambda}v \Vert^2 + \Phi(u) - \frac{\lambda}{2}\Vert u\Vert^2 -\langle v,u \rangle - \frac{1}{2\lambda} \Vert v \Vert^2$. 

Let $(x_n)_{n\in \mathbb{N}} \subset H$ be a sequence such that $\Phi(x_n)\rightarrow \inf_{x\in H} \Phi(x)$ as $n \rightarrow \infty$. By coercivity of $\Phi$, $(x_n)$ is bounded in $H$ and so, by the Banach--Alaoglu--Bourbaki theorem \cite[Theorem 3.16]{BrezisBook}, there exist an $x_* \in H$ and a subsequence $(x_{n_k})_{k\in \mathbb{N}}$ of $(x_n)$ such that $(x_{n_k})$ converges weakly in $H$ to $x_*$. By part 2 of Proposition~\ref{subdiffProp}, $\Phi$ is weakly lower semicontinuous and thus
$$ \Phi(x_*) \leq \underset{k \rightarrow \infty}{\liminf} \Phi(x_{n_k}) = \inf_{x\in H} \Phi(x). $$
Hence the infimum is attained. Uniqueness of the minimum follows by strict convexity of $\Phi$. Indeed, by Lemma~\ref{lem:lambdaconvexanstrictlyconvex}, $\Phi$ is strictly convex and thus, if the infimum is attained at both $x^1\in H$ and $x^2\in H$ with $x^1\neq x^2$ and $t\in (0,1)$, then
\[
\Phi(tx^1+(1-t)x^2) < t \Phi(x^1) + (1-t) \Phi(x^2) = \inf_{x\in H} \Phi(x).
\]
Since $tx^1+(1-t)x^2\in H$, this is a contradiction and hence $x^1=x^2$.
\end{proof}

From this result we deduce a useful lower bound for $\lambda$-convex functions.

\begin{corollary}
\label{cor:lambdaconvexbound}
Let $\lambda>0$ and let $\Phi: H \rightarrow (-\infty,+\infty]$ be $\lambda$-convex, proper, and lower semicontinuous. Then there exists an $x^* \in H$ such that, for all $x\in H$,
$$ \Phi(x) \geq \frac{\lambda}{2} \Vert x -x^* \Vert^2 + \Phi(x^*).$$ 
Moreover, this $x^*$ is the unique minimizer of $\Phi$ over $H$.
\end{corollary}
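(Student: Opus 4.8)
The plan is to reduce the claimed lower bound to the minimization result in Lemma~\ref{ConvexMin}. By Lemma~\ref{lem:lambdaconvexequivalent}, the hypothesis that $\Phi$ is $\lambda$-convex means that $\Psi := \Phi - \frac{\lambda}{2}\|\cdot\|^2$ is convex; it is also proper (since $\operatorname{dom}(\Psi) = \operatorname{dom}(\Phi)\neq\emptyset$) and lower semicontinuous (as the sum of the lower semicontinuous $\Phi$ and the continuous function $-\frac{\lambda}{2}\|\cdot\|^2$). By Lemma~\ref{ConvexMin} applied to $\Phi$ itself, $\Phi$ attains its minimum at a unique $x^* \in H$, and $\inf_{x\in H}\Phi(x) > -\infty$. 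So the first step is simply to invoke Lemma~\ref{ConvexMin} to produce this $x^*$.

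Next I would extract the subgradient inequality at $x^*$. Since $x^*$ minimizes $\Phi$, it also minimizes $\Psi + \frac{\lambda}{2}\|\cdot\|^2 = \Phi$, so $0 \in \partial\Phi(x^*)$ in the sense of Definition~\ref{def:subdifferential} — indeed, for all $h\in H$, $\Phi(x^*+h)\geq \Phi(x^*) = \Phi(x^*) + \langle 0, h\rangle$. Translating via Definition~\ref{def:subdiflambda}, this gives $(x^*, -\lambda x^*) \in \partial\Psi$, i.e. for all $h\in H$,
\[
\Psi(x^*+h) \geq \Psi(x^*) + \langle -\lambda x^*, h\rangle.
\]
(Alternatively, one can avoid subdifferentials altogether: the minimality of $x^*$ for $\Phi$ combined with $\lambda$-convexity along the segment from $x^*$ to $x$ directly yields the estimate, as in the argument below.)

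The final step is a short computation. Writing $x = x^* + h$ with $h = x - x^*$, I would expand
\[
\Phi(x) = \Psi(x) + \frac{\lambda}{2}\|x\|^2 \geq \Psi(x^*) - \lambda\langle x^*, x - x^*\rangle + \frac{\lambda}{2}\|x\|^2,
\]
and then complete the square: $-\lambda\langle x^*, x - x^*\rangle + \frac{\lambda}{2}\|x\|^2 = \frac{\lambda}{2}\|x - x^*\|^2 + \frac{\lambda}{2}\|x^*\|^2 - \frac{\lambda}{2}\|x^*\|^2$... more carefully, $\frac{\lambda}{2}\|x\|^2 - \lambda\langle x^*, x\rangle + \lambda\|x^*\|^2 = \frac{\lambda}{2}\|x - x^*\|^2 + \frac{\lambda}{2}\|x^*\|^2$, so that
\[
\Phi(x) \geq \Psi(x^*) + \frac{\lambda}{2}\|x^*\|^2 + \frac{\lambda}{2}\|x - x^*\|^2 = \Phi(x^*) + \frac{\lambda}{2}\|x - x^*\|^2,
\]
which is exactly the claimed inequality. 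Uniqueness of $x^*$ as the minimizer is already part of the conclusion of Lemma~\ref{ConvexMin}. There is no real obstacle here: the corollary is essentially a repackaging of the estimate already derived inside the proof of Lemma~\ref{ConvexMin} (where $\Phi$ is shown to dominate $\frac{\lambda}{2}\|x + \frac{1}{\lambda}v\|^2 + \text{const}$ for a subgradient pair $(u,v)$); the only new ingredient is to center the quadratic at the minimizer $x^*$ rather than at an arbitrary point of $D(\partial\Psi)$, which is what makes the constant exactly $\Phi(x^*)$. The mild care needed is just in the algebra of completing the square and in confirming that $0\in\partial\Phi(x^*)$, both of which are routine.
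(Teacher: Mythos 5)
Your proof is correct and reaches the result by a genuinely different route than the paper. The paper's proof is a perturbation-and-limit argument: it fixes $\delta\in(0,\lambda/2)$, observes that $\Phi(\cdot)-\delta\|\cdot-x^*\|^2$ is $(\lambda-2\delta)$-convex, shows by contradiction (using the $\lambda$-convexity inequality along the segment between the two putative minimizers, with the particular choice $t=\delta/(\lambda/2)$) that this perturbed functional still has $x^*$ as its unique minimizer, and finally sends $\delta\uparrow\lambda/2$. Your proof instead passes to the convex shift $\Psi=\Phi-\tfrac{\lambda}{2}\|\cdot\|^2$, extracts the subgradient $-\lambda x^*\in\partial\Psi(x^*)$ from $0\in\partial\Phi(x^*)$, and completes the square. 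Both yield the same estimate; yours is more direct (one inequality plus algebra, no contradiction, no limiting parameter), while the paper's stays entirely inside the $\lambda$-convexity formalism and never touches subdifferentials.

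One small point of rigor: the phrase ``translating via Definition~\ref{def:subdiflambda}'' does not quite justify the step from $0\in\partial\Phi(x^*)$ to $(x^*,-\lambda x^*)\in\partial\Psi$. Definition~\ref{def:subdiflambda} gives $\partial^\lambda\Phi=\partial\Psi+\lambda I_H$, so what you actually need is $0\in\partial^\lambda\Phi(x^*)$, not merely $0\in\partial\Phi(x^*)=\partial^0\Phi(x^*)$. The bridge is Lemma~\ref{lem:subdiffinclusion}(2): because $\Phi$ is $\lambda$-convex, $\partial^\lambda\Phi=\partial^0\Phi$, and then the translation is exactly as you say. Alternatively, the ``segment'' argument you mention in passing sidesteps this entirely and is in fact the cleanest proof of all: since $x^*$ minimizes $\Phi$, the $\lambda$-convexity inequality applied to $tx^*+(1-t)x$ gives, after dividing by $1-t$ and letting $t\uparrow 1$, precisely $\Phi(x)\geq\Phi(x^*)+\tfrac{\lambda}{2}\|x-x^*\|^2$. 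Either way, the gap is minor and easily closed.
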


\begin{proof}
By Lemma~\ref{ConvexMin}, $\Phi$ is bounded below and has a unique minimizer at $x^*\in H$. Let $\delta \in (0, \frac{\lambda}{2})$. We note that $\Phi(\cdot) - \delta\Vert\cdot -x^*\Vert^2$ is $(\lambda-2\delta)$-convex. Thus, by Lemma~\ref{lem:lambdaconvexanstrictlyconvex}, $\Phi(\cdot) - \delta\Vert\cdot -x^*\Vert^2$ is strictly convex and hence, by Lemma~\ref{ConvexMin}, it has a unique minimizer $z$.
Assume for a contradiction that $z\neq x^*$. By uniqueness of the minimizer of  $\Phi(\cdot) - \delta\Vert\cdot -x^*\Vert^2$ we have 
$$ \Phi(z) - \delta \Vert z- x^*\Vert^2 < \Phi(x^*) -\delta\Vert x^* - x^*\Vert^2= \Phi(x^*).$$
If $t:= \frac{\delta}{\lambda/2} \in (0,1)$, then
\begin{align*}
\Phi(tx^*+(1-t)z) &\leq t\Phi(x^*) +(1-t)\Phi(z) -\frac{\lambda}{2} t(1-t)\Vert z-x^*\Vert^2 \\
&= t \Phi(x^*) +(1-t)[\Phi(z)-\delta\Vert z-x^* \Vert^2] \\
&< t\Phi(x^*) + (1-t)\Phi(x^*)
= \Phi(x^*).
\end{align*}
The first inequality above follows by $\lambda$-convexity of $\Phi$ and the equality $$t\|x^*\|^2+(1-t)\|z\|^2-\|tx^*+(1-t)z\|^2 = t(1-t) \|z-x^*\|^2.$$

Since $x^*$ is the minimizer of $\Phi$, this gives us a contradiction and we deduce that $z=x^*$. Hence the minimizer of $\Phi(\cdot) - \delta\Vert\cdot -x^*\Vert^2$ is $x^*$. It follows that, for all $x\in H$, 
$$\Phi(x) - \delta\Vert x -x^*\Vert^2 \geq \Phi(x^*) - \delta\Vert x^* -x^*\Vert^2 = \Phi(x^*) .$$
The above result holds for all $\delta \in (0,\frac{\lambda}{2})$ so we can take the limit as $\delta \uparrow \frac{\lambda}{2}$ to conclude our result.
\end{proof}

\begin{lemma}\label{lem:subdiffinclusion}
    Let $\lambda_1, \lambda_2 \in \mathbb{R}$ with $\lambda_2 \geq \lambda_1$ and $\Phi: H \rightarrow (-\infty,+\infty]$.
    \begin{enumerate}
        \item Then $\partial^{\lambda_2} \Phi \subset \partial^{\lambda_1} \Phi$.
        \item If $\Phi$ is $\lambda_2$-convex, then $\partial^{\lambda_2} \Phi = \partial^{\lambda_1} \Phi$.
        \end{enumerate}
\end{lemma}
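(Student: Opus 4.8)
\textbf{Proof plan for Lemma~\ref{lem:subdiffinclusion}.}

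The plan is to unwind the definition of $\partial^\lambda\Phi$ from Definition~\ref{def:subdiflambda} and reduce both parts to an elementary inequality about the quadratic $\tfrac12\|\cdot\|^2$. For part~1, fix $(x,y)\in\partial^{\lambda_2}\Phi$. By Definition~\ref{def:subdiflambda} this means $(x,\,y-\lambda_2 x)\in\partial\big(\Phi-\tfrac{\lambda_2}{2}\|\cdot\|^2\big)$, i.e.\ for all $h\in H$,
\[
\Phi(x+h)-\tfrac{\lambda_2}{2}\|x+h\|^2 \;\geq\; \Phi(x)-\tfrac{\lambda_2}{2}\|x\|^2 + \langle y-\lambda_2 x, h\rangle.
\]
I want to show $(x,\,y-\lambda_1 x)\in\partial\big(\Phi-\tfrac{\lambda_1}{2}\|\cdot\|^2\big)$, i.e.\ for all $h$,
\[
\Phi(x+h)-\tfrac{\lambda_1}{2}\|x+h\|^2 \;\geq\; \Phi(x)-\tfrac{\lambda_1}{2}\|x\|^2 + \langle y-\lambda_1 x, h\rangle.
\]
Subtracting the target from the hypothesis, the $\Phi$-terms cancel and what remains to verify is
\[
\big(\tfrac{\lambda_1}{2}-\tfrac{\lambda_2}{2}\big)\big(\|x+h\|^2-\|x\|^2\big) + (\lambda_1-\lambda_2)\langle x,h\rangle \;\geq\; 0.
\]
Expanding $\|x+h\|^2-\|x\|^2 = 2\langle x,h\rangle+\|h\|^2$, the left-hand side collapses to $\tfrac{\lambda_1-\lambda_2}{2}\|h\|^2$, which is $\geq 0$ since $\lambda_1\leq\lambda_2$. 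This gives the inclusion; I would present the computation as a short chain of equalities inside a single \texttt{align*} to avoid the blank-line pitfall.

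For part~2, I first invoke Lemma~\ref{lem:lambdaconvexanstrictlyconvex}: if $\Phi$ is $\lambda_2$-convex and $\lambda_1\leq\lambda_2$, then $\Phi$ is also $\lambda_1$-convex, so both $\Phi-\tfrac{\lambda_2}{2}\|\cdot\|^2$ and $\Phi-\tfrac{\lambda_1}{2}\|\cdot\|^2$ are convex. Given part~1, it remains to prove the reverse inclusion $\partial^{\lambda_1}\Phi\subset\partial^{\lambda_2}\Phi$. Take $(x,y)\in\partial^{\lambda_1}\Phi$, so $(x,\,y-\lambda_1 x)\in\partial\big(\Phi-\tfrac{\lambda_1}{2}\|\cdot\|^2\big)$, meaning for all $h$,
\[
\Phi(x+h)-\tfrac{\lambda_1}{2}\|x+h\|^2 \;\geq\; \Phi(x)-\tfrac{\lambda_1}{2}\|x\|^2 + \langle y-\lambda_1 x, h\rangle.
\]
To upgrade this to the $\lambda_2$ statement I would use $\lambda_2$-convexity of $\Phi$ directly. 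Applying Definition~\ref{def:lambdaconvex} with the points $x$ and $x+h$ and parameter $t$, then rearranging (this is exactly the computation in the proof of Lemma~\ref{lem:lambdaconvexequivalent} localised at these two points), one gets for all $t\in[0,1]$ and all $h$:
\[
\Phi(x+(1-t)h) \;\leq\; t\,\Phi(x) + (1-t)\,\Phi(x+h) - \tfrac{\lambda_2}{2}t(1-t)\|h\|^2.
\]
Combining this with the $\lambda_1$-subgradient inequality applied at the point $x+(1-t)h$, dividing by $1-t$, and letting $t\uparrow 1$ should produce the $\lambda_2$-subgradient inequality at $x$; the quadratic terms again reduce to a multiple of $\|h\|^2$ with the correct sign because $\lambda_2\geq\lambda_1$. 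Alternatively — and this is cleaner — I would argue that $\Psi:=\Phi-\tfrac{\lambda_2}{2}\|\cdot\|^2$ is convex, write $\Phi-\tfrac{\lambda_1}{2}\|\cdot\|^2 = \Psi + \tfrac{\lambda_2-\lambda_1}{2}\|\cdot\|^2$, note that $\tfrac{\lambda_2-\lambda_1}{2}\|\cdot\|^2$ is everywhere (Gateaux) differentiable, and use the standard fact that for a convex function the subdifferential of a sum with a smooth convex function splits as $\partial(\Psi + g) = \partial\Psi + \nabla g$. Hence $(x,y-\lambda_1 x)\in\partial\Psi + (\lambda_2-\lambda_1)x$, i.e.\ $(x,y-\lambda_2 x)\in\partial\Psi=\partial\big(\Phi-\tfrac{\lambda_2}{2}\|\cdot\|^2\big)$, which is precisely $(x,y)\in\partial^{\lambda_2}\Phi$.

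The main obstacle is the reverse inclusion in part~2: part~1 is a pure computation requiring no convexity, but the reverse inclusion genuinely needs convexity, since without it the two ``subdifferentials'' need not be monotone or additive. The delicate point is justifying the sum rule $\partial(\Psi+g)=\partial\Psi+\nabla g$ in the Hilbert-space setting — this is classical (it holds whenever $g$ is convex and continuous, e.g.\ by Moreau--Rockafellar, and $g=\tfrac{\lambda_2-\lambda_1}{2}\|\cdot\|^2$ certainly qualifies) but I should cite it properly, perhaps via \cite[Proposition 16.27 and Corollary 16.48]{Bauschke} or reprove the easy one-sided inclusion directly from Definition~\ref{def:subdifferential} as in the first-principles argument above, which in fact needs only the inequality $\tfrac12\|x+h\|^2 \geq \tfrac12\|x\|^2 + \langle x,h\rangle$ and the $\lambda_2$-convexity inequality — no external sum rule needed. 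I would go with the self-contained version to keep the appendix free-standing.
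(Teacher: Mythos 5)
Your approach is essentially the paper's. For part~1, both proofs unwind Definition~\ref{def:subdiflambda}, subtract the two subgradient inequalities so the $\Phi$-terms cancel, and observe that what is left is a nonnegative multiple of $\|h\|^2$. For part~2, your preferred self-contained version (combine the $\lambda_1$-subgradient inequality with the $\lambda_2$-convexity inequality at an interpolated point, divide by the small parameter, and take a limit) is the paper's argument, just parameterised with $t\uparrow 1$ instead of $t\downarrow 0$; the sum-rule alternative you sketch would also work but you discard it in favour of the bare-hands version, so the two proofs coincide. That said, the display in your part~1 has a sign inconsistency. The quantity that must be shown nonnegative is (target inequality) minus (hypothesis inequality), namely
$\tfrac{\lambda_2-\lambda_1}{2}\big(\|x+h\|^2-\|x\|^2\big) - (\lambda_2-\lambda_1)\langle x,h\rangle$,
and expanding $\|x+h\|^2-\|x\|^2 = 2\langle x,h\rangle+\|h\|^2$ makes the $\langle x,h\rangle$ cross terms cancel, leaving $\tfrac{\lambda_2-\lambda_1}{2}\|h\|^2 \ge 0$. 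As written, your displayed expression has mismatched signs between the two terms (so the cross terms do \emph{not} cancel as you claim), and your concluding line $\tfrac{\lambda_1-\lambda_2}{2}\|h\|^2 \ge 0$ has the inequality backwards, since $\lambda_1 \le \lambda_2$ forces $\lambda_1-\lambda_2 \le 0$. These are arithmetic slips rather than a wrong idea, but they must be fixed for the proof to read correctly.
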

\begin{proof}
\begin{enumerate}
\item  Let $(x,y)\in \partial^{\lambda_2} \Phi$. By Definition~\ref{def:subdiflambda}, there exists a $y_2\in H$ such that $y=y_2 + \lambda_2 x$ and, for all $h\in H$, 
    \[
\Phi(x+h) -\frac{\lambda_2}2 \|x+h\|^2 \geq \Phi(x) -\frac{\lambda_2}2 \|x\|^2 + \langle y_2, h\rangle.
    \]
Define $y_1 := y-\lambda_1 x = y_2 + (\lambda_2-\lambda_1) x$, such that $y=y_1+\lambda_1 x$. Let $h\in H$, then
\begin{align*}
-\frac{\lambda_2}2 \|x\|^2 + \langle y_2, h\rangle + \frac{\lambda_2-\lambda_1}2 \|x+h\|^2 &= -\frac{\lambda_1}2 \|x\|^2 + \frac{\lambda_2-\lambda_1}2 \|h\|^2 + \langle y_2 + (\lambda_2-\lambda_1) x, h \rangle\\
&\geq -\frac{\lambda_1}2 \|x\|^2 + \langle y_1, h \rangle,
\end{align*}
where the inequality holds by the definition of $y_1$ and $\frac{\lambda_2-\lambda_1}2\geq 0$. Thus, for all $h\in H$,
\begin{align*}
\Phi(x+h) - \frac{\lambda_1}2 \|x+h\|^2 &= \Phi(x+h) - \frac{\lambda_2}2 \|x+h\|^2 + \frac{\lambda_2-\lambda_1}2 \|x+h\|^2\\
&\geq \Phi(x) - \frac{\lambda_2}2 \|x\|^2 + \langle y_2, h\rangle + \frac{\lambda_2-\lambda_1}2 \|x+h\|^2\\
&\geq \Phi(x) - \frac{\lambda_1}2 \|x\|^2 + \langle
y_1, h \rangle.
\end{align*}
Thus $(x,y)\in \partial^{\lambda_1} \Phi$.

\item Let $(x,y)\in \partial^{\lambda_1} \Phi$. Then, for all $h\in H$,
\begin{align*}
\Phi(x+h) -\frac{\lambda_2}2 \|x+h\|^2 &= \Phi(x+h) - \frac{\lambda_1}2 \|x+h\|^2 - \frac{\lambda_2-\lambda_1}2 \|x+h\|^2\\
& \geq \Phi(x) - \frac{\lambda_1}2 \|x\|^2 + \langle y-\lambda_1 x, h\rangle - \frac{\lambda_2-\lambda_1}2 \|x+h\|^2\\
&= \Phi(x) - \frac{\lambda_2}2 \|x\|^2 + \langle y-\lambda_2 x, h\rangle - \frac{\lambda_2-\lambda_1}2 \|h\|^2,
\end{align*}
where the inequality follows from the definition of $\partial^{\lambda_1} \Phi$ and the last equality holds since
\begin{align*}
&\hspace{0.4cm} -\frac{\lambda_1}2\|x\|^2 + \langle y-\lambda_1 x, h\rangle - \frac{\lambda_2-\lambda_1}2 \|x+h\|^2\\
&= \langle y-\lambda_1 x, h\rangle - \frac{\lambda_2}2 \left(\|x\|^2+\|h\|^2\right) - \lambda_2 \langle x, h\rangle + \frac{\lambda_1}2 \|h\|^2 + \lambda_1 \langle x, h\rangle\\
&= -\frac{\lambda_2}2 \|x\|^2 + \langle y-\lambda_2 x, h\rangle 
- \frac{\lambda_2-\lambda_1}2 \|h\|^2.
\end{align*}
For notational convenience write $f(x):=\Phi(x) - \frac{\lambda_2}2\|x\|^2$, then we have, for all $h\in H$,
\[
f(x+h) \geq f(x) + \langle y-\lambda_2 x, h\rangle - \frac{\lambda_2-\lambda_1}2 \|h\|^2.
\]
Fix a $h \in H$, then, for all $t\in \mathbb{R}$, $th\in H$ and thus
\[
f(x+th) \geq f(x) + t \langle y-\lambda_2 x, h\rangle - \frac{\lambda_2-\lambda_1}2 t^2 \|h\|^2.
\]
Since $\Phi$ is $\lambda_2$-convex, $f$ is convex, and thus, for all $t\in [0,1]$,
\[
f(x+th) = f\big((1-t)x+t(h+x)\big) \leq (1-t) f(x) + t f(x+h).
\]
Combining the preceding two inequalities, we obtain, for all $t\in [0,1]$,
\[
(1-t) f(x) + t f(x+h) \geq f(x) + t \langle y-\lambda_2 x, h\rangle - \frac{\lambda_2-\lambda_1}2 t^2 \|h\|^2,
\]
hence
\[
t f(x+h)-tf(x) \geq t \langle y-\lambda_2 x, h\rangle - \frac{\lambda_2-\lambda_1}2 t^2 \|h\|^2.
\]
Dividing by $t>0$ and taking the limit $t\downarrow 0$ gives
\[
f(x+h)-tf(x) \geq \langle y-\lambda_2 x, h\rangle,
\]
which implies that $(x,y)\in \partial^{\lambda_2} \Phi$ and thus, in combination with part 1 of this lemma, $\partial^{\lambda_2} \Phi = \partial^{\lambda_1} \Phi$.
\end{enumerate}
\end{proof}

\begin{remark}
   Without the additional assumption of $\lambda_2$-convexity of $\Phi$, part 2 of Lemma~\ref{lem:subdiffinclusion} is not true.  For example, if $\lambda_1=0<\lambda_2$ and $\Phi(x)=0$ (which is easily seen not to be $\lambda_2$-convex), then $(0,0)\in \partial^{\lambda_1} \Phi = \partial \Phi$, but $(0,0)\not\in \partial^{\lambda_2} \Phi$, because for, $(x,y)=(0,0)$, $\Phi(x+h)-\frac{\lambda_2}2 \|x+h\|^2 = -\frac{\lambda_2}2 \|h\|^2$ which for all nonzero $h\in H$ is strictly smaller than $\Phi(x)-\frac{\lambda_2}2\|x\|^2 + \langle y-\lambda_2 x, h \rangle = 0$.
\end{remark}

\section{Lower bound on the metric derivatives}\label{sec:B}

This section is dedicated to proving Proposition~\ref{prop:derivativelowbound}, i.e. that the lower bound on the metric derivatives, a natural assumption introduced in \cite[Theorem 2]{SerfatyGam}, holds in the setting of a Banach stacking.

\begin{definition}
\label{def:ACcurves}
Let $T>0$ and let $(M,d)$ be a complete metric space. We say that a curve $u:(0,T)\rightarrow M$ is absolutely continuous, denoted as $u\in AC(0,T;M)$, if there exists a non-negative function $g\in L^1(0,T)$ such that, for all $s,t \in (0,T)$ with $s\leq t$,
\begin{equation}
\label{eq:ACadmissable}
 d(u(s),u(t))\leq \int_s^t g(r)\, dr .   
\end{equation}
\end{definition}

For absolutely continuous curves we are able to define pointwise (almost everywhere) a derivative analogous to the magnitude of a tangent to a curve. We do so as part of the next theorem.

\begin{theorem}
\label{thm:metderivative}
Let $T>0$ and let $(M,d)$ be a complete metric space. Assume ${u\in AC(0,T;M)}$. For a.e. $t\in(0,T)$ the limit
$$ |u'|(t) := \lim_{s\rightarrow t} \frac{d(u(s),u(t))}{|s-t|}$$
exists.
Moreover, the function $t\mapsto |u'|(t)$ belongs to $L^1(0,T)$. We call $|u'|$ the metric derivative. The inequality in \eqref{eq:ACadmissable} is satisfied with $g=|u'|$. Additionally, $|u'|$ is minimal among all admissible functions $g$ in~\eqref{eq:ACadmissable}; i.e. if $g\in L^1(0,T)$ is non-negative and satisfies~\eqref{eq:ACadmissable}, then, for a.e. $t\in(0,T)$, $|u'|(t)\leq g(t)$.  
\end{theorem}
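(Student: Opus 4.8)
\textbf{Proof plan for Theorem~\ref{thm:metderivative}.}

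The plan is to prove the three assertions — existence of the pointwise limit $|u'|(t)$ a.e., integrability of $t\mapsto|u'|(t)$ with $|u'|$ admissible in \eqref{eq:ACadmissable}, and minimality of $|u'|$ among admissible $g$ — in that order, following the classical argument of Ambrosio--Gigli--Savar\'e \cite[Theorem 1.1.2]{greenbook}. First I would fix an admissible $g\in L^1(0,T)$ as guaranteed by the hypothesis $u\in AC(0,T;M)$ and define $G(t):=\int_0^t g(r)\,dr$, which is absolutely continuous and non-decreasing, hence differentiable a.e.\ with $G'=g$ a.e. The key estimate is that for all $s\le t$ in $(0,T)$ the triangle inequality for $d$ combined with \eqref{eq:ACadmissable} gives $|d(u(\tau),u(t))-d(u(\tau),u(s))|\le d(u(s),u(t))\le G(t)-G(s)$ for any fixed $\tau$; thus each map $t\mapsto d(u(\tau),u(t))$ is absolutely continuous with a.e.\ derivative bounded in absolute value by $g$.

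Next I would run the standard separability-free localization: although $M$ need not be separable, the image $u((0,T))$ is, since $u$ is continuous (by \eqref{eq:ACadmissable}) on a separable domain. Pick a countable dense subset $\{x_k\}_{k\in\mathbb{N}}$ of $u((0,T))$ and set $\varphi_k(t):=d(u(t),x_k)$; each $\varphi_k$ is absolutely continuous with $|\varphi_k'|\le g$ a.e. Define $m(t):=\sup_k|\varphi_k'|(t)$, which is measurable and satisfies $m\le g$ a.e., hence $m\in L^1(0,T)$. The crux of the argument is the two-sided bound: for a.e.\ $t$ and all $s$, $d(u(s),u(t))=\sup_k|\varphi_k(s)-\varphi_k(t)|$ by density, which yields $\limsup_{s\to t}\frac{d(u(s),u(t))}{|s-t|}\le m(t)$ on one hand (via $|\varphi_k(s)-\varphi_k(t)|\le\int_{[s,t]}|\varphi_k'|\le\int_{[s,t]}m$), and on the other hand, picking for a.e.\ $t$ an index $k$ with $\varphi_k(t)=0$ up to arbitrarily small error and using $|\varphi_k'|(t)\le\liminf_{s\to t}\frac{|\varphi_k(s)-\varphi_k(t)|}{|s-t|}\le\liminf_{s\to t}\frac{d(u(s),u(t))}{|s-t|}$, one gets $m(t)\le\liminf_{s\to t}\frac{d(u(s),u(t))}{|s-t|}$ at Lebesgue points. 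Combining, the limit exists a.e.\ and equals $m(t)$; we then \emph{define} $|u'|:=m$, so $|u'|\in L^1(0,T)$ automatically.

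For admissibility of $|u'|$ in \eqref{eq:ACadmissable}, I would fix $s\le t$, partition $[s,t]$ into $N$ equal subintervals, apply the just-proved pointwise identity to telescope $d(u(s),u(t))\le\sum_j d(u(t_{j-1}),u(t_j))$, and pass to the limit recognizing the sum as a Riemann-type approximation of $\int_s^t|u'|(r)\,dr$; more cleanly, one notes $t\mapsto d(u(s),u(t))$ is absolutely continuous with a.e.\ derivative $\le|u'|$, so $d(u(s),u(t))=d(u(s),u(t))-d(u(s),u(s))\le\int_s^t|u'|(r)\,dr$ by the fundamental theorem of calculus for absolutely continuous functions. Finally, minimality: if $g\in L^1(0,T)$ is non-negative and satisfies \eqref{eq:ACadmissable}, then for a.e.\ $t$ (a Lebesgue point of $g$ where the metric derivative exists) $|u'|(t)=\lim_{s\to t}\frac{d(u(s),u(t))}{|s-t|}\le\lim_{s\to t}\frac{1}{|s-t|}\int_{[t\wedge s,\,t\vee s]}g(r)\,dr=g(t)$, as desired. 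The main obstacle I anticipate is the non-separability of $M$: handling it correctly requires the observation that $u$ is continuous and hence has separable image, which makes the countable family $\{\varphi_k\}$ sufficient to recover $d(u(s),u(t))$ as a supremum; everything else is a routine application of the fundamental theorem of calculus for real-valued absolutely continuous functions.
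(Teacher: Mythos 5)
Your plan correctly reconstructs the argument of Ambrosio--Gigli--Savar\'e \cite[Theorem~1.1.2]{greenbook}, which is precisely what the paper cites without detail. Your handling of non-separability via the separable image $u((0,T))$, the countable test family $\varphi_k$, the two-sided bound on the difference quotient, and the Lebesgue-point arguments for admissibility and minimality all match the standard proof.
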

\begin{proof}
See \cite{greenbook}[Theorem 1.1.2].
\end{proof}

We now prove Proposition~\ref{prop:derivativelowbound}.

\begin{proof}[Proof of Proposition~\ref{prop:derivativelowbound}]
We assume for a proof by contradiction that the proposition fails. So there exists an $S\in (0,T)$ such that
$\displaystyle
\liminf_{n\to\infty} \int_0^S |u_n'|^2(s) \, ds < \int_0^S |u'|^2(s) \, ds. 
$
Let $\tilde L\geq 0$ be such that $\displaystyle \liminf_{n\to\infty} \int_0^S |u_n'|^2(s) \, ds < \tilde L < \int_0^S |u'|^2(s) \, ds$. Take $\varepsilon:= \frac12\left(\int_0^S |u'|^2(s) \, ds - \tilde L \right)$ and $L:=\tilde L+\varepsilon$. Then, by definition of the limit inferior, there exists a strictly increasing sequence $(n_k)_{k\in\mathbb{N}}\subset \mathbb{N}$ such that, for all $k\in\mathbb{N}$, 
$$\int_0^S |u_{n_k}'|^2(s) \,ds<\widetilde{L}+\varepsilon=L<\int_0^S|u'|^2(s) \,ds.$$
Since $(|u_{n_k}'|)_{k\in\mathbb{N}}$ is bounded in $L^2(0,S)$ we can assume, after potentially passing to another subsequence, that $(|u_{n_k}'|)$ converges weakly in $L^2(0,S)$ to a function $g\in L^2(0,S)$. Then, for all $r,t\in (0,S)$ with $r\leq t$,
\begin{align*}
\Vert u(t) - u(r) \Vert_{\infty} &= \lim_{k\rightarrow \infty} \Vert u_{n_k}(t)-u_{n_k}(r) \Vert_{n_k} 
\leq \lim_{k\rightarrow \infty} \int_r^t |u_{n_k}'|(s)\,ds 
= \int_r^t g(s)\,ds. 
\end{align*}
The first equality holds by property~(iv) in Definition~\ref{BanStack} and the second holds by weak convergence of $(|u_{n_k}'|)$ to $g$, since the indicator function of the interval $(r,t)$ is in $L^2(0,S)$. The inequality is a consequence of \eqref{eq:ACadmissable} with $g=|u_{n_k}'|$, as per Theorem~\ref{thm:metderivative}. To justify the use of that theorem, we recall that, for all $k\in \mathbb{N}$, $|u_{n_k}'|$ denotes the metric derivative of $u_{n_k}$ with respect to the distance on $X_{n_k}$.

Since $L^2(0,S)$ is continuously embedded in $L^1(0,S)$ by H\"older's inequality and since $g$, being the weak-$L^2(0,S)$ limit of non-negative functions, is non-negative, $g$ is admissible in \eqref{eq:ACadmissable}. So, by Theorem~\ref{thm:metderivative}, $g\geq |u'|$ a.e. on $(0,S)$. We then deduce
\begin{align*}
   L\geq \liminf_{k\rightarrow \infty} \int_0^S |u_{n_k}'|^2(s)\,ds 
   \geq \int_0^S g^2(s)\,ds \geq \int_0^S |u'|^2(s)\,ds,
\end{align*}
where the second inequality holds since the $L^2(0,S)$ norm is lower semicontinuous with respect to weak-$L^2(0,S)$ convergence and because $\displaystyle \liminf_{k\to\infty} a_k^2 = \left(\liminf_{k\to\infty} a_k\right)^2$ for sequences $(a_k)_{k\in \mathbb{N}}$ of non-negative numbers $a_k$. This is a contradiction, hence we conclude that the result holds.
\end{proof}

\section{Examples of Banach stackings}\label{sec:BanStackEx}

In this appendix we list a number of examples of the Banach stacking structure introduced in Definitions~\ref{BanStack} and~\ref{def:BanStackI}. 

\begin{example}\label{BSexmpl}
\begin{enumerate}

    \item Let $Z$ be a Banach space and let $(Z_n)_{n\in\mathbb{N}_\infty}$ be a sequence of closed linear subspaces of $Z$. For all $n\in \mathbb{N}_\infty$, let $\iota_n$ denote the inclusion map of $Z_n$ into $Z$. Assume that for all $x\in Z_\infty$ there exists a sequence $(x_n)_{n\in \mathbb{N}} \subset_n Z_n$ such that $x_n\rightarrow x$ in $Z$ as $n\rightarrow \infty$. Then $\big((Z_n,\iota_n)_{n\in\mathbb{N_\infty}},Z\big)$ is a Banach stacking. 

    \item Let $I$ be a topological space, $X$ a Banach space with norm $\Vert \cdot \Vert$, and $d$ a metric on $I\times X$ such that
    \begin{itemize}
        \item the induced metric topology of $(I\times X,d)$ is equivalent to the product topology on $I\times X$; and,
        \item for all $i\in I$ and all $x,y\in X$, $d\big((i,x),(i,y)\big)\leq \Vert x-y\Vert$.
    \end{itemize}
    Define, for all $i\in I$, the map $\xi_i: X \rightarrow I\times X$ by $\xi_i(x):=(i,x)$. Then ${\big( (X,\xi_i)_{i\in I},I\times X\big)}$ is a Banach stacking.
    
    \item Let $(\pi,E,M)$ be a smooth vector bundle of rank $k$ over $M$ (see \cite[Chapter 10]{Lee2013}), with $M$ (and therefore also $E$) connected. In particular, $M$ is a connected $C^\infty$ manifold called the base space, $E$ is an $C^\infty$-manifold called the total space, $\pi:E\rightarrow M$ is a $C^\infty$ and surjective map called the projection map, and, for all $p\in M$, $\pi^{-1}(p)$ is a $k$-dimensional vector space. 
    
    Let $U$ be an open subset of $M$ in the usual manifold topology. We will construct a Banach stacking in which the fibres $(\pi^{-1}(p))_{p\in U}$ are the Banach spaces. To this end we need to equip each fibre with a norm to each fibre and the total space $E$ with a metric. To do so we require a choice of local frame, which is equivalent to a choice of local trivialisation (\cite[Chapter 10]{Lee2013}).

 Let $g$ be a Riemannian metric \cite[Chapter 13]{Lee2013} on $M$ (which takes finite values by connectedness of $M$) and let $\Phi:\pi^{-1}(U)\rightarrow U \times \mathbb{R}^k$ be a local trivialisation; thus $\pi \circ \Phi^{-1}$ is the projection from $U\times \mathbb{R}^k$ onto $U$ and, for all $q\in U$, the restriction $\Phi|_{\pi^{-1}(q)}$ is an isomorphism between the vector spaces $\pi^{-1}(q)$ and $\{q\}\times \mathbb{R}^k$. Next construct a $C^\infty$ local frame $\sigma_i(p):=\Phi^{-1}(p,e_i)$, for $1\leq i \leq k$ and $p\in U$. Here $e_1, \ldots ,e_k$ are the standard basis vectors of $\mathbb{R}^k$. This enables us to define, for all $p\in U$, a norm on $\pi^{-1}(p)$ as follows. Let $v\in \pi^{-1}(p)$, then there exist unique real coefficients $(v_i)_{1\leq i\leq k}$ such that $v=\sum_{i=1}^k v_i\sigma_i(p)$. Define $ \Vert v \Vert_p := \left( \sum_{i=1}^k |v_i|^2 \right)^{1/2}$.
    
    Denote by $h$ the standard Euclidean Riemannian metric on $\mathbb{R}^k$, then $\Phi^*(g|_U\oplus h)$ is a Riemannian metric on $\pi^{-1}(U)$. Here $g|_U\oplus h$ is the product metric on $U\times \mathbb{R}^k$ as in \cite[Example 13.2]{Lee2013}, $\Phi^*$ denotes the pullback by $\Phi$ \cite[Chapter 12, Proposition 13.9]{Lee2013}, and $g|_U$ is the restriction of $g$ to the submanifold $U$.
    
    Let $d$ denote the distance function on $\pi^{-1}(U)$ that is induced by $\Phi^*(g|_U\oplus h)$. This makes $\pi^{-1}(U)$ into a metric space. Let, for all $p\in U$, $\iota_p:\pi^{-1}(p)\rightarrow \pi^{-1}(U)$ be the inclusion map. One can then verify that $\big( (\pi^{-1}(p),\iota_p)_{p\in U} , \pi^{-1}(U)\big)$ is a Banach stacking. This can be seen by utilizing the local trivialisation $\Phi$ to reduce us to example 2.

    \item Let $\mathcal{I}$ denote the space of $d\times d$ symmetric positive-definite real matrices and equip $\mathcal{I}$ with the standard operator norm $\Vert x \Vert_{op} := \sup_{\substack{v\in \mathbb{R}^d\\x\neq 0}} \frac{\|Ax\|}{\|x\|}$, where $\|\cdot\|$ is the Euclidean norm on $\mathbb{R}^d$.  Given $A\in \mathcal{I}$ we denote $H_A$ to be the Hilbert space $\mathbb{R}^d$ equipped with the inner product $\langle x,y \rangle_A := x^TAy$, where $x$ and $y$ are interpreted as column vectors and superscript $T$ denotes transposition.
    
    Take $E$ to be $\mathbb{R}^d$ equipped with the standard Euclidean inner product. For each $A\in \mathcal{I}$ define the map $\xi_A: H_A \rightarrow E$ by
    $ \xi_A(x) := A^{1/2}x.$

    Then $\big( (H_A,\xi_A)_{A\in \mathcal{I}} , E)$ forms a Banach stacking, where we equip the index set $\mathcal{I}$ with the operator norm topology.

\item For an overview of Sobolev spaces and their properties from the perspective of Fourier transforms, useful for this example, we refer the reader to \cite[Chapter 9.3]{Folland99}.

We denote the Fourier transform (\cite[Chapter 8]{Folland99}) of a function $f$ by $\widehat{f}$ and the complex conjugate of $z\in\mathbb{C}$ by $\overline{z}$.   
Let $d\in\mathbb{N}$, $s\in \mathbb{R}$, and denote by $H^s(\mathbb{R}^d)$ the Sobolev space of square-Lebesgue-integrable functions on $\mathbb{R}^d$ for which the norm induced by the inner product
$$  \langle u,v\rangle_s:= \int_{\mathbb{R}^d} \widehat{u}(\xi) \, \overline{\widehat{v}(\xi)} \, (1+|\xi|^2)^{s} \, d\xi$$
is finite.

For a function $f$ we denote the inverse Fourier transform by $f^{\vee}$.  Define the operator $\Lambda_{s}:H^{s}(\mathbb{R}^d)\rightarrow L^2(\mathbb{R}^d)$ by
$$ \Lambda_{s}u := \left((1+|\xi|^2)^{s/2} \widehat{u}\right)^{\vee}.$$
Then $\big((H^s(\mathbb{R}^d), \Lambda_s)_{s\in \mathbb{R}}, L^2(\mathbb{R}^d)\big)$ is a Banach stacking, where we equip the index set $\mathbb{R}$ with the standard topology. From the basic properties of the Fourier transform most axioms of a Banach stacking are easy to verify; we leave this to the reader. The one condition that is more difficult to check is property~(ii) from Definition~\ref{BanStack}. To see that it too is satisfied, let $s_\infty\in \mathbb{R}$, $u_\infty\in H^s(\mathbb{R}^d)$, and let $(s_n)_{n\in\mathbb{R}}$ be a sequence of real numbers such that $s_n\rightarrow s_\infty$ as $n\to\infty$.   
Define the kernel,
$$ S(t,x):= t^{n/2}e^{-\pi |x|^2t}$$
and, for all $n\in \mathbb{N}$, $\sigma_n:= \frac{2\pi}{1/n +|s_n-s_\infty|}$. Property~(ii) in Definition~\ref{BanStack} requires the existence of an approximating sequence $(u_n)_{n\in\mathbb{N}} \subset_n H^{s_n}(\mathbb{R}^d)$ such that $(\Lambda_{s_n} u_n)_{n\in\mathbb{N}}$ converges to $\Lambda_s u_\infty$ in $L^2(\mathbb{R}^d)$. We will show that such a sequence is given by $u_n:= S(\sigma_n,\cdot)*u$. 

First we observe that indeed $u_n\in H^{s_n}(\mathbb{R}^d)$. To prove the required convergence, it suffices, by Parseval's equality, to show that $\left((1+|\xi|^2)^{s_n/2} \widehat{u_n}\right)_{n\in\mathbb{N}}$ converges to $(1+|\xi|^2)^{s/2} \widehat{u}$ in $L^2(\mathbb{R})^d$. By \cite[Theorem 8.22(c) and Proposition 8.24]{Folland99}, we have
$$ \widehat{u_n}(\xi)= \widehat{S}(\sigma_n,\xi)\widehat{u}(\xi) = e^{-\pi|\xi|^2/\sigma_n}\widehat{u}(\xi)$$
and thus
\begin{align*}
I_n(\xi) &:= \left|(1+|\xi|^2)^{s/2} \widehat{u}-(1+|\xi|^2)^{s_n/2} \widehat{u_n}\right|\\
&= \left|(1+|\xi|^2)^{s/2} \widehat{u}\right| \, \, \left|1-(1+|\xi|^2)^{(s_n-s)/2}e^{-\pi|\xi|^2/\sigma_n}\right|.
\end{align*}
It remains to show that $\int_{\mathbb{R}^d} I_n^2(\xi)d\xi\rightarrow 0$ as $n\rightarrow \infty$, which we will prove by the dominated convergence theorem. For all $\xi \in \mathbb{R}^d$, $I_n(\xi) \to 0$ as $n\to\infty$, because $\sigma_n \to \infty$ as $n\to\infty$. Thus to use the dominated convergence theorem we need to bound $\left((1+|\xi|^2)^{(s_n-s)/2}e^{-\pi|\xi|^2/\sigma_n}\right)_{n\in\mathbb{N}}$ uniformly. Define, for all $r\geq 0$ and all $n\in\mathbb{N}$,
$$ f_{\lambda,\theta}(r):= (1+r^2)^{\lambda/2}e^{-\pi r^2/\theta}.$$
If $\lambda\leq 0$, then $f_{\lambda,\theta}$ is maximised at $r=0$. If $\lambda>0$ and $\theta \leq \frac{2\pi}{\lambda}$, it follows that $f_{\lambda,\theta}$ is again maximised at $r=0$. As a consequence we have $f_{s_n-s,\sigma_n}(r)$, for $r\geq 0$, is maximised at $r=0$.  
Hence
$$
(1+|\xi|^2)^{(s_n-s)/2}e^{-\pi|\xi|^2/\sigma_n} = f_{s_n-s,\sigma_n}(|\xi|)\leq f_{s_n-s,\sigma_n}(0)=1.     
$$
Thus, for all $n\in\mathbb{N}$, $I_n^2(\xi)\leq 4(1+|\xi|^2)^{s} |\widehat{u}(\xi)|^2$. The bound is integrable, thus, by the dominated convergence theorem, $\int_{\mathbb{R}^d} I_n^2(\xi)d\xi\rightarrow 0$ as $n\rightarrow \infty$.

\item This example is based on ideas from optimal transport and is important for this paper, in particular in Section~\ref{sec:TLp}. Fix an open subset $\Omega \subset \mathbb{R}^d$ and $1\leq p <+\infty$. For all $\mu \in \mathcal{P}_p(\Omega)$, define the map $\xi_{\mu}: L^p(\Omega;\mu) \rightarrow TL^p(\Omega)$ by
$\xi_{\mu}(u) := (u,\mu).$ 

Then $\big((L^p(\Omega;\mu), \xi_{\mu})_{\mu \in \mathcal{P}_p(\Omega)}, TL^p(\Omega)\big)$ defines a Banach stacking in the sense of Definition~\ref{def:BanStackI}where we equip the index set $\mathcal{P}_p(\Omega)$ with the topology induced by the $p$-Wasserstein metric.
\end{enumerate}

\medskip

We leave it to the reader to verify that the first five examples above indeed describe Banach stackings. The sixth example is the most relevant one for this paper and a proof of its correctness is given in Proposition~\ref{tlpBanach}. 
\end{example}

\section{\texorpdfstring{$\Gamma$}{Gamma}-convergence over a Banach stacking}\label{app:Gammaconvergence}

We briefly provide an overview of the main definitions and results that we need to make use of $\Gamma$-convergence. Definitions~\ref{def:Gammaconvergence} and~\ref{def:equicoercivity} and Theorem~\ref{fundGamma} are from \cite{DalMaso93}. In the remainder of Appendix~\ref{app:Gammaconvergence} we adapt them to the context of a Banach stacking.

\begin{definition}\label{def:Gammaconvergence} 
Let $(\mathcal{M},d)$ be a metric space and $(\Psi_n)_{n\in \mathbb{N}_{\infty}}$ a sequence of functions $\Psi_n:\mathcal{M} \rightarrow (-\infty,+\infty]$. We say that the sequence $(\Psi_n)_{n\in \mathbb{N}}$ $\Gamma$-converges to $\Psi_{\infty}$ as $n\rightarrow \infty$ if both of the following conditions hold.
\begin{itemize}
    \item For all sequences $(x_n)_{n\in \mathbb{N}_{\infty}} \subset \mathcal{M}$ such that $x_n \rightarrow x_{\infty}$ as $n\rightarrow \infty$, we have
    $$ \liminf_{n \rightarrow \infty} \Psi_n(x_n) \geq \Psi_{\infty}(x_\infty) .$$
    \item For all $x_{\infty}\in \mathcal{M}$ there exists a sequence $(x_n)_{n\in \mathbb{N}} \subset \mathcal{M}$ such that $x_n \rightarrow x_{\infty}$ as $n \rightarrow \infty$ and
    $$ \limsup_{n \rightarrow \infty} \Psi_n(x_n) \leq \Psi_{\infty}(x_{\infty}) .$$
    In this case we call $(x_n)_{n\in \mathbb{N}}$ a recovery sequence.
\end{itemize}
\end{definition}

\begin{remark}\label{rem:recovery}
    If $(\Psi)_{n\in\mathbb{N}}$ $\Gamma$-converges to $\Psi_\infty$ and $(x_n)_{n\in \mathbb{N}}$ is a recovery sequence in the sense of Definition~\ref{def:Gammaconvergence} converging to $x_\infty$, then in fact
    \[
\Psi_{\infty}(x_\infty) \leq \liminf_{n \rightarrow \infty} \Psi_n(x_n) \leq \limsup_{n \rightarrow \infty} \Psi_n(x_n) \leq \Psi_{\infty}(x_{\infty})
    \]
    and thus $\Psi_n(x_n) \to \Psi_\infty(x_\infty)$ as $n\to \infty$.
\end{remark}

\begin{definition}\label{def:equicoercivity}
Let $(\mathcal{M},d)$ be a metric space and $(\Psi_n)_{n\in \mathbb{N}}$ a sequence of functions $\Psi_n:\mathcal{M} \rightarrow (-\infty,+\infty]$. We say that the sequence $(\Psi_n)_{n\in \mathbb{N}}$ is equicoercive in $\mathcal{M}$ if for all $a\in \mathbb{R}$, the set $\cup_{n\in \mathbb{N}} \{x\in M\,|\,\Psi_n(x) \leq a\}$ is relatively compact in $\mathcal{M}$. 
\end{definition}

The following classical result illustrates the usefulness of $\Gamma$-convergence in minimization problems.

\begin{theorem} \label{fundGamma}

Let $(\mathcal{M},d)$ be a metric space and $(\Psi_n)_{n\in \mathbb{N}_{\infty}}$ a sequence of functions $\Psi_n:\mathcal{M} \rightarrow (-\infty,+\infty]$.
Suppose that the sequence $(\Psi_n)_{n\in \mathbb{N}}$ is equicoercive and $(\Psi_n)_{n\in \mathbb{N}}$ $\Gamma$-converges to $\Psi_\infty$. Additionally assume that, for all $n \in \mathbb{N}_{\infty}$, $\Psi_n$ has a unique minimizer $x^*_n$ in $\mathcal{M}$. Then $x^*_n \rightarrow x^*_{\infty}$ and $\Psi_n(x^*_n) \rightarrow \Psi(x^*_{\infty})$ as $n \rightarrow \infty$.
\end{theorem}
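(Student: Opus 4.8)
\textbf{Proof proposal for Theorem~\ref{fundGamma}.} The plan is to follow the classical ``fundamental theorem of $\Gamma$-convergence'' argument in the single metric space $\mathcal{M}$, since the statement as written is purely about a sequence of functions on one metric space. The strategy splits into two halves: first show that every subsequence of $(x^*_n)_{n\in\mathbb{N}}$ has a further subsequence converging to $x^*_\infty$ (which forces $x^*_n \to x^*_\infty$), and simultaneously extract the convergence of the minimum values; second, tie the pieces together using the liminf and recovery-sequence halves of Definition~\ref{def:Gammaconvergence}.

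First I would fix a recovery sequence: by the limsup condition in Definition~\ref{def:Gammaconvergence} applied to the point $x^*_\infty$, there is a sequence $(y_n)_{n\in\mathbb{N}}\subset\mathcal{M}$ with $y_n\to x^*_\infty$ and $\limsup_{n\to\infty}\Psi_n(y_n)\le\Psi_\infty(x^*_\infty)$. Since each $x^*_n$ minimizes $\Psi_n$, we have $\Psi_n(x^*_n)\le\Psi_n(y_n)$, so $\limsup_{n\to\infty}\Psi_n(x^*_n)\le\Psi_\infty(x^*_\infty)<+\infty$ (note $\Psi_\infty$ is proper since it has a minimizer, which also means its values are not $+\infty$ at $x^*_\infty$; one should record that $\Psi_\infty(x^*_\infty)\neq+\infty$). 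Hence there is a constant $a\in\mathbb{R}$ with $\Psi_n(x^*_n)\le a$ for all $n$ large, so $\{x^*_n\}$ eventually lies in $\bigcup_{n\in\mathbb{N}}\{x\in\mathcal{M}\,|\,\Psi_n(x)\le a\}$, which by equicoercivity (Definition~\ref{def:equicoercivity}) is relatively compact. Therefore $(x^*_n)$ has at least one convergent subsequence, and more importantly every subsequence has a convergent sub-subsequence.

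Next I would identify all such subsequential limits with $x^*_\infty$. Let $(x^*_{n_k})_k$ be any subsequence converging to some $z\in\mathcal{M}$. By the liminf condition in Definition~\ref{def:Gammaconvergence},
\[
\Psi_\infty(z)\le\liminf_{k\to\infty}\Psi_{n_k}(x^*_{n_k})\le\limsup_{k\to\infty}\Psi_{n_k}(x^*_{n_k})\le\Psi_\infty(x^*_\infty),
\]
where the last inequality is the bound obtained from the recovery sequence in the previous paragraph. Since $x^*_\infty$ is the \emph{unique} minimizer of $\Psi_\infty$ and $\Psi_\infty(z)\le\Psi_\infty(x^*_\infty)=\inf_{\mathcal{M}}\Psi_\infty$ forces $z$ to be a minimizer, we conclude $z=x^*_\infty$. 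Because every subsequence of $(x^*_n)$ has a sub-subsequence converging to the same limit $x^*_\infty$, and the sequence eventually lies in a relatively compact (hence, in a metric space, sequentially compact up to closure) set, the whole sequence converges: $x^*_n\to x^*_\infty$ as $n\to\infty$. Finally, applying the liminf condition along the full sequence and combining with the recovery-sequence upper bound gives
\[
\Psi_\infty(x^*_\infty)\le\liminf_{n\to\infty}\Psi_n(x^*_n)\le\limsup_{n\to\infty}\Psi_n(x^*_n)\le\Psi_\infty(x^*_\infty),
\]
so $\Psi_n(x^*_n)\to\Psi_\infty(x^*_\infty)$ as $n\to\infty$.

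The only mildly delicate point — and the one I would be most careful about — is the ``every subsequence has a sub-subsequence converging to $x^*_\infty$ implies the whole sequence converges'' step: this is a standard topological fact in a metric space but relies on $(x^*_n)$ being confined (for large $n$) to a set whose closure is compact, which is exactly what equicoercivity delivers once we have the uniform energy bound $\Psi_n(x^*_n)\le a$. A secondary bookkeeping issue is ensuring $\Psi_\infty(x^*_\infty)<+\infty$ so that the energy bound $a$ is a genuine real number; this follows since $x^*_\infty$ is assumed to be a minimizer of $\Psi_\infty$ and the recovery sequence produces a finite upper bound for it. No Banach-stacking machinery is needed for this particular statement; it is the purely metric version used as a black box elsewhere in the paper.
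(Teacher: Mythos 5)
Your proof is correct, and it is the standard argument for the fundamental theorem of $\Gamma$-convergence. The paper does not give a proof of Theorem~\ref{fundGamma} at all—it simply cites \cite[Corollary 7.24]{DalMaso93}—so your proposal supplies what the citation would deliver. Your framing (``every subsequence has a sub-subsequence converging to $x^*_\infty$, hence the whole sequence converges'') is the contrapositive of the argument the paper itself uses for the Banach-stacking analogue in Proposition~\ref{prop:gammaconBS}, where the authors instead argue by contradiction: assume $x^*_n\not\to x^*_\infty$, extract a subsequence bounded away from $x^*_\infty$, apply equicoercivity to get a further convergent subsequence, and derive a contradiction with uniqueness of the minimizer. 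The two framings are logically equivalent; the ingredients (recovery sequence to get $\limsup_n \Psi_n(x^*_n)\le\Psi_\infty(x^*_\infty)<+\infty$, equicoercivity for compactness, the liminf inequality to identify the limit, uniqueness of the minimizer to pin it down) are identical. Your side-remarks about the propriety of $\Psi_\infty$ and the need for the tail of $(x^*_n)$ to lie in a relatively compact set are the right places to be careful and are handled correctly.
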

\begin{proof}
    See Corollary 7.24 in \cite{DalMaso93}.
\end{proof}

Next we adapt the definitions and result above to the context of a Banach stacking.

\begin{definition}
\label{BSdefs}
Suppose $\big((X_n, \xi_n)_{n\in \mathbb{N}_\infty}, \mathcal{M}\big)$ is a Banach stacking and $(\Phi_n)_{n\in \mathbb{N}_\infty}$ a sequence of functions $\Phi_n:X_n \rightarrow (-\infty,+\infty]$.
We say that $(\Phi_n)_{n\in\mathbb{N}}$ $\Gamma$-converges to $\Phi_\infty$ over the Banach stacking $\big((X_n, \xi_n)_{n\in \mathbb{N}_\infty}, \mathcal{M}\big)$ if both of the following conditions hold. 
\begin{itemize}
\item For all sequences $(x_n)_{n\in \mathbb{N}_{\infty}} \subset_n X_n$ such that $x_n \underset{\mathcal{M}}\longtwoheadrightarrow x_{\infty}$ as $n\rightarrow \infty$, we have
    $$ \liminf_{n \rightarrow \infty} \Phi_n(x_n) \geq \Phi_{\infty}(x_\infty) .$$
    \item For all $x_{\infty}\in X_\infty$, there exists a sequence $(x_n)_{n\in \mathbb{N}} \subset_n X_n$ such that $x_n \underset{\mathcal{M}}\longtwoheadrightarrow x_{\infty}$ as $n \rightarrow \infty$ and
    $$ \limsup_{n \rightarrow \infty} \Phi_n(x_n) \leq \Phi_{\infty}(x_{\infty}) .$$
    In this case we call $(x_n)_{n\in \mathbb{N}}$ a recovery sequence.    
\end{itemize}

Additionally we say that the sequence $(\Phi_n)_{n\in\mathbb{N}}$ is equicoercive over the Banach stacking $\big((X_n, \xi_n)_{n\in \mathbb{N}_\infty}, \mathcal{M}\big)$ if, for all $c\in \mathbb{R}$ and all sequences $(x_n)_{n\in\mathbb{N}}\subset_n X_n$ with $\Phi_n(x_n)\leq c$, there exists an $x_\infty\in X_\infty$ and a subsequence $(x_{n_k})_{k\in\mathbb{N}}$ of $(x_n)_{n\in\mathbb{N}}$ such that $x_{n_k}\underset{\mathcal{M}}\longtwoheadrightarrow x_{\infty}$ as $k \rightarrow \infty$.
\end{definition}

Our definition of equicoercivity over a Banach stacking in Definition~\ref{BSdefs} differs slightly from the definition over a metric space in Definition~\ref{def:equicoercivity}. We explain the reason for this difference in Remark~\ref{rem:whydifferentequicoercivity}. First we prove the analogous result to Theorem~\ref{fundGamma} in the Banach-stacking setting. We note that the observation about recovery sequences in Remark~\ref{rem:recovery} still holds in this setting.

\begin{proposition}
\label{prop:gammaconBS}
Suppose $\big((X_n, \xi_n)_{n\in \mathbb{N}_\infty}, \mathcal{M}\big)$ is a Banach stacking and $(\Phi_n)_{n\in \mathbb{N}_\infty}$ a sequence of functions $\Phi_n:X_n \rightarrow (-\infty,+\infty]$. Assume that $(\Phi_n)_{n\in \mathbb{N}}$ is equicoercive and $(\Phi_n)_{n\in \mathbb{N}}$ $\Gamma$-converges to $\Phi_\infty$ over the Banach stacking $\big((X_n, \xi_n)_{n\in \mathbb{N}_\infty}, \mathcal{M}\big)$. Additionally suppose that, for all $n \in \mathbb{N}_{\infty}$, $\Phi_n$ has a unique minimizer $x^*_n$ in $X_n$. Then $x^*_n \underset{\mathcal{M}}\longtwoheadrightarrow x^*_{\infty}$ and $\Phi_n(x^*_n) \rightarrow \Phi_{\infty}(x^*_{\infty})$ as $n\rightarrow \infty$.
\end{proposition}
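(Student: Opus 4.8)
The plan is to mimic the classical argument behind Theorem~\ref{fundGamma}, but everywhere replace ordinary metric convergence in $\mathcal{M}$ by convergence in the Banach stacking sense (i.e.\ $\underset{\mathcal{M}}\longtwoheadrightarrow$), and to use the Banach-stacking form of equicoercivity from Definition~\ref{BSdefs}. Concretely, I would first show that every subsequence of $(x^*_n)_{n\in\mathbb{N}}$ has a further subsequence converging (in the stacking sense) to $x^*_\infty$; this, together with the uniqueness of $x^*_\infty$, upgrades to convergence of the whole sequence. Then I would deduce convergence of the minimal values $\Phi_n(x^*_n)\to\Phi_\infty(x^*_\infty)$ from the two $\Gamma$-convergence inequalities.

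For the first part, fix any subsequence $(x^*_{n})$ (relabelled). Let $(y_n)_{n\in\mathbb{N}}\subset_n X_n$ be a recovery sequence for $x^*_\infty$ as in Definition~\ref{BSdefs}; so $y_n\underset{\mathcal{M}}\longtwoheadrightarrow x^*_\infty$ and, by Remark~\ref{rem:recovery}, $\Phi_n(y_n)\to\Phi_\infty(x^*_\infty)$. Since $x^*_n$ minimises $\Phi_n$, we have $\Phi_n(x^*_n)\leq\Phi_n(y_n)$, so the sequence $(\Phi_n(x^*_n))_{n\in\mathbb{N}}$ is bounded above, say by some $c\in\mathbb{R}$. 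Equicoercivity over the Banach stacking then gives a point $z_\infty\in X_\infty$ and a subsequence $(x^*_{n_k})$ with $x^*_{n_k}\underset{\mathcal{M}}\longtwoheadrightarrow z_\infty$ as $k\to\infty$. Applying the liminf inequality of $\Gamma$-convergence along this subsequence,
\[
\Phi_\infty(z_\infty)\leq\liminf_{k\to\infty}\Phi_{n_k}(x^*_{n_k})\leq\limsup_{k\to\infty}\Phi_{n_k}(x^*_{n_k})\leq\limsup_{k\to\infty}\Phi_{n_k}(y_{n_k})=\Phi_\infty(x^*_\infty),
\]
so $z_\infty$ is a minimiser of $\Phi_\infty$. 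By the assumed uniqueness of the minimiser $x^*_\infty$, we get $z_\infty=x^*_\infty$. Thus every subsequence of $(x^*_n)$ has a further subsequence with $x^*_{n_k}\underset{\mathcal{M}}\longtwoheadrightarrow x^*_\infty$, which by the usual subsequence argument (translated through the embeddings $\xi_n$: the displayed convergence is convergence of $\xi_n(x^*_n)$ to $\xi_\infty(x^*_\infty)$ in the single metric space $\mathcal{M}$, so the standard ``every subsequence has a sub-subsequence converging to the same limit $\Rightarrow$ the whole sequence converges'' principle applies verbatim) yields $x^*_n\underset{\mathcal{M}}\longtwoheadrightarrow x^*_\infty$ as $n\to\infty$.

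For the convergence of values, the chain of inequalities above, now applied with the full sequence in place of the subsequence once we know $x^*_n\underset{\mathcal{M}}\longtwoheadrightarrow x^*_\infty$, gives $\Phi_\infty(x^*_\infty)\leq\liminf_n\Phi_n(x^*_n)$ from the liminf inequality, while $\limsup_n\Phi_n(x^*_n)\leq\limsup_n\Phi_n(y_n)=\Phi_\infty(x^*_\infty)$ from minimality of $x^*_n$ and the recovery-sequence property; together these force $\Phi_n(x^*_n)\to\Phi_\infty(x^*_\infty)$. I do not expect any serious obstacle here: the only delicate point is ensuring that the subsequence argument is legitimate, which it is precisely because convergence in the Banach stacking is \emph{defined} as honest convergence of the images $\xi_n(x_n)$ in the fixed metric space $(\mathcal{M},d)$, so all the topological bookkeeping reduces to ordinary metric-space facts. (One should also note that $\Phi_\infty(x^*_\infty)>-\infty$ is automatic: minimality together with the recovery sequence shows $\Phi_\infty(x^*_\infty)=\lim_n\Phi_n(y_n)$, and if this were $-\infty$ then $\liminf_n\Phi_n(x^*_n)=-\infty$, contradicting nothing by itself, but the statement only asserts convergence, which still holds; if finiteness is wanted it follows from $\Phi_n$ taking values in $(-\infty,+\infty]$ only after additionally knowing the values are bounded below, which is not needed for the stated conclusion.)
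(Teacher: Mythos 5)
Your proof is correct and follows essentially the same approach as the paper's: a recovery sequence bounds $\Phi_n(x^*_n)$ from above, equicoercivity extracts a convergent subsequence, the liminf inequality identifies the subsequential limit as the unique minimizer $x^*_\infty$, and combining the two $\Gamma$-inequalities gives convergence of the minimal values. The only cosmetic difference is that you package the argument via the subsequence-of-subsequence principle, whereas the paper unwinds the same logic into a direct proof by contradiction (assuming a subsequence stays a distance $\varepsilon$ away from $x^*_\infty$).
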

\begin{proof}
Let $(z_n)_{n\in\mathbb{N}}\subset X_n$ be a recovery sequence such that $\Phi_n(z_n)\rightarrow \Phi_\infty(x_\infty^*)$ as $n\rightarrow \infty$. Since $\Phi_n(x^*_n)\leq \Phi_n(z_n)$ and $\Phi_\infty$ is proper,  $\limsup_{n\rightarrow \infty} \Phi_n(x_n^*) \leq \Phi_\infty(x_\infty^*)<+\infty$. Suppose for a proof by contradiction that $(x_n^*)_{n\in\mathbb{N}}$ does not converge to $x_\infty^*$ over $\big((X_n, \xi_n)_{n\in \mathbb{N}_\infty}, \mathcal{M}\big)$. In particular, there exist an $\varepsilon>0$ and a subsequence $(x_{n_k}^*)_{k\in\mathbb{N}}$ of $(x_n^*)_{n\in\mathbb{N}}$ such that, for all $k\in \mathbb{N}$, $d(\xi_{n_k}(x^*_{n_k}),\xi_\infty(x^*_\infty))\geq \varepsilon$. By equicoercivity of $(\Phi_n)_{n\in\mathbb{N}}$, there exists a $z_\infty \in X_\infty$ such that, up to passing to a further subsequence, $x^*_{n_k} \underset{\mathcal{M}}\longtwoheadrightarrow z_{\infty}$ as $k\rightarrow \infty$. By the inequality for the limit inferior, $\Phi_\infty(z_\infty)\leq \liminf_{k\rightarrow \infty} \Phi_{n_k}(x^*_{n_k})\leq \Phi_\infty(x_\infty^*)$. The minimizer of $\Phi_\infty$ is unique and so $z_\infty=x_\infty^*$, which gives a contradiction. Thus $x^*_n \underset{\mathcal{M}}\longtwoheadrightarrow x^*_{\infty}$ as $n\rightarrow \infty$. By the inequality for the limit inferior, $\liminf_{n\rightarrow \infty} \Phi_n(x_n^*)\geq \Phi_\infty(x_\infty^*)$; combined with the inequality for the limit superior this implies that $\Phi_n(x_n^*)\rightarrow \Phi_\infty(x_\infty^*)$ as $n\rightarrow\infty$.     
\end{proof}

\begin{remark}\label{rem:whydifferentequicoercivity}
Suppose $\big((X_n, \xi_n)_{n\in \mathbb{N}_\infty}, \mathcal{M}\big)$ is a Banach stacking and $(\Phi_n)_{n\in \mathbb{N}_\infty}$ a sequence of functions $\Phi_n:X_n \rightarrow (-\infty,+\infty]$. Mirroring Definition~\ref{def:equicoercivity}, we might reasonably think to define $(\Phi_n)_{n\in \mathbb{N}}$ to be equicoercive over the Banach stacking $\big((X_n, \xi_n)_{n\in \mathbb{N}_\infty}, \mathcal{M}\big)$ if, for any $c\in \mathbb{R}$, $\cup_{n\in\mathbb{N}} \xi_n(\Phi_n^{-1}(-\infty,c])$ is relatively compact in $\mathcal{M}$. However this turns out to be insufficient for Proposition~\ref{prop:gammaconBS} to hold, which is why we chose differently in Definition~\ref{BSdefs}. In fact, if we were to reformulate the definition of equicoercivity from Definition~\ref{BSdefs} for the metric-space setting of Definition~\ref{def:equicoercivity}, we would obtain an equivalent definition to that in Definition~\ref{def:equicoercivity}, but in the Banach-stacking setting the two do not coincide, as we will now illustrate.

First we define a map $\theta:\mathbb{R}\rightarrow [0,2\pi]$ by
$$ \theta(x):= \pi+ \frac{\pi x}{\sqrt{1+x^2}}.$$
Then
$$\theta'(x)= \frac{\pi\sqrt{1+x^2}-\frac{\pi x^2}{\sqrt{1+x^2}}}{1+x^2} = \frac{\pi(1+x^2)-\pi x^2}{(1+x^2)^{\frac32}}=\frac{\pi}{(1+x^2)^{\frac32}}\in [0,\pi].$$
Next we define the embedding $\xi:\mathbb{R}\rightarrow S^1$ via $\xi(x)=e^{i\theta(x)}$. Here $S^1 \subset \mathbb{C}$ is the unit circle in the complex plane, equipped with the angular distance metric divided by $\pi$, which we denote by $d$. Since we divided by $\pi$ and by the bounds on $\theta'$, we find that $\xi$ is a $1$-Lipschitz map. For all $n\in\mathbb{N}_\infty$, let $X_n:=\mathbb{R}$ and $\xi_n:=\xi$, and set $\mathcal{M}:=S^1$. Then $\big((X_n, \xi_n)_{n\in \mathbb{N}_\infty}, \mathcal{M}\big)$ is a Banach stacking; we leave it to the reader to verify this. 

Now we define a sequence $(\Phi_n)_{n\in\mathbb{N}}$ of functions $\Phi_n:\mathbb{R}\rightarrow [0,+\infty)$ by $\Phi_n(n):=0$ and, for all $x\neq n$, $\Phi_n(x):=|x|+1/n$. Additionally, we define $\Phi_\infty:\mathbb{R}\rightarrow [0,+\infty)$ by $\Phi_\infty(x):=|x|$. One can then verify that $(\Phi_n)_{n\in\mathbb{N}}$ $\Gamma$-converges to $\Phi_\infty$ over the Banach stacking. Moreover, $S^1$ is compact in $\mathbb{C}$ and thus every subset is relatively compact; hence $(\Phi_n)_{n\in\mathbb{N}}$ is ``equicoercive'' according to the ``mirrored'' definition at the start of this remark. However, for all $n\in \mathbb{N}$, the minimizer of $\Phi_n$ is $n$, and $(n)_{n\in\mathbb{N}}$ does not converge to the minimizer of $\Phi_\infty$, which is $0$.  
\end{remark}

We close this appendix with a lemma regarding a uniform lower bound for a sequence of $\lambda$-convex functions on Hilbert spaces which $\Gamma$-converges.

\begin{lemma}
\label{Lem:lambdagammabound}
Suppose $\big((H_n, \xi_n)_{n\in \mathbb{N}_\infty}, \mathcal{M}\big)$ is a Banach stacking of Hilbert spaces $H_n$ and $(\Phi_n)_{n\in \mathbb{N}_\infty}$ a sequence of functions $\Phi_n:H_n \rightarrow (-\infty,+\infty]$. Assume that there exists a $\lambda>0$ such that, for all $n\in \mathbb{N}$, $\Phi_n$ is $\lambda$-convex, proper, and lower semicontinuous, and, moreover, $\Phi_\infty$ is $\lambda$-convex and proper. Furthermore, assume that $(\Phi_n)_{n\in \mathbb{N}}$ $\Gamma$-converges to $\Phi_\infty$ over the Banach stacking. Then there exists an $L\in \mathbb{R}$ such that, for all $n\in \mathbb{N}_\infty$ and all $x\in H_n$, $\Phi_n(x)\geq L$.
\end{lemma}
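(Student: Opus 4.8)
The plan is to argue by contradiction using the equicoercivity-like structure that $\Gamma$-convergence provides together with the uniform lower bound available for each individual $\lambda$-convex function. First I would recall that, by Corollary~\ref{cor:lambdaconvexbound}, for each $n\in\mathbb{N}_\infty$ there exists a point $x^*_n\in H_n$ (the unique minimizer of $\Phi_n$ over $H_n$) such that, for all $x\in H_n$, $\Phi_n(x)\geq \frac{\lambda}{2}\Vert x-x^*_n\Vert_n^2 + \Phi_n(x^*_n)$. In particular $\Phi_n$ is bounded below by $\Phi_n(x^*_n)$. So it suffices to produce a constant $L$ with $\Phi_n(x^*_n)\geq L$ for all $n\in\mathbb{N}_\infty$; equivalently, to show that $\inf_{n\in\mathbb{N}_\infty}\Phi_n(x^*_n)>-\infty$.

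Suppose, for contradiction, that this fails: then there is a subsequence $(n_k)_{k\in\mathbb{N}}$ with $\Phi_{n_k}(x^*_{n_k})\to -\infty$ as $k\to\infty$. The key step is to control the minimizers $x^*_{n_k}$. Fix a recovery sequence $(z_n)_{n\in\mathbb{N}}\subset_n H_n$ for some fixed point of $\operatorname{dom}(\Phi_\infty)$ (which is nonempty since $\Phi_\infty$ is proper), so that $z_n\underset{\mathcal{M}}\longtwoheadrightarrow z_\infty$ and $\limsup_{n\to\infty}\Phi_n(z_n)\leq \Phi_\infty(z_\infty)<+\infty$; in particular $\Phi_n(z_n)$ is bounded above by some constant $C_1$ for $n$ large. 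Then $\Phi_{n_k}(x^*_{n_k})\leq \Phi_{n_k}(z_{n_k})\leq C_1$ (using that $x^*_{n_k}$ is a minimizer), which is consistent with $\Phi_{n_k}(x^*_{n_k})\to-\infty$; this alone does not immediately contradict $\Gamma$-convergence because the $x^*_{n_k}$ might escape to infinity in $\mathcal{M}$. To prevent this I would use the quadratic lower bound from Corollary~\ref{cor:lambdaconvexbound} \emph{centered at $z_n$} instead: the function $\Phi_n(\cdot)-\frac{\lambda}{4}\Vert\cdot - z_n\Vert_n^2$ is $\frac{\lambda}{2}$-convex, proper, lower semicontinuous, hence by Corollary~\ref{cor:lambdaconvexbound} bounded below, with its (unique) minimizer being $x^*_n$ only if the extra quadratic term does not move it; more carefully, applying Lemma~\ref{ConvexMin} to $\Phi_n(\cdot)-\frac{\lambda}{4}\Vert\cdot-z_n\Vert_n^2$ gives a point $w_n$ and the bound $\Phi_n(x)\geq \frac{\lambda}{4}\Vert x-z_n\Vert_n^2 + \big(\Phi_n(w_n)-\frac{\lambda}{4}\Vert w_n-z_n\Vert_n^2\big)$ is not quite what I want either. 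The cleanest route: use Corollary~\ref{cor:lambdaconvexbound} directly to get $\Phi_n(z_n)\geq \frac{\lambda}{2}\Vert z_n-x^*_n\Vert_n^2+\Phi_n(x^*_n)$, hence $\Vert z_n-x^*_n\Vert_n^2\leq \frac{2}{\lambda}\big(\Phi_n(z_n)-\Phi_n(x^*_n)\big)\leq \frac{2}{\lambda}(C_1-\Phi_n(x^*_n))$. Since $\Phi_{n_k}(x^*_{n_k})\to-\infty$, the right side tends to $+\infty$, so $\Vert z_{n_k}-x^*_{n_k}\Vert_{n_k}\to\infty$ — the minimizers run off to infinity, so this does not yet give a contradiction directly. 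The main obstacle is therefore exactly this: $\Gamma$-convergence gives no information ``at infinity''.

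To close this gap I would instead apply the quadratic lower bound along a \emph{bounded} scale. Pick any fixed $r>0$ and consider the point $y_n := z_n + r\,\frac{x^*_n-z_n}{\Vert x^*_n-z_n\Vert_n}$ (for $k$ large so that $x^*_{n_k}\neq z_{n_k}$), which satisfies $\Vert y_n-z_n\Vert_n=r$. By $\lambda$-convexity of $\Phi_n$ (Definition~\ref{def:lambdaconvex}) applied to the segment from $z_n$ to $x^*_n$ with $y_n$ the convex combination at parameter $t_n=1-\frac{r}{\Vert x^*_n-z_n\Vert_n}\to 1$, we get $\Phi_n(y_n)\leq t_n\Phi_n(z_n)+(1-t_n)\Phi_n(x^*_n)-\frac{\lambda}{2}t_n(1-t_n)\Vert x^*_n-z_n\Vert_n^2$. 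Along the subsequence $n_k$: $t_{n_k}\to 1$, $\Phi_{n_k}(z_{n_k})\leq C_1$, and $(1-t_{n_k})\Phi_{n_k}(x^*_{n_k}) = -\frac{r}{\Vert x^*_{n_k}-z_{n_k}\Vert_{n_k}}\,|\Phi_{n_k}(x^*_{n_k})|$, with $\frac{1}{2}t_{n_k}(1-t_{n_k})\Vert x^*_{n_k}-z_{n_k}\Vert_{n_k}^2\sim \frac{r}{2}\Vert x^*_{n_k}-z_{n_k}\Vert_{n_k}\to+\infty$; combining with $\Vert z_{n_k}-x^*_{n_k}\Vert_{n_k}^2\leq\frac{2}{\lambda}(C_1-\Phi_{n_k}(x^*_{n_k}))$ one checks that $\Phi_{n_k}(y_{n_k})\to -\infty$. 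Now $(y_{n_k})$ is a sequence with $\Vert y_{n_k}-z_{n_k}\Vert_{n_k}=r$ fixed, so (since $z_{n_k}\underset{\mathcal{M}}\longtwoheadrightarrow z_\infty$ and the $\Vert y_{n_k}-z_{n_k}\Vert_{n_k}$ are bounded) it is a bounded sequence in the Banach-stacking sense; however $\Gamma$-convergence does \emph{not} guarantee a convergent subsequence, so I would instead need an equicoercivity hypothesis — which is \emph{not} assumed here. This tells me the contradiction must come from a different, more quantitative comparison that never leaves a neighborhood of $z_\infty$. The actual fix: apply the above segment estimate with $r$ \emph{not} fixed but chosen so that $y_n$ lies on a segment and take $y_n$ to be the specific convex combination at parameter $t=1/2$, i.e. the midpoint $m_n=\frac12(z_n+x^*_n)$; then Definition~\ref{def:lambdaconvex} gives $\Phi_n(m_n)\leq \frac12\Phi_n(z_n)+\frac12\Phi_n(x^*_n)-\frac{\lambda}{8}\Vert x^*_n-z_n\Vert_n^2$. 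This is bounded above by $\frac12 C_1+\frac12\Phi_n(x^*_n)$, still $\to-\infty$, and $m_n$ is still unbounded. Iterating the midpoint construction $j$ times produces a point within distance $2^{-j}\Vert z_n-x^*_n\Vert_n$ of $z_n$ on which $\Phi_n$ is at most roughly $\Phi_n(z_n)+2^{-j}(\Phi_n(x^*_n)-\Phi_n(z_n))$; choosing $j=j_n$ so that $2^{-j_n}\Vert z_n-x^*_n\Vert_n$ is bounded (say $\leq 1$) forces $2^{-j_n}|\Phi_n(x^*_n)|$ to be controlled by $\Vert z_n-x^*_n\Vert_n^{-1}|\Phi_n(x^*_n)|$, which by the quadratic bound is $O(\Vert z_n-x^*_n\Vert_n)\to\infty$. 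I would then take the sequence $(m^{(j_n)}_n)$, which stays in the ball of radius $1$ around $z_n$ in $\mathcal{M}$-bounded sense, apply property (iv) of the Banach stacking to control norms, and derive from the liminf inequality in $\Gamma$-convergence that $\Phi_\infty(\text{any }\mathcal{M}\text{-limit point})=-\infty$, contradicting properness of $\Phi_\infty$ — \emph{provided} such a limit point exists. Honestly, I expect the clean argument the authors use is simpler: apply Lemma~\ref{ConvexMin} / Corollary~\ref{cor:lambdaconvexbound} to $\Phi_\infty$ to get a lower bound $\Phi_\infty\geq \frac{\lambda}{2}\Vert\cdot-x^*_\infty\Vert_\infty^2+\Phi_\infty(x^*_\infty)=:L_\infty$, take a recovery sequence $(z_n)$ for $x^*_\infty$ so $\Phi_n(z_n)\to\Phi_\infty(x^*_\infty)$, and then invoke Corollary~\ref{cor:lambdaconvexbound} for each $\Phi_n$ to write $\Phi_n(x)\geq\Phi_n(x^*_n)$ and $\Phi_n(z_n)\geq\Phi_n(x^*_n)$, giving $\Phi_n(x^*_n)\leq\Phi_n(z_n)$; the reverse bound $\Phi_n(x^*_n)\geq$ const is obtained by noting that if $\Phi_n(x^*_n)$ were very negative then, by the quadratic bound centered at $x^*_n$ evaluated at $z_n$, $z_n$ would have to be far from $x^*_n$, yet a further convexity comparison along the segment from $x^*_n$ through $z_n$ extended a fixed distance past $z_n$ would produce points on which $\Phi_n$ beats its own minimum — a contradiction. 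The main obstacle, as flagged, is handling the possibility that minimizers escape to infinity; the resolution is that the quadratic lower bound of Corollary~\ref{cor:lambdaconvexbound} is strong enough that ``$\Phi_n(x^*_n)\to-\infty$'' forces a segment-convexity violation within a bounded region, and on that bounded region the liminf inequality of $\Gamma$-convergence together with property (iv) of the stacking forces $\Phi_\infty=-\infty$ somewhere, contradicting properness. I would write up precisely this contradiction, keeping the segment-convexity estimate (Definition~\ref{def:lambdaconvex}) and Corollary~\ref{cor:lambdaconvexbound} as the two workhorses, and using Proposition~\ref{zeroConvergence}-style arguments plus property (iv) of Definition~\ref{BanStack} to pass norms to the limit.
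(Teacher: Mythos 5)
Your proposal correctly identifies the two workhorse ingredients --- the quadratic lower bound of Corollary~\ref{cor:lambdaconvexbound} with known minimizer $x_n^*$, and a recovery sequence $(z_n)$ for a point of $\operatorname{dom}(\Phi_\infty)$ --- and you correctly diagnose the central obstacle: $\Gamma$-convergence over a Banach stacking carries no compactness, so producing a merely \emph{bounded} sequence on which $\Phi_n\to-\infty$ does not contradict anything. But you never close the gap. Each construction you attempt (the point at fixed distance $r$ from $z_n$, the midpoint $m_n$, the iterated midpoints) yields a sequence that is bounded but not \emph{convergent} in the stacking, and you explicitly flag that you cannot extract a limit. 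The final speculative paragraph does not resolve this either, and contains two errors: first, the quadratic bound $\Phi_n(z_n)\geq\frac\lambda2\Vert z_n-x_n^*\Vert_n^2+\alpha_n$ with $\alpha_n:=\Phi_n(x_n^*)\to-\infty$ gives only an \emph{upper} bound $\Vert z_n-x_n^*\Vert_n\lesssim\sqrt{|\alpha_n|}$ that grows weaker --- it does not force $z_n$ to be far from $x_n^*$, as you claim; and second, ``extending the segment past $z_n$'' via $\lambda$-convexity produces only a further \emph{lower} bound on $\Phi_n$ at the extended point, not a value ``beating its own minimum.''

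The paper's resolution is a scaling trick you do not find. After normalizing so $\Phi_\infty(0)=0$ and $\Phi_\infty\geq 0$, take the recovery sequence $(z_n)$ for $0$, so $z_n\underset{\mathcal{M}}\longtwoheadrightarrow 0$ and $\Phi_n(z_n)\to 0$. Define $t_n:=\bigl(1+|\alpha_n|^{2/3}\bigr)^{-1}$ and consider the convex combination $t_n x_n^*+(1-t_n)z_n$. The exponent $2/3\in(1/2,1)$ is tuned so that both of the following hold simultaneously: since $\Vert x_n^*\Vert_n\lesssim\sqrt{|\alpha_n|}+\Vert z_n\Vert_n$ (from the quadratic bound evaluated at $z_n$), one has $t_n\Vert x_n^*\Vert_n\sim|\alpha_n|^{-1/6}\to 0$, so $t_n x_n^*+(1-t_n)z_n\underset{\mathcal{M}}\longtwoheadrightarrow 0$; and $t_n\alpha_n\sim-|\alpha_n|^{1/3}\to-\infty$. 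Convexity (Lemma~\ref{lem:lambdaconvexanstrictlyconvex}) then gives $\Phi_n(t_n x_n^*+(1-t_n)z_n)\leq t_n\alpha_n+(1-t_n)\Phi_n(z_n)\to-\infty$, and the liminf inequality of $\Gamma$-convergence yields $0=\Phi_\infty(0)\leq-\infty$, a contradiction. The essential insight you miss is that one does not need a convergent \emph{sub}sequence (compactness); one can \emph{manufacture} a convergent sequence by contracting $x_n^*$ toward $z_n$ at a rate intermediate between $|\alpha_n|^{-1/2}$ (needed to kill the norm) and $|\alpha_n|^{-1}$ (needed to preserve divergence of the value).
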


\begin{proof}
The function $\Phi_\infty$, being a $\Gamma$-limit, is lower semicontinuous with respect to the topology on $H_\infty$ (see \cite[Definition 4.1 and Proposition 6.8]{DalMaso93}). By assumption it is also $\lambda$-convex and thus, by Proposition~\ref{ConvexMin}, $\Phi_{\infty}$ is bounded below and has a unique minimizer. So without loss of generality we can assume that $\Phi_{\infty}(0)=0$ and $\Phi_{\infty}\geq0$; indeed, if this is not the case, then we can replace each $\Phi_n$ by $\Phi_n(\cdot - x_\infty)-\alpha_\infty$ with $x_\infty := \argmin_{x\in H_\infty} \Phi_{\infty}(x)$ and $\alpha_\infty := \Phi_{\infty}(x_\infty)$. It can be checked directly from the definitions that this preserves the required $\lambda$-convexity, properness, and $\Gamma$-convergence.

For all $n\in \mathbb{N}$, let $x_n^*:=\argmin_{x\in H_n} \Phi_n(x)$ (which exists and is indeed unique according to Lemma~\ref{ConvexMin}) and $\alpha_n:= \Phi_n(x_n^*)$. By Corollary~\ref{cor:lambdaconvexbound} we have, for all $n\in \mathbb{N}$ and for all $x\in H_n$,
\begin{equation}
\label{coercivebound}
\Phi_n(x) \geq \frac{\lambda}{2}\Vert x-x_n^* \Vert_n^2 + \alpha_n .
\end{equation}
By our assumption of $\Gamma$-convergence, there exists a recovery sequence $(z_n)_{n\in \mathbb{N}}\subset_n H_n$ such that $z_n \rightarrow 0$ as $n\rightarrow \infty$ and $\Phi_n(z_n)\rightarrow \Phi_\infty(0)=0$ as $n\rightarrow 0$ (see Remark~\ref{rem:recovery}). Using the bound in~\eqref{coercivebound} we obtain, for all $n\in \mathbb{N}$,
\[
    \Vert x_n^* \Vert_n \leq \Vert x_n^* -z_n \Vert + \Vert z_n \Vert_n
    \leq \sqrt{\frac{\Phi_n(z_n)-\alpha_n}{\lambda/2}} + \Vert z_n \Vert_n .
\]
Now assume, for a proof by contradiction, that $(\Phi_n)_{n\mathbb{N}_{\infty}}$ is not uniformly bounded below. By passing to a subsequence we can assume WLOG $\alpha_n \rightarrow -\infty$ as $n\rightarrow \infty$. 
Define $t_n:= \frac{1}{1+|\alpha_n|^{2/3}}\in (0,1]$. Then
$$ \Vert t_n x_n^*\Vert_n \leq \frac{1}{1+|\alpha_n|^{2/3}} \left(\sqrt{\frac{\Phi_n(z_n)+|\alpha_n|}{\lambda/2}} + \Vert z_n \Vert_n  \right) \rightarrow 0.$$
Thus $t_n x_n^*+(1-t_n)z_n \rightarrow 0$ as $n\rightarrow \infty$. By the inequality for the limit inferior in our assumption of $\Gamma$-convergence it follows that
\begin{align*}
    0 &= \Phi_{\infty}(0)
    \leq \liminf_{n\rightarrow \infty} \Phi_n(t_nx_n^*+(1-t_n)z_n)
    \leq \liminf_{n\rightarrow \infty} t_n \Phi_n(x_n^*)+(1-t_n) \Phi_n(z_n) \\
    &= \liminf_{n\rightarrow \infty} \frac{\alpha_n}{1+|\alpha_n|^{2/3}}
    = -\infty.
\end{align*}
The second inequality follows from Lemma~\ref{lem:lambdaconvexanstrictlyconvex} and the second equality by definition of $\alpha_n$ and convergence of $(\Phi_n(z_n))$ to $0$.
This is a contradiction and so $(\Phi_n)_{n\mathbb{N}_{\infty}}$ is uniformly bounded below. \end{proof}

\bibliographystyle{plain} 
\bibliography{mybib}

\end{document}